\numberwithin{equation}{section}
\numberwithin{equation}{subsection}
\newcommand{\Z}{\mathbb{Z}}
\newcommand{\C}{\mathbb{C}}
\newcommand{\R}{\mathbb{R}}
\newcommand{\Q}{\mathbb{Q}}
\newcommand{\bp}{{\mathbb P}}
\newtheorem{theorem}{Theorem}[section]
\newtheorem{proposition}[theorem]{Proposition}
\newtheorem{lemma}[theorem]{Lemma}
\newtheorem{corollary}[theorem]{Corollary}
\newtheorem{remark}{\bf Remark}[section]
\newtheorem{fact}{Fact}[section]
\newtheorem{definition}{Definition}
\begin{document}
\title[On a Relative Mumford-Newstead Theorem]{On a Relative Mumford-Newstead Theorem}
\author{Suratno Basu}
\email{suratno@cmi.ac.in}
\address{Chennai Mathematical Institute, H1 SIPCOT IT Park, Siruseri, Kelambakkam-603103, India.}
\begin{abstract}
 In this paper, we prove a relative version of the classical Mumford-Newstead theorem
for a family of smooth curves degenerating to a reducible curve with a simple node. We also prove
a Torelli-type theorem by showing that certain moduli spaces of torsion free sheaves on a reducible
curve allows us to recover the curve from the moduli space.
\end{abstract}
\maketitle

 \section{Introduction}
  Let  $X$ be a smooth, projective curve of genus $g\geq 2$ over $\C$. 
 We fix a line bundle $L$ of odd degree  over $X$. Let $M_{_{X}}$ be the moduli space of rank 2, 
 stable vector bundles $E$ such that det$E\simeq L$. It is known that $M_{_X}$ is a smooth, 
 projective and unirational variety. Consequently it follows, by \cite[Lemma 1]{Se}, that the
 Hodge numbers $h^{0,p}=h^{p,0}=0$.
  Therefore, we have the following Hodge decomposition:
 \[H^3(M_{_{X}},\C)=H^{1,2}\oplus \overline{H^{1,2}},\] where $\overline{\alpha}$ is the complex conjugate of $\alpha \in H^3(M_{_X},\C)=H^3(M_{_X},\R)\otimes \C$ and $H^{1,2}\simeq H^2(M_{_X},\Omega^1_{_{M_{_X}}})$. 
 Let $pr_1:H^3(M_{_{X}},\C)\rightarrow H^{1,2}$ be the first projection.  
 Since $H^3(M_{_X},\R)\cap H^{1,2}=\{0\}$, we get that the image $pr_1(H^3(M_{_{X}},\Z))$ is a full lattice in $H^{1,2}$. 
  We associate a complex torus corresponding to the above Hodge structure: \begin{equation}\label{inter}J^2(M_{_X}):=\frac{H^{1,2}}{pr_1(H^3(M_{_{X}},\Z))}
  \end{equation} It is known as the second  intermediate Jacobian of $M_{_{X}}$. 
  We remark that the complex torus, defined above, varies holomorphically in an analytic family of smooth projective, unirational varieties and is a principally polarised abelian variety. It is known that the second Betti number $b_{_2}(M_{_X})=1$ (\cite{ne}). Let $\omega$ be the unique ample, integral, K\"ahler class on $M_{_X}$. Then the principal polarisation on $J^2(M_{_{X}})$ is induced by the following paring:
  \begin{equation}(\alpha,\beta) \mapsto  \int_{_{M_X}} \omega^{n-3}\wedge \alpha \wedge \overline{\beta},\end{equation} where $\alpha,\beta \in H^{1,2}$ and $n=dim_{_{\C}}M_{_{X}}$. 
  We denote this polarisation on $J^2(M_{_X})$ by $\theta'$. 
  The theorem of  Mumford and Newstead  (\cite[Theorem in page 1201]{mum-new}) asserts that there is a natural isomorphism $\phi:J(X)\to J^2(M_{_X})$ such that $\phi^{*}(\theta')=\theta$,
  where $J(X)$ is the Jacobian of the curve and $\theta$ is the canonical polarisation on $J(X)$. In \cite[Section 5, page 625]{Ba2}) there is a detailed proof of the fact $\phi^{*}(\theta')=\theta$. Hence, appealing to the classical Torelli theorem one can recover the curve $X$ from the moduli space $M_{_X}$. 
 
Let $X_{_0}$ be a projective curve with exactly two smooth irreducible components $X_{_1}$ and $X_{_2}$ meeting at a simple node $p$. Fix two rational numbers $0<a_{_1},a_{_2}<1$ such that $a_{_1}+a_{_2}=1$ and let $\chi$ be an odd integer. 
 Under some numerical conditions, Nagaraj and Seshadri construct in \cite[Theorem 4.1]{ns},
 the moduli space $M(2,(a_{_1},a_{_2}),\chi)$ of rank $2$, $(a_{_1},a_{_2})$-semistable torsion free sheaves on $X_{_0}$ with  Euler charachteristic $\chi$. Moreover, they show that $M(2,(a_{_1},a_{_2}),\chi)$ is the union of two smooth, projective varieties intersecting transversally  along a smooth divisor. We will observe that there exists a
 determinant morphism 
 $det:M(2,(a_{_1},a_{_2}),\chi)\to J^{\chi-(1-g)}(X_{_0})$ 
  where $J^{\chi-(1-g)}(X_{_0})$ is the Jacobian parametrising the line bundles with Euler characteristic $\chi-(1-g)$ over $X_{_0}$ (see Proposition \ref{determin} in Appendix). We further observe that the fibres of the morphism $det$ is again the union of two smooth projective varieties intersecting transversally (see Proposition \ref{tran} in Appendix). Fix $\xi\in J^{\chi-(1-g)}$. We denote the fibre $det^{-1}(\xi)$ by $M_{_{0,\xi}}$.  Since  $M_{_{0,\xi}}$ is a singular variety, a priori $H^3(M_{_{0,\xi}},\C)$ has an intrinsic mixed Hodge structure. Let $g$ be the arithmetic genus of $X_{_0}$. Note that $g=g_{_1}+g_{_2}$, where $g_{_i}$ is the genus of $X_{_i}$ for  $i=1,2$.
Under the assumption $g_{_i}>3$, $i=1,2$,  we will show that $H^3(M_{_{0,\xi}},\Q)\simeq \Q^{2g}$, and that it has a pure Hodge structure with  Hodge numbers $h^{^{3,0}}=h^{^{0,3}}=0$. Thus we have an intermediate Jacobian $J^2(M_{_{0,\xi}})$, as defined earlier, corresponding  to the Hodge structure on $H^3(M_{_{0,\xi}},\C)$ which is a priori only a complex torus of dimension $g$. 

 Let $\pi:\mathcal{X}\to C$ be a proper, flat and surjective family of curves, parametrised by a smooth, irreducible curve $C$. Fix $0\in C$. We assume that $\pi$ is smooth outside the point $0$ and $\pi^{-1}(0)=X_{_0}$, where $X_0$ is as above, $g_i>3$ for $i=1,2$. 
 Let $X_{_t}$ be the fibre $\pi^{-1}(t)$ over $t\in C$.
 Fix a line bundle $\mathcal{L}$ over $\mathcal{X}$ such that the restrictions $\mathcal{L}_{_t}$ to $X_{_t}$  are line bundles with Euler characteristics $\chi-(1-g)$ for $t\neq 0$ and $\mathcal{L}_{_0}$ is isomorphic to the line bundle $\xi$. 
 In this situation, it is observed in \cite[Lemma 7.2]{ns} that there is a family $\pi':\mathcal{M}_{_{\mathcal{L}}}\to C$ such that the fibre $\pi'^{-1}(t)$ over a point $t\neq 0$ is  $M_{_{t,\mathcal L_t}}$, the moduli space of rank $2$, semistable locally free sheaves with $det\simeq \mathcal L_t$ over the smooth projective curve $X_{_t}$ and $\pi'^{-1}(0)=M_{_{0,\xi}}$ (see Section \ref{degen}). We should mention a related work by X Sun \cite{sun}. In \cite{sun} the author constructs a family of rank $r$ fixed determinant, semistable bundles over smooth projective curves degenerating to a ``fixed determinant'' moduli space of rank $r$ torsion free sheaves over $X_{_0}$. Though his methods are different we believe, in rank $2$ case, the relative moduli space in \cite{sun} coincides with $\mathcal{M}_{_{\mathcal{L}}}$. 
 We consider an analytic disc $\Delta$ around the point $0$ and we denote the family $\pi':\pi'^{-1}(\Delta)\to \Delta$ by $\{M_{_{t, \mathcal L_t}}\}_{_{t\in \Delta}}$.

 With these notations we state one of the main results of this paper :
 \begin{theorem}\label{m}{\ }
 
 \begin{enumerate}
  \item There is a holomorphic family  $\{J^2(M_{t,\mathcal{L}_{t}})\}_{_{t\in \Delta}}$ of intermediate Jacobians corresponding to the family $\{M_{_{t, \mathcal L_t}}\}_{_{t\in \Delta}}$. In other words, there is a surjective,
   proper, holomorphic submersion 
    \[\pi_{_2} : J^2(\mathcal{M}_{_{\mathcal{L}}})\longrightarrow \Delta\] 
   such that $\pi_2^{-1}(t)=J^2(M_{_{t, \mathcal L_t}})~ \forall ~  t\in \Delta^{*}:=\Delta \setminus \{0\}$ and $\pi_2^{-1}(0)=J^2(M_{_{0,\xi}})$.   
   Further, there exists a relative ample class $\Theta'$ on $J^2(\mathcal{M}_{_{\mathcal{L}}})_{|\Delta^*}$ such that $\Theta'_{|J^2(M_{_{t,\mathcal L_t}})}=\theta'_{_t}$, where $\theta'_{_t}$ is the principal polarisation on $J^2(M_{_{t, \mathcal L_t}})$.
   
  \item 
   There is an isomorphism 
   \begin{equation}
\xymatrix{
J^0(\mathcal{X}) \ar[rr]^{\Phi}_{\sim} \ar[rd]_{\pi_{_1}} && J^2(\mathcal{M}_{_{\mathcal{L}}})\ar[ld]^{\pi_{_2}} \\
& \Delta
}
\end{equation} 
such that $\Phi^*\Theta'_{|\pi_1^{-1}(t)}=\theta_{_t}$ for all $t\in \Delta^*$, where $\pi_1: J^0(\mathcal X)\to \Delta$ is the holomorphic family $\{J^0(X_{_t})\}_{_{t\in \Delta}}$ of Jacobians and $\theta_{_t}$ is the canonical polarisation on $J^0(X_{_t})$. In particular $J^2(\mathcal{M}_{_{\mathcal{L}}})_{_0}:=\pi_{_2}^{-1}(0)$ is an abelian variety.
\end{enumerate}
\end{theorem}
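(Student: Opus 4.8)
The plan is to deduce both parts from the classical Mumford--Newstead theorem applied fibrewise over $\Delta^*$, supplemented by a limiting Hodge-theoretic analysis at $t=0$. Over the punctured disc $\Delta^*$ the map $\pi'$ is a smooth projective family of unirational varieties, so $R^3\pi'_*\Z|_{\Delta^*}$ modulo torsion underlies a polarisable variation of pure Hodge structure of weight $3$ with $h^{3,0}=0$; by Griffiths' construction its intermediate Jacobians assemble into a holomorphic family over $\Delta^*$. Since the fibrewise second Betti number is $1$, the unique ample generators $\omega_{_t}$, realised for instance as the first Chern class of the relative theta line bundle on $\mathcal{M}_{_{\mathcal{L}}}|_{\Delta^*}$, form a flat section of $R^2\pi'_*\Z|_{\Delta^*}$, and the fibrewise pairing $(\alpha,\beta)\mapsto\int\omega_{_t}^{\,n-3}\wedge\alpha\wedge\overline{\beta}$ (with $n=\dim M_{_{t,\mathcal{L}_{t}}}$) defines the relative ample class $\Theta'$ over $\Delta^*$ restricting to $\theta'_{_t}$ fibrewise. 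What remains --- the heart of Part~(1) --- is to extend this family of complex tori across $0$.

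For the extension across $0$, I would use that by Proposition~\ref{tran} the central fibre $M_{_{0,\xi}}=M_{1}\cup_{D}M_{2}$ is a reduced simple normal crossing divisor with two smooth components, so $\pi'$ is a semistable degeneration and the Clemens--Schmid exact sequence applies to $R^3\pi'_*$. Into it I would feed: (i) the earlier result that $H^3(M_{_{0,\xi}},\Q)\simeq\Q^{2g}$ carries a \emph{pure} Hodge structure of weight $3$ with $h^{3,0}=h^{0,3}=0$; (ii) the classical Mumford--Newstead theorem fibrewise, which (up to a Tate twist) identifies the weight-$3$ structure $H^3(M_{_{t,\mathcal{L}_{t}}},\Z)$ with $H^1(X_{_t},\Z)$, so that $\dim H^3(M_{_{t,\mathcal{L}_{t}}},\Q)=2g$ as well; and (iii) the Mayer--Vietoris computation of $H^*(M_{1})$, $H^*(M_{2})$ and $H^*(D)$ carried out earlier in the paper, which forces the incoming Clemens--Schmid term $H_{2n-1}(\mathcal{M}_{_{\mathcal{L}}})\to H^3(M_{_{0,\xi}})$ to vanish. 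Exactness then makes the specialisation map $H^3(M_{_{0,\xi}})\to H^3_{\lim}$ injective, hence an isomorphism of Hodge structures by the dimension count, and forces $N=\log T=0$, i.e.\ trivial local monodromy. Thus $R^3\pi'_*\Z$ is a trivial local system on all of $\Delta$, its Hodge subbundle extends holomorphically over $0$ (nilpotent orbit theorem, trivially since $N=0$) with fibre at $0$ equal to the Hodge filtration of $H^3(M_{_{0,\xi}})$, and the Griffiths quotient yields $\pi_{_2}:J^2(\mathcal{M}_{_{\mathcal{L}}})\to\Delta$ with $\pi_{_2}^{-1}(0)=J^2(M_{_{0,\xi}})$.

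For Part~(2), the Mumford--Newstead isomorphism $\phi_{_t}\colon J(X_{_t})\to J^2(M_{_{t,\mathcal{L}_{t}}})$ is induced by the algebraic correspondence $\alpha\mapsto p_{2,*}\bigl(c_2(\mathcal{E})\cup p_1^{*}\alpha\bigr)$ attached to a universal rank-$2$ sheaf $\mathcal{E}$ on $\mathcal{X}\times_{_C}\mathcal{M}_{_{\mathcal{L}}}$, which exists because $\chi$ is odd, the nodal fibre being covered by the universal sheaf of the Appendix. Hence the $\phi_{_t}$ globalise to a holomorphic isomorphism $\Phi\colon J^0(\mathcal{X})|_{\Delta^*}\xrightarrow{\ \sim\ }J^2(\mathcal{M}_{_{\mathcal{L}}})|_{\Delta^*}$ over $\Delta^*$, and $\Phi^{*}\Theta'|_{\pi_{_1}^{-1}(t)}=\theta_{_t}$ for $t\neq 0$ by the classical theorem together with the polarisation computation of \cite[Section~5]{Ba2}. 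To extend $\Phi$ across $0$, observe first that $\pi_{_1}$ extends there: the vanishing cycle of a degeneration to a curve with a separating node is null-homologous, so Picard--Lefschetz gives trivial monodromy on $H^1(X_{_t})$, $R^1\pi_*\Z$ is a trivial local system on $\Delta$, and $\pi_{_1}^{-1}(0)=J^0(X_{_0})\cong J(X_{_1})\times J(X_{_2})$ is an abelian variety. Now $R^3\pi'_*\Z$ and $R^1\pi_*\Z$ are both trivial local systems on $\Delta$, so the isomorphism $\Phi$ over the dense open $\Delta^*$ identifies them over all of $\Delta$, and likewise identifies their Hodge subbundles (holomorphic and agreeing over $\Delta^*$). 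Therefore $\Phi$ extends to an isomorphism of families over $\Delta$, and in particular $J^2(\mathcal{M}_{_{\mathcal{L}}})_{_0}=\pi_{_2}^{-1}(0)\cong J(X_{_1})\times J(X_{_2})$ is an abelian variety.

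The main obstacle is the extension across $0$ in Part~(1): identifying the limiting mixed Hodge structure of $R^3\pi'_*$ with the intrinsic pure Hodge structure on $H^3(M_{_{0,\xi}})$, equivalently proving that the local monodromy is trivial. This is where the semistability of $\mathcal{M}_{_{\mathcal{L}}}\to\Delta$ provided by the Nagaraj--Seshadri description of the central fibre, the Clemens--Schmid exact sequence, and the explicit Mayer--Vietoris computation of $H^*(M_{1})$, $H^*(M_{2})$, $H^*(D)$ must all be brought together; once that step is secured, Griffiths' theory of families of intermediate Jacobians, the globalisation of Mumford--Newstead via the universal sheaf, and the rigidity argument extending $\Phi$ are essentially formal.
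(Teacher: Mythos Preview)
Your proposal is essentially correct and uses the same ingredients as the paper (Griffiths' construction, Clemens--Schmid/local invariant cycles, the purity and dimension of $H^3(M_{_{0,\xi}})$, and the Mumford--Newstead correspondence), but you run the argument in the reverse order, and this creates work the paper avoids.

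In the paper, the Mumford--Newstead correspondence is used \emph{first}, and at the level of local systems rather than fibrewise: the $(1,3)$ K\"unneth component of $c_2$ of the relative universal bundle gives a morphism $\phi\colon H_{\Z}(\mathcal{X})\to H_{\Z}(\mathcal{M})$ of local systems over $\Delta^*$ preserving Hodge filtrations, and by classical Mumford--Newstead it is an isomorphism (Lemma~\ref{Hodge}). Since the curve family has trivial monodromy ($N'=0$, Lemma~\ref{lim}: the node is separating), this \emph{transfers} $N=0$ to the moduli side at once. Only then does the paper invoke the local invariant cycle theorem, and at that point $\operatorname{Ker}(N)=H^3_{\lim}$, so $i^*\colon H^3(M_{_{0,\xi}})\to H^3_{\lim}$ is surjective, hence an isomorphism by the dimension count. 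Part~(2) is then literally immediate from Lemma~\ref{Hodge}, since the isomorphism of canonical extensions is already in hand.

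Your route instead tries to prove $N=0$ on the moduli side \emph{directly} via Clemens--Schmid, using Mumford--Newstead only fibrewise for the dimension count. This forces you to verify your step~(iii): that the incoming Clemens--Schmid map $H_{2n-1}(M_{_{0,\xi}})\to H^3(M_{_{0,\xi}})$ vanishes. You attribute this to ``the Mayer--Vietoris computation'', but the paper only computes $H^k(M_{_{0,\xi}})$ for $k\le 3$, whereas $2n-1=6g-7$ is far above that, and $M_{_{0,\xi}}$ is singular so Poincar\'e duality is not directly available. This gap can certainly be filled (e.g.\ via Poincar\'e duality on the smooth components $M_{12},M_{21},D$ and a weight argument), but it is genuine extra work. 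The paper's trick of transferring the trivial monodromy from the curve side, where it is obvious, bypasses the issue entirely and makes Part~(2) a one-line consequence rather than a separate rigidity argument.
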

 
  %We note that the fibre $\pi_{_1}^{-1}(0)$ is isomorphic to the Jacobian $J^0(X_{_0})$ of $X_{_0}$

  By the above theorem we deduce the following:
  
    \begin{corollary} 
  Let $X_{_0}$ be a  projective curve with exactly two smooth irreducible components $X_1$ and $X_2$ meeting at a simple node $p$. We further assume that $g_{_i}>3$, $i=1,2$. Then,  
  there is an isomorphism $J^0(X_{_0})\simeq J^2(M_{_{0,\xi}})$, where $\xi\in J^{\chi}(X_{_0})$.
   In particular, $J^2(M_{_{0,\xi}})$ is an abelian variety.
  \end{corollary}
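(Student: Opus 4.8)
The plan is to derive the corollary from Theorem~\ref{m} by specialising to $t=0$: I would exhibit the given pair $(X_{_0},\xi)$ as the distinguished member of a degenerating family satisfying the hypotheses of Theorem~\ref{m}, and then read off the conclusion on the central fibre. Throughout, fix the auxiliary data needed for the Nagaraj--Seshadri construction (an odd integer $\chi$ and weights $0<a_{_1},a_{_2}<1$ with $a_{_1}+a_{_2}=1$) and write $g=g_{_1}+g_{_2}$, so that $\xi$ has $\chi(\xi)=\chi-(1-g)$ as in the definition of $M_{_{0,\xi}}$.

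First I would construct an admissible smoothing. A projective nodal curve is smoothable, and smoothing the unique node of $X_{_0}$ costs a single parameter, so I would choose a proper, flat, surjective morphism $\pi:\mathcal{X}\to C$ over a smooth irreducible curve $C$ with a point $0\in C$ such that $\pi$ is smooth over $C\setminus\{0\}$, $\pi^{-1}(0)\cong X_{_0}$, and the total space $\mathcal{X}$ is a smooth surface --- this holds, for instance, for a general curve through $[X_{_0}]$ in the versal deformation space of $X_{_0}$. Next I would extend $\xi$ over the family: the fibres of $\pi$ are curves, so their $H^2(\mathcal{O})$ vanishes and the relative Picard scheme $\mathrm{Pic}_{\mathcal{X}/C}$ is smooth over $C$; hence, after a harmless \'etale base change of $C$ near $0$, the point of the fibre of $\mathrm{Pic}_{\mathcal{X}/C}$ over $0$ corresponding to $[\xi]$ extends to a section, i.e.\ to a line bundle $\mathcal{L}$ on $\mathcal{X}$ with $\mathcal{L}_{_0}\cong\xi$. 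Since $\mathcal{L}$ is flat over $C$, the Euler characteristic $\chi(\mathcal{L}_{_t})$ is constant, equal to $\chi(\xi)=\chi-(1-g)$, for all $t$. Thus $(\pi,\mathcal{L})$ satisfies precisely the hypotheses stated before Theorem~\ref{m}.

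Passing to an analytic disc $\Delta$ around $0$, Theorem~\ref{m}(2) now supplies an isomorphism $\Phi:J^0(\mathcal{X})\xrightarrow{\ \sim\ }J^2(\mathcal{M}_{_{\mathcal{L}}})$ of families over $\Delta$ with $\pi_{_1}^{-1}(0)=J^0(X_{_0})$ and $\pi_{_2}^{-1}(0)=J^2(M_{_{0,\xi}})$; restricting $\Phi$ to the fibres over $0$ gives the desired isomorphism $J^0(X_{_0})\cong J^2(M_{_{0,\xi}})$. For the final assertion, the dual graph of $X_{_0}$ --- two vertices joined by one edge --- is a tree, so $X_{_0}$ is of compact type and $J^0(X_{_0})\cong J^0(X_{_1})\times J^0(X_{_2})$ is an abelian variety; transporting this through $\Phi$ shows that $J^2(M_{_{0,\xi}})$ is an abelian variety. (This also re-derives the last clause of Theorem~\ref{m}(2).)

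The only genuine obstacle beyond invoking Theorem~\ref{m} is the construction in the second paragraph: producing a one-parameter degeneration whose central fibre carries \emph{exactly} the prescribed line bundle $\xi$, rather than merely some line bundle of the correct Euler characteristic. This rests on the smoothness of the total space and of the relative Picard scheme, together with a local base change --- all standard --- after which the corollary is a pure specialisation of Theorem~\ref{m} and requires no new Hodge-theoretic input.
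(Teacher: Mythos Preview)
Your proposal is correct and follows essentially the same approach as the paper: construct a one-parameter smoothing of $X_{_0}$ carrying a line bundle restricting to $\xi$ on the central fibre, and then invoke Theorem~\ref{m} and specialise to $t=0$. The paper obtains the family by appealing to completeness of the moduli space of stable curves (the assumption $g_{_i}>3$ guarantees $X_{_0}$ is stable) rather than via the versal deformation, and it is terser about the extension of $\xi$; your treatment of that step through smoothness of the relative Picard scheme is more explicit, but the overall argument is the same.
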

  
  Since $J^0(X_{_0})$ is isomorphic to $J^0(X_{_1})\times J^0(X_{_2})$, 
  we observe the Jacobian  $J^0(X_{_0})$ is independent of the nodal point in $X_{_0}$.
  Hence, the classical Torelli theorem fails for such curves
  (see \cite[Page 6 ]{mummay}).
  On the other hand, it is known that under suitable choice of the polarisation on the Jacobian $J^0(X_{_0})$, one can recover the normalization $\tilde {X_{_0}}$ of $X_{_0}$, but not the curve $X_{_0}$. In otherwords one can recover both the components of $X_{_0}$ but not the nodal point(see \cite[page 125]{RH}). 
  
    We see that the moduli space $M_{_{0,\xi}}$ of rank $2$ torsion free sheaves carries more information than the Jacobian $J(X_{_0})$. In fact, we show that we can actually recover the curve $X_{_0}$ from $M_{_{0,\xi'}}$ by following a strategy of \cite{bbd}.  
More precisely, we will prove the following analogue of the Torelli theorem for reducible curves:
\begin{theorem}
  Let $X_{_0}$ ( resp. $Y_{_0}$) be the projective curve with two smooth irreducible components $X_i$ (resp. $Y_i$), $i=1,2$ meeting at a simple node $p$ (resp. $q$).
  We assume that $\mbox{genus}(X_{_i})=\mbox{genus}(Y_{_i})$, for $i=1,2$, and $X_{_1}\ncong X_{_2}$(resp. $Y_{_1}\ncong Y_{_2}$). 
  Let  $M_{_{0,\xi_{_{X_0}}}}$ (resp. $M_{_{0,\xi_{_{Y_0}}}}$) be the moduli space of rank $2$, semistable torsion free sheaves  $E$ with $detE\simeq \xi_{_{X_0}}$, $\xi_{_{X_0}}\in J^{\chi}(X_{_0})$, on $X_{_0}$ (resp. on $Y_{_0}$). 
  If $M_{_{0,\xi_{_{X_0}}}}\simeq M_{_{0,\xi_{_{Y_0}}}}$ then we have $X_{_0}\simeq Y_{_0}$.
  \end{theorem}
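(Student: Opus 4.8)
We sketch a proof that reconstructs $X_{_0}$ from the moduli space $M_{_{0,\xi_{_{X_0}}}}$ in three stages. First we recover the irreducible structure intrinsically: by \cite{ns} together with the fixed-determinant refinement (Proposition \ref{tran}), $M_{_{0,\xi}}$ is a union $\mathcal M_{_1}\cup\mathcal M_{_2}$ of two smooth projective varieties meeting transversally along the smooth divisor $\mathcal D=\mathcal M_{_1}\cap\mathcal M_{_2}$. Hence $\mathcal D$ is exactly the singular locus of $M_{_{0,\xi}}$ and $\mathcal M_{_1},\mathcal M_{_2}$ are exactly its irreducible components, so any isomorphism $\Psi\colon M_{_{0,\xi_{_{X_0}}}}\to M_{_{0,\xi_{_{Y_0}}}}$ carries $\mathcal D_{_{X_0}}$ onto $\mathcal D_{_{Y_0}}$ and permutes the two components; after relabelling the components of $Y_{_0}$ we may assume $\Psi(\mathcal M_{_i}^{X_0})=\mathcal M_{_i}^{Y_0}$ for $i=1,2$.

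Next we recover the smooth branches $X_{_1},X_{_2}$. Recall from \cite{ns} (this is also the geometric input used in \cite{bbd}) that the components and the divisor carry natural surjections onto moduli spaces built from the two branches of $X_{_0}$: for instance $\mathcal D$ fibres over $\mathcal N_{_{X_1}}\times\mathcal N_{_{X_2}}$ with flag-variety fibres encoding the gluing at the node, where $\mathcal N_{_{X_j}}$ is a moduli space of rank $2$ stable bundles of fixed determinant on $X_{_j}$ — on the relevant branch carrying a quasi-parabolic structure at the point over the node — and the components $\mathcal M_{_1},\mathcal M_{_2}$ admit analogous morphisms onto products of moduli of (parabolic) bundles on $X_{_1}$ and $X_{_2}$. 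These fibrations, and in particular the factors $\mathcal N_{_{X_j}}$, can be characterised purely from the variety structure — following \cite{bbd}, via Hecke curves and the variety of minimal rational tangents, or via a maximal Fano contraction — so $\Psi$ induces isomorphisms $\mathcal N_{_{X_j}}\simeq\mathcal N_{_{Y_{\sigma(j)}}}$ for some permutation $\sigma$ of $\{1,2\}$. By the Mumford--Newstead theorem and the classical Torelli theorem (applicable since $g_{_j}>3$), each $X_{_j}$ is recovered from $\mathcal N_{_{X_j}}$ and each $Y_{_j}$ from $\mathcal N_{_{Y_j}}$; and since $X_{_1}\ncong X_{_2}$ we have $\mathcal N_{_{X_1}}\ncong\mathcal N_{_{X_2}}$, which forces $\sigma=\mathrm{id}$ for the labelling fixed above and yields isomorphisms $f_{_j}\colon X_{_j}\simeq Y_{_j}$, $j=1,2$.

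Finally — and this is the crux — we must arrange $f_{_1}(p_{_1})=q_{_1}$ and $f_{_2}(p_{_2})=q_{_2}$, where $p_{_j}\in X_{_j}$ and $q_{_j}\in Y_{_j}$ are the points lying over the nodes $p$ and $q$; once this is done, $f_{_1}$ and $f_{_2}$ glue to an isomorphism $X_{_0}\simeq Y_{_0}$, proving the theorem. This is exactly the step where $M_{_{0,\xi}}$ sees more than $J^0(X_{_0})=J^0(X_{_1})\times J^0(X_{_2})$, whose product polarisation cannot detect the node (as recalled after the Corollary): the gluing datum is recorded in $\mathcal D$ together with the two normal directions of $\mathcal D$ inside $\mathcal M_{_1}$ and $\mathcal M_{_2}$, which over $\mathcal N_{_{X_j}}$ produce, through evaluation of a universal bundle at $p_{_j}$, a $\bp^1$-bundle — equivalently a one-pointed quasi-parabolic moduli space — from which, for the now-known curve $X_{_j}$, the point $p_{_j}$ can be recovered by a Torelli-type argument for one-pointed parabolic moduli; transporting this structure along $\Psi$ then gives $f_{_j}(p_{_j})=q_{_j}$. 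We expect this last stage to be the main obstacle: one has to show that the ``evaluation at the node'' structure is canonically attached to each moduli space and that it pins down the marked point, while working only with an abstract isomorphism $\Psi$ that a priori respects none of the universal families. The arguments of \cite{bbd} are tailored to precisely this kind of difficulty and the plan is to adapt them; here the hypothesis $g_{_i}>3$ gives the room needed both for Torelli on the $\mathcal N_{_{X_i}}$ and for its parabolic refinement, while $X_{_1}\ncong X_{_2}$ is what lets the two branches be told apart.
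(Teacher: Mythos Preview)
Your strategic outline is in the right spirit but diverges substantially from the paper's argument, and the divergence matters because the paper's route sidesteps precisely the obstacle you flag as the crux.

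The paper works entirely with the singular locus $D\simeq P_{1}\times P_{2}$, where each $P_{i}$ is already the moduli space of rank~$2$ \emph{parabolic} bundles on $(X_{i},x_{i})$ with fixed determinant. The key device is not VMRT or Fano contractions but the \emph{nef cone} of $P_{1}\times P_{2}$: Proposition~\ref{nef} shows it is simplicial with four generators $\vartheta_{1},\ldots,\vartheta_{4}$, pulled back from the four Hecke targets $M_{1},M_{1}',M_{2},M_{2}'$. An isomorphism $\Psi$ of the singular loci must take extremal nef classes to extremal nef classes; a dimension count (Lemma~\ref{nefc}) shows that the only boundary classes whose Iitaka image has dimension $\dim P_{i}$ are $\vartheta_{1}^{a}\otimes\vartheta_{2}^{b}$ and $\vartheta_{3}^{c}\otimes\vartheta_{4}^{d}$, and the images of the associated linear systems are precisely $P_{1}$ and $P_{2}$ (Remark~\ref{ima}). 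This yields $P_{i}\simeq P_{\sigma(i)}'$ intrinsically, and then a single invocation of \cite[Theorem~2.1]{bbd} --- Torelli for the one-pointed parabolic moduli space --- recovers the pair $(X_{i},x_{i})$ together, so the node comes for free. The hypothesis $X_{1}\ncong X_{2}$ is used only to rule out the possibility $\Psi^{*}N$ and $\Psi^{*}N'$ landing in the same $2$-face.

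By contrast, you propose to first extract the non-parabolic moduli $\mathcal N_{X_{j}}$, apply Mumford--Newstead plus classical Torelli to recover $X_{j}$, and only then chase the marked point through the normal bundle of $\mathcal D$. This is more circuitous and carries real costs: one of the two determinants is even, so one $\mathcal N_{X_{j}}$ is singular and Mumford--Newstead does not apply to it as stated; your route needs $g_{i}>3$ (for the intermediate-Jacobian input) whereas the paper's nef-cone argument works already for $g_{i}\geq 2$; and your ``main obstacle'' --- pinning down the marked point from an abstract $\Psi$ --- is exactly what the paper avoids by recovering the parabolic moduli $P_{i}$ directly and handing everything to \cite{bbd} in one stroke. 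The moral is that the right intrinsic invariant of $D$ here is its nef cone and the associated contractions, not a VMRT-type analysis.
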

  {\it Acknowledgements: I am extremely thankful to Prof V. Balaji who introduced me to this problem and discussed this work with me. I thank Prof. C.S Seshadri for suggesting a way to define a certain ''determinant'' morphism and for several helpful discussions. I thank B Narasimha Chary for a very careful reading of the previous draft and suggesting many changes. I have greatly benefited from the discussions with Dr. Ronnie Sebastian. I also thank Prof. D.S Nagaraj, Rohith Varma and Krisanu Dan for many helpful discussions. Finally I wish to thank the referee for being extremely patient with the previous manuscript and generously suggesting many changes. The proofs of the Proposition \ref{determin} and \ref{reldet} are suggested by the referee.  }
  
 \section{Preliminaries}
In this section, we briefly recall the main results in \cite{ns}
which will be extensively used in the present work. 
Before proceeding further we will fix the following notations:
\subsubsection{Notation}\label{note1}
\begin{itemize}
 \item Throughout we work over the field $\mathbb C$ of complex numbers. We assume that all the schemes are reduced, separated and finite type over $\C$.
 \item Let $p_{_i}: X_{_1}\times \cdots \times X_{_n}\to X_{i}$ be the $i^{th}$ projection, where $X_{_i}$ is a scheme for $i=1,\ldots,n$. 
 By abuse of notation, we denote $p_{_i}^{*}(E_{_i})$ also by $E_{_i}$, where $E_{_i}$ is a sheaf of $\mathcal{O}_{_{X_{_i}}}$ module.
 \item Let $X$ be a projective scheme and $E$ be a vector bundle over $X$. Then we set $h^i(E):=dim_{_{\C}}H^i(X,E)$. Let $S$ be another projective scheme and $\mathcal{E}$ be a coherent sheaf over $X\times S$ then we set $\mathcal{E}_{_s}:=\mathcal{E}|_{_{X\times s}}$, $s\in S$.
 \item By cohomology of a scheme $X$, we mean the singular cohomology of the space $X_{_{ann}}$, the analytic space with complex analytic topology associated to $X$.
 \item Let $E$ be coherent sheaf over $X$. We denote by $E(p):=\frac{E_{_p}}{m_{_p}E_{_p}}$ the fibre of E at $p\in X$. 
 \item
 Let $X$ be a smooth projective curve and $E$ be a vector bundle over $X$.  Then we denote $E\otimes \mathcal{O}_{_X}(np)$ by $E(np)$, where $p\in X$ is a closed point and $n$ is an integer.
 \item If $Z$ is a closed subvariety of a smooth variety $X$, then we denote by $Codim(Z,X)$, the codimension of $Z$ in $X$.
\end{itemize}

\subsection{Triples associated to a torsion free sheaves on a reducible nodal curve}
%\subsection{Moduli space of rank \texorpdfstring{$2$}{} torsion free sheaves over a reducible nodal curve}
Let $X_{_0}$ be a projective curve of arithmetic genus $g$ with exactly two smooth irreducible components $X_{_1}$ and $X_{_2}$ meeting at a simple node $p$. The arithmetic genus $g$ of such a curve is 
$g=g_1+g_2$, where $g_{_i}$ is the genus of $X_{_i}$ for $i=1,2$. 

By a torsion free sheaf over $X_0$ we always mean a coherent $\mathcal{O}_{_{X_{0}}}$-module of depth $1$. Let $\stackrel{\to}{C}$  be a category whose objects are triples $(F_{_1},F_{_2},A)$ where $F_{_i}$ are vector bundles on $X_{_i}$, for $i=1,2$ and $A:F_{_1}(p)\to F_{_2}(p)$ is a linear map. %Let $F_{_1}$ (resp. $F_{_2}$) be a vector bundle over $X_{_1}$ (resp. over $X_{_2}$) and ${A}:F_{_1}(p)\to F_{_2}(p)$ be a linear homomorphism. Then we call $(F_{_1},F_{_2},A)$. We define a category  $\stackrel{\to}{C}$  whose objects are  and a morphism between such triples is defined to be: 
Let $(F_{_1},F_{_2},A), (G_{_1},G_{_2},B)\in \stackrel{\rightarrow}{C}$. We say $\phi: (F_{_1},F_{_2},A)\to (G_{_1},G_{_2},B)$ is a morphism if there are morphisms $\phi_{_i}: F_{_i}\to G_{_i}$  of $\mathcal{O}_{_{X_{_i}}}$-modules for $i=1,2$ such that the following diagram is commutative:
 \begin{equation}\label{mor1}
                     \xymatrix{F_{_1}(p) \ar[r]^{\phi_1 \otimes k(p) } \ar[d]^{A} & G_{_1}(p) \ar[d]^{B} \\
                     F_{_2}(p) \ar[r]^{\phi_2 \otimes k(p)} & G_{_2}(p)}
                    \end{equation}

In \cite[Lemma 2.8]{ns}, it is shown that there is an equivalence of categories between $\stackrel{\to}{C}$ and the category of torsion free sheaves over $X_{_0}$.

\begin{remark}\label{direction}
Similarly, we define another category $\stackrel{\leftarrow}{C}$ whose objects are triples $(F_{_1},F_{_2},A)$ where $F_{_i}$ are vector bundles over $X_{_i}$ for $i=1,2$ and $A:F_{_2}(p)\to F_{_1}(p)$ is a linear map. The morphism between any two such triples is defined in the same way before. The category of torsion free sheaves is equivalent to the category $\stackrel{\leftarrow}{C}$ (see \cite[Remark 2.9]{ns}). Now if the triples $(F_{_1},F_{_2},A)\in \stackrel{\to}{C}$ and $(F'_{_1},F'_{_2},B)\in \stackrel{\leftarrow}{C}$ correspond the same torsion free sheaf $F$, then they are related by the following diagram:
\begin{equation}\label{mor2}
                     \xymatrix{F_{_1}(p) \ar[r]^{i_{_p}} \ar[d]^{A} & F'_{_1}(p)  \\
                     F_{_2}(p)  &\ar[l]^{j_{_p}} \ar[u]^{B} F'_{_2}(p)}
                    \end{equation}
                    where $i:F_{_1}\to F'_{_1}$ (resp. $j:F'_{_2}\to F_{_2}$) is a morphism of vector bundle which is an isomorphism outside the point $p$ and $ker(i_{_p})=ker(A)$ (resp $Im(j_{_p})=Im(A)$)(see \cite[Remark 2.5]{ns}).  $F_{_i}'$ is called the Hecke-modification of $F_{_i}$ for $i=1,2$.
 \end{remark}                   
\subsubsection{Notion of semistability}
Fix an ample line bundle $\mathcal{O}_{_{X_0}}(1)$ on $X_{_0}$. Let $deg(\mathcal{O}_{_{X_{_0}}}(1)|_{_{X_i}})=c_{_i}$, $i=1,2$, and $a_{_i}=\frac{c_{_i}}{c_{_1}+c_{_2}}$. Then $0<a_{_1},a_{_2}<1$ and $a_{_1}+a_{_2}=1$. We say $a=(a_{_1},a_{_2})$ a polarisation on $X_{_0}$. A torsion free sheaf $F$ on $X_{_0}$ is of rank type $(r_{_1},r_{_2})$ if the generic rank of the restrictions $F|_{_{X_{_i}}}$ are $r_{_i}$, $i=1,2$.
\begin{definition}\label{semi}
  For a torsion free sheaf $F$ of rank type $(r_{_1},r_{_2})$, we define the rank $r:=a_{_1}r_{_1}+a_{_2}r_{_2}$ and the slope $\mu(F):=\frac{\chi(F)}{r}$, where $\chi(F):=h^0(F)-h^1(F)$. A torsion free sheaf $F$ is said to be semistable(resp. stable) with respect to the polarisation $a=(a_{_1},a_{_2})$ if $\mu(G)\leq \mu(F)$(resp. $<$) for all nontrivial proper subsheaves $G$ of $F$. We define the Euler characteristic and the slope  of a triple $(F_{_1},F_{_2},A)\in \stackrel{\to}{C}$ to be: 
  \begin{equation}
  \chi((F_{_1},F_{_2},A))=\chi(F_1)+\chi(F_2)-rk(F_2)~ \mbox{and}~ \mu(F_1,F_2,A)=\frac{\chi((F_1,F_2,A))}{r}.
  \end{equation}
  A triple  $(F_1,F_2,A)$ is said to be semistable(resp. stable) if $\mu(G_1,G_2,B)\leq \mu(F_1,F_2,A)$ for all nontrivial proper subtriples of $(F_1,F_2,A)$ (for definition of a subtriple see \cite[Definition 2.3]{ns}).  
  \end{definition}

  \begin{remark}
If a torsion free sheaf $F$ is associated to a triple $(F_1,F_2,A)$ then $\chi(F)=\chi(F_1,F_2,A)$(see \cite[Remark 2.11]{ns}). We have already remarked the category of torsion free sheaves is equivalent to the category of triples in a fixed direction. Therefore, a torsion free sheaf $F$ is $a=(a_{_1},a_{_2})$-semistable (resp. stable) if and only if the corresponding triple $(F_1,F_2,A)$ is  $a=(a_{_1},a_{_2})$-semistable (resp. stable).
\end{remark}
\subsection{Moduli space of rank \texorpdfstring{$2$}{} torsion free sheaves over a reducible nodal curve}
\subsubsection{Euler Characteristic bounds for rank $2$ semistable sheaves}
Fix an integer $\chi$ and a polarisation $a=(a_{_1},a_{_2})$ on $X_{_0}$ such that $a_{_1}\chi$ is not an integer. Then we have the following Euler characteristic restrictions: 
\begin{lemma}\label{bounds2}
 Let $\chi_{_1}$, $\chi_{_2}$ be the unique integers satisfying 
\begin{equation}\label{inq1}
a_{_1}\chi< \chi_{_1}< a_{_1}\chi+1 ~ , ~  a_{_2}\chi+1< \chi_{_2}< a_{_2}\chi+2
\end{equation} 
and $\chi=\chi_{_1}+\chi_{_2}-2$. If $F$ is a rank $2$, $a=(a_{_1},a_{_2})$-semistable sheaf then $\chi(F_{_1})=\chi_{_1}$, $\chi(F_{_2})=\chi_{_2}$ or $\chi(F_{_1})=\chi_{_1}+1$, $\chi(F_{_2})=\chi_{_2}-1$ and $rk(A)\geq1$ where $(F_{_1},F_{_2},A)\in \stackrel{\to}{C}$ is the unique  triple representing $F$. Moreover if $F$ is non-locally free then $\chi(F_{_1})=\chi_{_1}$ and $\chi(F_{_2})=\chi_{_2}$.
\begin{proof}
 See \cite[Theorem 3.1]{ns}.
\end{proof}
\end{lemma}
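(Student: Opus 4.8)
The plan is to work entirely at the level of triples via the equivalence of \cite[Lemma 2.8]{ns}, and to detect the constraints by testing the semistability of $F$ against two (resp. three) carefully chosen subtriples of the triple $(F_{_1},F_{_2},A)\in\stackrel{\to}{C}$ representing $F$. Write $\chi_{_1}=\chi(F_{_1})$, $\chi_{_2}=\chi(F_{_2})$ and $r=rk(A)\le 2$; since $F$ is of rank $2$ the associated triple has rank $a_{_1}\cdot 2+a_{_2}\cdot 2=2$, so $\mu(F)=\chi/2$, and $\chi=\chi_{_1}+\chi_{_2}-2$ by the definition of the Euler characteristic of a triple. Write $\chi_{_1}^{0},\chi_{_2}^{0}$ for the two integers singled out by \eqref{inq1}; the hypothesis $a_{_1}\chi\notin\Z$ forces $a_{_2}\chi=\chi-a_{_1}\chi\notin\Z$, so each of those length-one open intervals contains exactly one integer, and adding the two defining inequalities gives $\chi_{_1}^{0}+\chi_{_2}^{0}=\chi+2$. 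Thus the first assertion reduces to the single pair of inequalities $\chi_{_1}^{0}\le\chi_{_1}\le\chi_{_1}^{0}+1$, after which $\chi_{_2}$ is forced by $\chi_{_1}+\chi_{_2}=\chi+2$.

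For the lower bound I would use the subtriple $(0,F_{_2},0)$, which is legitimate because $A(0)=0\subseteq F_{_2}(p)$; its Euler characteristic is $\chi_{_2}-rk(F_{_2})=\chi_{_2}-2$ and its rank is $2a_{_2}$, so semistability yields $\frac{\chi_{_2}-2}{2a_{_2}}\le\frac{\chi}{2}$, that is $\chi_{_1}\ge a_{_1}\chi$, and since $\chi_{_1}$ is an integer and $a_{_1}\chi$ is not, $\chi_{_1}\ge\chi_{_1}^{0}$. For the upper bound I would pass to the largest subsheaf of $F$ supported on $X_{_1}$: let $F_{_1}^{\flat}\subseteq F_{_1}$ be the elementary modification of $F_{_1}$ at $p$ in the direction $\ker(A)$, i.e. the kernel of $F_{_1}\longrightarrow F_{_1}(p)/\ker(A)$. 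A local check at the node shows $A\big(F_{_1}^{\flat}(p)\big)=0$ — here one must remember that $F_{_1}^{\flat}(p)$ acquires a $Tor$-contribution at $p$ which maps to zero in $F_{_1}(p)$, while the image of $F_{_1}^{\flat}(p)$ in $F_{_1}(p)$ is exactly $\ker(A)$ — so $(F_{_1}^{\flat},0,0)$ is a subtriple. Its Euler characteristic is $\chi_{_1}-r$ and its rank is $2a_{_1}$, so semistability gives $\chi_{_1}-r\le a_{_1}\chi$, hence $\chi_{_1}\le a_{_1}\chi+r\le a_{_1}\chi+2$ and therefore $\chi_{_1}\le\chi_{_1}^{0}+1$.

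The remaining claims fall out of the same inequality. If $r=0$ then $F_{_1}^{\flat}=F_{_1}$ and the last estimate reads $\chi_{_1}\le a_{_1}\chi$; combined with $\chi_{_1}\ge a_{_1}\chi$ this gives $\chi_{_1}=a_{_1}\chi\in\Z$, contradicting the hypothesis, so $rk(A)\ge 1$. Finally $F$ is locally free if and only if $A$ is an isomorphism, i.e. $r=2$; so if $F$ is non-locally free then $r=1$, and $\chi_{_1}\le a_{_1}\chi+1$ together with $\chi_{_1}\ge\chi_{_1}^{0}$ and $a_{_1}\chi\notin\Z$ forces $\chi_{_1}=\chi_{_1}^{0}$, whence $\chi_{_2}=\chi_{_2}^{0}$. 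I expect the only genuinely delicate point to be the verification that $(F_{_1}^{\flat},0,0)$ meets the subtriple axiom of \cite[Definition 2.3]{ns}; the rest is additivity of the Euler characteristic of triples together with elementary rounding of $a_{_i}\chi$.
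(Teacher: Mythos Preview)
Your argument is correct. The paper itself does not prove this lemma: it simply refers the reader to \cite[Theorem 3.1]{ns}, so there is no in-house proof to compare against. What you have written is, in spirit, exactly the kind of argument Nagaraj--Seshadri carry out there: one tests the semistability inequality of the triple $(F_{_1},F_{_2},A)$ against subtriples supported on a single component, and the two extreme choices $(0,F_{_2},0)$ and the elementary modification $(F_{_1}^{\flat},0,0)$ pin down $\chi(F_{_1})$ to the window $[\chi_{_1}^{0},\chi_{_1}^{0}+1]$, with the refinement for $r\le 1$ giving the non--locally--free clause.

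Two minor remarks on presentation. First, your phrase ``$F_{_1}^{\flat}(p)$ acquires a $Tor$-contribution'' is slightly misleading, since $F_{_1}^{\flat}$ is itself locally free; what you mean is that tensoring the defining sequence $0\to F_{_1}^{\flat}\to F_{_1}\to F_{_1}(p)/\ker(A)\to 0$ with $k(p)$ produces a $Tor_{_1}$ term from the skyscraper quotient, which identifies the kernel of $F_{_1}^{\flat}(p)\to F_{_1}(p)$ and shows the image is exactly $\ker(A)$. Second, it is worth saying explicitly that $F$ is locally free at $p$ if and only if $A$ is an isomorphism (this is part of the dictionary in \cite[\S2]{ns}), since you invoke the equivalence ``$F$ non--locally--free $\Longleftrightarrow r\le 1$'' without citation. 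With those two clarifications the proof is complete and self-contained.
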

\textbf{For the rest of the paper we fix an odd integer $\chi$ and a polarization $a:=(a_{_1},a_{_2})$ ($a_{_1}<a_{_2}$) on $X_{_0}$ such that $a_{_1}\chi$ is not an integer}. 
With these notations, one of the main results of \cite{ns} is the following:
\begin{theorem}(\cite[Theorem 4.1]{ns})
  The moduli space $M(2,a,\chi)$ of isomorphism classes rank $2$, $(a_{_1},a_{_2})$ stable torsion free sheaves exists as a reduced,connected, projective scheme. Moreover, it has two smooth, irreducible components meeting transversally along a smooth divisor $D$.
  \end{theorem}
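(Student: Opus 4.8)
The plan is to build $M(2,a,\chi)$ by geometric invariant theory and then read off its local and global structure from the dictionary between torsion free sheaves on $X_{_0}$ and triples. First I would fix $m\gg 0$ and form the Quot scheme $\mathcal Q$ of quotients $\mathcal O_{_{X_{0}}}(-m)^{\oplus N}\twoheadrightarrow F$, with $F$ torsion free of rank type $(2,2)$, $\chi(F)=\chi$ and $N=h^{0}(F(m))$; the $a$-semistable quotients with $H^{0}$ surjective form an open $PGL_{N}$-invariant locus $\mathcal Q^{ss}\subseteq\mathcal Q$, and after a suitable linearisation GIT-semistability matches $a$-semistability in the sense of Definition \ref{semi} (the standard GIT construction of moduli of sheaves, carried out for the nodal curve $X_{_0}$ in \cite{ns}). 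The hypotheses ($\chi$ odd, $a_{_1}\chi\notin\Z$) exclude a proper nonzero subsheaf of the same slope: a rank-one subsheaf $G$ has slope either an integer (rank type $(1,1)$) or $\chi(G)/a_{_i}$ (rank type $(1,0)$, $(0,1)$), none of which can equal $\mu(F)=\chi/2$ under these hypotheses. Hence $a$-semistable $=$ $a$-stable on $\mathcal Q^{ss}$, all $PGL_{N}$-orbits there are closed with trivial stabiliser, and $M(2,a,\chi):=\mathcal Q^{ss}/\!\!/PGL_{N}$ is a projective geometric quotient, in particular a projective scheme coarsely representing the moduli problem. Connectedness and reducedness will fall out of the next two steps.

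For the components I would use the equivalence $\stackrel{\to}{C}\simeq\{\text{torsion free sheaves on }X_{_0}\}$ together with Lemma \ref{bounds2}: a stable $F$, with its unique triple $(F_{_1},F_{_2},A)$, has $(\chi(F_{_1}),\chi(F_{_2}))$ equal to $(\chi_{_1},\chi_{_2})$ (``type $\mathrm I$'') or $(\chi_{_1}+1,\chi_{_2}-1)$ (``type $\mathrm{II}$''), and it is non-locally free exactly when $\operatorname{rk}(A)=1$, in which case it is of type $\mathrm I$. Let $\mathfrak M_{_i}\subseteq M(2,a,\chi)$ be the closure of the locus of locally free sheaves of type $i$. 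Smoothness and the dimension $\dim\mathfrak M_{_i}=4g-3$ come from deformation theory --- for a locally free stable $F$ one has $\mathrm{Ext}^{2}_{X_{_0}}(F,F)=H^{2}(X_{_0},\mathcal{E}nd(F))=0$ (a curve has cohomological dimension $1$) and $\dim\mathrm{Ext}^{1}_{X_{_0}}(F,F)=h^{1}(\mathcal{E}nd(F))=4g-3$ --- and irreducibility from the fact that a locally free sheaf of type $i$ is the datum of a pair of rank-$2$ bundles on $X_{_1},X_{_2}$ together with an isomorphism $F_{_1}(p)\cong F_{_2}(p)$, parameters forming an irreducible family. Inside $\mathfrak M_{_1}$ the non-locally free sheaves form the divisor $D:=\{\det A=0\}$; by the $\stackrel{\leftarrow}{C}$-description of Remark \ref{direction} such an $F$ also carries a triple $(F_{_1}',F_{_2}',B)$ with $\chi(F_{_1}')=\chi_{_1}+1$, $\chi(F_{_2}')=\chi_{_2}-1$, $\operatorname{rk}(B)=1$ (the Hecke modifications of $F_{_1},F_{_2}$), and deforming $B$ to an isomorphism while keeping $(F_{_1}',F_{_2}')$ fixed produces locally free sheaves of type $\mathrm{II}$; thus the same locus $D$ sits inside $\mathfrak M_{_2}$ as $\{\det B=0\}$, and $\mathfrak M_{_1}\cap\mathfrak M_{_2}=D$. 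Moreover $D$ is smooth of dimension $4g-4$: it fibres over the product of the moduli of rank-$2$ bundles on $X_{_1}$ and on $X_{_2}$ with fibre $\bp^{1}\times\bp^{1}$ recording $(\ker A,\operatorname{im}A)$. Since $D\neq\emptyset$ and $\mathfrak M_{_1}\cup\mathfrak M_{_2}=M(2,a,\chi)$, the moduli space is connected.

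The remaining and crucial point is that $\mathfrak M_{_1}$ and $\mathfrak M_{_2}$ cross transversally along $D$, equivalently that the completed local ring of $M(2,a,\chi)$ at each $[F]\in D$ is $\widehat{\mathcal O}_{_{D,[F]}}[[u,v]]/(uv)$ --- the product of the smooth $(4g-4)$-dimensional ring $\widehat{\mathcal O}_{_{D,[F]}}$ with a node. I would establish this by the deformation theory of $F$ on the fixed curve $X_{_0}$: the local-to-global spectral sequence gives $\dim\mathrm{Ext}^{1}_{X_{_0}}(F,F)=h^{1}(X_{_0},\mathcal{E}nd(F))+\dim_{\C}\mathrm{Ext}^{1}_{\mathcal O_{X_{_0},p}}(F_{_p},F_{_p})$, so everything comes down to the local model at the node. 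Writing $X_{_0}$ analytically near $p$ as $\operatorname{Spec}R$ with $R=\C[[x,y]]/(xy)$ and the stalk $F_{_p}$ as $\mathcal O\oplus\mathfrak m$, $\mathfrak m=(x,y)$ (the generic non-locally free type), one computes $\mathrm{Ext}^{1}_{R}(F_{_p},F_{_p})=\mathrm{Ext}^{1}_{R}(\mathfrak m,\mathfrak m)\cong\C^{2}$ --- the two generators being the two ways to smooth the node, one landing in $\mathfrak M_{_1}$ (deform $A$ to an isomorphism) and the other, after the Hecke modification, in $\mathfrak M_{_2}$ (deform $B$) --- while $h^{1}(X_{_0},\mathcal{E}nd(F))=4g-4=\dim D$; reducedness then follows from the fact that the quadratic part of the local obstruction map $\mathrm{Ext}^{1}_{R}(\mathfrak m,\mathfrak m)\to\mathrm{Ext}^{2}_{R}(\mathfrak m,\mathfrak m)$ is, in suitable coordinates $(u,v)$, the \emph{non-degenerate} form $uv$, so that $M(2,a,\chi)$ is cut out near $[F]$ by this single equation and the coordinate lines $\{u=0\},\{v=0\}$ are exactly the traces of $\mathfrak M_{_1},\mathfrak M_{_2}$. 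The principal obstacle is precisely this local computation at a non-locally free sheaf --- in particular showing the obstruction is the rank-$2$ form $uv$, transverse to the equisingular directions $H^{1}(X_{_0},\mathcal{E}nd(F))$, rather than something more degenerate --- and once it is settled the theorem follows; the details of this analysis are carried out in \cite[\S 4]{ns}.
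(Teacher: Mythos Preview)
The paper does not prove this theorem; it is simply quoted from \cite[Theorem 4.1]{ns}, so there is no argument in the paper to compare yours against. Your sketch --- GIT construction followed by the local deformation analysis of $\mathcal O_{_p}\oplus\mathfrak m_{_p}$ at the node, with the obstruction map cutting out $uv=0$ --- is a reasonable outline of a direct proof and is in the spirit of \cite[\S4]{ns}, to which you yourself defer for the crucial local computation.

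What the paper \emph{does} supply, in the subsection immediately following, is an alternative description (after \cite[\S5--6]{ns}) that it relies on for all later computations: one builds two smooth projective moduli spaces $M_{_{12}},M_{_{21}}$ of semistable \emph{triples} of types I and II, each containing a smooth divisor $D_{_i}=\{\operatorname{rk} A=1\}$ isomorphic to a product of parabolic moduli, and glues them along the Hecke isomorphism $D_{_1}\cong D_{_2}$. In that picture the two components and the transversal crossing are visible by construction rather than extracted from an $\mathrm{Ext}$ computation; note, however, that the paper's identification of this glued variety with $M(2,a,\chi)$ invokes seminormality of the latter, i.e.\ the theorem itself, so the triples discussion is a reformulation and not an independent proof. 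One minor point of order in your write-up: you assert smoothness of each $\mathfrak M_{_i}$ before carrying out the local analysis at $D$, but since $\mathrm{Ext}^2_{X_{_0}}(F,F)\neq 0$ for non-locally free $F$, smoothness of the individual branches is not automatic there and only follows \emph{after} you have pinned down the hull as $\widehat{\mathcal O}_{_{D,[F]}}[[u,v]]/(uv)$ and identified the branches with the coordinate lines.
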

  \subsection{Fixed determinant moduli space}
  Let $J^{\chi_{_i}-(1-g_{_i})}(X_{_i})$ be the Jacobian of isomorphism classes of line bundles over $X_{_i}$ with Euler characteristic $\chi_{_i}-(1-g_{_i})$, $i=1,2$ and $J_{_0}:=J^{\chi_{_1}-(1-g_{_1})}(X_{_1})\times J^{\chi_{_2}-(1-g_{_2})}(X_{_2})$. In the Appendix we will show that there is a well defined determinant morphism $det:M(2,a,\chi)\to J_{_0}$ whose fibres are again the union of two sooth, projective varieties intersecting transversally along a smooth divisor ( see Proposition \ref{determin}). %In the next section we will compute the cohomologies of a fibre of the $det$ morphism. Later in this section we will give a alternative construction of this space.

\subsection{Moduli space of triples}
Fix $\xi\in J_{0}$ and let $det^{-1}(\xi):=M_{_{0,\xi}}$. In this subsection we will discuss a different description of the moduli spaces $M(2,a,\chi)$ and $M_{_{0,\xi}}$ in terms of certain moduli space of triples glued along a certain divisor. This description is given in section $5$ of the article \cite{ns}. This description will be useful for the cohomology computations later. 

The following facts are well known. For the completeness we shall indicate a proof.
\begin{fact}\label{para} 
  Let $(X,x)$ be a smooth,projective curve together with a marked point $x$ and $(E,0\subset F^2E(x)\subset E(x))$ be a parabolic vector bundle with weights $0<\beta_{_1}<\beta_{_2}< 1$. Suppose the weights satisfy $|\beta_{_1}-\beta_{_2}|<\frac{1}{2}$. Then we have-\\
  $(a)$ $E$ is parabolic semistable implies $E$ is parabolic stable.\\
  $(b)$ $E$ is parabolic semistable implies $E$ is semistable.\\
  $(c)$ If $E$ is stable then any quasi parabolic structure $(E,0\subset F^2E(x)\subset E(x))$ is parabolic semistable with respect to the weights $0<\beta_{_1}<\beta_{_2}<1$.
  \begin{proof}
   From our assumption on weights we get that $|\frac {\beta_{_1}+\beta_{_2}}{2}-\beta_{_i}|<\frac {1}{2}$ for $i=1,2$. Suppose $E$ is strictly parabolic semistable. Let $L$ be a parabolic line subbundle of $E$. Then we have-\[deg(L)=\frac{deg(E)}{2}+\frac{\beta_{_1}+\beta_{_2}}{2}-\beta_{_i}.\] Since $|\frac {\beta_{_1}+\beta_{_2}}{2}-\beta_{_i}|<\frac {1}{2}$ and $deg(L)$ is an integer this is not possible. This completes the proof of $(a)$. Let $L$ be a line subbundle of $E$. The parabolic stability of $E$  implies \[deg(L)<\frac{deg(E)}{2}+\frac{\beta_{_1}+\beta_{_2}}{2}-\beta_{_i}.\] Therefore, $deg(L)<\frac{deg(E)}{2}\pm \frac{1}{2}$. Since $deg(L)$ is an integer the above inequality will imply $deg(L)\leq \frac{deg(E)}{2}$. This completes the proof of $(b)$. Let $L$ be a subbundle of $E$. If $L(x)\cap F^2E(x)\neq 0$ then we associate the weight $\beta_{_1}$ otherwise we associate the weight $\beta_{_2}$. Now as $E$ is stable we have \[deg(L)<\frac{deg(E)}{2}.\] Since $|\frac {\beta_{_1}+\beta_{_2}}{2}-\beta_{_i}|<\frac {1}{2}$ and $deg(L)$ is an integer we conclude that \[deg(L)<\frac{deg(E)}{2}+\frac{\beta_{_1}+\beta_{_2}}{2}-\beta_{_i}.\] This completes the proof of $(c)$.
  \end{proof}
\end{fact}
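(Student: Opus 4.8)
The plan is to reduce each of the three statements to an elementary inequality about integers versus a quantity lying strictly between two half-integers, and then exploit that $\deg(L)$ is an integer while $\frac{\beta_1+\beta_2}{2}-\beta_i$ has absolute value strictly less than $\tfrac12$. First I would record the key numerical consequence of the hypothesis $|\beta_1-\beta_2|<\tfrac12$: since $\frac{\beta_1+\beta_2}{2}-\beta_i = \pm\frac{\beta_1-\beta_2}{2}$, we get $\bigl|\frac{\beta_1+\beta_2}{2}-\beta_i\bigr| < \tfrac14 < \tfrac12$ for $i=1,2$. (In fact the bound $\tfrac14$ is available, but $\tfrac12$ is all that is needed.) I would also recall that for a parabolic bundle of rank $2$ the parabolic slope comparison with a line subbundle $L$ — equipped with the induced parabolic weight $\beta_j$, where $j=1$ if $L(x)$ meets the flag $F^2E(x)$ and $j=2$ otherwise — reads $\deg L + \beta_j \le \frac{\deg E + \beta_1 + \beta_2}{2}$ for semistability (strict for stability), i.e. $\deg L \le \frac{\deg E}{2} + \frac{\beta_1+\beta_2}{2}-\beta_j$.

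For part $(a)$, suppose $E$ is strictly parabolic semistable, so some parabolic line subbundle $L$ achieves equality: $\deg L = \frac{\deg E}{2} + \frac{\beta_1+\beta_2}{2}-\beta_j$. The right-hand side differs from the integer or half-integer $\frac{\deg E}{2}$ by less than $\tfrac12$ in absolute value and is not equal to it (the weights are distinct and nonzero, so $\frac{\beta_1+\beta_2}{2}-\beta_j \ne 0$); hence it cannot be an integer, contradicting that $\deg L \in \Z$. Therefore no such $L$ exists and $E$ is parabolic stable. For part $(b)$, let $L \subset E$ be any ordinary line subbundle. Parabolic stability from $(a)$ gives $\deg L < \frac{\deg E}{2} + \frac{\beta_1+\beta_2}{2}-\beta_j < \frac{\deg E}{2} + \tfrac12$, and since $\deg L$ is an integer this forces $\deg L \le \frac{\deg E}{2}$ (one must treat the parity of $\deg E$ here, but in either case the integer $\deg L$ strictly below $\frac{\deg E}{2}+\tfrac12$ satisfies $\deg L \le \lfloor \frac{\deg E}{2}\rfloor \le \frac{\deg E}{2}$), so $E$ is semistable. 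For part $(c)$, let $E$ be stable and fix any quasi-parabolic structure. Given a line subbundle $L$, assign the weight $\beta_1$ if $L(x)\cap F^2E(x)\ne 0$ and $\beta_2$ otherwise, so that this is exactly the induced parabolic weight. Stability of $E$ gives $\deg L < \frac{\deg E}{2}$, and since $\frac{\beta_1+\beta_2}{2}-\beta_j > -\tfrac12 > -1$ and $\deg L$ is an integer strictly less than $\frac{\deg E}{2}$, we deduce $\deg L \le \frac{\deg E}{2}-1 < \frac{\deg E}{2}-\tfrac12 < \frac{\deg E}{2}+\frac{\beta_1+\beta_2}{2}-\beta_j$; summing the appropriate version of this over the flag positions yields the parabolic semistability inequality, so $(E,0\subset F^2E(x)\subset E(x))$ is parabolic semistable.

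The only genuinely delicate point, and the one I would be most careful about, is the bookkeeping around the parity of $\deg E$ and the precise form of the parabolic-slope inequality for a rank-$2$ bundle: one must be sure that the weight attached to the subbundle $L$ in the stability test is indeed the induced weight $\beta_j$ determined by whether $L(x)$ lies in the flag, and that the ``$<\tfrac12$'' slack is being used on the correct side of the inequality in each of $(a)$, $(b)$, $(c)$. Once the inequality $\deg L \lessgtr \frac{\deg E}{2} + \frac{\beta_1+\beta_2}{2}-\beta_j$ is written correctly and the observation $|\frac{\beta_1+\beta_2}{2}-\beta_j|<\tfrac12$ is in hand, all three assertions are immediate from the integrality of $\deg L$; there is no real geometric obstacle, only the risk of a sign or parity slip.
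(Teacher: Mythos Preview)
Your argument is correct and follows essentially the same route as the paper: both proofs hinge on writing the parabolic slope inequality for a line subbundle as $\deg L \lessgtr \tfrac{\deg E}{2} + \tfrac{\beta_1+\beta_2}{2}-\beta_j$, observing that $\bigl|\tfrac{\beta_1+\beta_2}{2}-\beta_j\bigr|<\tfrac12$, and then using that $\deg L\in\Z$ to force the desired conclusion in each of $(a)$, $(b)$, $(c)$. The one place to tighten your write-up is the step $\deg L \le \tfrac{\deg E}{2}-1$ in part~$(c)$, which as written assumes $\deg E$ is even; when $\deg E$ is odd you only get $\deg L \le \tfrac{\deg E-1}{2}=\tfrac{\deg E}{2}-\tfrac12$, but since $\tfrac{\beta_1+\beta_2}{2}-\beta_j > -\tfrac12$ strictly this still yields the required strict inequality, so the argument goes through once you split on parity as you yourself anticipated.
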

 
 The following result is proved in \cite{ns}
 \begin{fact}\label{semistable}
 Let $(F_{_1},F_{_2},A)\in \stackrel{\to}{C}$ (resp. $(F_{_1}',F_{_2}',B)\in \stackrel{\leftarrow}{C}$) be a rank $2$, $(a_{_1},a_{_2})$-semistable 
 and the Euler characteristic $\chi(F_{_i})$, $i=1,2$, satisfy the inequality \ref{inq1}(resp. the inequality \ref{inq3}, 
 then $F_{_i}$(resp. $F_{_i}'$) are semistable over $X_{_i}$ for $i=1,2$ (see \cite[Theorem 5.1]{ns}).
 \end{fact}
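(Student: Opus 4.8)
The plan would be to argue by contradiction, extracting from a hypothetical destabilising subbundle of a component an explicit subtriple that violates the $a$-semistability of $(F_{_1},F_{_2},A)$. First one fixes normalisations: the rank type of $(F_{_1},F_{_2},A)$ is $(2,2)$, so its rank in the sense of Definition \ref{semi} is $a_{_1}\cdot 2+a_{_2}\cdot 2=2$, and since the $\chi(F_{_i})$ satisfy \eqref{inq1} one has $\chi(F_{_1},F_{_2},A)=\chi(F_{_1})+\chi(F_{_2})-2=\chi_{_1}+\chi_{_2}-2=\chi$, hence $\mu(F_{_1},F_{_2},A)=\chi/2$. Because $X_{_i}$ is smooth, $F_{_i}$ is semistable precisely when $\chi(L)\le\chi(F_{_i})/2$ for every line subbundle $L\subseteq F_{_i}$ --- a destabilising rank-one subsheaf may be replaced by its saturation without decreasing $\chi$, and on a smooth curve the Euler characteristic and the degree of a bundle of fixed rank differ by a constant. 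So the task reduces to ruling out a line subbundle $L\subseteq F_{_1}$ with $2\chi(L)>\chi_{_1}$ and one $L\subseteq F_{_2}$ with $2\chi(L)>\chi_{_2}$.

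For $F_{_1}$ I would proceed as follows. Given such an $L\subseteq F_{_1}$, the inclusion of fibres $L(p)\subseteq F_{_1}(p)$ (which holds since $F_{_1}/L$ is locally free) shows that $A$ carries $L(p)$ into $F_{_2}(p)$, so $(L,F_{_2},A|_{_{L(p)}})$ is a nontrivial proper subtriple, of rank type $(1,2)$, rank $a_{_1}+2a_{_2}=1+a_{_2}$, and Euler characteristic $\chi(L)+\chi_{_2}-2$. Semistability of $(F_{_1},F_{_2},A)$ then reads
\[\frac{\chi(L)+\chi_{_2}-2}{1+a_{_2}}\le\frac{\chi}{2};\]
substituting $\chi=\chi_{_1}+\chi_{_2}-2$ and using the bound $\chi_{_2}>a_{_2}\chi+1$ from \eqref{inq1} collapses this to $2\chi(L)<\chi_{_1}+1$, whence $2\chi(L)\le\chi_{_1}$ since $2\chi(L)$ and $\chi_{_1}$ are integers --- a contradiction, so $F_{_1}$ is semistable.

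For $F_{_2}$ the gluing arrow points the wrong way: there is no way to pull a line subbundle of $F_{_2}$ back to a line subbundle of $F_{_1}$ of controlled Euler characteristic. Instead I would test against the degenerate subtriple $(0,L,0)$ with $L\subseteq F_{_2}$ a line subbundle satisfying $2\chi(L)>\chi_{_2}$; the compatibility square \eqref{mor1} commutes trivially, so this is a legitimate nontrivial proper subtriple, of rank type $(0,1)$, rank $a_{_2}$, and Euler characteristic $\chi(L)-1$. Semistability gives $(\chi(L)-1)/a_{_2}\le\chi/2$, and the bound $a_{_2}\chi<\chi_{_2}-1$ from \eqref{inq1} collapses this to $2\chi(L)<\chi_{_2}+1$, i.e. $2\chi(L)\le\chi_{_2}$ --- a contradiction, so $F_{_2}$ is semistable as well. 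For a triple $(F_{_1}',F_{_2}',B)\in\stackrel{\leftarrow}{C}$ with the $\chi(F_{_i}')$ constrained by the corresponding inequalities, the same argument applies after interchanging the roles of $X_{_1}$ and $X_{_2}$, now using the subtriples $(F_{_1}',L,B|_{_{L(p)}})$ and $(L,0,0)$.

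The step I expect to require the most care is the selection of the test subtriples, together with the bookkeeping forced by the nonstandard rank $r=a_{_1}r_{_1}+a_{_2}r_{_2}$: because the gluing map runs from $F_{_1}$ to $F_{_2}$, a destabilising subbundle of $F_{_1}$ can be enlarged to a subtriple that keeps all of $F_{_2}$, whereas one coming from $F_{_2}$ must be paired with the zero bundle in the first slot, and one has to confirm against \cite[Definition 2.3]{ns} that such a degenerate object is an admissible nontrivial proper subtriple and that its slope is the one used above. Everything else is the elementary arithmetic turning \eqref{inq1} into $\chi(L)\le\chi(F_{_i})/2$. As an alternative to the $F_{_1}$-step one could instead equip $F_{_1}$ with the parabolic structure at $p$ determined by $A$ and invoke Fact \ref{para}, but the subtriple computation is more direct and treats the two components uniformly.
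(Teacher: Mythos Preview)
The paper does not supply its own proof of this fact; it merely records the statement and cites \cite[Theorem 5.1]{ns}. Your argument is correct and is precisely the computation one expects: testing $a$-semistability against the subtriples $(L,F_{_2},A|_{_{L(p)}})$ and $(0,L,0)$ (respectively $(F_{_1}',L,B|_{_{L(p)}})$ and $(L,0,0)$ in the other direction), and then using the strict inequalities in \eqref{inq1} to squeeze out the integrality contradiction $2\chi(L)\le\chi(F_{_i})$. The arithmetic checks out in both cases, and your caveat about confirming that the degenerate triple $(0,L,0)$ is admissible in the sense of \cite[Definition 2.3]{ns} is the only point that genuinely needs the reference.
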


 Conversely we have the following:
 \begin{lemma}\label{semistable1}

 Let $F_{_i}$ be rank $2$ semistable bundles over $X_{_i}$ and the Euler characteristic $\chi(F_{_i})$, $i=1,2$, satisfy the inequalities  \ref{inq1}. Let $A:F_{_1}(p)\to F_{_2}(p)$ be a linear map and $rk(A)=2$, then $(F_{_1},F_{_2},A)\in \stackrel{\to}{C}$ is $(a_{_1},a_{_2})$-semistable. Moreover, if $F_{_1}$ and $F_{_2}$ are both stable then $(F_{_1},F_{_2},A)$ is $(a_{_1},a_{_2})$-semistable if $rk(A)\geq 1$.
  %\item Let $F_{_i}$ be rank $2$ stable bundles over $X_{_i}$, $i=1,2$ and the Euler characteristic $\chi(F_{_i})$, $i=1,2$ satisfies the inequality \ref{inq1}. Let $A:F_{_1}(p)\to F_{_2}(p)$ be a linear map and $rk(A)=1$, then then $(F_{_1},F_{_2},A)\in \stackrel{\to}{C}$ is $(a_{_1},a_{_2})$-semistable.
  \begin{proof}
  
  Case 1: Let $rk(A)=2$
   The proof of the statement $(1)$ follows from \cite[Lemma 3.1.12 page 39]{barik}.   
Now suppose $rk(A)=1$. In this case we need both $F_{_i}$ to be stable.

Since $rk(A)=1$ we get a parabolic structure on $F_{_1}$ given by $0\subset ker(A)\subset F_{_1}(p)$ and a parabolic structure on $F_{_2}(p)$ given by $0\subset Im(A)\subset F_{_2}(p)$. By Fact \ref{para} $(c)$ we conclude that the above two quasi parabolic structure are parabolic stable with respect to the weights $0<\frac{a_{_1}}{2}<\frac{a_{_2}}{2}<1$. Thus by \cite[Theorem 6.1]{ns} we get that $(F_{_1},F_{_2},A)$ is semistable.
\end{proof}
\end{lemma}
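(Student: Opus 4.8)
The plan is to treat separately the two situations $rk(A)=2$ and $rk(A)=1$, as they call for different arguments. In both cases the ambient triple has rank type $(2,2)$, so in the sense of Definition \ref{semi} its rank is $r=2a_{_1}+2a_{_2}=2$, and since the $\chi(F_{_i})$ satisfy the inequalities \ref{inq1} one has $\chi(F_{_1},F_{_2},A)=\chi_{_1}+\chi_{_2}-2=\chi$, whence $\mu(F_{_1},F_{_2},A)=\chi/2$. To test semistability it suffices to consider proper nontrivial subtriples $(G_{_1},G_{_2},B)$ (see \cite[Definition 2.3]{ns}), and by a routine reduction one may assume each $G_{_i}$ is a subbundle of $F_{_i}$, since saturating a component can only raise $\chi(G_{_i})$, hence the slope of the subtriple. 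For such a $G_{_i}$ of ordinary rank $s_{_i}$, semistability of $F_{_i}$ gives $\mu(G_{_i})\le\mu(F_{_i})$, which by Riemann--Roch and $\chi(F_{_i})=\chi_{_i}$ becomes $\chi(G_{_i})\le s_{_i}\chi_{_i}/2$.

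First I would dispose of $rk(A)=2$. Here $A:F_{_1}(p)\to F_{_2}(p)$ is an isomorphism, so for a saturated subtriple $B$ embeds $G_{_1}(p)$ into $G_{_2}(p)$, forcing $s_{_1}\le s_{_2}$; thus only the rank types $(0,1),(0,2),(1,1),(1,2)$ occur. Substituting $\chi(G_{_i})\le s_{_i}\chi_{_i}/2$ into
\[\mu(G_{_1},G_{_2},B)=\frac{\chi(G_{_1})+\chi(G_{_2})-s_{_2}}{a_{_1}s_{_1}+a_{_2}s_{_2}},\]
one checks that $\mu(G_{_1},G_{_2},B)\le\chi/2$ holds in all four cases: in the cases $(0,1),(0,2),(1,2)$ it follows from the inequality $\chi_{_2}<a_{_2}\chi+2$ of \ref{inq1}, while in the case $(1,1)$ it is automatic; the subtriples with a component not saturated in $F_{_i}$ are then absorbed by the saturation step together with the remaining bound $\chi_{_1}>a_{_1}\chi$ of \ref{inq1}. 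This computation is precisely \cite[Lemma 3.1.12, page 39]{barik}, which I would cite, and it settles the first assertion.

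For the ``moreover'' I take $rk(A)=1$ with $F_{_1}$ and $F_{_2}$ both stable (the case $rk(A)=2$ being already done, so the two together cover $rk(A)\ge 1$). Now $ker(A)\subset F_{_1}(p)$ and $Im(A)\subset F_{_2}(p)$ are lines, so $(F_{_1},\,0\subset ker(A)\subset F_{_1}(p))$ and $(F_{_2},\,0\subset Im(A)\subset F_{_2}(p))$ are quasi-parabolic bundles; I give them the weights $0<\tfrac{a_{_1}}{2}<\tfrac{a_{_2}}{2}<1$, which satisfy $|\tfrac{a_{_1}}{2}-\tfrac{a_{_2}}{2}|=\tfrac{a_{_2}-a_{_1}}{2}<\tfrac12$. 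Since $F_{_i}$ is \emph{stable}, Fact \ref{para}(c) shows that both quasi-parabolic structures are parabolic semistable (indeed stable, by Fact \ref{para}(a)), and then \cite[Theorem 6.1]{ns}, which expresses $(a_{_1},a_{_2})$-semistability of a triple with $rk(A)=1$ in terms of parabolic semistability of exactly this data, yields that $(F_{_1},F_{_2},A)$ is $(a_{_1},a_{_2})$-semistable.

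The slope bookkeeping of the first case is routine; the step I expect to carry the real weight is the second, where stability of $F_{_1}$ and $F_{_2}$ --- not merely semistability --- is genuinely needed: Fact \ref{para}(c) fails for a strictly semistable bundle, so the parabolic reformulation no longer applies, and indeed one can produce triples with $rk(A)=1$ and a strictly semistable $F_{_i}$ that are not $(a_{_1},a_{_2})$-semistable. One must also make sure that the chosen weights both lie in the range $|\beta_{_1}-\beta_{_2}|<\tfrac12$ required by Fact \ref{para} and are the weights for which the comparison in \cite[Theorem 6.1]{ns} is valid.
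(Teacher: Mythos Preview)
Your proposal is correct and follows essentially the same approach as the paper: for $rk(A)=2$ you (like the paper) ultimately appeal to \cite[Lemma 3.1.12, page 39]{barik}, though you helpfully sketch the slope computation that sits behind that citation; for $rk(A)=1$ you reproduce the paper's argument verbatim, passing to the parabolic structures $ker(A)\subset F_{_1}(p)$ and $Im(A)\subset F_{_2}(p)$ with weights $\tfrac{a_{_1}}{2}<\tfrac{a_{_2}}{2}$, invoking Fact \ref{para}(c), and then \cite[Theorem 6.1]{ns}. Your additional remarks on why stability (not mere semistability) of the $F_{_i}$ is essential in the $rk(A)=1$ case, and your verification that the weight gap satisfies $|\beta_{_1}-\beta_{_2}|<\tfrac12$, are welcome amplifications of points the paper leaves implicit.
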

 \begin{remark}\label{other}
  The same results hold true for the triples in the other direction i.e if $F_{_i}$ are semistable over $X_{_i}$, $i=1,2$ satisfying the inequality \ref{inq3} and $rk(A)=2$ then the triple $(F_{_1},F_{_2},A)\in \stackrel{\leftarrow}{C}$ is $(a_{_1},a_{_2})$-semistable. Moreover, if $F_{_i}$ are stable and $rk(A)\geq 1$ then $(F_{_1},F_{_2},A)\in \stackrel{\leftarrow}{C}$ is $(a_{_1},a_{_2})$-semistable.
 \end{remark}

$(I)$ {\bf Semistable triple of type (I)}: We say a rank $2$, $(a_{_1},a_{_2})$-semistable triple $(F_{_1},F_{_2},A)\in \stackrel{\to}{C}$ is of type $(I)$ if $\chi(F_{_i})$, $i=1,2$, satisfy the following inequalities:
%Let $M_{_{12}}$ be the moduli space of rank $2$, $(a_{_1},a_{_2})$-semistable triple   $(F_{_1},F_{_2},A)\in \stackrel{\to}{C}$ satisfying: 
\begin{equation}\label{inq2} 
a_{_1}\chi\textrm{<} \chi_{_{X_{_1}}}(F_{_1})\textrm{<} a_{_1}\chi+1 ~ , ~  a_{_2}\chi+1\textrm{<} \chi_{_{X_{_2}}}(F_{_2})\textrm{<} a_{_2}\chi+2
\end{equation} 
and $rk(A)\geq 1$. 

$(II)$ {\bf Semistable triple of type (II)}: We say a $(a_{_1},a_{_2})$-semistable triple $(F_{_1},F_{_1},B)\in \stackrel{\leftarrow}{C}$ is of type $(II)$ if $\chi(F_{_i})$, $i=1,2$ satisfy the following inequalities:
\begin{equation}\label{inq3} 
a_{_1}\chi+1\textrm{<} \chi_{_{X_{_1}}}(F'_{_1})\textrm{<} a_{_1}\chi+2 ~ , ~  a_{_2}\chi\textrm{<} \chi_{_{X_{_2}}}(F'_{_2})\textrm{<} a_{_2}\chi+1\end{equation} and $rk(B)\geq 1$. 

Let $S$ be a scheme. We say $(\mathcal{F}_{_1},\mathcal{F}_{_2},\mathcal{A})$ a family of triples parametrised by $S$ if $\mathcal{F}_{_i}$'s are locally free sheaves on $X_{_i}\times S$, $i=1,2$ and $\mathcal{A}:\mathcal{F}_{_1}|_{_{p\times S}}\to \mathcal{F}_{_2}|_{_{p\times S}}$ is a $\mathcal{O}_{_S}$-module homomorphism of locally free sheaves.
\begin{remark}\label{family}
 Given a family of triples $(\mathcal{F}_{_1},\mathcal{F}_{_2},\mathcal{A})$ parametrised by $S$ we can associate a family of torsion free sheaves $\mathcal{F}$ parametrised by $S$ i.e a coherent sheaf $\mathcal{F}$ on $X_{_0}\times S$ which is flat over $S$ such that $\mathcal{F}_{_s}$ is torsion free for all $s\in S$. The association is the following: Let $G$ be the locally free subsheaf of $\mathcal{F}_{_1}|_{_{p\times S}}\oplus \mathcal{F}_{_2}|_{_{p\times S}}$ generated by the graph of the homomorphism $\mathcal{A}$ and $\mathcal{L}_{_S}:=\frac{\mathcal{F}_{_1}|_{_{p\times S}}\oplus \mathcal{F}_{_2}|_{_{p\times S}}}{G}$. Consider the exact sequence-
 \[0\to \mathcal{F}\to \mathcal{F}_{_1}\oplus \mathcal{F}_{_2}\to \mathcal{L}_{_S}\to 0.\]  Since, $\mathcal{F}_{_1}\oplus \mathcal{F}_{_2}$ and $\mathcal{L}_{_S}$ are both flat over $S$. Hence $\mathcal{F}$ is flat over $S$.
\end{remark}
In \cite[Theorem 5.3]{ns} it is shown that there is a smooth, irreducible projective variety which has the coarse moduli property for family of semistable triple of type $I$. We denote this space by $M_{_{12}}$. By the same construction one can construct another smooth, irreducible, projective variety which has the coarse moduli property of semistable triples of type $(II)$. We denote this space by $M_{_{21}}$.
Let \[D_{_1}:=\{[(F_{_1},F_{_2},A)\in M_{_{12}}\mid rk(A)=1\}.\] and \[D_{_2}:=\{[(F'_{_1},F'_{_2},B)]\in M_{_{21}}\mid rk(B)=1\}.\] 
Then, by \cite[Theorem 6.1]{ns} it follows $D_{_1}$ (resp. $D_{_2}$) is a smooth divisor in $M_{_{12}}$(resp. $M_{_{21}}$). 
Now if $(F_{_1},F_{_2},A)\in \stackrel{\to}{C}$ and $rk(A)=1$,
then by Remark \ref{direction}, we get a unique triple $(F'_{_1},F'_{_2},B)\in \stackrel{\leftarrow}{C}$ 
 such that $rk(B)=1$ and $\chi(F'_{_1})=\chi(F_{_1})+1$, $\chi(F_{_2}')=\chi(F_{_2})-1$. Therefore, this association defines a natural isomorphism between $D_{_1}$ and $D_{_2}$. Let us denote this isomorphism by $\Psi$ and $M_{_0}$ be the variety obtained by identifying the closed subschemes $D_{_1}$ and $D_{_2}$ via the isomorphism $\Psi$. Now by Remark \ref{family}  we get a morphism $f_{_1}:M_{_{12}}\to M(2,a,\chi)$ (resp. $f_{_2}:M_{_{21}}\to M(2,a,\chi)$) by associating a triple $(F_{_1},F_{_2},A)$ to the corresponding torsion free sheaf $F$. Clearly $f_{_1}$ and $f_{_2}$ are compatible with the gluing morphism $\Psi$. Thus we get a morphism $M_{_0}\to M(2,a,\chi)$. This morphism is bijective. Also this morphism induces an isomorphism on the dense open subvariety of $M_{_0}$ consisting of rank $2$ triples. Therefore it is a birational morphism. Thus by \cite[Theorem 2.4]{Vit} the variety $M_{_0}$ is isomorphic to the moduli space $M(2,a,\chi)$ as the latter space is projective and seminormal, being the union of two smooth projective variety intersecting transversally, without any one dimensional component.
 
Let $S$ be a finite type scheme and  $\chi_{_i}'=\chi_{_i}-(1-g_{_i})$. Given a family of type $(I)$, $(a_{_1},a_{_2})$ semistable triples $(\mathcal{F}_{_1},\mathcal{F}_{_2},\mathcal{A})$ parametrised by $S$ we get two families of line bundles  $\wedge^2\mathcal{F}_{_i}$ over $X_{_i}\times S$, $i=1,2$. Thus by the universal property of $J^{\chi_{_i}'}(X_{_i})$ we get a morphism \[det_{_1}:M_{_{12}}\to J_{_0}:=J^{\chi_{_1}'}(X_{_1})\times J^{\chi_{_2}'}(X_{_2}).\] such that $det_{_1}((F_{_1},F_{_2},A))=(\wedge^2F_{_1},\wedge^2F_{_2})$ for all closed points $(F_{_1},F_{_2},A)\in M_{_{12}}$.
Similarly, we get another morphism:
\[det_{_2}:M_{_{21}}\to J_{_0}':=J^{\chi_{_1}'+1}(X_{_1})\times J^{\chi_{_2}'-1}(X_{_2}).\] such that $det_{_2}((F_{_1},F_{_2},A))=(\wedge^2F_{_1},\wedge^2F_{_2})$ for all closed points $(F_{_1},F_{_2},A)\in M_{_{21}}$. 
\begin{lemma}\label{det3}
 The fibres of $det_{_i}$ are smooth and the fibres of $det_{_i}$ intersect $D_{_i}$ transversally, $i=1,2$.
 \begin{proof}
 The group $J^0(X_{_1})\times J^{0}(X_{_2})$ acts on $M_{_{21}}$ (resp. $M_{_{21}}$) by $(F_{_1},F_{_2},A)\mapsto (F_{_1}\otimes L_{_1},F_{_2}\otimes F_{_2}\otimes L_{_2},A)$ and on $J_{_0}$ (resp. $J_{_0}'$ ) by $(M_{_1},M_{_2})\mapsto (M_{_1}\otimes L_{_1},M_{_2}\times L_{_2})$ where $(L_{_1},L_{_2})\in J^0(X_{_1})\times J^{0}(X_{_2})$. The morphism $det_{_1}$ (resp. $det_{_2}$) is clearly compatible with the above actions. Thus $det_{_1}$ (resp. $det_{_2}$) is smooth. As $M_{_{12}}$ (resp. $M_{_{21}}$) and $J_{_0}$ (resp. $J'_{_0}$) are smooth, the fibres of $det_{_1}$ (resp. $det_{_2}$) are smooth. Clearly, the divisor $D_{_1}$ (resp. $D_{_2}$) is invariant under the above action. Therefore, $det_{_i}|_{_{D_{_i}}}$ are smooth, $i=1,2$. Thus the fibres of $det_{_i}|_{_{D_{_i}}}$ are also smooth. Clearly, the intersection of a fibre of $det_{_i}$ with $D_{_i}$ is the fibre of $det_{_i}|_{_{D_{_i}}}$. Hence we are done.
 \end{proof}
\end{lemma}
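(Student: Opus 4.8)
The plan is to show that $det_1$ and its restriction $det_1|_{D_1}$ are both smooth morphisms onto $J_0$, and then to deduce the transversality assertion formally; the case of $det_2$ on $M_{21}$, $D_2$ is word for word the same, using Remark \ref{other} and the smoothness of $D_2$ in place of the corresponding facts for $M_{12}$, $D_1$.

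First I would set up precisely the action mentioned in the statement. The connected group $G:=J^0(X_1)\times J^0(X_2)$ acts on $M_{12}$ by $(F_1,F_2,A)\mapsto (F_1\otimes L_1,\,F_2\otimes L_2,\,A)$: after fixing identifications $L_i(p)\cong\C$ the gluing datum $A$ is left unchanged, and two choices of identification give isomorphic triples, so the action descends to the moduli space. As $\deg L_i=0$, the Euler characteristics $\chi(F_i)$ do not change, so we stay inside $M_{12}$; twisting visibly preserves $rk(A)$, so $D_1=\{\,rk(A)=1\,\}$ is $G$-invariant. Let $G$ act on $J_0=J^{\chi_1'}(X_1)\times J^{\chi_2'}(X_2)$ by $(M_1,M_2)\mapsto (M_1\otimes L_1^{\otimes 2},\,M_2\otimes L_2^{\otimes 2})$. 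Since $\wedge^2(F_i\otimes L_i)=(\wedge^2F_i)\otimes L_i^{\otimes 2}$, the morphism $det_1$, and hence also $det_1|_{D_1}$, is $G$-equivariant.

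Next, this $G$-action on $J_0$ is transitive: on each factor the orbit map $J^0(X_i)\to J^{\chi_i'}(X_i)$, $L\mapsto M\otimes L^{\otimes 2}$, is the composite of multiplication by $2$ on $J^0(X_i)$ — a surjective isogeny, $\C$ having characteristic $0$ — with the simply transitive translation action, hence is surjective. So $J_0$ is a homogeneous $G$-space, and I would invoke the fact that a $G$-equivariant morphism from a smooth variety onto a homogeneous $G$-space is smooth: by generic smoothness in characteristic $0$ it is smooth over a dense open of $J_0$, transporting fibres by the transitive action makes all fibres smooth of one and the same dimension, whence the morphism is flat (``miracle flatness'', $M_{12}$ being Cohen--Macaulay and $J_0$ regular) and therefore smooth. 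Applying this to $M_{12}$ (smooth, by \cite[Theorem 5.3]{ns}) gives that $det_1$ is smooth; applying it to the $G$-invariant subvariety $D_1$ (smooth, by \cite[Theorem 6.1]{ns}), whose every irreducible component dominates $J_0$ because $G$ is connected and transitive on $J_0$, gives that $det_1|_{D_1}$ is smooth. In particular all fibres of $det_1$ are smooth.

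For the transversality claim, fix $x\in D_1$ and set $\xi:=det_1(x)$. Smoothness of $det_1|_{D_1}$ at $x$ means $d(det_1)_x$ restricts to a surjection $T_xD_1\twoheadrightarrow T_\xi J_0$; smoothness of $det_1$ at $x$ identifies $T_x\bigl(det_1^{-1}(\xi)\bigr)$ with $\ker\bigl(d(det_1)_x\colon T_xM_{12}\to T_\xi J_0\bigr)$. Given $v\in T_xM_{12}$, pick $w\in T_xD_1$ with $d(det_1)_x(w)=d(det_1)_x(v)$; then $v-w\in T_x\bigl(det_1^{-1}(\xi)\bigr)$, so $v\in T_xD_1+T_x\bigl(det_1^{-1}(\xi)\bigr)$. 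Hence $D_1$ meets the fibre $det_1^{-1}(\xi)$ transversally at $x$, and as $x$ was arbitrary this is the claim. The one genuinely non-formal point is the passage from ``all fibres of $det_i$ are pairwise isomorphic'' to ``$det_i$ is a smooth morphism'', which is exactly where homogeneity of $J_0$ under $G$ enters, and where one must be slightly careful that the $G$-action on $J_0$ compatible with the determinant is twisted by squares — still transitive precisely because multiplication by $2$ is an isogeny.
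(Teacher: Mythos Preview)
Your proof is correct and follows essentially the same strategy as the paper: use the $J^0(X_1)\times J^0(X_2)$-action on $M_{12}$ and the equivariance of $det_1$ to deduce smoothness of both $det_1$ and $det_1|_{D_1}$, then read off transversality. Two points are worth noting. First, you are right that the compatible action on $J_0$ is by $(M_1,M_2)\mapsto (M_1\otimes L_1^{\otimes 2},\,M_2\otimes L_2^{\otimes 2})$ rather than by $L_i$, since $\wedge^2(F_i\otimes L_i)=(\wedge^2 F_i)\otimes L_i^{\otimes 2}$; the paper's written action is imprecise on this point, and your observation that squaring is a surjective isogeny in characteristic $0$ is exactly what restores transitivity. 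Second, your explicit tangent-space computation for transversality, and your justification of smoothness via generic smoothness plus miracle flatness, spell out steps the paper leaves implicit; these are genuine improvements in exposition, not a different argument.
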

Fix $\xi=(\xi_{_1},\xi_{_2})\in J^{\chi_{_1}'}(X_{_1})\times J^{\chi_{_2}'}(X_{_2})$. Let $det_{_1}^{-1}(\xi):=M_{_{12}}^{\xi}$ and $det_{_2}^{-1}(\xi'):=M_{_{21}}^{\xi'}$ where $\xi'=(\xi(p),\xi(-p))$. By Lemma \ref{det3} the fibre $det_{_1}^{-1}(\xi)$ (resp. $det_{_2}^{-1}(\xi')$) intersects $D_{_1}$ (resp. $D_{_2}$) transversally. Hence $D_{_1}^{\xi}:=det_{_1}^{-1}(\xi)\cap D_{_1}$ and $D_{_2}^{\xi'}:=det_{_2}^{-1}(\xi')\cap D_{_2}$. Let $M_{_{0,\xi}}$ be the closed  subvariety of $M_{_0}$ obtained by gluing $M_{_{12}}^{\xi}$ and $M_{_{21}}^{\xi'}$ along the closed subschemes $D_{_1}^{\xi}$ and $D_{_2}^{\xi'}$ via the isomorphism $\Psi$.

Let $det$ be the morphism defined in Proposition \ref{determin}. We can easily show that $det^{-1}(\xi)$, $\xi\in J_{_0}$ is isomorphic to the variety $M_{_{0,\xi}}$. In the next section we will compute some of the cohomology groups of $M_{_{0,\xi}}$.
\subsubsection{Notation}
Henceforth, we will denote by $M_{_{0,\xi}}$, the moduli space of rank $2$, $(a_{_1},a_{_2})$- semistable bundles with $det\simeq \xi$ and its components by $M_{_{12}}$ and $M_{_{21}}$. We also denote the smooth divisor $D_{_1}^{\xi}$ in $M_{_{12}}$ by $D_{_1}$ and the smooth divisor $D_{_2}^{\xi}$ in $M_{_{21}}$ by $D_{_2}$. 

We conclude this section by proving a geometric fact about the moduli space $M_{_{12}}$ (resp. $M_{_{21}}$).
\begin{lemma}
The moduli space $M_{_{12}}$ (resp, $M_{_{21}}$) is a unirational variety.
\end{lemma}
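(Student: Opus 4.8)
The plan is to deduce the statement from the classical fact that, on a smooth projective curve, the moduli space of rank $2$ semistable bundles with a fixed determinant is unirational, by realising $M_{12}$ birationally as a $\mathbb{P}^3$-bundle over a product of two such moduli spaces. For $i=1,2$ write $N_i$ for the moduli space of rank $2$ semistable bundles on $X_i$ with determinant $\xi_i$; it is classical (see \cite{Se}) that $N_i$ is unirational, and that the proof exhibits a smooth rational variety $R_i$ dominating the stable locus of $N_i$ and carrying a universal bundle $\mathcal{F}_i$ on $X_i\times R_i$. In particular $R:=R_1\times R_2$ is rational.

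First I would construct the forgetful morphism $q\colon M_{12}\to N_1\times N_2$, $(F_1,F_2,A)\mapsto(F_1,F_2)$, via the coarse moduli properties. It is well defined: by \ref{inq1} and \ref{inq2} a type $(I)$ triple has $\chi(F_i)=\chi_i$, so Fact \ref{semistable} applies and $F_1,F_2$ are semistable, while $\det F_i=\xi_i$ by the definition of $M_{12}$ (taken here to mean $M_{12}^\xi$). Let $U\subset N_1\times N_2$ be the open dense subset where both $F_i$ are stable. Over $U$, Lemma \ref{semistable1} shows that every $A$ with $rk(A)\geq 1$ gives a semistable triple, necessarily of type $(I)$ since the Euler characteristics are fixed; as a nonzero $A$ automatically has rank $\geq 1$ and $\mathrm{Aut}(F_i)=\mathbb{C}^{*}$ acts on $\mathrm{Hom}(F_1(p),F_2(p))$ by scaling, the fibre of $q$ over a point of $U$ is $\mathbb{P}\bigl(\mathrm{Hom}(F_1(p),F_2(p))\bigr)\cong\mathbb{P}^3$. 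To turn this into a unirationality statement I would pull back to $R$: there the rank $4$ bundle $\mathcal{H}:=\mathcal{H}om\bigl(\mathcal{F}_1|_{p\times R},\mathcal{F}_2|_{p\times R}\bigr)$ is honest, so $\mathbb{P}(\mathcal{H})$ is a Zariski-locally trivial $\mathbb{P}^3$-bundle over the rational variety $R$ and hence rational, and the tautological assignment $(r,[A])\mapsto\bigl[(\mathcal{F}_{1,r},\mathcal{F}_{2,r},A)\bigr]$ defines (by Lemma \ref{semistable1} and the coarse moduli property) a morphism $\mathbb{P}(\mathcal{H})\to M_{12}$ whose image contains $q^{-1}(U)$, which is dense since $M_{12}$ is irreducible. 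Therefore $M_{12}$ is unirational. The same argument, applied to type $(II)$ triples via the corresponding parts of Fact \ref{semistable}, Lemma \ref{semistable1} and Remark \ref{other} and with $(\xi_1,\xi_2)$ replaced by the shifted determinant $\xi'=(\xi(p),\xi(-p))$, shows that $M_{21}$ is unirational.

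The step I expect to be the real obstacle is exactly this passage from the set-theoretic $\mathbb{P}^3$-fibration $q^{-1}(U)\to U$ to a dominant morphism from a rational variety. A global (twisted) Poincaré family on $N_1\times N_2$ need not exist: since $\chi$ is odd, exactly one of $\deg\xi_1,\deg\xi_2$ is odd, so one factor carries a universal bundle while the other carries only a $2$-torsion Brauer obstruction, and consequently $\mathbb{P}\bigl(\mathrm{Hom}(F_1(p),F_2(p))\bigr)$ could a priori be a nontrivial Brauer--Severi scheme over $U$ rather than a projectivised vector bundle. Working over the rational parameter space $R=R_1\times R_2$, where the universal families exist honestly, avoids this; the remaining point to check there is that no triple produced by the construction leaves $M_{12}$, i.e. that every $(\mathcal{F}_{1,r},\mathcal{F}_{2,r},A)$ is $(a_1,a_2)$-semistable of type $(I)$, which is immediate from Lemma \ref{semistable1} together with the fact that the inequalities \ref{inq2} hold identically along this family.
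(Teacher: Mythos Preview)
Your argument is correct and follows essentially the same approach as the paper: build a rational parameter space carrying a universal family of pairs $(F_1,F_2)$, pass to the total $\mathrm{Hom}$-bundle (or its projectivisation) at $p$, and use Lemma \ref{semistable1} to get a dominant morphism to $M_{12}$. The only difference is presentational: the paper constructs the rational parameter spaces explicitly as the affine extension spaces $V_{\xi_i}=H^1(X_i,\xi_i^*)$ (after twisting so that $\deg F_i>2(2g_i-1)$ and every semistable $F_i$ arises as an extension $0\to\mathcal{O}_{X_i}\to F_i\to\xi_i\to 0$), whereas you invoke the rational covers $R_i$ as a black box from the classical unirationality proof.
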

\begin{proof}\label{unirational}
  To prove $M_{_{12}}$ is unirational we can assume, after tensoring by line bundles, it consists of all triples $(F_{_1},F_{_2},A)$, where $F_{_i}$ is semistable over $X_{_i}$ such that $deg(F_{_i})>2(2g_{_i}-1)$  $i=1,2$. Then, any such $F_{_i}$ can be obtained as an extension: \[0\to \mathcal{O}_{_{X_{_i}}} \to F_{_i}\to \xi_{_i}\to 0,\] where $\xi_{_i}=det(F_{_i})$ for $i=1,2$. The exact sequences of this type are classified by $V_{_{\xi_{_i}}}:=Ext^1(\mathcal{O}_{_{X_{_i}}},\xi_{_i})=H^1(X_{_i},\xi_{_i}^*)$. Let $\mathcal{E}_{i}$ be the universal extension over $X_{_i}\times V_{_{\xi_{_i}}}$. We denote the restriction ${\mathcal{E}_{_i}}_{_{|p\times V_{_{\xi_{_i}}}}}$ by ${\mathcal{E}_{_i}}_{_p}$. Clearly, $Hom(\mathcal{E}_{_1},\mathcal{E}_{_2})$ parametrises a family of triples in the sense we have defined family of triples and if $(F_{_1},F_{_2},A)$ be a triple corresponding to the closed point $A\in Hom(\mathcal{E}_{_1},\mathcal{E}_{_2})$ then $F_{_i}$'s are the extensions of the type described before. Now as $F_{_i}$'s are semistable if we choose an isomorphism $A:F_{_1}(p)\to F_{_2}(p)$ then by Lemma \ref{semistable}, $(F_{_1},F_{_2},A)$ is semistable. Thus we conclude the set of points $W$ where the corresponding triple is semistable is a nonempty Zariski open set of $Hom(\mathcal{E}_{_1},\mathcal{E}_{_2})$. Therefore, by the 
coarse moduli property of $M_{_{
12}}$, we 
get a  morphism from $W$ to $M_{_{12}}$. Clearly the morphism $W\to M_{_{12}}$ is surjective. Hence, $M_{_{12}}$ is a unirational variety. The same argument shows the moduli space $M_{_{21}}$ is also a unirational variety.
  \end{proof}
\section{Topology of \texorpdfstring{$M_{_{0, \xi}}$}{}}\label{top}
In this section, our main aim is to outline a strategy to compute the  cohomology groups of $M_{_{0,\xi}}$ and compute explicitly the third cohomology group. We make the following convention:
Let $X$ be a topological space. By $H^k(X)$ we mean the cohomology groups of $X$ with the coefficients in $\Q$, $k\geq 0$. Whenever we obtain any results for other coefficients, e.g $\Z$, we will specifically mention it. Suppose $X$ and $Y$ be varities over $\C$. Whenever we say $X\to Y$ a topological fibre bundle, we assume the underlying topology of $X$ and $Y$ to be complex analytic topology.

Let $Y$ be a smooth,projective curve of genus $g_{_Y}\geq 2$ and $M_{_Y}$ be the moduli space of rank $2$ semistable bundles with fixed determinant. The cohomology groups of $M_{_Y}$ are quite well studied in the literature. When the determinant is odd $M_{_Y}$ is a smooth projective variety of dimension $3g_{_Y}-3$ and the cohomology groups with integral coefficients are completely known. When the determinant is even $M_{_Y}$ need not be smooth. In fact it is known that the singular locus of $M_{_Y}$ is precisely the complement $M_{_Y}\setminus M_{_Y}^{s}$ if $g_{_Y}\geq 3$ where $M_{_Y}^s$ is the open subset consisting of stable bundles (see \cite[Theorem 1]{NAR}). In this case also the Betti numbers are determined in the work of \cite{bs}. We will summarize some of the results concerning the cohomology groups of $M_{_Y}$ in both the cases i.e odd determinant and even determinant:
\begin{lemma}\label{topology}
\begin{enumerate}
 \item Let $M_{_Y}$ be the moduli space of rank $2$ semistable bundles with odd determinant. Then $M_{_Y}$ is a smooth, projective rational variety (\cite{ne1}) and hence it is simply connected and $H^3(M_{_Y},\Z)_{_{tor}}=0$. Furthermore, $b_{_1}(M_{_Y})=0$, $b_{_2}(M_{_Y})=1$, $b_{_3}(M_{_Y})=2g_{_Y}$, where $b_{_i}$ are the Betti numbers (\cite{ne}).
 \item Let $M_{_Y}$ be the moduli space of rank $2$ semistable bundles with even determinant. Then $M_{_Y}^s$ is a simply connected variety (\cite[Proposition 1.2]{bbgn}). Furthermore, we have $b_{_1}(M_{_Y})=0$, $b_{_2}(M_{_Y})=1$ and $b_{_3}(M_{_Y}^s)=2g_{_Y}$, where $b_{_i}$ are the Betti numbers (\cite{N}, \cite[Section 3]{bs}).
 \end{enumerate}
\end{lemma}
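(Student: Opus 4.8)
\emph{Overall approach.} The statement is a digest of results already available in the literature, so the plan is not to prove anything genuinely new but to record, item by item, the source of each assertion together with the short deductions needed to glue them together.

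\emph{Odd determinant case.} When $\deg L$ is odd, semistability coincides with stability, so $M_{_Y}$ is a smooth projective variety of dimension $3g_{_Y}-3$, and its rationality is the content of \cite{ne1}. Since the fundamental group is a birational invariant of smooth projective varieties and $\bp^{3g_{_Y}-3}$ is simply connected, $M_{_Y}$ is simply connected; hence $b_{_1}(M_{_Y})=0$, $H^1(M_{_Y},\Z)=0$, and by the universal coefficient theorem $H^2(M_{_Y},\Z)$ is torsion free. The remaining values $b_{_2}(M_{_Y})=1$, $b_{_3}(M_{_Y})=2g_{_Y}$ and the vanishing $H^3(M_{_Y},\Z)_{_{tor}}=0$ I would simply read off from Newstead's computation of the integral cohomology of $M_{_Y}$ in \cite{ne}, where in degrees $\le 3$ one has $H^0\simeq\Z$, $H^1=0$, $H^2\simeq\Z$ and $H^3\simeq\Z^{2g_{_Y}}$, all torsion free.

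\emph{Even determinant case.} Here $M_{_Y}$ is the GIT quotient and, for $g_{_Y}\ge 3$, is singular precisely along the strictly semistable locus $M_{_Y}\setminus M_{_Y}^{s}$ by \cite[Theorem 1]{NAR}; since a point of that locus is the $S$-equivalence class of a decomposable bundle determined by a single line bundle of fixed degree, the locus has dimension $g_{_Y}$, that is $Codim(M_{_Y}\setminus M_{_Y}^{s},M_{_Y})=2g_{_Y}-3\ge 3$. The smooth locus $M_{_Y}^{s}$ is simply connected by \cite[Proposition 1.2]{bbgn}, giving $b_{_1}(M_{_Y}^{s})=0$, and together with the codimension estimate this yields $b_{_1}(M_{_Y})=0$. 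Finally $b_{_2}(M_{_Y})=1$ and $b_{_3}(M_{_Y}^{s})=2g_{_Y}$ are the computations of \cite{N} and \cite[Section 3]{bs}, obtained through the equivariant stratification of the space of all bundles, respectively through a desingularisation of $M_{_Y}$, and I would quote them verbatim.

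\emph{Main difficulty.} There is no genuine obstacle in the argument, since every individual fact lives in one of \cite{ne1}, \cite{ne}, \cite{NAR}, \cite{bbgn}, \cite{N}, \cite{bs}. The only point that deserves a moment's care is the passage, in the even case, from invariants of the stable locus $M_{_Y}^{s}$ to invariants of the whole moduli space $M_{_Y}$: this rests entirely on the bound $Codim(M_{_Y}\setminus M_{_Y}^{s},M_{_Y})=2g_{_Y}-3\ge 3$ valid for $g_{_Y}\ge 3$, which is what makes the restriction maps on low-degree cohomology isomorphisms and lets one replace $M_{_Y}^{s}$ by $M_{_Y}$ (and conversely) wherever convenient.
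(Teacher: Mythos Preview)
Your proposal is correct and matches the paper's approach: the lemma is stated as a summary of known results with inline citations, and the paper gives no separate proof at all. Your write-up is in fact more detailed than what the paper provides, since you spell out the short deductions (birational invariance of $\pi_1$, the codimension estimate for $M_{_Y}\setminus M_{_Y}^s$) that the paper leaves implicit in the references.
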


Let $M_{_1}$ (resp. $M_{_1}'$) be the moduli space of rank $2$, semistable bundles over $X_{_1}$  with $det\simeq \xi_{_1}$ (resp. with $det\simeq \xi_{_1}(p)$) and $M_{_2}$ (resp. $M_{_2}'$) be the moduli space of rank $2$, semistable bundes over $X_{_2}$ with $det\simeq \xi_{_2}$ (resp. $\det\simeq \xi_{_2}(-p)$)  where $\xi_{_i}$'s are line bundles of degree $d_{_i}=\chi_{_i}-2(1-g_{_i})$ for $i=1,2$ and the integers $\chi_{_1}$, $\chi_{_2}$ satisfy the inequality \ref{inq1}. Since  $\chi$ is odd, one of the integer in the pair $(d_{_1},d_{_2})$ is odd and the other is even. We assume that $d_{_1}$ is odd and $d_{_2}$ is even. Therefore, $M_{_1}$ and $M_{_2}'$ are smooth projective varieties. Let $M_{_2}^s$ be the open subvariety of $M_{_2}$ consisting of all the isomorphism classes 
of stable bundles over $X_{_2}$ and $M_{_1}'^s$ be the open subvariety of $M_{_1}'$ consisting of all the isomorphism classes of stable bundles over $X_{_1}$. Note that $M_{_2}\setminus M_{_2}^s$ is precisely the singular locus of $M_{_2}$ if $g_{_2}\geq 3$ and $M_{_1}'\setminus M_{_1}'^s$ is precisely the singular locus of $M_{_1}'$ if $g_{_1}\geq 3$.

Let us denote the open subvariety $M_{_1}\times M_{_2}^s$ of $M_{_1}\times M_{_2}$ by $B$.
We will show the following,
\begin{proposition}\label{morphism}
 There is a surjective morphism $p: M_{_{12}}\to M_{_1}\times M_{_2}$. Moreover, $p:P\to B$ is a topological
 $\bp^3$-bundle where $P:=p^{-1}(B)$. 
  \begin{proof}
  Let $S$ be a finite type scheme and $(\mathcal{F}_{_1},\mathcal{F}_{_2},\mathcal{A})$ be a family of triples parametrised by $S$ such that $({\mathcal{F}_{_1}}_{_s},{\mathcal{F}_{_2}}_{_s},\mathcal{A}_{_s})$ is $(a_{_1},a_{_2})$-semistable of type $(I)$ for all $s\in S$ where ${\mathcal{F}_{_i}}_{_s}:=\mathcal{F}_{_i}|_{_{X_{_0}\times s}}$. We also assume $\wedge^2{\mathcal{F}_{_i}}_{_s}\simeq \xi_{_i}$, $i=1,2$. Then by Fact \ref{semistable} ${\mathcal{F}_{_i}}_{_s}$, $i=1,2$, are semistable for all $s\in S$. Thus we get a morphism $p:M_{_{12}}\to M_{_1}\times  M_{_2}$. Let $([F_{_1}],[F_{_2}])\in M_{_1}\times M_{_2}$. Choose any isomorphism $A:F_{_1}(p)\to F_{_2}(p)$. Then, by Fact \ref{semistable1} $(F_{_1},F_{_2},A)$ is $(a_{_1},a_{_2})$-semistable. Therefore, $p$ is surjective.

  Now we show that $p:P\to B$ is a topological $\bp^3$-bundle. Let $b=([F_{_1}],[F_{_2}])\in B$. Our first claim is the fibre $p^{-1}(b)$ is homeomorphic to $\bp Hom(F_{_1}(p),F_{_2}(p))\simeq \bp^3$. Let $A\in Hom(F_{_1}(p),F_{_2}(p))$ and $A\neq 0$. Since both $F_{_i}$'s are stable, by Lemma \ref{semistable1}, $(F_{_1},F_{_2},A)$ is $(a_{_1},a_{_2})$-stable. Thus we get a morphism $i_{_b}:Hom(F_{_1}(p),F_{_2}(p))\setminus 0\to M_{_{12}}$. Clearly, $i_{_b}(Hom(F_{_1}(p),F_{_2}(p)))\setminus 0)=p^{-1}(b)$. Note that $(F_{_1},F_{_2},A)$ and $(F_{_1},F_{_2},\lambda A)$ are isomorphic for all $\lambda\in \C^*$. Thus $i_{_b}$ descends to a morphism $i_{_b}:\bp Hom(F_{_1}(p),F_{_2}(p))\to p^{-1}(b)$. Now we show that $i_{_b}$ is  injective. Then the claim will follow. Let  $A, B\in \bp Hom(F_{_1}(p),F_{_2}(p))$ are disticnt points. Then the triples $(F_{_1},F_{_2},A)$, $F_{_1},F_{_2},B)$ are non isomorphic. Suppose, $(F_{_1},F_{_2},A)$ and $(F_{_1},F_{_2},B)$ are isomorphic as triples. Then there are isomorphisms $\phi_{_i}:F_{_i}\to F_{_i}$, $i=1,2$ such that we have the following commutative diagram:
  \begin{equation}
                     \xymatrix{F_{_1}(p) \ar[r]^{\phi_{_1}(p)} \ar[d]^{A} & F_{_1}(p)
                     \ar[d]^{B} \\
                     F_{_2}(p)\ar[r]^{\phi_{_2}(p)} &  
                     F_{_2}(p)}
                    \end{equation}
      Since $F_{_i}$ are stable the only isomorphisms of $F_{_i}$ are $\lambda I$ for some scalor $\lambda$. Thus we have $\phi_{_i}(p)=\lambda_{_i} I$, $i=1,2$. From the commutativity of the above diagram we get $B\lambda_{_1}=\lambda_{_2}A$. Thus $B=\lambda_{_1}^{-1}\lambda_{_2}A$. Hence a contradiction as $A$ and $B$ are disticnt in $\bp Hom(F_{_1}(p),F_{_2}(p))$. Therefore, the morphism is injective. Since the fibres of $p:P\to B$ are compact, $p:P\to B$ is a proper, analytic map. 
      
      Our next claim is that the induced map $dp:T_{_F}\to T_{_{p(F)}}$ at the level of Zariski tangent space is surjective for all $F=(F_{_1},F_{_2},A)\in P$. Let $(F_{_1},F_{_2})\in B$. Since $F_{_i}$ are both stable, $i=1,2$, the Zariski tangent space $T_{_{F_{_i}}}\simeq H^1(End(F_{_i}))_{_0}$ where $H^1(End(F_{_i}))_{_0}=Ker(tr^1:H^1(End(F_{_i}))\to H^1(\mathcal{O}_{_{X_{_i}}}))$ and $tr^1$, the trace homomorphism (see \cite[Theorem 4.5.4]{huy}). Thus the tangent space $T_{_{(F_1,F_2)}}B\simeq H^1(End(F_{_1}))_{_0}\times H^1(End(F_{_2}))_{_0}$. Now a cocycle in $H^1(End(F_{_i}))$ corresponds to a locally free sheaf $\mathcal{F}_{_i}$ over $X_{_i}\times D$ such that ${\mathcal{F}_{_i}}_{t_0}\simeq F_{_i}$ where $D=Spec\frac{\C[\varepsilon]}{\varepsilon^2}$ and $t_{_0}=(\varepsilon)$. Choose an isomorphism $\mathcal{A}:F_{_1}(p)\to F_{_2}(p)$. Then clearly, $A$ lifts to a $\mathcal{O}_{_D}$-module homomorphism $\mathcal{A}:F_{_1}|_{_{p\times D}}\to F_{_2}|_{_{p\times D}}$. Thus we get a triple $(\mathcal{F}_{_1},\mathcal{F}_{_2},\mathcal{A})$ parametrised by $D$ such that $({\mathcal{F}_{_1}}_{t_0},{\mathcal{F}_{_2}}_{t_0},\mathcal{A}_{_{t_0}})=(F_{_1},F_{_2},A)$. By the coarse moduli property of $M_{_{12}}$ we get a morphism $x:D\to M_{_{12}}$ such that $x(t_{_0})=(F_{_1},F_{_2},A)$. In otherwords we get a point of the Zariski tangent space at $(F_{_1},F_{_2},A)$. Thus $dp$ is surjective. Therefore, $p$ is a proper, surjective,  holomorphic submersion. Hence, $p:P\to B$ is a topological $\bp^3$- bundle.
  \end{proof}
\end{proposition}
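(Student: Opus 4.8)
The plan is to produce $p$ from the corepresentability of $M_{_{12}}$, to pin down the fibres over the open locus $B$ where both bundles are stable as honest projective spaces, and then to promote ``every fibre is a $\bp^3$'' to ``locally trivial $\bp^3$-bundle'' via Ehresmann's fibration theorem. For the morphism itself: given a family $(\mathcal{F}_{_1},\mathcal{F}_{_2},\mathcal{A})$ of type $(I)$, $(a_{_1},a_{_2})$-semistable triples over a scheme $S$ with $\wedge^2\mathcal{F}_{_{i,s}}\simeq\xi_{_i}$ for all $s$, Fact \ref{semistable} shows each $\mathcal{F}_{_{i,s}}$ is semistable over $X_{_i}$, so $(\mathcal{F}_{_1},\mathcal{F}_{_2})$ is a family of pairs of fixed-determinant semistable bundles and hence induces a morphism $S\to M_{_1}\times M_{_2}$; this assignment is natural in $S$, so the coarse moduli property of $M_{_{12}}$ yields a unique morphism $p\colon M_{_{12}}\to M_{_1}\times M_{_2}$ sending $[(F_{_1},F_{_2},A)]$ to $([F_{_1}],[F_{_2}])$. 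For surjectivity I would take any $([F_{_1}],[F_{_2}])$, choose an isomorphism $A\colon F_{_1}(p)\to F_{_2}(p)$ (possible since both fibres are $2$-dimensional), so that $rk(A)=2$, and invoke Lemma \ref{semistable1} to see that $(F_{_1},F_{_2},A)$ is $(a_{_1},a_{_2})$-semistable of type $(I)$ with class over $([F_{_1}],[F_{_2}])$.

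\emph{The fibres over $B$.} Fix $b=([F_{_1}],[F_{_2}])\in B=M_{_1}\times M_{_2}^{s}$: here $F_{_1}$ is stable because $d_{_1}$ is odd (so $M_{_1}=M_{_1}^{s}$), and $F_{_2}$ is stable by the choice of $B$. For any nonzero $A\in Hom(F_{_1}(p),F_{_2}(p))$ one has $rk(A)\ge 1$, so $(F_{_1},F_{_2},A)$ is semistable by Lemma \ref{semistable1}, giving a map $Hom(F_{_1}(p),F_{_2}(p))\setminus\{0\}\to p^{-1}(b)$. Since the pair of scalars $(I,\lambda I)$ is an isomorphism of triples between $(F_{_1},F_{_2},A)$ and $(F_{_1},F_{_2},\lambda A)$, this factors through $i_{_b}\colon\bp\,Hom(F_{_1}(p),F_{_2}(p))\to p^{-1}(b)$. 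Surjectivity of $i_{_b}$ holds because any triple over $b$ is, after choosing isomorphisms with $F_{_1},F_{_2}$, of the form $(F_{_1},F_{_2},A)$ with $A\ne 0$ (nonzero since type $(I)$ forces $rk\ge 1$); injectivity holds because stability of $F_{_1},F_{_2}$ forces every triple isomorphism to be a pair $(\lambda_{_1}I,\lambda_{_2}I)$, and the commuting square then yields $B=(\lambda_{_2}/\lambda_{_1})A$, i.e.\ $[A]=[B]$. Hence $p^{-1}(b)\cong\bp\,Hom(F_{_1}(p),F_{_2}(p))\cong\bp^{3}$.

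\emph{Local triviality.} Since $M_{_{12}}$ is projective, $p$ is proper, and so is its restriction $p\colon P\to B$ over the open set $B$. To see $p|_{_P}$ is a submersion I would compare Zariski tangent spaces: for stable $F_{_i}$ the fixed-determinant deformation space is $H^1(End(F_{_i}))_{_0}$, so $T_{_b}B\simeq H^1(End(F_{_1}))_{_0}\oplus H^1(End(F_{_2}))_{_0}$, and given a first-order deformation of $(F_{_1},F_{_2})$ represented by locally free sheaves over $D=Spec\,\C[\varepsilon]/(\varepsilon^2)$ with the prescribed determinants, the restriction of each to $p\times D$ is free, so the chosen $A$ lifts to an $\mathcal{O}_{_D}$-homomorphism; this produces a family of type $(I)$ triples over $D$, hence a tangent vector at $[(F_{_1},F_{_2},A)]$ mapping onto the given one. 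Thus $p|_{_P}$ is a proper holomorphic submersion between complex manifolds, so by Ehresmann's theorem it is a locally trivial $C^\infty$ fibre bundle, and by the previous step the fibre is $\bp^{3}$; this is precisely the assertion that $p\colon P\to B$ is a topological $\bp^{3}$-bundle.

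\emph{Expected main obstacle.} The delicate point is the submersivity of $p$ at points of $P$ lying over the rank-one locus $D_{_1}$: one must know $M_{_{12}}$ is smooth there and that the lift of $A$ over $D$ can always be arranged compatibly with arbitrary deformations of $F_{_1}$ and $F_{_2}$. Relatedly, the entire fibre description depends essentially on restricting to $M_{_2}^{s}$, since at a strictly semistable (polystable) $F_{_2}$ the automorphism group is strictly larger than the scalars, which would break both the injectivity of $i_{_b}$ and the dimension count, so the base $B$ cannot be enlarged.
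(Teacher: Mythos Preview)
Your proposal is correct and follows essentially the same route as the paper's own proof: construct $p$ via the coarse moduli property and Fact~\ref{semistable}, obtain surjectivity from Lemma~\ref{semistable1}, identify each fibre over $B$ with $\bp\,Hom(F_{_1}(p),F_{_2}(p))$ using that stable bundles have only scalar automorphisms, and then deduce local triviality from properness plus surjectivity of $dp$. Your version is in fact slightly more explicit in places (you name Ehresmann's theorem, spell out surjectivity of $i_{_b}$, and flag the role of restricting to $M_{_2}^{s}$), but the argument is the same.
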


\begin{remark}\label{2}
 By the same arguments  as before  we get a morphism $p':M_{_{21}}\to M'_{_1}\times M'_{_2}$, where $M'_{_1}$ is the moduli space of rank $2$ semistable bundles over $X_{_1}$ with $det\simeq \xi'_{_1}$ and $M'_{_2}$ be the moduli space of rank $2$ semistable bundles over $X_{_2}$ with $det\simeq \xi'_{2}$, where $\xi'_{_1}:=\xi_{_1}(p)$ and $\xi'_{_2}:=\xi_{_2}(-p)$. Moreover, $p': \overline P\to B'$ is a $\bp^3$-fibration where $B'=M_{_1}^s\times M_{_2}$ and $\overline P= p'^{-1}(B')$.
\end{remark}
\subsection{Codimension computations}
 
 In the following proposition we compute the codimension of the complement of the open subvariety $P$ in $M_{_{12}}$ (resp. the complement of $\overline P$ in $M_{_{21}}$).
 
 Let $K'$ denote the complement of $P$ in $M_{_{12}}$ and $K_{_2}'$ denote the complement of $\overline P$ in $M_{_{21}}$. Then we have
 \begin{proposition}\label{codimension}{\ }

\noindent  $(a)$ $Codim (K',M_{_{12}})= g_{_2}-1.$ where $g_{_2}$ is the genus of $X_{_2}$\\ 
  $(b)$ $Codim (K_{_2}',M_{_{21}})= g_{_1}-1$ where $g_{_1}$ is the genus of $X_{_1}$.
  \end{proposition}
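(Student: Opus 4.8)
The plan is to identify $K'$, the complement of $P = p^{-1}(B)$ in $M_{_{12}}$, explicitly and then stratify it, computing the codimension of each stratum. Recall $B = M_{_1} \times M_{_2}^s$ and $p : M_{_{12}} \to M_{_1} \times M_{_2}$ is surjective. Since $M_{_1}$ is already everywhere stable (because $d_{_1}$ is odd), the locus we are missing over $M_{_1} \times M_{_2}$ lies above $M_{_1} \times (M_{_2} \setminus M_{_2}^s)$, i.e. above the strictly-semistable (hence singular, using $g_{_2} \geq 3$) locus of $M_{_2}$. So as a first step I would write $K' = p^{-1}\big(M_{_1} \times (M_{_2}\setminus M_{_2}^s)\big)$ and reduce the problem to estimating (i) $\dim (M_{_2}\setminus M_{_2}^s)$ inside $M_{_2}$, and (ii) the fibre dimension of $p$ over points of $M_{_1}\times(M_{_2}\setminus M_{_2}^s)$, the latter compared with the generic fibre dimension $3$ (the $\bp^3$-bundle from Proposition \ref{morphism}).

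For (i): a rank $2$ semistable bundle $F_{_2}$ with $\det F_{_2} = \xi_{_2}$ of even degree $d_{_2}$ that is strictly semistable is an extension (S-equivalent to a direct sum) of line bundles $N$ and $\xi_{_2} N^{-1}$ with $\deg N = d_{_2}/2$. The set of such $F_{_2}$ (as points of $M_{_2}$, i.e. up to S-equivalence) is the image of $J^{d_{_2}/2}(X_{_2})$ under $N \mapsto [N \oplus \xi_{_2}N^{-1}]$, which is $2$-to-$1$ onto its image; hence $\dim(M_{_2}\setminus M_{_2}^s) = g_{_2}$. Since $\dim M_{_2} = 3g_{_2}-3$, this locus has codimension $2g_{_2}-3$ in $M_{_2}$, hence $M_{_1}\times (M_{_2}\setminus M_{_2}^s)$ has codimension $2g_{_2}-3$ in $M_{_1}\times M_{_2}$.

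For (ii): I must bound the dimension of $p^{-1}(b)$ for $b$ a point of $M_{_1}\times M_{_2}$ lying over a strictly semistable $F_{_2}$. Here the subtlety is that $M_{_{12}}$ is a moduli space of triples, so the fibre is not simply $\bp\,\mathrm{Hom}(F_{_1}(p),F_{_2}(p))$; one must account for the possible non-locally-free sheaves and for the fact that $F_{_2}$ may only be a point of $M_{_2}$ representing an S-equivalence class, so several triples $(F_{_1}, F_{_2}', A)$ map to the same $b$. The right way is to note that by Lemma \ref{semistable1} and Fact \ref{semistable}, the triples in $M_{_{12}}$ lying over a given pair of semistable bundles, when $F_{_2}$ is strictly semistable, can have $\mathrm{rk}(A) = 1$ allowed more freely, and the gluing data together with the choice of a representative in the S-equivalence class adds dimension; a dimension count of the incidence variety $\{(F_{_1}, F_{_2}, A)\}$ fibred over $M_{_1}\times(M_{_2}\setminus M_{_2}^s)$ shows the fibres of $K' \to M_{_1}\times (M_{_2}\setminus M_{_2}^s)$ have dimension at most $3 + (g_{_2}-2)$. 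Combining: $\mathrm{codim}(K', M_{_{12}}) \geq (2g_{_2}-3) - (g_{_2}-2) = g_{_2}-1$, and one checks equality by exhibiting the generic point of $K'$. Part $(b)$ is identical with the roles of $X_{_1}, X_{_2}$ and $M_{_1}', M_{_2}'$ interchanged, using that $d_{_1}$ (the degree relevant to $M_{_1}'$) plays the role that $d_{_2}$ played for $M_{_2}$.

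The main obstacle I anticipate is step (ii): carefully describing the fibre of the triple moduli space $M_{_{12}}$ over a point of $M_{_1}\times M_{_2}$ whose second coordinate is a strictly-semistable (singular) point of $M_{_2}$, and in particular tracking how the jump in $\dim\mathrm{Hom}(F_{_1}(p), F_{_2}(p))$-type data interacts with the S-equivalence identification, so as to get the sharp bound $g_{_2}-2$ on the excess fibre dimension rather than something weaker. Everything else is a bookkeeping exercise with extensions of line bundles on curves and the dimension formula $\dim M_{_2} = 3g_{_2}-3$.
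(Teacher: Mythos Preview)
Your strategy coincides with the paper's: write $K' = p^{-1}(M_{_1}\times K)$ where $K = M_{_2}\setminus M_{_2}^s$, parametrize the strictly semistable locus by pairs $(L_{_1},L_{_2})$ with $L_{_1}\otimes L_{_2}\simeq\xi_{_2}$, and count dimensions. At the step you flag as the main obstacle, the paper does not attempt to bound the fibres of $p|_{K'}$ directly; instead it constructs an explicit parameter variety $K_{_1}$---the rank-$2$ open locus inside $\bp\,Hom(F_{_p},\mathcal{G}_{_p})$ over $M_{_1}\times P'$, where $P'\to J'^{\xi_{_2}}$ is the $\bp\,Ext^1(L_{_1},L_{_2})$-bundle restricted to the locus $L_{_1}\ncong L_{_2}$---shows that $K_{_1}\to M_{_{12}}$ is injective with dense image in (a dense open subset of) $K'$, and reads off $\dim K_{_1}=(3g_{_1}-3)+(2g_{_2}-2)+3$. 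The injectivity uses only that $Aut(F_{_2})\simeq\C^*$ for a nontrivial extension with $L_{_1}\ncong L_{_2}$ and that distinct extension classes yield non-isomorphic bundles; this is exactly the content behind your asserted ``$3+(g_{_2}-2)$'' excess-fibre bound, so your outline is correct once that construction is made precise.
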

  \begin{proof}

%$Codim(K',M_{_{12}})=\mbox{dim}M_{_{12}}-\mbox{dim}K'$.
We will only show $(a)$. The proof of $(b)$ is similar.
Note that if $(F_{_1},F_{_2},A)\in K'$ then $F_{_2}$ is a strictly semistable bundle on $X_{_2}$. Therefore, $K'=p^{-1}(M_{_1}\times K)$, where $K=M_{_2}\setminus M_{_2}^s$ and $p$ is the morphism as in Proposition \ref{morphism}. . Now $F_{_2}\in K$ if and only if there is a short exact sequence \[0\to L_{_2}\to F_{_2}\to L_{_1}\to 0,\] for some line bundles $L_{_1}$, $L_{_2}$ 
with $deg(L_{_i})=\frac{d_{_2}}{2}$, $i=1,2$. Clearly, $L_{_1}\otimes L_{_2}\simeq det(F_{_2})\simeq  \xi_{_2}$. Thus $K$ consists of all S-equivalance classes $[L_{_1}\oplus L_{_2}]$ of semistable bundles on $X_{_2}$ where $L_{_1},L_{_2}\in J^{d'_{_2}}(X_{_2})$, $d_{_2}'=\frac{d_{_2}}{2}$ such that $L_{_1}\otimes L_{_2}\simeq \xi_{_2}$. Let $K^{0}$ be the subset of $K$ consisting of all S- equivalence classes $[L_{_1}\oplus L_{_2}]$ such that $L_{_1}\ncong L_{_2}$. Then by \cite[Lemma 4.3]{NAR} $K^{0}$ is an open and dense subset of $K$. Let $K''=p^{-1}(M_{_1}\times K^{0})$. Then $K''$ is open and dense in $K'$. Therefore, we get $dim(K')=dim(K'')$. Now we will find a parameter variety of isomorphism classes of all $(a_{_1},a_{_2})$-semistable triples $(F_{_1},F_{_2},A)$ where $F_{_1}\in M_{_1}$ and $F_{_2}\in \bp Ext^1(L_{_1},L_{_2})$ for some $L_{_i}\in J^{d'_{_2}}(X_{_2})$, $i=1,2$ with $L_{_1}\ncong L_{_2}$ and show that this parameter variety has same dimension as $K''$. 

Let $J^{\xi_{_2}}=\{(L_{_1},L_{_2})\in J^{d'_{_2}}(X_{_2})\times J^{d'_{_2}}(X_{_2})\mid L_{_1}\otimes L_{_2}\simeq \xi_{_2}\}$. Note that $J^{d'_{_2}}(X_{_2})$ is isomorphic to $J^{\xi_{_2}}$ by $L\mapsto (L,\xi_{_2}\otimes L^{-1})$. Therefore, $J^{\xi_{_2}}$ is a closed subvariety of $J^{d'_{_2}}(X_{_2})\times J^{d'_{_2}}(X_{_2})$ of dimension $g_{_2}$. Let $J'=\{(L_{_1},L_{_2})\in J^{d_{_2}}(X_{_2})\times J^{d'_{_2}}(X_{_2})\mid L_{_1}\ncong L_{_2}\}$. Then, clearly $J'$ is an open and dense subvariety of $J^{d'_{_2}}(X_{_2})\times J^{d'_{_2}}(X_{_2})$. Let $J'^{\xi_{_2}}:=J'\cap J^{\xi_{_2}}$. 

We will construct a projective bundle $\bp$ over $J'$ such that the fibre over a point $(L_{_1},L_{_2})\in J'$ is isomorphic to $\bp Ext^1(L_{_1},L_{_2})$: Let $\mathcal{L}$ be the Poincare line bundle over $X_{_2}\times J^{d'_{_2}}(X_{_2})$ and $\mathcal{L}_{_i}:=(id\times p_{_i})^{*}\mathcal{L}$ where $p_{_i}:J^{d'_{_2}}(X_{_2})\times J^{d'_{_2}}(X_{_2})\to J^{d'_{_2}}(X_{_2}) $ is the $i$th projection for $i=1,2$. Then $V:=R^1{p_{_{J'}}}_{_*}Hom(\mathcal{L}_{_1},\mathcal{L}_{_2})$ is a locally free sheaf of rank $g_{_2}-1$ over $J'$ where $p_{_{J'}}:X_{_2}\times J'\to J'$ is the projection. Let $\bp$ over $J'$ be the projective bundle associated  to $V$. Then the fibre over a point $(L_{_1},L_{_2})\in J'$ is isomorphic to $\bp Ext^1(L_{_1},L_{_2})$. Let $P'=\bp|_{_{J'^{\xi_{_2}}}}$.  Let $\mathcal{G}$ be the universal extension over $X_{_2}\times P'$ (see \cite[Proposition 3.1]{NAR}) and $F$ be a universal bundle over $X_{_2}\times M_{_1}$ (note that $F$ exists as the degree and rank of the vector bundles in $M_{_1}$ are coprime).

Let $\mathcal{G}_{_p}:=\mathcal{G}|_{_{_{p\times P'}}}$ and $F_{_p}:=F|_{_{_{p\times M_{_1}}}}$. Clearly, $Hom(F_{_p},G_{_p})$ parametrises a family of triples of type $(I)$ such that every closed point in  $ \ Hom(F_{_p},G_{_p})$ corresponds to a triple $(F_{_1},F_{_2},A)$ where $F_{_1}\in M_{_1}$ and $F_{_2}\in P'$. Note that if $E\in \bp Ext^1(L_{_1},L_{_2})$ then $Aut(E)\simeq \C^*$ whenever $L_{_1}\ncong L_{_2}$ (see \cite[Lemma 4.1]{NAR}). Let $A,B\in \bp Hom(F_{_p},\mathcal{G}_{_p})$ be two distinct closed points and $(F_{_1},F_{_2},A)$, $(G_{_1},G_{_2},B)$ be the corresponding triples. Then $(F_{_1},F_{_2},A)$ and $(G_{_1},G_{_2},B)$ are non isomorphic. This follows from the two facts: if $E_{_1}\in M_{_1}$ and $E_{_2}\in P'$ then $Aut(E_{_i})\simeq \C^*$.  If $E_{_1},E_{_2}\in \bp Ext^1(L_{_1},L_{_2})$ are distinct then $E_{_1}$ and $E_{_2}$ are non isomorphic (  \cite[Lemma 3.3]{NAR}).  Let $K_{_1}$ be the subset of $\bp Hom(F_{_p},\mathcal{G}_{_p})$ whose closed points correspond to the triples $(F_{_1},\mathcal{G}_{_2},A)$ such that $rk(A)=2$. Then $K_{_1}$ is an open subset in $\bp Hom(F_{_p},\mathcal{G}_{_p})$. Note that by  Lemma \ref{semistable1}, any closed point of $K_{_1}$ is semistable. Therefore, by the coarse moduli property of $M_{_{12}}$, we get a morphism  $i_{_K}:K_{_1}\to M_{_{12}}$. By the above discussions  $i_{_K}$ is injective. Clearly, the image $i_{_K}(K_{_1})$ is dense in $K''$ since if $(F_{_1},F_{_2},A)\in K''\setminus i_{_K}(K_{_1})$ then $F_{_2}\simeq L_{_1}\oplus L_{_2}$ for some $L_{_1},L_{_2}\in J^{d_{_2}'}(X_{_2})$. Therefore, $dim(K_{_1})= dim(K'')$
%\simeq L_{_1}\oplus L_{_2}$ for some $L_{_1},L_{_2}\in J^{d'_{_2}}(X_{_2})\times J^{d'_{_2}}(X_{_2})$. Therefore, $i_{_K}(K_{_1})$ is dense in $K''$. Consider the natural action $\C^*\times \C^*$ on $Hom(F_{_p},\mathcal{G}_{_p})\setminus 0$. Clearly $K_{_1}$ is invariant under this action and $i_{_K}$ is $\C^*\times \C^*$ invariant, by the above discussions. Hence, $i_{_K}$ induces an injective morphism $i_{_K}:K_{_1}'\to K''$ where $K_{_1}'=K_{_1}/\C^*\times \C^*$. Hence, we have $dim(K_{_1}'')=dim(K'')$

 We have $\mbox{dim}(M_{_1})=3g_{_1}-3$ and $\mbox{dim}(P')=2g_{_2}-2$. Therefore, $\bp Hom(F_{_p},\mathcal{G}_{_p})=dim(K_{_1})=3g_{_1}-3+2g_{_2}-2+3=3g_{_1}+2g_{_2}-2$.  
 Note that $P$ is an dense open subvariety of $M_{_{12}}$, therefore $dim(P)=dim(M_{_{12}})$. Now, from the proof of Proposition \ref{morphism}, $p:P\to B$ is flat with fibres isomorphic to $\bp^3$ as  algebraic varieties. Therefore, $dim(P)=dim(B)+3=3(g_{_1}+g_{_2})-3$. Hence, we have $\mbox{dim} M_{_{12}}=3g_{_1}+3g_{_2}-3$. Since 
$Codim(K',M_{_{12}}) = dim(M_{_{12}})-dim(K')$ and $dim(K_{_1})= dim(K'')$, we see
 \begin{equation*}
\begin{split} 
 & Codim(K',M_{_{12}}) = 3g_{_1}+3g_{_2}-3-3g_{_1}-2g_{_2}+2\\
 & \hspace{3cm} =g_{_2}-1. \qedhere
\end{split}
\end{equation*}
 \end{proof}
Now we recall a well-known fact (see \cite[Lemma 12]{bs}).
\begin{lemma}\label{excision}
  Let $X$ be a smooth projective variety and $k:=Codim(X/U)$, where $U$ be an open subset of $X$ .
  Then we have $H^i(X,\Z)\simeq H^i(U,\Z)$ for all $i< 2k-1$.
  \end{lemma}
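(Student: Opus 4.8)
The plan is to pass to the long exact sequence of cohomology with supports along the closed complement $Z:=X\setminus U$, and then to observe that the supported cohomology groups vanish in the relevant range because $Z$ has small real dimension.

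First, write $n:=\dim_{\C}X$, so that $Z$ is a closed algebraic subset of complex codimension $\geq k$ and hence $\dim_{\R}Z\leq 2n-2k$. There is the exact sequence of the pair $(X,U)$,
\[
\cdots \longrightarrow H^i_Z(X,\Z)\longrightarrow H^i(X,\Z)\longrightarrow H^i(U,\Z)\longrightarrow H^{i+1}_Z(X,\Z)\longrightarrow \cdots,
\]
where $H^i_Z(X,\Z)=H^i(X,U;\Z)$ denotes cohomology of $X$ with supports in $Z$ and the middle arrow is restriction. Thus it suffices to prove $H^i_Z(X,\Z)=0$ for $i<2k$: then for $i<2k-1$ both the left-hand group (since $i<2k$) and the right-hand group (since $i+1<2k$) vanish, and $H^i(X,\Z)\to H^i(U,\Z)$ is an isomorphism.

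To obtain the vanishing, I would invoke Alexander--Lefschetz duality: since $X$ is a compact oriented topological manifold of real dimension $2n$, one has $H^i_Z(X,\Z)\cong H^{BM}_{2n-i}(Z,\Z)$, the Borel--Moore homology of $Z$. A (possibly singular) complex algebraic set of complex dimension $\leq n-k$, hence real dimension $\leq 2n-2k$, has vanishing Borel--Moore homology in degrees $>2n-2k$; one sees this by stratifying $Z$ into smooth locally closed pieces of real dimension $\leq 2n-2k$ and inducting up the skeleta via the long exact sequences relating Borel--Moore homology of a closed subset, the ambient space and the open complement. Therefore $H^i_Z(X,\Z)=0$ whenever $2n-i>2n-2k$, i.e. whenever $i<2k$, which completes the argument.

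The only delicate point is this vanishing $H^i_Z(X,\Z)=0$ for $i<2k$ without any smoothness hypothesis on $Z$. If one prefers to avoid Borel--Moore homology, the same conclusion follows by a direct d\'evissage: choose a stratification $Z=\bigsqcup_\alpha S_\alpha$ by smooth locally closed subvarieties of complex codimension $\geq k$, form a filtration $\emptyset=Z_{-1}\subset Z_0\subset\cdots\subset Z_m=Z$ by closed subsets with $Z_j\setminus Z_{j-1}$ smooth of real codimension $\geq 2k$ in the manifold $X\setminus Z_{j-1}$, and apply the Thom isomorphism (Gysin sequence) at each step, namely $H^i_{Z_j\setminus Z_{j-1}}(X\setminus Z_{j-1},\Z)\cong H^{i-2c_j}(Z_j\setminus Z_{j-1},\Z)$ with $c_j\geq k$, which vanishes for $i<2k$; the excision sequence $H^i_{Z_{j-1}}(X,\Z)\to H^i_{Z_j}(X,\Z)\to H^i_{Z_j\setminus Z_{j-1}}(X\setminus Z_{j-1},\Z)$ then propagates the vanishing inductively in $j$. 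Either route finishes the proof, and I expect the bookkeeping in this d\'evissage to be the main, albeit entirely routine, obstacle.
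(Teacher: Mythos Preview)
Your argument is correct and is the standard one. Note, however, that the paper does not actually prove this lemma: it simply records it as a well-known fact with a reference to \cite[Lemma 12]{bs}. So there is no proof in the paper to compare against, and your proposal supplies exactly the sort of argument one would expect behind that citation --- the long exact sequence with supports together with the vanishing of $H^i_Z(X,\Z)$ for $i<2k$, obtained either via Alexander--Lefschetz duality and the dimension bound on Borel--Moore homology, or via the stratification/Thom-isomorphism d\'evissage you outline. Both routes are fine here since $Z$ is a complex algebraic subset and hence admits a Whitney stratification by smooth locally closed subvarieties.
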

  Using the above Lemma and Proposition \ref{codimension} we immediately get the following
  \begin{proposition}\label{imp}
  With the above notations,
  \begin{item}
  \item[(i)] $H^i(M_{_{12}},\Z)\simeq H^i(P,\Z)$ for $i<2k-1$ where $k=g_{_2}-1$.
  \item[(ii)] $H^i(M_{_{21}},\Z)\simeq H^i(\overline P,\Z)$ for $i<2k'-1$ where $k'=g_{_1}-1$.
  \end{item}
  \end{proposition}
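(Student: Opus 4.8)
The plan is to read off both statements from the general excision principle of Lemma \ref{excision}, fed with the codimension estimates of Proposition \ref{codimension}. The only preliminaries to settle are that the ambient spaces are smooth projective varieties and that $P$ (resp. $\overline P$) is genuinely a Zariski open subset of the relevant ambient space.

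For smoothness: by Lemma \ref{det3} the fibres of $det_{_1}$ and $det_{_2}$ are smooth, and they are projective since $M_{_{12}}$ and $M_{_{21}}$ are projective; hence the fixed-determinant moduli spaces, which by the notational convention adopted at the end of the previous section we continue to denote $M_{_{12}}$ and $M_{_{21}}$, are smooth projective varieties. For openness: with $p:M_{_{12}}\to M_{_1}\times M_{_2}$ the morphism of Proposition \ref{morphism}, one has $K'=p^{-1}(M_{_1}\times K)$ with $K=M_{_2}\setminus M_{_2}^s$; since $M_{_2}^s$ is open in $M_{_2}$, the set $K$ is closed, so $K'$ is closed in $M_{_{12}}$ and $P=M_{_{12}}\setminus K'$ is open. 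The same reasoning, via the map $p'$ of Remark \ref{2}, shows $\overline P=M_{_{21}}\setminus K_{_2}'$ is open in $M_{_{21}}$.

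With these in hand, part (i) is immediate: Proposition \ref{codimension}(a) gives $\mathrm{Codim}(K',M_{_{12}})=g_{_2}-1=:k$, and Lemma \ref{excision} applied to the pair $(M_{_{12}},P)$ yields $H^i(M_{_{12}},\Z)\simeq H^i(P,\Z)$ for all $i<2k-1$. Part (ii) follows identically from Proposition \ref{codimension}(b), which gives $\mathrm{Codim}(K_{_2}',M_{_{21}})=g_{_1}-1=:k'$, and Lemma \ref{excision} applied to $(M_{_{21}},\overline P)$.

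There is essentially no hard step remaining here, the substantive work having been done in Proposition \ref{codimension}. The one point worth being careful about is that Lemma \ref{excision} is invoked for the fixed-determinant spaces, not for the full moduli spaces of all semistable triples; but its hypotheses (smooth projective ambient variety with an open subset of bounded complement) hold verbatim in the fixed-determinant setting by the remarks above, so the argument goes through unchanged.
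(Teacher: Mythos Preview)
Your proof is correct and follows exactly the same approach as the paper, which simply states that the proposition follows immediately from Lemma \ref{excision} and Proposition \ref{codimension}. You have merely spelled out the verification of the hypotheses (smoothness and projectivity of the ambient space, openness of $P$ and $\overline P$) that the paper leaves implicit.
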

 \subsection{Computation of cohomology groups of \texorpdfstring{$M_{_{0, \xi}}$}{}}
In this subsection we will outline the strategy to compute the Betti numbers of the component $M_{_{12}}$(resp. $M_{_{21}}$) and compute the third cohomology of $M_{_{0,\xi}}$ in full details.
First we compute the Betti numbers of $P$ (resp. $\overline P$) using Leray-Hirsh Theorem:
\begin{theorem}\label{LH}(Leray-Hirsh)
Let $f:X\to Y$ be a topological fibre bundle with fibres isomorhic to $F$. Suppose, $e_{_1},\cdots,e_{_n}\in H^*(X)$ such that $H^*(X_{_y})$ is freely generated by $i_{_y}^*e_{_1},\cdots, i_{_y}^*e_{_n}$ for all $y\in Y$ where $X_{_y}=f^{-1}(y)$ and $i_{_y}:X_{_y}\to X$ is the inclusion. Then $H^*(X)$ is freely generated as a $H^*(Y)$-module by $e_{_1},\cdots,e_{_n}$.
\end{theorem}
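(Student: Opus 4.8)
The plan is to reduce the theorem to showing that the natural $H^*(Y)$-module homomorphism
\[
\Phi\colon H^*(Y)\otimes_{\Q}H^*(F)\longrightarrow H^*(X),\qquad \alpha\otimes c\longmapsto f^*\alpha\cup\widetilde{c},
\]
is an isomorphism, where the hypothesis is used to identify $H^*(F)\cong H^*(X_y)$ with the graded subspace $V:=\langle e_1,\dots,e_n\rangle\subseteq H^*(X)$ via $e_j\mapsto i_y^*e_j$, and $\widetilde c\in V$ denotes the element corresponding to $c$. The first step is to observe that this identification is canonical, i.e.\ the monodromy action of $\pi_1(Y)$ on $H^*(X_y)$ is trivial: parallel transport along a loop carries the restriction $i_y^*e_j$ of the globally defined class $e_j$ to itself, and by hypothesis these restrictions span $H^*(X_y)$. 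Hence the local systems $R^qf_*\Q$ on $Y$ are constant with fibre $H^q(F)$, and the Leray spectral sequence of $f$ reads $E_2^{p,q}=H^p(Y;R^qf_*\Q)\cong H^p(Y)\otimes_{\Q}H^q(F)$.

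The main step is degeneration at $E_2$. Each $e_j\in H^{d_j}(X)$ is a global cohomology class, hence a permanent cycle, so the class it represents in $E_2^{0,d_j}=H^{d_j}(F)$ is annihilated by every differential $d_r$, $r\ge 2$. Since the multiplication $E_2^{*,0}\otimes E_2^{0,*}\to E_2^{*,*}$ is an isomorphism (it is the K\"unneth product $H^*(Y)\otimes H^*(F)$), and each $d_r$ is a derivation which vanishes on $E_r^{*,0}$ for degree reasons, an easy induction gives $d_r\equiv 0$ for all $r\ge 2$; thus $E_2=E_\infty$. Consequently the Leray filtration on $H^*(X)$ is bounded with $\mathrm{gr}^pH^{p+q}(X)\cong E_2^{p,q}=H^p(Y)\otimes H^q(F)$, and by the multiplicativity of the spectral sequence the map $\Phi$ is filtered with $\mathrm{gr}\,\Phi$ equal to this isomorphism in each bidegree; since both filtrations are finite, $\Phi$ is itself an isomorphism. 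Unravelling the identification $H^*(F)\cong V$, this says precisely that $H^*(X)$ is a free $H^*(Y)$-module with basis $e_1,\dots,e_n$.

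The one genuinely delicate point is the degeneration $E_2=E_\infty$, and it is here that the hypothesis --- global classes $e_j$ restricting to fibrewise bases --- is used in an essential way. For the bases $Y$ that occur in this paper (always smooth projective varieties) one can avoid the spectral sequence altogether: $Y$ carries a finite good cover $\{U_\alpha\}$ trivializing $f$, over each $f^{-1}(U_\alpha)$ and over each $f^{-1}$ of a finite intersection of the $U_\alpha$ the map $\Phi$ is an isomorphism by the K\"unneth theorem, and a Mayer--Vietoris induction on the number of opens together with the five-lemma then propagates the isomorphism to $X$. I would include whichever argument is shorter in the final write-up; the Mayer--Vietoris version is completely self-contained for the compact algebraic bases to which the theorem is applied here.
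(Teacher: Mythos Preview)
Your proposal is correct: both the spectral-sequence argument and the Mayer--Vietoris induction are standard and valid proofs of the Leray--Hirsch theorem. However, there is nothing to compare against here, because the paper does not prove this statement at all. Theorem~\ref{LH} is simply recorded, under its classical name, as a black-box tool from algebraic topology; the paper moves directly from its statement to the next proposition without any argument. So you have written a proof where the paper intentionally omits one.

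One small remark on your closing paragraph: you say the bases $Y$ arising in the paper are ``always smooth projective varieties,'' but in fact the main application (Proposition~\ref{lbetti}) has base $B = M_{_1}\times M_{_2}^s$, which is only quasi-projective. This does not damage your Mayer--Vietoris argument---a smooth complex algebraic variety is homotopy equivalent to a finite CW complex and therefore admits a finite good cover---but you should adjust the wording if you keep that version.
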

\begin{proposition}\label{lbetti}
The $k$-th Betti number $b_{_{k}}(P)=\sum_{_{l+m=k}}b_{_l}(B)b_{_m}(\bp^3)$ (resp. $b_{_{k}}(\overline P)=\sum_{_{l+m=k}}b_{_l}(\overline B)b_{_m}(\bp^3)$.
%The variety $P$ (resp. $\overline{P}$) is simply connected. Moreover, the second Betti number $b_{_2}(P)=3$ (resp. $b_{_2}(\overline P)=3$) and the third Betti number $b_{_3}(P)=2g$ (resp. $b_{_3}(\overline P)=2g$) where $g$ is the arithmatic genus of $X$.
 \begin{proof}
  Since  $P$ and $B$ are both smooth varieties and $p:P\to B$ is a submersion we get $p$ is smooth. Therefore, the fibres of $p|_{_P}$ are smooth. From the proof of Proposition \ref{morphism}, it follows that the fibres of $p$ are isomorphic to $\bp^3$ as algebraic varieties. Choose a relatively ample line bundle $L$ over $P$. Now $p_{_*}L$ is locally free by Zariski Main theorem. Therefore,  we get that the dimension of $H^0(p^{-1}(b),L|_{_{p^{-1}(b)}})$ is constant for all $b\in B$. Hence, $L|_{_{p^{-1}(b)}}=\mathcal{O}(k)$ for some $k>0$ for all $b\in B$. Consider the cohomolgy classes $c_{_1}(L)$, $c_{_1}(L)^2$, $c_{_1}(L)^3$. We denote by $j_{_b}:p^{-1}(b)\to P$ the inclusion. Then $H^*(p^{-1}(b))$ is freely generated by $j_{_b}^*c_{_1}(L)$, $j_{_b}^*c_{_1}(L)^2$ and $j_{_b}^*c_{_1}(L)^3$ for all $b\in B$. Thus using Leray-Hirsch theorem we get: \[b_{_k}(P)=\sum_{_{l+m=k}}b_{_l}(B)b_{_m}(\bp^3).\] where $b_{_k}(X)$ denotes the $k$th Betti number of a space $X$. %By Lemma \ref{topology}, \ref{topology2} and the Kunneth fomula we get that $b_{_1}(B)=0$, $b_{_2}(B)=3$ and $b_{_3}(B)=2g$. Also $b_{_k}(\bp^3)=1$ if $k$ is even and $0$ otherwise. Thus, by the above observation we get that $b_{_1}(P)=0$, $b_{_2}(P)=3$ and $b_{_3}(P)=2g_{_1}$. Moreover $P$ is simply connected as $B$ is simply connected by Lemma \ref{topology}, \ref{topology1} and Kunneth formula.
 \end{proof}
\end{proposition}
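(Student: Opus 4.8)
The plan is to apply the Leray--Hirsch theorem (Theorem \ref{LH}) to the topological $\bp^3$-bundle $p\colon P\to B$ constructed in Proposition \ref{morphism}. For this I need finitely many classes in $H^*(P)$ whose restrictions to each fibre $p^{-1}(b)\cong\bp^3$ freely generate $H^*(\bp^3)$. Since $H^*(\bp^3)=\Q[h]/(h^4)$ with $h$ the hyperplane class, it suffices to exhibit a single class $\eta\in H^2(P)$ whose restriction to every fibre is a nonzero rational multiple of $h$, and then use $1,\eta,\eta^2,\eta^3$.

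First I would construct such an $\eta$. Choose a line bundle $L$ on $P$ that is relatively ample for $p$ and set $\eta:=c_{_1}(L)\in H^2(P)$. The restriction $L|_{_{p^{-1}(b)}}$ to $p^{-1}(b)\cong\bp^3$ is ample, hence isomorphic to $\mathcal{O}_{_{\bp^3}}(d)$ for some integer $d\geq 1$; moreover, since $p$ is proper and the base $B=M_{_1}\times M_{_2}^s$ is connected, $d$ does not depend on $b$ (equivalently, by Zariski's Main Theorem $p_{_*}L$ is locally free, so $h^0(p^{-1}(b),L|_{_{p^{-1}(b)}})$ is constant in $b$, which forces the same $d$). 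Writing $j_{_b}\colon p^{-1}(b)\hookrightarrow P$ for the inclusion, we then have $j_{_b}^*\eta=d\,h\neq 0$, so that $j_{_b}^*1,\,j_{_b}^*\eta,\,j_{_b}^*\eta^2,\,j_{_b}^*\eta^3$ equal $1,\,dh,\,d^2h^2,\,d^3h^3$, a $\Q$-basis of $H^*(\bp^3)$ for every $b\in B$.

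Now Theorem \ref{LH} gives that $H^*(P)$ is a free $H^*(B)$-module on the generators $1,\eta,\eta^2,\eta^3$, which lie in degrees $0,2,4,6$. Taking the degree-$k$ part yields a $\Q$-vector space isomorphism $H^k(P)\cong\bigoplus_{i=0}^{3}H^{k-2i}(B)$, and this is precisely $b_{_{k}}(P)=\sum_{_{l+m=k}}b_{_l}(B)\,b_{_m}(\bp^3)$, since $b_{_m}(\bp^3)$ equals $1$ for $m\in\{0,2,4,6\}$ and $0$ otherwise. The identical argument applied verbatim to the $\bp^3$-fibration $p'\colon\overline P\to B'$ of Remark \ref{2}, with $B'=M_{_1}^s\times M_{_2}$ (denoted $\overline B$ in the statement), gives $b_{_{k}}(\overline P)=\sum_{_{l+m=k}}b_{_l}(\overline B)\,b_{_m}(\bp^3)$.

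There is no genuinely hard step here: the essential input is the topological $\bp^3$-bundle structure of $p\colon P\to B$, which is already established in Proposition \ref{morphism}. The only points to verify are that a relatively ample $L$ restricts to a nonzero multiple of the hyperplane class on each fibre (immediate from fibrewise ampleness) and the mild constancy statement for $d$ over the connected base; everything after that is a formal application of Leray--Hirsch, and this is the step I would expect to consume the bulk of the write-up only insofar as one wants to record the bundle hypothesis cleanly.
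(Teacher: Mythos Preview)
Your proposal is correct and follows essentially the same approach as the paper: choose a relatively ample line bundle $L$ on $P$, argue via constancy of $h^0$ on fibres (Zariski's Main Theorem) that $L$ restricts to $\mathcal{O}(d)$ on every fibre for a fixed $d>0$, and then feed the classes $1,c_{_1}(L),c_{_1}(L)^2,c_{_1}(L)^3$ into Leray--Hirsch. The only cosmetic difference is that you explicitly include the class $1$ among the generators, which the paper leaves implicit.
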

As a corollary of the above Proposition we immediately get:
\begin{corollary}\label{betti}
 $(i)$ $b_{_1}(P)=0$ (resp.$b_{_1}(\overline P)=0$) , $(ii)$ $b_{_2}(P)=3$ (resp. $b_{_2}(\overline P)=3$) and $(iii)$ $b_{_3}(P)=2g$ (resp. $b_{_3}(\overline P)=2g$) where $g$ is the arithmatic genus of $X_{_0}$ and $b_{_i}$'s are the Betti numbers, $i=1,2,3$.
 \begin{proof}
  By Lemma \ref{topology} and the Kunneth formula it follows that $b_{_1}(B)=b_{_1}(M_{_1})+b_{_1}(M_{_2}^s)=0$, $b_{_2}(B)=b_{_2}(M_{_1})+b_{_2}(M_{_2}^s)=1+1=2$ and $b_{_3}(B)=b_{_3}(M_{_1})+b_{_3}(M_{_2}^s)=2g_{_1}+2g_{_2}=2g$. Thus by Proposition \ref{lbetti} we get $b_{_1}(P)=0$, $b_{_2}(P)=3$ and $b_{_3}(P)=2g$.
  \end{proof}
\end{corollary}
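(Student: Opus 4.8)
The plan is to read the corollary off Proposition \ref{lbetti} by computing the low-degree Betti numbers of the base $B=M_{_1}\times M_{_2}^{s}$ (and of $\overline B$) via the K\"unneth formula, feeding in Lemma \ref{topology} for the cohomology of the two factors. Since $b_{_0}(\bp^{3})=b_{_2}(\bp^{3})=1$ and $b_{_1}(\bp^{3})=b_{_3}(\bp^{3})=0$, the identity $b_{_k}(P)=\sum_{_{l+m=k}}b_{_l}(B)b_{_m}(\bp^{3})$ of Proposition \ref{lbetti} collapses to
\[ b_{_1}(P)=b_{_1}(B),\qquad b_{_2}(P)=b_{_2}(B)+b_{_0}(B),\qquad b_{_3}(P)=b_{_3}(B)+b_{_1}(B), \]
and likewise for $\overline P$ in terms of $\overline B$; so it suffices to establish $b_{_0}(B)=1$, $b_{_1}(B)=0$, $b_{_2}(B)=2$, $b_{_3}(B)=2g$, and the same for $\overline B$.

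For $B=M_{_1}\times M_{_2}^{s}$ I would compute $b_{_i}(B)$ for $i\le 3$ by K\"unneth. By Lemma \ref{topology}(1) the odd-determinant space $M_{_1}$ has $b_{_1}(M_{_1})=0$, $b_{_2}(M_{_1})=1$ and $b_{_3}(M_{_1})=2g_{_1}$. By Lemma \ref{topology}(2), $M_{_2}^{s}$ is simply connected, so $b_{_0}(M_{_2}^{s})=1$ and $b_{_1}(M_{_2}^{s})=0$, and $b_{_3}(M_{_2}^{s})=2g_{_2}$; moreover $b_{_2}(M_{_2}^{s})=1$, obtained by transferring $b_{_2}(M_{_2})=1$ across the open inclusion $M_{_2}^{s}\hookrightarrow M_{_2}$. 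Because $b_{_1}$ vanishes on both factors, every K\"unneth cross-term in degrees $\le 3$ drops out, leaving $b_{_0}(B)=1$, $b_{_1}(B)=0$, $b_{_2}(B)=1+1=2$ and $b_{_3}(B)=2g_{_1}+2g_{_2}=2g$. Substituting into the displayed formulas gives $b_{_1}(P)=0$, $b_{_2}(P)=3$ and $b_{_3}(P)=2g$.

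For the parenthetical assertions about $\overline P$ I would use Remark \ref{2}, which exhibits $\overline P\to\overline B$ as a $\bp^{3}$-bundle over $\overline B={M_{_1}'}^{s}\times M_{_2}'$; here the parities of the determinant degrees are swapped, so $M_{_2}'$ (odd degree) is smooth while ${M_{_1}'}^{s}$ is the stable locus of the possibly singular $M_{_1}'$ (even degree). The computation of the previous paragraph applies verbatim with the two factors interchanged, now using $g_{_1}>3$ in place of $g_{_2}>3$ for the codimension bound on $M_{_1}'\setminus {M_{_1}'}^{s}$, and yields $b_{_1}(\overline B)=0$, $b_{_2}(\overline B)=2$, $b_{_3}(\overline B)=2g$, whence $b_{_1}(\overline P)=0$, $b_{_2}(\overline P)=3$, $b_{_3}(\overline P)=2g$.

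Everything here is Leray--Hirsch and K\"unneth bookkeeping except the one non-formal input $b_{_2}(M_{_2}^{s})=1$ (and its analogue $b_{_2}({M_{_1}'}^{s})=1$), which is where the only real care is needed: Lemma \ref{topology}(2) records $b_{_2}$ only for the full, possibly singular, moduli space and records $b_{_1},b_{_3}$ only for its stable locus, so one genuinely has to move $b_{_2}$ from $M_{_2}$ to $M_{_2}^{s}$. For this one notes that the complement, the strictly semistable locus, is the image of $J^{d_{_2}/2}(X_{_2})$ under $L\mapsto[L\oplus(\xi_{_2}\otimes L^{-1})]$ and hence has dimension $g_{_2}$ inside the $(3g_{_2}-3)$-dimensional space $M_{_2}$, that is, codimension $2g_{_2}-3$, which is $\ge 2$ since $g_{_2}>3$; a purity/excision argument in degree $2$ then gives $b_{_2}(M_{_2}^{s})=b_{_2}(M_{_2})=1$. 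This is the one place where the hypothesis $g_{_i}>3$ is actually needed in the present proof.
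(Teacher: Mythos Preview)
Your proof is correct and follows the same route as the paper: compute $b_i(B)$ for $i\le 3$ from Lemma \ref{topology} via K\"unneth, then feed these into Proposition \ref{lbetti}. In fact you are more careful than the paper on one point: the paper simply writes $b_{_2}(M_{_2}^{s})=1$ whereas Lemma \ref{topology}(2) literally only records $b_{_2}(M_{_2})=1$, and you correctly flag that a transfer from $M_{_2}$ to $M_{_2}^{s}$ is needed. One caveat worth keeping in mind: your purity/excision step is phrased for a smooth ambient space, but $M_{_2}$ is singular precisely along the strictly semistable locus, so Lemma \ref{excision} does not apply verbatim; in practice $b_{_2}(M_{_2}^{s})=1$ is available from the references cited for Lemma \ref{topology} (e.g.\ \cite{bs}), which is presumably what the paper is silently invoking.
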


\begin{remark}
 By above proposition all the Betti numbers of $P$ can be computed using the above argument as the Betti numbers of the varities $M_{_1}$ and $M_{_2}^s$ are well known (see \cite[page 113]{bs}).
\end{remark}
Let $g_{_1},g_{_2}>3$. Then as a consequence of Proposition \ref{imp} and Corollary \ref{betti} we immediately get:
\begin{theorem}\label{great}
\begin{enumerate}
With the notations above,
 \item  $H^1(M_{_{12}})=0$ (resp. $H^1(M_{_{21}})=0$). 
 \item $H^2(M_{_{12}})\simeq \Q\oplus \Q \oplus \Q$ (resp. $H^2(M_{_{21}})\simeq \Q\oplus \Q \oplus \Q$).
 \item $H^3(M_{_{12}})\simeq \Q^{2g}$ (resp. $H^2(M_{_{21}})\simeq \Q^{2g}$).
\end{enumerate}
\begin{remark}
 $M_{_{12}}$ (resp. $M_{_{21}}$) is a smooth, projective unirational variety by Lemma \ref{unirational}. Therefore, by a result of Serre (\cite{Se}) $M_{_{12}}$ (resp. $M_{_{21}}$) is a simply connected variety.
\end{remark}

%\begin{proof}
 %$(1)$ follows from the unirationality of $M_{_{12}}$ and the fact the fundamental group of smooth projective uniratinal variety is trivial (see \cite[Lemma 1]{Se}. For the proof of $(2)$ we first observe that since $M_{_{12}}$ is simply connected by universal coefficient theorem $H^2(M_{_{12}},\Z)$ has no torsion. Thus by Corollary \ref{betti} and \ref{imp} we get $(2)$. The proof of $(3)$ follows from Theorem \ref{PP} and \ref{imp}
 %\end{proof}
\end{theorem}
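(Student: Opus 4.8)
The plan is to deduce this theorem as a purely formal consequence of Proposition \ref{imp} and Corollary \ref{betti}, with the only genuine input being a numerical check on the codimension bound. First I would record the elementary observation that, $g_{_1}$ and $g_{_2}$ being integers, the hypothesis $g_{_i}>3$ means $g_{_i}\geq 4$. Hence, writing $k=g_{_2}-1$ and $k'=g_{_1}-1$ as in Proposition \ref{imp}, one has $2k-1=2g_{_2}-3\geq 5$ and $2k'-1=2g_{_1}-3\geq 5$; in particular the ranges $i<2k-1$ and $i<2k'-1$ both contain $i=1,2,3$. (This is precisely the step that uses $g_{_i}>3$ rather than merely $g_{_i}\geq 2$: for $g_{_2}=3$ the bound $2k-1$ would equal $3$ and the degree-$3$ isomorphism would fail.)

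Next I would invoke Proposition \ref{imp} to get, for $i=1,2,3$, isomorphisms $H^i(M_{_{12}},\Z)\simeq H^i(P,\Z)$ and $H^i(M_{_{21}},\Z)\simeq H^i(\overline P,\Z)$. Tensoring with $\Q$ — equivalently, passing to ranks via the universal coefficient theorem, which is harmless here — gives $H^i(M_{_{12}})\simeq H^i(P)$ and $H^i(M_{_{21}})\simeq H^i(\overline P)$ for $i=1,2,3$, so that $\dim_{\Q}H^i(M_{_{12}})=b_i(P)$ and $\dim_{\Q}H^i(M_{_{21}})=b_i(\overline P)$. Finally I would substitute the Betti numbers from Corollary \ref{betti}, namely $b_1(P)=b_1(\overline P)=0$, $b_2(P)=b_2(\overline P)=3$, and $b_3(P)=b_3(\overline P)=2g$ with $g=g_{_1}+g_{_2}$ the arithmetic genus of $X_{_0}$. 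This yields items (1), (2), (3) verbatim.

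For the attached remark, $M_{_{12}}$ and $M_{_{21}}$ are smooth, projective and unirational by Lemma \ref{unirational}, so Serre's theorem (\cite{Se}) forces $\pi_1=0$; this is consistent with item (1) and with the vanishing of torsion in $H^3$ in low degrees already implicit in the integral isomorphisms above. I do not expect any real obstacle in this argument: the content is entirely in the two cited results, and the only care needed is the bookkeeping between $\Z$- and $\Q$-coefficients and the verification that $2k-1>3$ under $g_{_i}>3$. If anything, the "hard part" was done earlier, in the codimension estimate of Proposition \ref{codimension} and the Leray–Hirsch computation of Proposition \ref{lbetti}, both of which we are entitled to assume.
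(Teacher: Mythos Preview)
Your proposal is correct and matches the paper's approach exactly: the paper presents Theorem \ref{great} as an immediate consequence of Proposition \ref{imp} and Corollary \ref{betti} under the standing assumption $g_{_1},g_{_2}>3$, with no further argument given. Your explicit verification that $2k-1=2g_{_2}-3\geq 5$ and $2k'-1=2g_{_1}-3\geq 5$ is the only bookkeeping needed, and you have it right.
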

\subsection{Continuation of the cohomology computation}
%Let $P_{_0}=P\cup \overline P$ and $D_{_0}=P\cap \overline P$ where $P$ (resp. $\overline P$) is the open subvarity of $M_{_{12}}$ (resp. $M_{_{21}}$) as defined in the previous section. First we will compute the cohomology groups of $P_{_0}$.  Before we proceed to compute the cohomology group $H^3(P_{_0})$ we will make few more observations.
We have $M_{_{0,\xi}}=M_{_{12}}\cup M_{_{21}}$. Let $D=M_{_{12}}\cap M_{_{21}}$. We will compute the third cohomology group of $M_{_{0,\xi}}$ using Mayer-Vietoris sequence. Before we compute the cohomology group we will make few more observations.

Let  $P_{_1}$ be the moduli space of rank $2$, parabolic semistable bundles $(F_{_1},0\subset F^2F_{_1}\subset F_{_1}(p))$ on $X_{_1}$  with parabolic weights 
 $0<\frac {a_{_1}}{2}<
 \frac {a_{_2}}{2}<1$ and $detF_{_1}\simeq \xi_{_1}$. Let $P_{_2}$ be the moduli space of rank $2$, parabolic semistable bundles $(F_{_2},0\subset F^2F_{_2}(p)\subset F_{_2}(p))$ on $X_{_2}$  with parabolic weights 
 $0<\frac {a_{_1}}{2}<
 \frac {a_{_2}}{2}<1$ and $detF_{_2}\simeq \xi_{_2}$.
 By Fact \ref{para} $(a)$ any parabolic semistable bundle in $P_{_1}$ (resp. in $P_{_2}$) is parabolic stable. Therefore, one can show that $P_{_i}$'s are smooth, $i=1,2$. Since $E\in P_{_i}$ is semistable by Fact \ref{para} $(b)$,  we get morphisms $q_{_i}:P_{_i}\to M_{_i}$, $i=1,2$. Let $P_{_2}^s=q_{_2}^{-1}(M_{_2}^s)$. Thus we get a morphism $q:=(q_{_1},q_{_2}):P_{_1}\times P_{_2}\to M_{_1}\times M_{_2}$ such that $q^{-1}(B)=P_{_1}\times P_{_2}^s$ where $B:=M_{_1}\times M_{_2}^s$. Now by using the same argument given in \cite[Theorem 6.1]{ns} we can show that there is an embedding $i:P_{_1}\times P_{_2}\to M_{_{12}}$ such that the image is isomorphic to $D$. Thus we have a commutative diagram of morphisms:
 \begin{equation}\label{parab}
\xymatrix{
P_{_1}\times P_{_2} \ar[rr]^{i} \ar[rd]_{q} && M_{_{12}}\ar[ld]^{p} \\
&M_{_1}\times M_{_2}
}
\end{equation}
where $p$ is the morphism in Proposition \ref{morphism}.

Let $q_{_1}':P_{_1}\to M_{_1}'$ be the morphism defined by $E_{_1}\to E_{_1}'$  and $q_{_2}':P_{_2}\to M_{_2}'$ defined by $E_{_2}\to E_{_2}'$ where $E_{_i}'$ are the Hecke modifications of $E_{_i}$, $i=1,2$ (see Remark \ref{direction}). Then we get another commutative diagram of morphisms: 
\begin{equation}\label{parab1}
\xymatrix{
P_{_1}\times P_{_2} \ar[rr]^{j} \ar[rd]_{q'} && M_{_{21}}\ar[ld]^{p} \\
&M_{_1}'\times M_{_2}'
}
\end{equation}
where the morphism $q':P_{_1}\times P_{_2}\to M_{_1}'\times M_{_2}'$ is given by the association $(E_{_1},E_{_2})\mapsto (E_{_1}',E_{_2}')$ and $p'$ is the morphism in Remark \ref{2}.

In the following lemma we summarize some topological facts about the moduli spaces $P_{_i}$, $i=1,2$.
\begin{lemma}\label{topology1}{\ }
\noindent$(i)$ $P_{_i}$, $i=1,2$, are smooth, projective and rational variety being  $\bp^1$- bundles associated to algebraic vector bundles over coprime moduli spaces (see Remark \ref{rational}). In particular, $P_{_i}$ are simply connected and $Pic(P_{_i})\simeq H^2(P_{_i},\Z)$ for $i=1,2$.\\
 $(ii)$ $H^1(P_{_i},\Z)=0$, $H^2(P_{_i},\Z)\simeq \Z\oplus \Z$ and $H^3(P_{_i},\Z)\simeq \Z^{2g_{_i}}$ for $i=1,2$ (follows from the previous statement).\\
\end{lemma}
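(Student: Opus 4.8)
The plan is to realise each $P_{_i}$ as the projectivisation of a rank $2$ algebraic vector bundle over a smooth, projective, \emph{fine} moduli space $N_{_i}$ of rank $2$ bundles on $X_{_i}$ (the ``coprime'' moduli space of Remark~\ref{rational}), and then to deduce everything from the resulting $\bp^1$-fibration $P_{_i}\to N_{_i}$. Granting that structure, part $(i)$ is immediate: $P_{_i}=\bp(\mathcal E)$ over a smooth projective rational $N_{_i}$ is again smooth, projective and rational; the homotopy exact sequence of the fibration together with $\pi_{_1}(\bp^1)=1$ and $\pi_{_1}(N_{_i})=1$ (Lemma~\ref{topology}$(1)$, \cite{ne1}) gives $\pi_{_1}(P_{_i})=1$; and since $H^1(\mathcal O_{P_{_i}})=H^2(\mathcal O_{P_{_i}})=0$ (Hodge numbers $h^{0,q}=0$ for $q>0$ on a rational variety, or the Leray sequence of the $\bp^1$-bundle applied to the analogous vanishing on $N_{_i}$), the exponential sequence yields $Pic(P_{_i})\simeq H^2(P_{_i},\Z)$.

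To produce the fibration structure, treat the two cases separately. For $i=1$, $d_{_1}=\deg\xi_{_1}$ is odd, so a rank $2$ semistable bundle on $X_{_1}$ with determinant $\xi_{_1}$ is automatically stable, $M_{_1}$ is a fine moduli space with a universal bundle $\mathcal E_{_1}$ on $X_{_1}\times M_{_1}$, and the quasi-parabolic datum over a stable bundle $E$ is just a line in the $2$-dimensional fibre $E(p)$; hence the forgetful morphism $q_{_1}\colon P_{_1}\to M_{_1}$ identifies $P_{_1}$ with $\bp\bigl(\mathcal E_{_1}|_{_{p\times M_{_1}}}\bigr)$. For $i=2$, $d_{_2}$ is even, $M_{_2}$ is not fine, and the underlying bundle of a point of $P_{_2}$ may be strictly semistable; instead use the Hecke-modification morphism $q_{_2}'\colon P_{_2}\to M_{_2}'$, $E_{_2}\mapsto E_{_2}'$, of diagram \eqref{parab1}. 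Since $\det E_{_2}'\simeq\xi_{_2}(-p)$ has odd degree, $M_{_2}'$ is a fine moduli space with universal bundle $\mathcal E_{_2}'$, and by Remark~\ref{direction} the fibre of $q_{_2}'$ over $[E_{_2}']$ is exactly the $\bp^1$ of lines in $E_{_2}'(p)$ along which one performs the inverse Hecke modification to recover $E_{_2}$; thus $P_{_2}\simeq\bp\bigl(\mathcal E_{_2}'|_{_{p\times M_{_2}'}}\bigr)$. In both cases $N_{_i}$ (namely $M_{_1}$ and $M_{_2}'$) is an odd-determinant rank $2$ moduli space, hence smooth, projective, rational and simply connected.

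For part $(ii)$, apply the Leray--Hirsch theorem (Theorem~\ref{LH}, with $\Z$-coefficients) to the $\bp^1$-bundle $P_{_i}\to N_{_i}$ using the two classes $1$ and $\zeta:=c_{_1}(\mathcal O_{\bp}(1))$, which restrict to a basis of $H^*(\bp^1,\Z)$ on each fibre. This gives isomorphisms $H^k(P_{_i},\Z)\simeq H^k(N_{_i},\Z)\oplus H^{k-2}(N_{_i},\Z)$. By Lemma~\ref{topology}$(1)$ the odd-determinant space $N_{_i}$ is simply connected with $H^0(N_{_i},\Z)=\Z$, $H^1(N_{_i},\Z)=0$, $H^2(N_{_i},\Z)=\Z$ and $H^3(N_{_i},\Z)=\Z^{2g_{_i}}$, and these groups are torsion-free in degrees $\le 3$ (degrees $1,2$ because $H_{_0},H_{_1}$ have no torsion by simple connectedness, degree $3$ because $H^3(N_{_i},\Z)_{_{\mathrm{tor}}}=0$). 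Substituting $k=1,2,3$ yields $H^1(P_{_i},\Z)=0$, $H^2(P_{_i},\Z)\simeq\Z\oplus\Z$ and $H^3(P_{_i},\Z)\simeq\Z^{2g_{_i}}$, as claimed.

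The main obstacle is the even-determinant case $i=2$: since $M_{_2}$ is not a fine moduli space and a parabolic-stable pair can have strictly semistable underlying bundle, one cannot build the projective bundle over $M_{_2}$ directly, and one must instead route through the odd-determinant space $M_{_2}'$ via Hecke modification, checking carefully (using Remark~\ref{direction}, as in the proof of \cite[Theorem~6.1]{ns}) that $q_{_2}'$ is genuinely the $\bp^1$-bundle $\bp(\mathcal E_{_2}'|_{_{p\times M_{_2}'}})$. The remaining ingredients --- rationality and simple connectedness of a projective bundle over a rational simply connected base, the exponential-sequence computation of $Pic$, and the torsion-freeness needed to run Leray--Hirsch over $\Z$ --- are routine given Lemma~\ref{topology} and \cite{ne1}.
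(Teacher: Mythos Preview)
Your proof is correct and follows the same approach as the paper: the lemma is stated there without a detailed proof, pointing only to Remark~\ref{rational} (and implicitly to the Hecke correspondence in Section~\ref{torel}) for the identification of $P_{_i}$ with $\bp(\mathcal E_{_{x_{_i}}})$ over the odd-determinant moduli space, which is exactly the $\bp^1$-bundle structure you exploit. Your routing of $P_{_2}$ through the odd space $M_{_2}'$ via Hecke modification, and the subsequent Leray--Hirsch and exponential-sequence arguments, are precisely the details the paper leaves implicit.
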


\begin{lemma}\label{third}
 \noindent
  $(1)$ $q^*:H^3(P_{_1}\times P_{_2}^s)\simeq H^3(M_{_1}\times M_{_2}^{s})$.\\
  $(2)$ $p^*:H^3(P)\simeq H^3(M_{_1}\times M_{_2}^s)$.\\
 \begin{proof}
 $(1)$ From the fact \ref{para} $(c)$ it follows that the topological fibre of $q:P_{_1}\times P_{_2}^s\to M_{_1}\times M_{_2}^s$ is $\bp^1\times \bp^1$. Using the similar arguments given in \ref{morphism} and \ref{betti} we can show that $q$ is a topological $\bp^1\times \bp^1$- bundle satisfying the hypothesis of the Leray-Hirsch theorem. Thus by Leray-Hirsch theorem $q^*:H^3(P_{_1}\times P_{_2}^s)\simeq H^3(M_{_1}\times M_{_2}^{s})$.
 
 $(2)$ The proof is already given in \ref{betti}.
\end{proof}
\end{lemma}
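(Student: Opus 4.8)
The plan is to recognise both isomorphisms as applications of the Leray--Hirsch theorem (Theorem \ref{LH}) combined with the vanishing of the first Betti numbers of the relevant bundle moduli spaces (Lemma \ref{topology}), in the same spirit as the computation in Proposition \ref{lbetti}.

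For part $(2)$, by Proposition \ref{morphism} the morphism $p\colon P\to B=M_{_1}\times M_{_2}^s$ is a topological $\bp^3$-bundle, and the proof of Proposition \ref{lbetti} already furnishes a relatively ample line bundle $L$ on $P$ whose powers $c_{_1}(L),c_{_1}(L)^2,c_{_1}(L)^3$ restrict to generators of $H^*(\bp^3)$ on each fibre. Leray--Hirsch then identifies $H^*(P)$ with the free $H^*(B)$-module on $1,c_{_1}(L),c_{_1}(L)^2,c_{_1}(L)^3$; in total degree $3$ only $1$ (degree $0$) and $c_{_1}(L)$ (degree $2$) can contribute, so $H^3(P)\cong H^3(B)\cdot 1\oplus H^1(B)\cdot c_{_1}(L)$. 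By the Kunneth formula and Lemma \ref{topology}, $H^1(B)=H^1(M_{_1})\oplus H^1(M_{_2}^s)=0$, and the surviving summand is precisely $p^*H^3(B)$; hence $p^*$ is an isomorphism on $H^3$.

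For part $(1)$, I would first check that $q\colon P_{_1}\times P_{_2}^s\to M_{_1}\times M_{_2}^s$ is a topological $\bp^1\times\bp^1$-bundle. Over the stable locus $M_{_i}^s$, Fact \ref{para}$(c)$ shows that every quasi-parabolic structure on a bundle $F$ is parabolic semistable, hence parabolic stable by Fact \ref{para}$(a)$, so the fibre of $q_{_i}\colon P_{_i}\to M_{_i}$ over $[F]$ is the whole projective line $\bp(F(p))\cong\bp^1$; the submersion property follows from the same first-order deformation argument used for $p$ in Proposition \ref{morphism}, and $q$ is the external product of the $\bp^1$-bundle $q_{_1}$ with the restriction of $q_{_2}$ to $P_{_2}^s$. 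Since each $P_{_i}$ is itself a $\bp^1$-bundle over a coprime moduli space by Lemma \ref{topology1}$(i)$ (Remark \ref{rational}), there are line bundles on $P_{_1}\times P_{_2}^s$ whose first Chern classes $\alpha_{_1},\alpha_{_2}$ restrict to the hyperplane classes of the two $\bp^1$-factors of every fibre; thus $1,\alpha_{_1},\alpha_{_2},\alpha_{_1}\alpha_{_2}$ satisfy the Leray--Hirsch hypothesis. In total degree $3$ this gives $H^3(P_{_1}\times P_{_2}^s)\cong H^3(M_{_1}\times M_{_2}^s)\cdot 1\oplus H^1(M_{_1}\times M_{_2}^s)\cdot\alpha_{_1}\oplus H^1(M_{_1}\times M_{_2}^s)\cdot\alpha_{_2}$, and since $H^1(M_{_1}\times M_{_2}^s)=0$ the map $q^*$ is an isomorphism.

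The hard part will be the bookkeeping behind the claim that $q$ is a genuine fibre bundle satisfying Leray--Hirsch: identifying the fibre scheme-theoretically as $\bp^1\times\bp^1$ via the parabolic (semi)stability dichotomy of Fact \ref{para}, verifying the submersion property by a Zariski-tangent-space computation analogous to Proposition \ref{morphism}, and producing the global classes $\alpha_{_i}$ from the $\bp^1$-bundle description of $P_{_i}$. Once this is set up, the degree count that discards the off-diagonal terms is immediate, and the remainder is as routine as in Proposition \ref{lbetti}.
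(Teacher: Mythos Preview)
Your proposal is correct and follows essentially the same strategy as the paper: both parts are deduced from Leray--Hirsch applied to the $\bp^3$-bundle $p$ (for $(2)$, referring back to the setup of Proposition~\ref{lbetti}/Corollary~\ref{betti}) and to the $\bp^1\times\bp^1$-bundle $q$ (for $(1)$, using Fact~\ref{para}$(c)$ and the submersion argument of Proposition~\ref{morphism}). Your write-up is in fact more explicit than the paper's, since you spell out the degree-$3$ decomposition and use $H^1(M_{_1}\times M_{_2}^s)=0$ to kill the extraneous summands, whereas the paper leaves this step implicit in its appeal to Leray--Hirsch.
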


\begin{lemma}\label{third1}
 $i^*:H^3(P_{_1}\times P_{_2})\simeq H^3(M_{_{12}})$ where $i:P_{_1}\times P_{_2}\to M_{_{12}}$ is the inclusion.
 \begin{proof}
  First note that $i(P_{_1}\times P_{_2}^s)\subset P$. Therefore, we have-
  \begin{equation}
\xymatrix{
P_{_1}\times P_{_2}^s \ar[rr]^{i} \ar[rd]_{q} && P\ar[ld]^{p} \\
&M_{_1}\times M_{_2}^s
}
\end{equation}
  By the commutativity of the above diagram we get $i^*p^*=q^*$. Since, by Lemma \ref{third}, $p^*$ and $q^*$ are isomorphisms, we get $i^*:H^3(P)\to H^3(P_{_1}\times P_{_2}^s)$ is an isomorphism. By an argument given in \cite[Proposition 7]{Ba1} we can show that $Codim(K,P_{_1}\times P_{_2})=g_{_2}-1$ where $K=P_{_1}\times P_{_2}\setminus P_{_1}\times P_{_2}^s$. Thus, by Lemma \ref{excision}, we get $i_{_1}^*:H^3(P_{_1}\times P_{_2})\to H^3(P_{_1}\times P_{_2}^s)$ is an isomorphism where $i_{_1}:P_{_1}\times P_{_2}^s\to P_{_1}\times P_{_2}$ is the inclusion. Also, we have shown $i_{_2}^*:H^3(M_{_{12}})\to H^3(P)$ is an isomorphism where $i_{_2}:P\to M_{_{21}}$ is the inclusion. Thus $i^*:H^3(M_{_{12}})\to H^3(P_{_1}\times P_{_2})$ is an isomorphism.
 \end{proof}
\end{lemma}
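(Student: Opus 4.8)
The plan is to compare $H^3(M_{_{12}})$ with $H^3(M_{_1}\times M_{_2}^s)$ through a short chain of isomorphisms: on one side using the $\bp^3$-bundle structure of $p:P\to B$ and the $\bp^1\times\bp^1$-bundle structure sitting in diagram (\ref{parab}), and on the other side using the codimension bounds of Proposition \ref{codimension} together with the excision Lemma \ref{excision}. All cohomology is with $\Q$-coefficients; the integral statements of Lemma \ref{excision} and Proposition \ref{imp} descend to $\Q$ by universal coefficients.

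First I would record that $i$ carries $P_{_1}\times P_{_2}^s$ into $P$: since $p\circ i=q$ and $q(P_{_1}\times P_{_2}^s)\subseteq B=M_{_1}\times M_{_2}^s$, we get $i(P_{_1}\times P_{_2}^s)\subseteq p^{-1}(B)=P$. Write $i_{_1}:P_{_1}\times P_{_2}^s\hookrightarrow P_{_1}\times P_{_2}$ and $i_{_2}:P\hookrightarrow M_{_{12}}$ for the two open inclusions, and $i_{_0}:P_{_1}\times P_{_2}^s\to P$ for the restriction of $i$. Then $i\circ i_{_1}=i_{_2}\circ i_{_0}$, so on third cohomology
\[
i_{_1}^*\circ i^*=i_{_0}^*\circ i_{_2}^*.
\]
The idea is to show that each of $i_{_0}^*$, $i_{_1}^*$, $i_{_2}^*$ is an isomorphism on $H^3$, and then to conclude that $i^*=(i_{_1}^*)^{-1}\circ i_{_0}^*\circ i_{_2}^*$ is an isomorphism.

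For $i_{_0}^*:H^3(P)\to H^3(P_{_1}\times P_{_2}^s)$ I would restrict diagram (\ref{parab}) over $B$: by Proposition \ref{morphism} the restriction $p:P\to B$ is a topological $\bp^3$-bundle, while Fact \ref{para}$(c)$ together with the argument of Proposition \ref{morphism} shows that $q:P_{_1}\times P_{_2}^s\to B$ is a topological $\bp^1\times\bp^1$-bundle; both satisfy the hypotheses of Leray--Hirsch (Theorem \ref{LH}), so exactly as in Lemma \ref{third} the pullbacks $p^*$ and $q^*$ are isomorphisms on $H^3$, and the relation $i_{_0}^*\,p^*=q^*$ forces $i_{_0}^*$ to be an isomorphism. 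For $i_{_2}^*$ I would invoke Proposition \ref{imp}(i): since $g_{_2}>3$ we have $k=g_{_2}-1>3$, hence $3<2k-1$ and $i_{_2}^*:H^3(M_{_{12}})\to H^3(P)$ is an isomorphism. For $i_{_1}^*$ the key input is the codimension estimate $Codim(P_{_1}\times P_{_2}\setminus P_{_1}\times P_{_2}^s,\,P_{_1}\times P_{_2})=g_{_2}-1$, which I would obtain by running the proof of Proposition \ref{codimension}(a) in the parabolic setting (the bad locus lies over $M_{_1}\times(M_{_2}\setminus M_{_2}^s)$, and $P_{_i}\to M_{_i}$ are $\bp^1$-bundles), or by citing \cite[Proposition 7]{Ba1}; Lemma \ref{excision} then gives that $i_{_1}^*:H^3(P_{_1}\times P_{_2})\to H^3(P_{_1}\times P_{_2}^s)$ is an isomorphism because $3<2(g_{_2}-1)-1$. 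Composing the three isomorphisms yields $i^*:H^3(P_{_1}\times P_{_2})\simeq H^3(M_{_{12}})$.

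The step I expect to be the main obstacle is the codimension computation for $P_{_1}\times P_{_2}\setminus P_{_1}\times P_{_2}^s$: one must verify that replacing $M_{_2}$ by the parabolic moduli space $P_{_2}$, which is a $\bp^1$-bundle over $M_{_2}$, does not alter the codimension of the locus sitting over the strictly semistable locus $M_{_2}\setminus M_{_2}^s$, i.e. that it remains $g_{_2}-1$ and is not shifted by the fibre dimension. This is precisely the content borrowed from \cite[Proposition 7]{Ba1}. Everything else --- the diagram chase, the two applications of Leray--Hirsch, and the excision lemma --- is formal.
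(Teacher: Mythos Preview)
Your proposal is correct and follows essentially the same route as the paper: both arguments factor $i^*$ through the open pieces $P$ and $P_{_1}\times P_{_2}^s$, invoke Lemma \ref{third} (Leray--Hirsch for the $\bp^3$- and $\bp^1\times\bp^1$-bundles over $B$) to handle $i_{_0}^*$, use Proposition \ref{imp}(i) for $i_{_2}^*$, and appeal to the codimension estimate from \cite[Proposition 7]{Ba1} together with Lemma \ref{excision} for $i_{_1}^*$. Your write-up is in fact slightly cleaner in that you name the restricted map $i_{_0}$ explicitly and spell out the relation $i_{_1}^*\circ i^*=i_{_0}^*\circ i_{_2}^*$, but the substance is identical.
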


It is known the Picard groups $Pic(M_{_i})$ (resp. $Pic(M_{_i}')$), $i=1,2$, are isomorphic to $\Z$. Let $\theta_{_i}$ (resp. $\theta_{_i}'$), $i=1,2$, be the unique ample generators of $Pic(M_{_i})$ (resp. $Pic(M_{_i}')$). Let us denote the projections $M_{_1}\times M_{_2}\to M_{_i}$ by $s_{_i}$; the projections $M_{_1}'\times M_{_2}'\to M_{_i}'$ by $s_{_i}'$ and the projections $P_{_1}\times P_{_2}\to P_{_i}$ by $r_{_i}$, $i=1,2$. Then we immediately get the following relations: 
 $$
r_{_1}^*q_{_1}^*\theta_{_1}=q^*s_{_1}^*\theta_{_1}, \hspace{0.5cm} r_{_1}^*q_{_1}'^*\theta_{_1}'=q'^*s_{_1}'^*\theta_{_1}', $$ $$r_{_2}^*q_{_2}^*\theta_{_2}=q^*s_{_2}^*\theta_{_2}, \hspace{0.5cm} r_{_2}^*q_{_2}'^*\theta_{_2}'=q'^*s_{_2}'^*\theta_{_2}'. $$

 Let $\Theta_{_1}:=q^*s_{_1}^*\theta_{_1}$, $\Theta_{_2}:=q'^*s_{_1}'^*\theta'_{_1}$, $\Theta_{_3}:=q^*s_{_2}^*\theta_{_2}$ and $\Theta_{_4}:=q'^*s_2'^*\theta'_{_2}$.
 \begin{lemma}\label{ind}
  The line bundles $\Theta_{_i}$, $i=1,\cdots 4$ are linearly independent on $P_{_1}\times P_{_2}$.
  \begin{proof}
     Note that $q_{_1}^*\theta_{_1}'$ and $q_{_1}'^*\theta_{_1}'$ are linearly independent line bundles over $P_{_1}$. This follows from the observation: The line bundles $\theta_{_1}$ (resp. $\theta_{_1}'$) is ample over $M_{_1}$ (resp. $M_{_1}'$) and $M_{_1}$, $M_{_1}'$ are normal varieties. Moreover, by Fact \ref{para} the fibre $q_{_1}^{-1}(F)$ is isomorphic to $\bp^1$ for all $F\in M_{_1}$ and the fibre $q_{_1}'^{-1}(F)$ is isomorphic to $\bp^1$ for all $F\in M_{_1}'^s$. The image of the morphism, inside some projective space, defined by the linear system corresponding to the sufficiently large power of $q_{_1}^*\theta_{_1}$ (resp. $q_{_1}'^*\theta_{_1}'$) will be isomorphic to $M_{_1}$ (resp. $M_{_1}'$) (this follows by Lemma \ref{good}). Since $M_{_1}$ and $M_{_1}'$ are not isomorphic we have $q_{_1}^*\theta_{_1}'$ and $q_{_1}'^*\theta_{_1}'$ are linearly independent. Now $r_{_i}:P_{_1}\times P_{_2}\to P_{_i}$ are the projection maps, $i=1,2$. Therefore, $\Theta_{_1}= r_{_1}^*q_{_1}^*\theta_{_1}$ and $\Theta_{_2}=r_{_1}^*q_{_1}'^*\theta_{_1}'$ are linearly independent. Similarly $\Theta_{_3}$ and $\Theta_{_4}$ are linearly independent. Next we will show that the relation $\Theta_{_1} ^{a_{_1}}\otimes  \Theta_{_2}^{a_{_2}}=\Theta_{_3} ^{a_{_3}}\otimes  \Theta_{_4}^{a_{_4}}$, with all $a_{_i}$ non zero will never occur. The above relation would imply $r_{_1}^*L=r_{_2}^*M$ where $L$ is a non trivial line bundle on $P_{_1}$ and $M$ is a nontrivial line bunlde on $P_{_2}$. But this is impossible as $r_{_1}^*L|_{_{q_{_1}^{-1}(x)}}$ is trivial but $r_{_2}^*M|_{_{q_{_1}^{-1}(x)\simeq P_{_2}}}\simeq M$ is non trivial for $x\in P_{_1}$. From the above observation we conclude that $\Theta_{_i}$ are linealy independent, $i=1,2,3,4$.
  \end{proof}
\end{lemma}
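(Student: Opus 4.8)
The plan is to reduce everything to a statement about each factor separately and then play the two rulings of $P_{_1}$ (and of $P_{_2}$) against each other. Since $P_{_1},P_{_2}$ are smooth rational projective varieties, by Lemma~\ref{topology1} their Picard groups are torsion-free and $\mathrm{Pic}(P_{_i})\simeq H^2(P_{_i},\Z)$, so ``linearly independent'' may be read inside $H^2(-,\Q)$. By the K\"unneth formula and $H^1(P_{_i})=0$ one has $H^2(P_{_1}\times P_{_2},\Q)=r_{_1}^*H^2(P_{_1},\Q)\oplus r_{_2}^*H^2(P_{_2},\Q)$ with each $r_{_i}^*$ injective; under this splitting $\Theta_{_1},\Theta_{_2}$ lie in the first summand and $\Theta_{_3},\Theta_{_4}$ in the second. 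Hence a relation $\sum_{i=1}^4 a_{_i}\Theta_{_i}=0$ splits: restricting to a slice $P_{_1}\times\{x\}$ kills the $r_{_2}$-pullbacks $\Theta_{_3},\Theta_{_4}$ and leaves $a_{_1}q_{_1}^*\theta_{_1}+a_{_2}q_{_1}'^*\theta_{_1}'=0$ on $P_{_1}$, while restricting to $\{y\}\times P_{_2}$ leaves $a_{_3}q_{_2}^*\theta_{_2}+a_{_4}q_{_2}'^*\theta_{_2}'=0$ on $P_{_2}$. So it suffices to show that $q_{_1}^*\theta_{_1},q_{_1}'^*\theta_{_1}'$ are independent in $H^2(P_{_1},\Q)$, and likewise $q_{_2}^*\theta_{_2},q_{_2}'^*\theta_{_2}'$ on $P_{_2}$.

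For the independence on $P_{_1}$ I would compare the morphisms to projective space defined by large powers of these two (semiample) bundles. If $q_{_1}^*\theta_{_1}$ and $q_{_1}'^*\theta_{_1}'$ were dependent, then since neither is zero (each is the pullback of an ample class along a surjection) they would be proportional, and suitable tensor powers $(q_{_1}^*\theta_{_1})^{\otimes M}$, $(q_{_1}'^*\theta_{_1}')^{\otimes N}$ would be isomorphic, hence define the same morphism up to a projective linear automorphism of the target. But by Lemma~\ref{good} --- equivalently by a Stein-factorisation argument, using that $q_{_1}$ is a $\bp^1$-bundle and that $q_{_1}'$ has connected fibres with $M_{_1},M_{_1}'$ normal (Fact~\ref{para}) --- the image of the morphism attached to a large power of $q_{_1}^*\theta_{_1}$ is isomorphic to $M_{_1}$, while that of $q_{_1}'^*\theta_{_1}'$ is isomorphic to $M_{_1}'$. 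This forces $M_{_1}\simeq M_{_1}'$, which is impossible: since $\chi$ is odd, $d_{_1}$ is odd and $d_{_2}$ even, so $M_{_1}$ (fixed determinant of odd degree) is smooth whereas $M_{_1}'$ (fixed determinant of even degree, $g_{_1}>3$) is singular along its nonempty strictly-semistable locus (\cite{NAR}). The identical argument on $P_{_2}$ --- where now $M_{_2}$ is singular ($d_{_2}$ even, $g_{_2}>3$) and $M_{_2}'$ smooth --- gives independence of $q_{_2}^*\theta_{_2}$ and $q_{_2}'^*\theta_{_2}'$.

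Combining the two paragraphs: from $\sum a_{_i}\Theta_{_i}=0$ we get $a_{_1}q_{_1}^*\theta_{_1}+a_{_2}q_{_1}'^*\theta_{_1}'=0$ on $P_{_1}$ and $a_{_3}q_{_2}^*\theta_{_2}+a_{_4}q_{_2}'^*\theta_{_2}'=0$ on $P_{_2}$, hence all $a_{_i}=0$, which is the lemma. (Equivalently, one can rule out a ``mixed'' relation with all $a_{_i}\neq 0$ directly: it would read $r_{_1}^*L=r_{_2}^*M$ with $L$ a nontrivial bundle on $P_{_1}$ and $M$ a nontrivial bundle on $P_{_2}$, which is absurd since $r_{_1}^*L$ is trivial on every fibre of $r_{_1}$ while $r_{_2}^*M$ restricts to $M\neq 0$ there, and the remaining cases follow from the per-factor independence.) The genuinely non-formal step --- and the main obstacle --- is the middle paragraph: verifying that the map given by a large power of $q_{_1}'^*\theta_{_1}'$ actually reconstructs $M_{_1}'$ (which rests on the structure of the Hecke-type morphism $q_{_1}':P_{_1}\to M_{_1}'$ coming from \cite{ns} and on normality), together with the geometric input that $M_{_1}$ and $M_{_1}'$ cannot be isomorphic because one is smooth and the other singular --- this is precisely where the hypothesis $g_{_i}>3$ enters. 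Everything else (K\"unneth, injectivity of the $r_{_i}^*$, semiampleness of pullbacks of ample bundles) is routine.
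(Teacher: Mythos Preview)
Your proof is correct and follows essentially the same route as the paper's: reduce to showing $q_{_1}^*\theta_{_1},\,q_{_1}'^*\theta_{_1}'$ are independent on $P_{_1}$ (and likewise on $P_{_2}$), and establish this by observing that the morphisms defined by large powers of these semiample bundles have images $M_{_1}$ and $M_{_1}'$ respectively (via Lemma~\ref{good}), which are non-isomorphic. Two minor differences are worth noting. First, you organise the reduction via the K\"unneth splitting $H^2(P_{_1}\times P_{_2},\Q)=r_{_1}^*H^2(P_{_1},\Q)\oplus r_{_2}^*H^2(P_{_2},\Q)$, which immediately separates any relation into its two factor-components; the paper instead treats the ``mixed'' relation $\Theta_{_1}^{a_{_1}}\otimes\Theta_{_2}^{a_{_2}}=\Theta_{_3}^{a_{_3}}\otimes\Theta_{_4}^{a_{_4}}$ by the fibre-restriction argument you mention parenthetically. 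Your K\"unneth packaging is a bit cleaner and avoids having to track which $a_{_i}$ vanish. Second, you supply a concrete reason for $M_{_1}\not\simeq M_{_1}'$ (one is smooth, the other singular along its strictly-semistable locus since $g_{_1}>3$), whereas the paper simply asserts the non-isomorphism; this is a genuine improvement in clarity and pinpoints where the genus hypothesis enters.
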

In section $5$ we will observe that $P_{_i}$'s are rational varieties and $Pic(P_{_i})\simeq \Z\oplus \Z$, $i=1,2$. Thus $Pic(P_{_1}\times P_{_2})\simeq Pic(P_{_1})\times Pic(P_{_2})\simeq \Z^4$. Therefore, the line bundles $\Theta_{_i}$ generate the picard group $Pic(P_{_1}\times P_{_2})$.
 Now we will prove the following:
 \begin{lemma}\label{key}
  The morphism $i^*-j^*:H^2(M_{_{12}})\oplus H^2( M_{_{21}})\to H^2(P_{_1}\times P_{_2})\simeq H^2(D)$, is surjective where $i$ and $j$ are the inclusions in the diagrams \ref{parab}, \ref{parab1}.
  \begin{proof}
   %We will find a set of generators $e_{_i}$, $i=1,\cdots 4$ for $H^2(P_{_1}\times P_{_2}^s)$ and show that there exists $f_{_i}\in H^2(P)\oplus H^2(\overline P)$ such that $(i^*-j^*)(f_{_i})=e_{_i}$. 
  Since $P_{_i}$ are rational varieties, we have $c_{1}:Pic(P_{_1}\times P_{_2})\to H^2(P_{_1}\times P_{_2},\Z)$ is an isomorphism where $c_{_1}$ is the first chern class homomorphism. By Lemma \ref{topology1} we get $Pic(P_{_1}\times P_{_2})\simeq H^2(P_{_1}\times P_{_2},\Z)\simeq \Z^{4}$   Therefore, by Lemma \ref{ind} $H^2(P_{_1}\times P_{_2})$ is generated by $c_{_1}(\Theta_{_1})$, $c_{_1}(\Theta_{_2})$, $c_{_1}(\Theta_{_3})$ and $c_{_1}(\Theta_{_4})$. From commutative diagrams \ref{parab} and \ref{parab1} we get \[\Theta_{_1}=i^*(p^*s_{_1}^*\theta_{_1}), ~ \Theta_{_2}=j^*(p'^*s_{_1}'^*\theta_{_1}'), ~ \Theta_{_3}=i^*(p^*s_{_2}^*\theta_{_2}) ~ \mbox{and } ~ \Theta_{_4}=j^*(p'^*s_{_2}'^*\theta_{_2}').\] Therefore, $H^2(M_{_{12}})\oplus H^2(M_{_{21}})\to H^2(P_{_1}\times P_{_2}^s)$ is surjective.
 \end{proof}
\end{lemma}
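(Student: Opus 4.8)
The plan is to reduce the surjectivity to the single assertion that the four line bundles $\Theta_{_1},\dots,\Theta_{_4}$ span $H^2(P_{_1}\times P_{_2},\Q)$, and then to observe that each of them is manifestly in the image of $i^*-j^*$, thanks to the commutative diagrams \ref{parab} and \ref{parab1}. So I would begin by pinning down $H^2$ of the intersection. By Lemma \ref{topology1}, $P_{_1}$ and $P_{_2}$ are smooth projective rational varieties with $H^1(P_{_i},\Z)=0$, and $c_{_1}$ induces an isomorphism $Pic(P_{_i})\simeq H^2(P_{_i},\Z)\simeq\Z\oplus\Z$. Since rationality forces $H^1(\mathcal{O}_{_{P_{_i}}})=0$, it follows that $Pic(P_{_1}\times P_{_2})\simeq Pic(P_{_1})\times Pic(P_{_2})\simeq\Z^{4}$, while the K\"unneth formula gives $H^2(P_{_1}\times P_{_2},\Z)\simeq\Z^{4}$, with $c_{_1}$ still an isomorphism; via the identification $H^2(D)\simeq H^2(P_{_1}\times P_{_2})$ the same applies to $H^2(D)$.

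Next I would invoke Lemma \ref{ind}: the classes $\Theta_{_1},\dots,\Theta_{_4}$ are linearly independent in the rank-$4$ group $Pic(P_{_1}\times P_{_2})$, hence $c_{_1}(\Theta_{_1}),\dots,c_{_1}(\Theta_{_4})$ form a $\Q$-basis of $H^2(P_{_1}\times P_{_2},\Q)$. It then suffices to realise each of these classes as a value of $i^*-j^*$. Reading off diagram \ref{parab}, $\Theta_{_1}=i^*(p^*s_{_1}^*\theta_{_1})$ and $\Theta_{_3}=i^*(p^*s_{_2}^*\theta_{_2})$ with $p^*s_{_\ell}^*\theta_{_\ell}\in Pic(M_{_{12}})$, so $(i^*-j^*)\bigl(c_{_1}(p^*s_{_\ell}^*\theta_{_\ell}),\,0\bigr)=c_{_1}(\Theta_{_{2\ell-1}})$ for $\ell=1,2$; reading off diagram \ref{parab1}, $\Theta_{_2}=j^*(p'^*s_{_1}'^*\theta_{_1}')$ and $\Theta_{_4}=j^*(p'^*s_{_2}'^*\theta_{_2}')$, so $(i^*-j^*)\bigl(0,\,-c_{_1}(p'^*s_{_\ell}'^*\theta_{_\ell}')\bigr)=c_{_1}(\Theta_{_{2\ell}})$ for $\ell=1,2$. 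The four classes thus obtained span $H^2(P_{_1}\times P_{_2},\Q)\simeq H^2(D)$, whence $i^*-j^*$ is surjective.

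The only substantive ingredient is the spanning statement, i.e. Lemma \ref{ind} together with the equality $\mathrm{rank}\,Pic(P_{_1}\times P_{_2})=4$; the latter rests on the rationality of the $P_{_i}$, which forces $H^2$ to be entirely of type $(1,1)$ and identifies it with the N\'eron--Severi group (the fact referred forward to Section 5). Granted that, the remaining steps form a purely formal diagram chase through \ref{parab} and \ref{parab1}, and I do not anticipate any further obstacle.
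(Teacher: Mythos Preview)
Your proposal is correct and follows essentially the same route as the paper's proof: identify $H^2(P_{_1}\times P_{_2},\Q)$ with $Pic(P_{_1}\times P_{_2})\otimes\Q\simeq\Q^4$ via rationality and Lemma~\ref{topology1}, use Lemma~\ref{ind} to see that $c_{_1}(\Theta_{_1}),\dots,c_{_1}(\Theta_{_4})$ span it, and then read off from diagrams~\ref{parab} and~\ref{parab1} that each $\Theta_{_k}$ is a pullback via $i$ or $j$. You are in fact slightly more careful than the paper in handling the sign in $i^*-j^*$ when exhibiting $\Theta_{_2}$ and $\Theta_{_4}$ as images.
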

We will now prove the main theorem of this section.
\begin{theorem}
 $(i)$ $H^2(M_{_{0,\xi}})\simeq \Q^{2}$ and $(ii)$ $H^3(M_{_{0,\xi}})\simeq \Q^{2g}$.
 \begin{proof}
  By Lemma \ref{key}, \ref{third1} and using Mayer-Vietoris sequence we get-
  \[0\to H^2(M_{_{0,\xi}})\to H^2(M_{_{12}})\oplus H^2(M_{_{21}})\to H^2(D)\to 0.\]
  and \[0\to H^3(M_{_{0,\xi}})\to H^3(M_{_{12}})\oplus H^3(M_{_{21}})\to H^3(D)\to 0.\]
  
  Now $b_{_2}(M_{_{12}})=b_{_2}(M_{_{21}})=2$ by Theorem \ref{great} and $b_{_2}(D)=b_{_2}(P_{_1}\times P_{_2})=4$ by Lemma \ref{topology1}. Therefore, $b_{_2}(M_{_{0,\xi}})=2$.
  
 Also we have $b_{_3}(M_{_{12}})=b_{_3}(M_{_{21}})=2g$ by Theorem \ref{great} and $b_{_3}(D)=b_{_3}(P_{_1}\times P_{_2})=2g$ by Lemma \ref{topology1}. Therefore, $b_{_3}(M_{_{0,\xi}})=b_{_3}(M_{_{12}})+b_{_3}(M_{_{21}})-b_{_3}(D)=2g$. %As both $H^2(M_{_{0,\xi}},\Z)$ and $H^3(M_{_{0,\xi}},\Z)$ are torsion free, we conclude that $H^2(M_{_{0,\xi}},\Z)\simeq \Z^2$ and $H^3(M_{_{0,\xi}},\Z)\simeq \Z^{2g}$.
 \end{proof}
\end{theorem}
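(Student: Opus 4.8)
The plan is to compute $H^2$ and $H^3$ of $M_{_{0,\xi}}$ via the Mayer--Vietoris sequence associated to the covering $M_{_{0,\xi}} = M_{_{12}} \cup M_{_{21}}$, with intersection the smooth divisor $D \cong P_{_1}\times P_{_2}$. First I would write down the relevant portion of the Mayer--Vietoris sequence in degrees $1,2,3$:
\[
\cdots \to H^k(M_{_{0,\xi}}) \to H^k(M_{_{12}}) \oplus H^k(M_{_{21}}) \xrightarrow{i^*-j^*} H^k(D) \to H^{k+1}(M_{_{0,\xi}}) \to \cdots
\]
Using Theorem \ref{great} I know $H^1(M_{_{12}}) = H^1(M_{_{21}}) = 0$, and by Lemma \ref{topology1} together with the Künneth formula $H^1(D) = H^1(P_{_1}\times P_{_2}) = 0$; this kills the contribution one step below and shows the map into $H^2(M_{_{0,\xi}})$ is zero, so the sequence breaks into the two short exact sequences displayed in the statement once I verify surjectivity of $i^*-j^*$ in degrees $2$ and $3$.

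The surjectivity in degree $2$ is exactly Lemma \ref{key}, which I would simply invoke. The surjectivity in degree $3$ follows from Lemma \ref{third1}, which gives $i^*: H^3(M_{_{12}}) \xrightarrow{\sim} H^3(P_{_1}\times P_{_2})$ an isomorphism (and similarly $j^*$ for $M_{_{21}}$ via the analogous diagram \ref{parab1}); since one of the two restriction maps is already surjective, $i^* - j^*$ is a fortiori surjective onto $H^3(D)$. With surjectivity in hand, the connecting homomorphisms $H^k(D) \to H^{k+1}(M_{_{0,\xi}})$ vanish for $k=2,3$, and I obtain
\[
0 \to H^2(M_{_{0,\xi}}) \to H^2(M_{_{12}}) \oplus H^2(M_{_{21}}) \to H^2(D) \to 0,
\]
\[
0 \to H^3(M_{_{0,\xi}}) \to H^3(M_{_{12}}) \oplus H^3(M_{_{21}}) \to H^3(D) \to 0.
\]

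It then remains to count Betti numbers. For (i): by Theorem \ref{great} we have $b_{_2}(M_{_{12}}) = b_{_2}(M_{_{21}}) = 3$ and by Lemma \ref{topology1} with Künneth $b_{_2}(D) = b_{_2}(P_{_1}\times P_{_2}) = b_{_2}(P_{_1}) + b_{_2}(P_{_2}) = 2 + 2 = 4$, so $b_{_2}(M_{_{0,\xi}}) = 3 + 3 - 4 = 2$, giving $H^2(M_{_{0,\xi}}) \simeq \Q^2$. For (ii): by Theorem \ref{great}, $b_{_3}(M_{_{12}}) = b_{_3}(M_{_{21}}) = 2g$, and by Lemma \ref{topology1} with Künneth $b_{_3}(D) = b_{_3}(P_{_1}\times P_{_2}) = b_{_3}(P_{_1}) + b_{_3}(P_{_2}) = 2g_{_1} + 2g_{_2} = 2g$ (the cross terms $H^1 \otimes H^2$ vanish since $H^1(P_{_i}) = 0$), hence $b_{_3}(M_{_{0,\xi}}) = 2g + 2g - 2g = 2g$ and $H^3(M_{_{0,\xi}}) \simeq \Q^{2g}$. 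The only genuinely substantive input is the surjectivity of the restriction maps in the Mayer--Vietoris sequence, but that work has already been done in Lemmas \ref{key}, \ref{third1} and \ref{third}; the proof here is just assembling these with the Betti number bookkeeping, so I do not anticipate a real obstacle beyond being careful that $H^1(D)$ vanishes so that the degree-$2$ sequence really is left-exact.
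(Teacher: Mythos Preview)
Your proof is correct and follows essentially the same route as the paper: Mayer--Vietoris for the covering $M_{_{0,\xi}}=M_{_{12}}\cup M_{_{21}}$ with $D\cong P_{_1}\times P_{_2}$, surjectivity of $i^*-j^*$ in degrees $2$ and $3$ via Lemmas~\ref{key} and~\ref{third1}, and then Betti number arithmetic using Theorem~\ref{great} and Lemma~\ref{topology1}. Your write-up is in fact a bit more careful than the paper's on two points: you explicitly check $H^1(D)=0$ to justify left-exactness at degree $2$, and you use $b_{_2}(M_{_{12}})=b_{_2}(M_{_{21}})=3$ (as Theorem~\ref{great} actually states) rather than the value $2$ that appears in the paper's proof, which is evidently a typo since $3+3-4=2$ gives the claimed answer while $2+2-4$ does not.
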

\subsection{Hodge structure on \texorpdfstring{$H^3(M_{_0},\Z)$}{}}
\begin{theorem}\label{pure}
 The Hodge structure on $H^3(M_{_0},\Z)$ is pure of weight $3$ with $h^{^{3,0}}=h^{^{0,3}}=0$.
\end{theorem}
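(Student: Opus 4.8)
The plan is to deduce purity and the vanishing of $h^{3,0}$ from the two ingredients already established: the Mayer--Vietoris presentation
\[
0\to H^3(M_{_{0,\xi}})\to H^3(M_{_{12}})\oplus H^3(M_{_{21}})\xrightarrow{i^*-j^*} H^3(D)\to 0,
\]
together with the fact (Theorem \ref{great} and its Remark) that $M_{_{12}}$, $M_{_{21}}$, and $D\simeq P_{_1}\times P_{_2}$ are smooth projective \emph{rational} (hence unirational, simply connected) varieties. First I would recall that for a smooth projective variety $V$ the cohomology $H^k(V,\Z)$ carries a pure Hodge structure of weight $k$; this is classical Hodge theory and needs no nodal input. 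The Mayer--Vietoris map $i^*-j^*$ is a morphism of Hodge structures (pullback along the algebraic inclusions $i,j$ is a morphism of mixed Hodge structures, and on smooth projective targets these are pure), so its kernel is a sub-Hodge structure. Therefore $H^3(M_{_{0,\xi}},\Z)$, being the kernel, inherits a pure Hodge structure of weight $3$ -- this overrides the a priori mixed Hodge structure coming from the singular (reducible, transversally glued) geometry of $M_{_{0,\xi}}$.

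Next I would pin down the Hodge numbers. Because each of $M_{_{12}}$, $M_{_{21}}$, $P_{_1}$, $P_{_2}$ is rational, an appeal to Serre's result (\cite{Se}, already invoked in the excerpt) gives $h^{p,0}=h^{0,p}=0$ for all $p>0$ on each of these smooth projective varieties; in particular $h^{3,0}(M_{_{12}})=h^{0,3}(M_{_{12}})=0$ and likewise for $M_{_{21}}$. For $D\simeq P_{_1}\times P_{_2}$ the same vanishing follows from the Künneth decomposition of $H^3(P_1\times P_2)$ as $\bigoplus_{a+b=3}H^a(P_1)\otimes H^b(P_2)$, using $h^{p,0}(P_i)=0$ for $p>0$ and $h^{0}(P_i)$ concentrated in bidegree $(0,0)$. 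Since $H^3(M_{_{0,\xi}})\hookrightarrow H^3(M_{_{12}})\oplus H^3(M_{_{21}})$ is an injection of pure weight-$3$ Hodge structures, it respects the Hodge decomposition, and hence $H^{3,0}(M_{_{0,\xi}})\hookrightarrow H^{3,0}(M_{_{12}})\oplus H^{3,0}(M_{_{21}})=0$, forcing $h^{3,0}(M_{_{0,\xi}})=0$; conjugating (the Hodge structure is defined over $\R$) gives $h^{0,3}=0$ as well.

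The main obstacle I anticipate is the justification that $i^*-j^*$ is genuinely a morphism of pure Hodge structures on $H^3$, i.e.\ that although $H^3(M_{_{0,\xi}})$ is a priori only equipped with Deligne's mixed Hodge structure, the Mayer--Vietoris sequence here is a sequence of MHS and the outer terms happen to be pure of the ``expected'' weight $3$. This is where one must invoke functoriality of mixed Hodge structures for the closed cover $M_{_{0,\xi}}=M_{_{12}}\cup M_{_{21}}$ with $M_{_{12}}\cap M_{_{21}}=D$, all members smooth projective, so that the Mayer--Vietoris long exact sequence underlies an exact sequence of MHS with pure weight-$3$ flanking terms; strictness of morphisms of MHS then gives the short exact sequence above in the category of Hodge structures, and a sub-object of a pure weight-$3$ Hodge structure is pure of weight $3$. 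Once this structural point is in place, the Hodge-number vanishing is the routine Serre/Künneth argument sketched above, and the proof concludes. I would also note in passing that this purity is consistent with, and in fact is the Hodge-theoretic statement underpinning, the later identification $J^2(M_{_{0,\xi}})\simeq J^0(X_{_0})$.
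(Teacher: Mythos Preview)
Your proposal is correct and follows essentially the same line as the paper's proof: both use the Mayer--Vietoris short exact sequence, identify $H^3(M_{_{0,\xi}})$ with the kernel of $i^*-j^*$ as a sub-Hodge structure of a pure weight-$3$ Hodge structure, and then invoke Serre's vanishing $h^{3,0}=0$ for smooth projective unirational varieties. The only cosmetic differences are that you treat $D$ via K\"unneth on $P_{_1}\times P_{_2}$ while the paper simply notes $D$ is unirational, and you are more explicit about the MHS functoriality of Mayer--Vietoris and strictness --- points the paper passes over in one clause.
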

 \begin{proof}
 We have the following short exact sequence: \[0\to H^3(M_{_{0,\xi}},\C)\stackrel{r^*}{\to} H^3(M_{_{12}},\C)\oplus H^3(M_{_{21}},\C)\stackrel{i^*-j^*}{\to} H^3(D,\C)\to 0.\] where all the morphisms are the morphism of Hodge structures. Thus $Ker(i^*-j^*)$ is a pure sub Hodge structure of $H^3(M_{_{12}},\C)\oplus H^3(M_{_{21}},\C)$ of weight $3$. This induce a Hodge structure of weight $3$ on $H^3(M_{_{0,\xi}},\C)$ as $H^3(M_{_{0,\xi}},\C)$ is isomorphic to $Ker(i^*-j^*)$.
 Since  $M_{_{12}}$ and $M_{_{21}}$ are smooth unirational varieties (see Lemma \ref{unirational}) and their intersection $D=M_{_{12}}\cap M_{_{21}}$ is also a smooth unirational variety, we have $h^{^{3,0}}(M_{_{12}})=h^{^{3,0}}(M_{_{21}})=0$ and $h^{^{3,0}}(D)=0$. Hence we conclude $h^{^{3,0}}(Ker(i^*-j^*))=h^{^{3,0}}(H^3(M_{_{0,\xi}},\Z))=0$. This completes the proof.
   \end{proof}
 \begin{remark}
  Thus we can define the intermediate Jacobian as in \ref{inter} corresponding to the Hodge structure on $H^3(M_{_{0,\xi}},\Z)$. We will denote this intermediate Jacobian by $J^2(M_{_0})$.
 \end{remark}

\section{Degeneration of the intermediate Jacobian of the moduli space}\label{degen1}
Let $\pi:\mathcal{X}\to C$ be a proper, flat and surjective family of curves, parametrised by a smooth, irreducible curve $C$. We assume that $\mathcal{X}$ is a smooth variety over $\C$. Fix a point $0\in C$. We assume that $\pi$ is smooth outside the point $0$ and $\pi^{-1}(0)=X_{_0}$ where $X_{_0}$ is a reducible curve with two smooth, irreducible components meeting at a node. Fix a line bundle $\mathcal{L}$ over $\mathcal{X}$ such that the restriction $\mathcal{L}_{_t}$ to $X_{_t}$ is a line bundle with Euler characteristic $\chi-(1-g)$ for all $t\in C$ where $g$ is the genus of $X_{_t}$. We denote the restriction $\mathcal{L}|_{_{X_{_0}}}$ by $\xi$. In Appendix we will show that  there is a proper, flat, surjective family $\pi':\mathcal{M}_{_{\mathcal{L}}}\to C$ such that $\pi'^{-1}(0)=M_{_{0,\xi}}$, the moduli space of rank $2$, stable torsion free sheaves with determinant $\xi$ and for $t\neq 0$, $\pi'^{-1}(t)=M_{_{t,\mathcal{L}_t}}$, the moduli space of rank $2$ stable bundles on $X_{_t}$ with determinant $L_{_t}$ (see  Proposition \ref{reldet} and Remark \ref{reldet1} in the Appendix). Moreover, $\mathcal{M}_{_{\mathcal{L}}}$ is smooth over $\C$. Choose a neighbourhood of the point $0$ which is analytically isomorphic to the open unit disk $\Delta$ such that both the morphisms $\pi'|_{_{\Delta^*}}$ and $\pi|_{_{\Delta^*}}$ are smooth, $\Delta^*:=\Delta-0$. Denote the family $\pi'|_{_{\Delta^*}}:\pi'^{-1}(\Delta^*)\to \Delta^*$ by $\{M_{_t}\}_{_{t\in \Delta^*}}$ and the family $\pi|_{_{\Delta^*}}:\pi^{-1}(\Delta^*)\to \Delta^*$ by $\{X_{_t}\}_{_{t\in \Delta^*}}$ .

\subsubsection{\bf {Variation of Hodge structure corresponding to the family \texorpdfstring{$\{M_{_t}\}_{_{t\in \Delta^*}}$}{} and \texorpdfstring{$\{X_{_t}\}_{_{t\in \Delta^*}}$}{}}} 
 Since $\pi'|_{_{\Delta^*}}$ (resp.$\pi|_{_{\Delta^*}}$) is smooth we get a local system $R^i\pi'_{_*}\Z$ (resp. $R^i\pi_{_*}\Z$) for $i\geq 0$, of free abelian groups whose fibre over a point $t\in \Delta^*$ is isomorphic to $H^i(M_{_t},\Z)$ (resp. $H^i(X_{_t},\Z)$). Let $H_{_{\Z}}(\mathcal{M}):=R^3\pi_{_*}'\Z$ and $H_{_{\Z}}(\mathcal{X}):=R^1\pi_{_*}\Z$. Let $H_{_{\C}}(\mathcal{M})$ (resp. $H_{_{\C}}(\mathcal{X})$) be the holomorphic bundle over $\Delta^*$ whose sheaf of section is $H_{_{\Z}}(\mathcal{M})\otimes_{\Z}\mathcal{O}_{_{\Delta^*}}$ (resp. $H_{_{\Z}}(\mathcal{X})\otimes_{\Z}\mathcal{O}_{_{\Delta^*}}$). Then $H_{_{\C}}(\mathcal{M})$(resp. $H_{_{\C}}(\mathcal{X})$) admits a flat connection $\nabla_{_{\mathcal{M}}}$ (resp. $\nabla_{_{\mathcal{X}}}$). Let $T$ (resp. $T'$) be the monodromy operator defined by the flat connection $\nabla_{_{\mathcal{M}}}$ (resp. $\nabla_{_{\mathcal{X}}}$) corresponding to the positive generator of $\pi_{_1}(\Delta^*,t_{_0})$. Since the fibre $\pi'^{-1}(0)$ (resp. $\pi^{-1}(0)$) is union of two smooth projective varities intersecting transversally, $T$ (resp. $T'$) is unipotent. It is known that the unipotency index of $T$ is atmost $4$ and $T'$ is atmost $2$ (see \cite[Monodromy Theorem page 106]{M}). Recall that there is a decreasing filtration $\{F^p\}$,$p=0,1,2,3$  (resp. $\{G^q\}$, $q=0,1$) of the holomorphic vector bundle $H_{_{\C}}(\mathcal{M})$ (resp. $H_{_{\C}}(\mathcal{X})$) by holomorphic subbundles such that 
\begin{enumerate}
 \item for each $t\in \Delta^*$ the filtration $\{F^p(t)\}$ (resp. $\{G^p(t)\}$) gives the Hodge filtration on $H_{_{\C}}(\mathcal{M})(t)=H^3(M_{_t},\C)$.
 \item the flat connection $\nabla_{_{\mathcal{M}}}$ (resp. $\nabla_{_{\mathcal{X}}}$ ) satisfies $\nabla_{_{\mathcal{M}}}(F^p)\subset F^{p-1}\otimes {\Omega^1_{_{\Delta^*}}}$ (resp. $\nabla_{_{\mathcal{X}}}(G^q)\subset G^{q-1}\otimes {\Omega^1_{_{\Delta^*}}}$) where ${\Omega^1_{_{\Delta^*}}}$ is the sheaf of holomorphic $1$ forms on $\Delta^*$.
\end{enumerate}

We say the triple $(H_{_\C}(\mathcal{M}),F^p,H_{_{\Z}}(\mathcal{M}))$ (resp. $(H_{_\C}(\mathcal{M}),G^q,H_{_{\Z}}(\mathcal{M}))$ a variation of Hodge structure corresponding to the local system $H_{_{\Z}}(\mathcal{M})$ (resp.$H_{_{\Z}}(\mathcal{X})$). Set $V=H_{_{\C}}(\mathcal{M})/F^2$. Then $V$ is a holomorhic vector bundle over $\Delta^*$ of rank $2g$. Since $H_{_{\Z}}(\mathcal{M})(t)\cap F^2(t)=(0)$ for all $t\in \Delta^*$, we get $H_{_{\Z}}(\mathcal{M})$ is a cocompact lattice inside $V$ i.e for each $t\in \Delta^*$ $H_{_{\Z}}(\mathcal{M})(t)\subset V(t)$ is a cocompact lattice of rank $2g$. Therefore, we can construct a complex manifold $J^{2*}:=V/H_{_{\Z}}(\mathcal{M})$ and a proper, surjective, holomorphic submersion $\pi_{_1}:J^2\to \Delta^*$ such that $\pi_{_1}^{-1}(t)=J^2(M_{_t})$ for all $t\in \Delta^*$. Similarly, we can construct a proper, holomorphic, submerssion $\pi':J^{0*}\to \Delta^*$ such that $\pi'^{-1}(t)=J^0(X_{_t})$. Note that the principal polarisations $\{\Theta'_{_t}\}_{_{t\in \Delta^*}}$, induced by the intersection form $(1.0.2)$, fit together to give a relative polarisation $\Theta'$ on $J^{2*}$. Also the family of Jacobians $\pi':J^{0*}\to \Delta^*$ carries a canonical relative principal polarisation induced by the intersection pairing on $H^1(X_{_t},\Z)$, $t\in \Delta^*$. We denote this relative polarisation by $\Theta$.

We recall that there is a  unique extension $\overline{H}_{_{\C}}(\mathcal{M})$ (resp. $\overline{H}_{_{\C}}(\mathcal{X})$) of the holomorphic vector bundle $H_{_{\C}}(\mathcal{M})$ such that the extended connection $\overline{\nabla}_{_{\mathcal{M}}}$ (resp. $\overline{\nabla}_{_{\mathcal{X}}}$) is regular singular and the residue $N=log(T)$ (resp. $N'=log(T')$) of $\overline{\nabla}_{_{\mathcal{M}}}$ (resp. $\overline{\nabla}_{_{\mathcal{X}}}$) is nilpotent ((see \cite[Page 91-92]{de}). 
There is also an extension $\overline{F^2}$ (resp. $\overline{G^1}$) of the subbundle $F^2$ (resp. $G^1$) such that $\overline{\nabla}_{_{\mathcal{M}}}(\overline{F^2})\subset \overline{F^1} \otimes \Omega_{\Delta}^1(log(0))$ (\cite{S}, Nilpotent orbit Theorem). Let $\overline{H}_{_{\Z}}(\mathcal{M}):=j_{_*}(H_{_{\Z}}(\mathcal{M}))$ (resp. $\overline{H}_{_{\Z}}(\mathcal{X}):=j_{_*}(H_{_{\Z}}(\mathcal{X}))$ where $j:\Delta^*\to \Delta$ is an inclusion. We denote by $\overline{H}(\mathcal{M})(0)$ (resp. $\overline{H}(\mathcal{X})(0)$) and $\overline{F}^2(0)$ ($\overline{G}^1(0)$) the fibre of $\overline{H}_{_{\C}}(\mathcal{M})$ (resp. $\overline{H}_{_{\C}}(\mathcal{X})$) and $\overline{F}^2$ (resp. $\overline{G}^1$) at $0$.

%Now we consider the family $\pi:\mathcal{X}\to \Delta$ where $\pi^{-1}(t)$ is a smooth projective curve of genus $g$ if $t\neq 0$ and $\pi^{-1}(0)=X_{_0}$. Let $H_{_{\Z}}:=R^1\pi_{_*}\Z$ and $H_{_{\C}}$ be the holomorphic bundle on $\Delta^*$ whose sheaf of sections is $H_{_{\Z}}\otimes_{\Z}\mathcal{O}_{_{\Delta^*}}$. Let $(H_{_{\C}},G^q,H_{_{\Z}})$ be the variation of Hodge structure corresponding to the local system $H_{_{\Z}}$. Hence, there is a holomorphic family $\pi_{_2}:J^{0*}\to \Delta^*$ such that $\pi_{_2}^{-1}(t)=J(X_{_t})$, $t\in \Delta^*$. Let $(\overline{H}_{_{\C}},\overline{F^q},\overline{H}_{_{\Z}})$,$q=0,1$, be canonical extension as described in the previous paragraph.
%We denote by $\overline{H}(0)$ and $\overline{F}^1(0)$ the fibre of $\overline{H}_{_{\C}}$ and $\overline{F}^1$ at $0$. 
\subsubsection{\bf{Limiting mixed Hodge structure on the fibre \texorpdfstring{$\overline{H}(\mathcal{M})(0)$}{} and \texorpdfstring{$\overline{H}(\mathcal{X})(0)$}{}}}\label{clem}
In general the filtration $\{\overline{F^p}(0)\}$ (resp. $\{\overline{G^q}(0)\}$) does not define a Hodge structure on $\overline{H}(\mathcal{M})(0)$ (resp. $\overline{H}(\mathcal{X})(0)$). %Therefore, in general we have $\overline{H}_{_{\Z}}(\mathcal{M})(0)\cap \overline{F^2}(0)\neq (0)$ (resp. $\overline{H}_{_{\Z}}(\mathcal{X})(0)\cap \overline{G^1}(0)\neq (0)$). 
However, it follows that from a theorem of W Schmid \cite{S} (see \cite[Theorem 10]{RH} for the statement), for each $t\in \C^*$, the data $(t^N\overline{H}_{_{\Z}}(\mathcal{M})(0),\overline{F^p}(0), W_{_r})$ (resp. $(t^{N'}\overline{H}_{_{\Z}}(\mathcal{X})(0),\overline{G^q}(0), W'_{_r})$) defines a mixed Hodge structure (see \cite[definition 11]{RH} for the definition) where  $N$ (resp. $N'$) is the residue of the monodromy operator $T_{_{\mathcal{M}}}$ (resp.$T_{_{\mathcal{M}}}$) and $W_{_r}$ (resp. $W'_{_r}$) is the weight filtration defined by $N$ (resp. $N'$)  (see also \cite{M}). Thus, in particular, if the residue of the monodromy operator is trivial then there is no monodromy weight filtration and we have pure Hodge structure.

In the next lemma we will show that $N'=0$. As a consequence of this we can extend the family $J^{0*}\to \Delta^*$ to a family $J^0\to \Delta$.
\begin{lemma}\label{lim}
 The limiting Hodge structure on $\overline{H}(\mathcal{X})(0)$ is pure and is isomorphic to the Hodge structure on $H^1(X_1,\C)\oplus H^1(X_2,\C)$.
\end{lemma}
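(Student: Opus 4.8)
The plan is to show that the monodromy operator $T'$ on $H^1(X_t,\Z)$ is trivial (equivalently its logarithm $N'=0$), which by the discussion in Section \ref{clem} forces the limiting mixed Hodge structure on $\overline{H}(\mathcal{X})(0)$ to be pure, and then to identify it explicitly. The key geometric input is the structure of the degeneration $\pi:\mathcal{X}\to C$: the special fibre $X_0 = X_1 \cup_p X_2$ is a reducible nodal curve with two smooth components meeting transversally at a single node, and $\mathcal{X}$ is smooth. For such a family the Clemens--Schmid exact sequence (or a direct Picard--Lefschetz computation at the single node) shows that the vanishing cycle is the class of the loop around the node, and the monodromy acts on $H^1$ by a transvection along this vanishing cycle. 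First I would recall that the logarithm of monodromy $N'$ on $H^1(X_t)$ has image contained in the span of the vanishing cycle classes and is governed by the intersection pairing; since the total space $\mathcal{X}$ is smooth and the singularity of $X_0$ is a node, the relevant vanishing cycle is homologous to zero on the nearby smooth fibre precisely because it bounds in $\mathcal{X}$. Concretely, the reducibility of $X_0$ means the node is a "separating" type degeneration: the vanishing cycle separates $X_t$ into the two pieces that specialise to $X_1$ and $X_2$, hence it is null-homologous, so the Picard--Lefschetz transvection is trivial and $N'=0$.

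Having established $N' = 0$, the second step is to invoke the statement in Section \ref{clem}: since the residue of the monodromy operator is trivial there is no monodromy weight filtration, so the limiting Hodge filtration $\{\overline{G^q}(0)\}$ on $\overline{H}(\mathcal{X})(0)$ is already a pure Hodge structure of weight $1$. It then remains to identify this pure structure. For this I would use the Clemens--Schmid sequence together with the fact that $H^1$ of the normalisation $\widetilde{X_0} = X_1 \sqcup X_2$ injects into $H^1(X_0,\Q)$ and, because $X_0$ has only one node joining two distinct components, the cokernel vanishes: the combinatorial $H^1$ of the dual graph of $X_0$ is zero (the dual graph is a single edge, a tree). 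Hence $H^1(X_0,\Q) \cong H^1(X_1,\Q)\oplus H^1(X_2,\Q)$ as Hodge structures, and $X_0$ has pure $H^1$. The limiting Hodge structure, being pure, is then identified with $H^1(X_0,\C) \cong H^1(X_1,\C)\oplus H^1(X_2,\C)$, either directly from the Clemens--Schmid sequence (which in the $N'=0$ case gives $H^1_{\lim} \cong H^1(X_0)$) or via the local invariant cycle theorem.

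I expect the main obstacle to be making the vanishing-of-monodromy argument clean and self-contained: one must argue carefully that for a semistable (or at least nodal) degeneration whose special fibre is $X_1 \cup_p X_2$ with $X_1, X_2$ irreducible, the single node contributes a vanishing cycle that is null-homologous on the nearby fibre, so that $T'$ is genuinely the identity rather than merely unipotent of index $2$. The Monodromy Theorem cited in the text only gives unipotency of index at most $2$; upgrading "index $\le 2$" to "index $1$" is exactly the point, and it is where the hypothesis that $X_0$ has \emph{two} components (so the node is non-separating in the normalisation but separating on the curve, giving a null-homologous vanishing cycle) is used. Once that is in place, the purity and the explicit identification follow formally from standard facts about mixed Hodge structures on nodal curves and the Clemens--Schmid sequence.
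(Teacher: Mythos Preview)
Your proposal is correct and follows essentially the same route as the paper: establish $N'=0$, conclude purity of the limiting Hodge structure, and then identify it with $H^1(X_0)$ via the Clemens--Schmid sequence and the local invariant cycle theorem. The only real difference is that where the paper simply cites Morrison's survey for $N'=0$, you supply the underlying Picard--Lefschetz reason (the node is separating, so the vanishing cycle is null-homologous and the transvection is trivial), and you make explicit the identification $H^1(X_0)\cong H^1(X_1)\oplus H^1(X_2)$ via the dual graph being a tree; these are welcome elaborations rather than a different strategy.
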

 \begin{proof}
  Since the singular fiber $X_{_0}$ is the union of two smooth curves meeting transversally at a node we have $N'=0$ (see \cite[page 111]{M}). So, in this case, there is no weight filtration and hence the limiting Hodge structure on $\overline{H}(\mathcal{X})(0)$ is pure. Now we have a morphism of MHS, $i^*:H^1(X_{_0},\Z)\to \overline{H}(\mathcal{X})(0)$ of $(0,0)$ type (see \cite[Clemens-Schmid I,page 108]{M}). By Local Invariance Cycle Theorem \cite[page 108]{M}), it  known that: \[Ker(N)=Im(i^*).\] Since $Ker(N')=\overline{H}(0)$, $i^*$ is surjecive. Now $rk(H^1(X_{_0},\Z))=2g=rk(\overline{H}(\mathcal{X})(0)))$. Therefore, $i^*:H^1(X_{_0},\Z)\to \overline{H(\mathcal{X})}(0)$ is an isomorphism of Hodge structure.
(see \cite[page 111]{M}).
  \end{proof}
\begin{corollary}
 There is a holomorphic family $\pi_{_2}:J^0\to \Delta$ extending the family $\pi_{_2}:J^{0*}\to \Delta^*$ such that $\pi_{_2}^{-1}(0)=J^0(X_{_0})$.
\end{corollary}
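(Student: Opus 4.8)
The corollary asserts that the analytic family $\pi_{2}:J^{0*}\to\Delta^{*}$ of Jacobians of the smooth fibres extends across the puncture $0\in\Delta$ to a family $J^{0}\to\Delta$ whose central fibre is $J^{0}(X_{0})$. The natural way to produce this extension is to observe that a complex torus family $V/H_{\Z}(\mathcal{X})\to\Delta^{*}$ extends as a holomorphic family of complex tori precisely when the underlying variation of Hodge structure has trivial (or at least finite) monodromy about $0$, so that the local system $H_{\Z}(\mathcal{X})$ and the Hodge bundle $G^{1}$ extend across $0$ without multivaluedness. So the first step is to invoke Lemma~\ref{lim}: since $N'=\log(T')=0$, the monodromy $T'$ is trivial, hence the local system $H_{\Z}(\mathcal{X})$ on $\Delta^{*}$ extends to a local system (indeed a constant sheaf) on all of $\Delta$, with stalk canonically $H^{1}(X_{0},\Z)$ by the Local Invariant Cycle isomorphism $i^{*}:H^{1}(X_{0},\Z)\xrightarrow{\ \sim\ }\overline{H}(\mathcal{X})(0)$ established in that lemma.

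**Key steps.** First I would upgrade the triviality of $T'$ to the statement that Deligne's canonical extension $\overline{H}_{\C}(\mathcal{X})$ is just the trivial bundle $\overline{H}_{\Z}(\mathcal{X})\otimes_{\Z}\mathcal{O}_{\Delta}$ with the trivial connection, since the residue $N'$ vanishes; consequently the extended Hodge subbundle $\overline{G^{1}}\subset\overline{H}_{\C}(\mathcal{X})$ is an honest holomorphic subbundle on $\Delta$ whose fibre at $0$, by Lemma~\ref{lim} together with Schmid's limiting MHS being pure of weight $1$, is the Hodge piece $H^{1,0}(X_{0})=H^{0}(X_{0},\Omega^{1}_{X_{0}})\cong H^{1,0}(X_{1})\oplus H^{1,0}(X_{2})$. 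Second, I would form the quotient bundle $\overline{V}:=\overline{H}_{\C}(\mathcal{X})/\overline{G^{1}}$ over $\Delta$, a holomorphic vector bundle of rank $g$ restricting to $V=H_{\C}(\mathcal{X})/G^{1}$ over $\Delta^{*}$, and check that $\overline{H}_{\Z}(\mathcal{X})$ maps to a subgroup of each fibre of $\overline{V}$ which is a cocompact lattice: over $\Delta^{*}$ this is the condition $H_{\Z}(\mathcal{X})(t)\cap G^{1}(t)=(0)$, valid since each $H^{1}(X_{t},\C)$ carries a genuine weight-$1$ Hodge structure, and at $t=0$ it is the corresponding fact for the pure Hodge structure on $H^{1}(X_{0},\C)$, i.e. $H^{1}(X_{0},\Z)\cap H^{1,0}(X_{0})=(0)$, which holds because the MHS on $H^{1}$ of the nodal reducible curve $X_{0}$ splits as $H^{1}(X_{1})\oplus H^{1}(X_{2})$ and is thus pure of weight $1$ (each $X_{i}$ is smooth projective). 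Third, I would set $J^{0}:=\overline{V}/\overline{H}_{\Z}(\mathcal{X})$ and verify that the projection $\pi_{2}:J^{0}\to\Delta$ is a proper holomorphic submersion with $\pi_{2}^{-1}(t)=J^{0}(X_{t})$ for $t\neq0$ and $\pi_{2}^{-1}(0)=H^{1,0}(X_{0})^{\vee}/H^{1}(X_{0},\Z)\cong J^{0}(X_{1})\times J^{0}(X_{2})=J^{0}(X_{0})$, exactly as $\pi_{2}:J^{0*}\to\Delta^{*}$ was built from $V$ and $H_{\Z}(\mathcal{X})$ in the body of the section; properness follows because the fibres are compact tori and the lattice bundle is locally constant, and the submersion property is immediate from the local product structure of a torus bundle built from a vector bundle modulo a locally constant cocompact lattice.

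**Main obstacle.** The only real content beyond bookkeeping is justifying that the central fibre of the extended family is genuinely $J^{0}(X_{0})$ and not merely an abstract $g$-dimensional torus: this is precisely where one needs Lemma~\ref{lim}, namely that the limiting MHS is pure of weight $1$ and canonically identified, via $i^{*}$ and $\mathrm{Ker}(N')=\mathrm{Im}(i^{*})=\overline{H}(\mathcal{X})(0)$, with the polarized Hodge structure $H^{1}(X_{1},\C)\oplus H^{1}(X_{2},\C)$, so that quotienting by the integral lattice reproduces $J^{0}(X_{1})\times J^{0}(X_{2})\cong J^{0}(X_{0})$. Everything else — the extension of the local system, the canonical extension of the Hodge bundle, the cocompactness of the lattice, properness and submersiveness of $\pi_{2}$ — is formal once $N'=0$ is known, and the polarization $\Theta$ on $J^{0*}$ extends to $0$ as well since the intersection pairing on $H^{1}(X_{t},\Z)$ is monodromy-invariant and specializes to the (principal) polarization on $H^{1}(X_{0},\Z)$. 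I would therefore present the proof as: (i) $N'=0$ by Lemma~\ref{lim}; (ii) hence trivial monodromy, so $H_{\Z}(\mathcal{X})$ and $\overline{G^{1}}$ extend over $\Delta$; (iii) the quotient $\overline{V}/\overline{H}_{\Z}(\mathcal{X})$ is the desired family $J^{0}$, with central fibre $J^{0}(X_{0})$ by the identification in Lemma~\ref{lim}.
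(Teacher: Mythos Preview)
Your proposal is correct and follows essentially the same approach as the paper: use $N'=0$ from Lemma~\ref{lim} to conclude that $\overline{G^{1}}(0)\cap\overline{H}_{\Z}(\mathcal{X})(0)=(0)$, so that $\overline{H}_{\Z}(\mathcal{X})$ is a cocompact lattice in $\overline{V}=\overline{H}_{\C}(\mathcal{X})/\overline{G^{1}}$ and the quotient gives the desired family, with the central fibre identified with $J^{0}(X_{0})$ via the Hodge-structure isomorphism of Lemma~\ref{lim}. Your write-up is in fact more detailed than the paper's own proof, which compresses steps (ii) and (iii) of your outline into two sentences and does not spell out properness, submersiveness, or the extension of the polarization.
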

 \begin{proof}
 
Since $N'=0$, we get that $\overline{G^1}(0)\cap \overline{H}_{_{\Z}}(\mathcal{X})(0)=(0)$. As a consequence $\overline{H}_{_{\Z}}(\mathcal{X})(0)$ is a full lattice inside $\overline{H}_{_{\C}}(\mathcal{X})(0)/\bar{G}^1(0)$. Thus there is a holomorphic family $\pi_{_2}:J^0(\mathcal{X})\to \Delta$ extending the family $\pi_{_1}:J^{0*}\to \Delta^*$ such that $\pi_{_2}^{-1}(0)=V/\overline{H}_{_{\Z}}(0)$ where $V:=\overline{H}_{_{\C}}(0)/\overline{F^1}(0)$. By Lemma \ref{lim}, it follows that $\pi_{_2}^{-1}(0)\simeq J^0(X_{_0})$.  
 \end{proof}

Next we shall show,
\begin{lemma}\label{Hodge}
There is an isomorphism $\overline{\phi}:\overline{H_{_{\mathbb C}}}(\mathcal{X})\to \overline{H_{_{\mathbb C}}}(\mathcal{M})$ such that $\overline{\phi}(\overline{G^q})= \overline{F^{q+1}}$ and $\overline{\phi}(\overline{H_{_{\mathbb Z}}}(\mathcal{X}))=\overline{H_{_{\mathbb Z}}}(\mathcal{M})$, $q=0,1$.
\end{lemma}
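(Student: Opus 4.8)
The plan is to build the isomorphism $\overline{\phi}$ away from the puncture first, using the classical Mumford--Newstead isomorphism fiberwise, and then argue that it extends across $0$ because both sides have been shown to carry pure limiting Hodge structures. Concretely, for each $t\in\Delta^*$ the fibers are $X_t$ a smooth curve and $M_{t,\mathcal{L}_t}=\pi'^{-1}(t)$ the smooth odd-determinant moduli space, and the Mumford--Newstead theorem (\cite{mum-new}, with the polarization statement as in \cite[Section 5]{Ba2}) gives a natural isomorphism $H^1(X_t,\Z)\xrightarrow{\sim} H^3(M_{t,\mathcal{L}_t},\Z)$ of Hodge structures carrying the intersection form to the polarization $\theta'_t$. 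The naturality of this construction — it is induced by a correspondence coming from a universal bundle, whose Chern classes vary holomorphically in the family — means these fiberwise maps assemble into an isomorphism of local systems $\phi:H_{_{\Z}}(\mathcal{X})\xrightarrow{\sim} H_{_{\Z}}(\mathcal{M})$ over $\Delta^*$, compatible with the flat connections $\nabla_{\mathcal{X}}$ and $\nabla_{\mathcal{M}}$, and after tensoring with $\mathcal{O}_{\Delta^*}$ an isomorphism of holomorphic bundles $\phi:H_{_{\C}}(\mathcal{X})\xrightarrow{\sim}H_{_{\C}}(\mathcal{M})$ sending the Hodge filtration $G^q$ to $F^{q+1}$ (the shift by one reflects that $H^1$ has Hodge type in degrees $0,1$ while $H^3(M)$ sits in types $1,2$).

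Next I would extend $\phi$ across $0$. Since $\phi$ is flat, it commutes with monodromy, so $T'$ and $T$ are intertwined; combined with Lemma \ref{lim} (which shows $N'=\log T'=0$, i.e.\ $T'$ is trivial) this forces $N=\log T=0$ as well, so the monodromy of the family $\{M_t\}$ on $H^3$ is trivial too. Consequently the canonical Deligne extension of each side is just the trivial extension of the underlying local system, and $\phi$, being an isomorphism of local systems, extends to an isomorphism $\overline{H}_{_{\Z}}(\mathcal{X})\xrightarrow{\sim}\overline{H}_{_{\Z}}(\mathcal{M})$ of the extended sheaves $j_*$, hence to $\overline{\phi}:\overline{H_{_{\C}}}(\mathcal{X})\xrightarrow{\sim}\overline{H_{_{\C}}}(\mathcal{M})$. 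The filtration statement $\overline{\phi}(\overline{G^q})=\overline{F^{q+1}}$ then follows: the Deligne extensions $\overline{G^q}$, $\overline{F^{q+1}}$ are the canonical extensions of $G^q$, $F^{q+1}$ (nilpotent orbit theorem, as already invoked in the text), uniqueness of such extensions of a subbundle is automatic, and $\overline{\phi}$ carries one to the other because $\phi$ does so on $\Delta^*$ and both are the unique coherent extensions.

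The main obstacle is the very first point: verifying that the Mumford--Newstead isomorphism is genuinely natural in families — i.e.\ that it is realized by an algebraic (or at least topological) correspondence that globalizes over $\Delta^*$ — rather than merely an abstract isomorphism constructed fiber by fiber. This requires the existence of a (quasi-)universal sheaf on $\mathcal{X}\times_C\mathcal{M}_{\mathcal{L}}$ (at least over $\Delta^*$) and a Künneth/slant-product argument showing that the induced map on $R^\bullet\pi_*\Z$ restricts on each fiber to the Mumford--Newstead map. For the odd-determinant case such a universal family exists, and the slant product by a fixed component of its second Chern class gives exactly the desired correspondence; one then checks on one fiber, using \cite{mum-new} and \cite{Ba2}, that it is an isomorphism of polarized Hodge structures, and the rest propagates by flatness. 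I would also record, as a byproduct, that $\overline{\phi}$ is compatible with the polarizations $\Theta$ and $\Theta'$, since the intersection form on $H^1$ and the form $(1.0.2)$ on $H^3$ are both determined by the same correspondence — this is precisely what is needed for the polarization assertions in Theorem \ref{m}.
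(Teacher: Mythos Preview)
Your proposal is correct and follows essentially the same approach as the paper: construct $\phi$ over $\Delta^*$ via the $(1,3)$ K\"unneth component of $c_2$ of the relative universal bundle (this is exactly how the paper realizes the ``naturality'' you flag as the main obstacle), observe it is a fiberwise isomorphism of Hodge structures by Mumford--Newstead, and then extend across $0$ using the canonical/Deligne extension machinery. The only minor difference is that you explicitly invoke $N'=0$ (from Lemma~\ref{lim}) to force $N=0$ before extending, whereas the paper phrases the extension step directly in terms of Schmid's theorem and defers the $N=0$ conclusion to Theorem~\ref{main}; both are valid and amount to the same argument.
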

\begin{proof}
Let  $\mathcal{U}$ be the relative universal bundle over $\mathcal{X}^*\times_{_{\Delta^*}} \mathcal{M}^*$ i.e $\mathcal{U}_{_{_{|X_{_t}\times M_{_t}}}}$ is the corresponding universal bundle. Now if we consider  $(1,3)$ Kunneth-component $[c_{_2}(\mathcal{U})|_{_{X_{_t}\times M_{_t}}}]_{_{1,3}}  \in H^1(X_{_t},\Z)\otimes H^3(M_{_t},\Z)$ of $c_{_2}(\mathcal{U}|_{_{X_{_t}\times M_{_t}}})$, then we get a morphism $\phi_t:H^1(X_{_t},\Z)\to H^3(M_{_t},\Z)$, $t\in \Delta^*$ such that $\phi_t(G^q(t))\subseteq F^{q+1}(t)$ for $q=0,1$ (see \cite{mum-new}). Thus we get a morphism $\phi:H_{_{\Z}}(\mathcal{X})\to H_{_{\Z}}(\mathcal{M})$ of local systems, preserving the Hodge filtrations. Since the filtration $\{\overline{G^q}(0)\}$ (resp. $\{\overline{F^p}(0)\}$) is canonically determined by the filtrations $\{G^q(t)\}$ (resp. $\{F^q(t)\}$)(see \cite[Theorem(Schmid),page 116]{M}), we get a morphism $\phi_0:\overline{H}(\mathcal{X})(0)\to \overline{H}(\mathcal{M})(0)$ such that $\phi_0(\overline{G^q}(0))\subseteq \overline{F^{q+1}}(0)$. Therefore, the morphism $\phi$ extends to a morphism $\overline{\phi}:\overline{H_{_{\C}}}(\mathcal{X})\to \overline{{H}_{_{\C}}}(\mathcal{M})$ such that $\overline{\phi}(\bar{G^q})\subseteq \overline {F^{q+1}}$; further, we have $\phi^*(\Theta')=\Theta$ (see  \cite[Section 5,page 625]{
Ba2}). 
By the Mumford-Newstead theorem \cite[Proposition 1, page 1204]{mum-new} we conclude that $\overline{\phi}$ is an isomorphism.
\end{proof}

 Now we will state the main theorem of this section:
 \begin{theorem}\label{main}{\ }
 
 \begin{enumerate}
  \item There is a holomorphic family  $\{J^2(M_{t,\mathcal{L}_{t}})\}_{_{t\in \Delta}}$ of intermediate Jacobians corresponding to the family $\{M_{_{t, \mathcal L_t}}\}_{_{t\in \Delta}}$. In other words, there is a surjective,
   proper, holomorphic submersion 
    \[\pi_{_2} : J^2(\mathcal{M}_{_{\mathcal{L}}})\longrightarrow \Delta\] 
   such that $\pi_2^{-1}(t)=J^2(M_{_{t, \mathcal L_t}})~ \forall ~  t\in \Delta^{*}:=\Delta \setminus \{0\}$ and $\pi_2^{-1}(0)=J^2(M_{_{0,\xi}})$.   
   Further, we show that there exists a relative ample class $\Theta'$ on $J^2(\mathcal{M}_{_{\mathcal{L}}} )_{|\Delta^*}$ such that $\Theta'_{|J^2(M_{_{t,\mathcal L_t}})}=\theta'_{_t}$, where $\Theta'_{_t}$ is the principal polarisation on $J^2(M_{_{t, \mathcal L_t}})$.
   
  \item 
   There is an isomorphism 
   \begin{equation}
\xymatrix{
J^0(\mathcal{X}) \ar[rr]^{\Phi}_{\sim} \ar[rd]_{\pi_{_1}} && J^2(\mathcal{M}_{_{\mathcal{L}}})\ar[ld]^{\pi_{_2}} \\
& \Delta
}
\end{equation} 
such that $\Phi^*\Theta'_{|\pi_1^{-1}(t)}=\Theta_{_t}$ for all $t\in \Delta^*$, where $\pi_1: J^0(\mathcal X)\to \Delta$ is the holomorphic family $\{J^0(X_{_t})\}_{_{t\in \Delta}}$ of Jacobians and $\Theta_{_t}$ is  the canonical polarisation on $J^0(X_{_t})$. In particular, $J^2(\mathcal{M}_{_{\mathcal{L}}})_{_0}:=\pi_{_2}^{-1}(0)$ is an abelian variety.
\end{enumerate}
\begin{proof}
 Proof of $(1)$: By Lemma \ref{Hodge} we get the local system $H_{_{\mathbb{Z}}}(\mathcal{M})$ is isomorphic to the local system $H_{_{\mathbb{Z}}}(\mathcal{X})$ over $\Delta^*$. Since by Lemma \ref{lim} the local system $H_{_{\mathbb{Z}}}(\mathcal{X})$ has trivial monodromy, therefore the local system $H_{_{\mathbb{Z}}}(\mathcal{M})$ also has trivial monodromy. Hence $N=0$. Thus we have $\overline{F^2}(0)\cap \overline{H}_{_{\mathbb{Z}}}(\mathcal{M})(0)=(0)$. As a consequence we have a holomorphic family $\pi_{_1}:J^2(\mathcal{M}_{_{\mathcal{L}}})\to \Delta$ extending the family $\pi_{_1}:J^{2*}\to \Delta^*$ such that $\pi^{-1}(0)=V'/\overline{H}_{_{\mathbb{Z}}}(\mathcal{M})(0)$ where $V'=\overline{H}_{_{\mathbb{C}}}(\mathcal{M})(0)/\overline{F}^2(0)$.   
 Now we claim: $\pi_{_1}^{-1}(0)\simeq J^2(M_{_{0,\xi}})$. By Theorem \ref{pure}, we see that the Hodge structure on $H^3(M_{_{0,\xi}},\mathbb{Z})$ is pure and it has rank $2g$. Now there is a morphism $i^*: H^3(M_{_{0,\xi}},\mathbb{Z})\to \overline{H}_{_{\mathcal{M}}}(0)$ of MHS of $(0,0)$ type and  $Ker(N)=Im(i^*)$. Since $Im(N)=0$ and both the Hodge structures have the same rank $2g$,  $\overline{H}_{_{\mathbb{Z}}}(\mathcal{M})(0)$ and $H^3(M_{_{0,\xi}},\mathbb{Z})$ are isomorphic as Hodge structures. This completes the proof of $(1)$.
 
 Proof of $(2)$: This immediately follows from Lemma \ref{Hodge}.
\end{proof}

\end{theorem}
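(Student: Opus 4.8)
The plan is to bootstrap everything off the isomorphism of variations of Hodge structure established in Lemma \ref{Hodge}, together with the vanishing of the monodromy of the family of curves (Lemma \ref{lim}) and the purity result (Theorem \ref{pure}). First I would set up the two relative intermediate Jacobians over the punctured disc $\Delta^*$: from the smooth families $\pi'|_{\Delta^*}$ and $\pi|_{\Delta^*}$ one obtains the local systems $H_{\Z}(\mathcal M) = R^3\pi'_*\Z$ and $H_{\Z}(\mathcal X) = R^1\pi_*\Z$, the Hodge filtrations $F^\bullet$, $G^\bullet$, and (since the central fibres are unions of two smooth components meeting transversally) regular singular extensions $\overline H_{\C}(\mathcal M)$, $\overline H_{\C}(\mathcal X)$ across $0$, with nilpotent residues $N=\log T$, $N'=\log T'$. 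The family $J^{2*}\to\Delta^*$ with fibre $J^2(M_t)$ is then $V/H_{\Z}(\mathcal M)$ for $V=H_{\C}(\mathcal M)/F^2$, and likewise $J^{0*}\to\Delta^*$ for the curves; the intersection pairing (1.0.2) and the cup-product pairing on $H^1(X_t,\Z)$ give the relative polarisations $\Theta'$, $\Theta$.

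For part (1), the key point is that $N=0$. By Lemma \ref{Hodge} the local systems $H_{\Z}(\mathcal M)$ and $H_{\Z}(\mathcal X)$ are isomorphic over $\Delta^*$ (via the Künneth $(1,3)$-component of $c_2$ of the relative universal bundle), so they have the same monodromy; by Lemma \ref{lim} the monodromy of $H_{\Z}(\mathcal X)$ is trivial, hence so is that of $H_{\Z}(\mathcal M)$, giving $N=0$. Consequently the limiting mixed Hodge structure on $\overline H(\mathcal M)(0)$ is pure, and in particular $\overline F^2(0)\cap \overline H_{\Z}(\mathcal M)(0)=(0)$, so $\overline H_{\Z}(\mathcal M)(0)$ is a cocompact lattice in $V' := \overline H_{\C}(\mathcal M)(0)/\overline F^2(0)$. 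This produces the desired holomorphic extension $\pi_2: J^2(\mathcal M_{\mathcal L})\to\Delta$ as a proper holomorphic submersion with $\pi_2^{-1}(t)=J^2(M_{t,\mathcal L_t})$ for $t\neq 0$ and $\pi_2^{-1}(0)=V'/\overline H_{\Z}(\mathcal M)(0)$. To identify the central fibre with $J^2(M_{0,\xi})$, I would invoke the Local Invariant Cycle Theorem: the specialisation map $i^*: H^3(M_{0,\xi},\Z)\to \overline H(\mathcal M)(0)$ is a morphism of MHS of type $(0,0)$ with $\mathrm{Ker}(N)=\mathrm{Im}(i^*)$; since $N=0$ this map is surjective, both sides are pure of weight $3$ (Theorem \ref{pure}) of the same rank $2g$, so $i^*$ is an isomorphism of Hodge structures, whence $\pi_2^{-1}(0)\simeq J^2(M_{0,\xi})$. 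Finally the relative polarisation $\Theta'$ on $J^2(\mathcal M_{\mathcal L})|_{\Delta^*}$ is assembled fibrewise from the $\theta'_t$ using $b_2(M_t)=1$ and the unique Kähler class $\omega_t$, as recalled in the introduction.

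For part (2), I would simply transport the isomorphism of Lemma \ref{Hodge}: $\overline\phi:\overline H_{\C}(\mathcal X)\to \overline H_{\C}(\mathcal M)$ satisfies $\overline\phi(\overline G^q)=\overline F^{q+1}$ and $\overline\phi(\overline H_{\Z}(\mathcal X))=\overline H_{\Z}(\mathcal M)$, hence descends to an isomorphism $\Phi: J^0(\mathcal X)\to J^2(\mathcal M_{\mathcal L})$ of families over $\Delta$, compatible with $\pi_1$ and $\pi_2$. The compatibility $\phi_t^*(\theta'_t)=\theta_t$ over $\Delta^*$ is exactly the polarisation statement in the classical Mumford--Newstead theorem (with the detailed verification in \cite[Section 5]{Ba2}), so $\Phi^*\Theta'|_{\pi_1^{-1}(t)}=\theta_t$ for $t\in\Delta^*$; and since $J^0(X_0)$ is an abelian variety and $\Phi$ is an isomorphism over all of $\Delta$, $J^2(\mathcal M_{\mathcal L})_0$ is an abelian variety.

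The main obstacle I anticipate is not any single hard computation but marshalling the degeneration machinery cleanly: one must be careful that the universal bundle $\mathcal U$ exists on the relative product $\mathcal X^*\times_{\Delta^*}\mathcal M^*$ (it does, since $\mathcal L_t$ has odd Euler characteristic), that the Künneth component $\phi_t$ varies holomorphically and extends across $0$ via Schmid's canonical-extension functoriality, and that the monodromy-vanishing argument is genuinely circular-free — i.e. Lemma \ref{lim}'s conclusion $N'=0$ is established independently (from the transversal nodal degeneration, \cite[page 111]{M}) before being fed into the $N=0$ deduction. Once these functoriality-and-extension points are in place, everything else is formal.
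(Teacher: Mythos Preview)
Your proposal is correct and follows essentially the same route as the paper: deduce $N=0$ by transporting the trivial monodromy of $H_{\Z}(\mathcal X)$ (Lemma~\ref{lim}) across the isomorphism of local systems from Lemma~\ref{Hodge}, extend the intermediate-Jacobian family across $0$ using $\overline F^2(0)\cap\overline H_{\Z}(\mathcal M)(0)=(0)$, and identify the central fibre via the Clemens--Schmid/Local Invariant Cycle map $i^*$ together with the purity and rank computation of Theorem~\ref{pure}; part~(2) is read off directly from Lemma~\ref{Hodge}. Your write-up is in fact somewhat more explicit than the paper's (naming the Local Invariant Cycle Theorem, spelling out the role of the relative universal bundle and Schmid's functoriality), but the logical skeleton is identical.
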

As a corollary of the theorem we get the following result:
\begin{corollary}\label{MN} 
  Let $X_{_0}$ be a  projective curve with exactly two smooth irreducible components $X_1$ and $X_2$ meeting at a simple node $p$. We further assume that $g_{_i}>3$, $i=1,2$. Then,  
  there is an isomorphism $J^0(X_{_0})\simeq J^2(M_{_{0,\xi}})$, where $\xi\in J^{\chi}(X_{_0})$.
   In particular, $J^2(M_{_{0,\xi}})$ is an abelian variety.
  \end{corollary}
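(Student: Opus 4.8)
The plan is to deduce this as an immediate consequence of Theorem~\ref{main}, by restricting the relative family of intermediate Jacobians constructed there to its central fibre. First I would realise $X_{_0}$ as the special fibre of a one-parameter smoothing. Since $X_{_0}$ has a single node and two smooth components, classical deformation theory of nodal curves produces a proper, flat, surjective family $\pi:\mathcal{X}\to C$ over a smooth irreducible curve $C$, with $\mathcal{X}$ smooth over $\C$, a point $0\in C$ such that $\pi^{-1}(0)\simeq X_{_0}$, and $\pi$ smooth over $C\setminus\{0\}$; the nearby smooth fibres then have genus $g=g_{_1}+g_{_2}$. After replacing $C$ by a small neighbourhood of $0$ (or an \'etale neighbourhood), smoothness of the relative Picard scheme of $\pi$ allows me to extend $\xi$ to a line bundle $\mathcal{L}$ on $\mathcal{X}$ with $\mathcal{L}_{_0}\simeq\xi$ and $\chi(\mathcal{L}_{_t})=\chi-(1-g)$ for every $t\in C$. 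This is precisely the input data required in Section~\ref{degen1}.

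Since $g_{_i}>3$ for $i=1,2$, Theorem~\ref{main} now applies. Part (1) gives a proper holomorphic submersion $\pi_{_2}:J^2(\mathcal{M}_{_{\mathcal{L}}})\to\Delta$ with $\pi_{_2}^{-1}(0)=J^2(M_{_{0,\xi}})$; here I would invoke the identification of Hodge structures $\overline{H}_{_{\Z}}(\mathcal{M})(0)\simeq H^3(M_{_{0,\xi}},\Z)$ established in the proof of that theorem, which relies on the purity statement of Theorem~\ref{pure} together with the vanishing of the residue $N$ of the monodromy (forced by Lemma~\ref{Hodge} and Lemma~\ref{lim}). Part (2) gives an isomorphism $\Phi:J^0(\mathcal{X})\to J^2(\mathcal{M}_{_{\mathcal{L}}})$ of families over $\Delta$ commuting with the two projections. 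Restricting $\Phi$ to the fibre over $0$, and using $\pi_{_1}^{-1}(0)=J^0(X_{_0})$, yields the sought isomorphism $J^0(X_{_0})\simeq J^2(M_{_{0,\xi}})$. Finally, $J^0(X_{_0})\simeq J^0(X_{_1})\times J^0(X_{_2})$ is an abelian variety, which gives the last assertion (this is in any case already contained in Theorem~\ref{main}(2)).

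The only step that is not purely formal is the construction of the family in the first paragraph: one must check that $X_{_0}$ really does occur as the special fibre of a family of the type used in Section~\ref{degen1}, together with a compatible determinant line bundle $\mathcal{L}$. I expect this to be the main (indeed the only genuine) obstacle, and it is mild: the existence of the smoothing is standard, since a node is a smoothable singularity and the total space may be taken smooth; and the existence of $\mathcal{L}$ follows from smoothness of $\mathrm{Pic}$ over $C$. Once these are in place, the corollary is a one-line specialisation of Theorem~\ref{main} to $t=0$.
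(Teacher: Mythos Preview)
Your proposal is correct and follows essentially the same approach as the paper: construct a one-parameter smoothing of $X_{_0}$ with a compatible line bundle $\mathcal{L}$, then specialise Theorem~\ref{main} to the central fibre. The only cosmetic difference is that the paper obtains the smoothing by invoking the stability of $X_{_0}$ (which the genus hypothesis guarantees) together with the completeness of the moduli space of stable curves, producing a family over $\bp^1$, whereas you appeal directly to deformation theory of nodal curves; both routes are standard and yield the required input for Theorem~\ref{main}.
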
 
 \begin{proof}
  By our genus assumption: $g_{_i}>3$ for i=1,2, the curve $X_{_0}$ is stable i.e, they have finite number of automorphisms. As the moduli space of stable curves is complete, we get an algebraic family $r:\mathcal{X}\to \bp^1$ such that $r_{_1}^{-1}(t)$ is smooth if $t\neq t_{_0}$ and $r_{_1}^{-1}(t_{_0})=X_{_0}$. Moreover, we can choose $\mathcal{X}$ to be regular over $\C$. Therefore, by  Theorem \ref{main}, we get $J^2(M_{_{0,\xi}})\simeq J(X_{_0})$. Hence, $J^2(M_{_{0,\xi}})$ is an abelian variety.
 \end{proof}
\section{ Torelli type Theorem for the moduli space of rank \texorpdfstring{$2$}{} degree \texorpdfstring{$1$}{} fixed determinant torsion free sheaves over a reducible curve }\label{torel}
In this section our goal is to investigate the moduli space $M_{_{0,\xi}}$ more carefully, and show that we can actually recover the curve $X_0$ i.e both the components as well as the node, from the moduli space $M_{_{0,\xi}}$ following  a strategy given in \cite{bbd}.

Let $\pi:\tilde X_0\to X_0$ be the normalization map and $\pi^{-1}(p)=\{x_{_1},x_{_2} \}$, where $p\in X_{_1}\cap X_{_2}$.  Note that $\tilde X_{_0}=X_{_1}\sqcup X_{_2}$, the disjoint union of $X_{_1}$ and $X_{_2}$. Fix a line bundle $\xi$ on $X_{_0}$ and let $\xi_{_i}=\xi|_{X_{_i}}$, $i=1,2$. Recall that that the moduli space $M_{_{0,\xi}}$ of rank $2$ stable torsion free sheaves with determinant $\xi$ over $X_0$ is the union of two irreducible, smooth, projective varieties intersecting transversally along a divisor $D$.
 We have also observed that $D$ is isomorphic to
the product $P_1\times P_2$, where $P_{_1}$ is the moduli space of rank $2$ parabolic semistable bundles $(F_{_1},0\subset F^2F_{_1}(x_{_1})\subset F_{_1}(x_{_1}))$ over $X_1$ with $det\simeq \xi_{_1}$ and  weights $(\frac{a_{_1}}{2},\frac{a_{_2}}{2})$, and $P_{_2}$ is the moduli space of rank $2$ parabolic semistable bundles $(F_{_2},0\subset F^2F_{_2}(x_{_2})\subset F_{_2}(x_{_2}))$ over $X_2$ with $det\simeq \xi_{_2}$ and weights $(\frac{a_{_1}}{2},\frac{a_{_2}}{2})$, where $a=(a_{_1},a_{_2})$ is the polarisation on $X_{_0}$. Without loss of generality, we can assume that $deg(\xi_{_1})=1$ and $deg(\xi_{_2}=0$.

Let  $M_{_1}$ (resp. $ M'_{_1}$) be the moduli space of rank $2$, deg $1$,
semistable bundles over $X_1$ with det $E\simeq \xi_{_1}$ (resp. moduli space of rank $2$, deg $0$ semistable bundles over $X_{_2}$ with  $detE\simeq \xi_{_1}(-x_{_1})$).

Note that $Pic(M_{_1})\simeq \Z$ (resp. $Pic(M'_{_1})\simeq \Z$).
Let  $\theta_{_1} $ (resp. $\theta_{_1}' $) be the unique ample generator of $Pic(M_{_1})$ (resp. of $Pic(M'_{_1})$). It is known that there exists a unique 
rank $2$ bundle $\mathcal{E}$ over $X_{_1}\times M_{_1}$ such that $\wedge^2\mathcal{E}_{_{x_{_1}}}\simeq \theta_{_1}$, where $\mathcal{E}_{_{x_{_1}}}:=\mathcal{E}_{_{|x_{_1}\times M_{_1}}}$ (see \cite[Definition 2.10]{R}).
Since the weights $0<\frac{a_{_1}}{2},\frac{a_{_2}}{2}<1$ are very small, we can show that: $P_{_1}\simeq \bp(\mathcal{E}_{_{x_{_1}}})$ (see \cite[Proposition 6]{Ba1}). Therefore, it follows that $Pic(P_{_1})\simeq Pic(M_{_1})\oplus Pic(\bp^1) \simeq \Z\oplus \Z$.

We define a morphism $\pi_1': \bp(\mathcal{E}_{_{x_{_1}}})\to M'_{_1}$ as follows: Any closed point of $\bp(\mathcal{E}_{_{x_{_1}}})$ over $E\in M_{_1}$ looks like $\{E , V(x_{_1})\}$, where  $V(x_{_1})$ is a one dimensional subspace of the fibre $E(x_{_1})$. Consider the vector bundle $V$ 
which fits into the following exact sequence 
  \begin{equation}0\to V\to E\to (i_{_{x_{_1}}})_{_*}(E(x_{_1})/V(x_{_1}))\to 0.\end{equation}
  As $E(x_{_1})/V(x_{_1})$ is 1-dimensional vector space supported over the point $x_{_1}$, it follows that $det(V)\simeq \xi_{_1}(-x_{_1})$. We can easily check that $V$ is semistable (see \cite[page 11]{Ba1}). 
  
  Thus we get a Hecke correspondence: \begin{equation}
  \xymatrix{\bp(\mathcal{E}_{_{x_{_1}}}) \ar[r]^{\pi_1'} \ar[d]^{\pi_1} & M'_{_1} \\
  M_{_1}}
 \end{equation}
 
 Similarly, let $M_{_2}$ (resp. $M'_{_2}$) be the moduli space of rank $2$, deg $1$ semistable bundles over $X_2$ with $detE\simeq \xi_{_2}(x_{_2})$ (resp. the moduli space of rank $2$, deg $0$ semistable bundles over $X_2$ with $detE\simeq \xi_{_2}$).
 
 Let $\theta_{_2}$ (resp. $\theta_{_2}'$) be the unique ample generator of $Pic(M_{_2})$ (resp. $Pic(M'_{_2})$). Then there is a unique universal bundle  $\mathcal{E}'$ over $X_2\times M_{_2}$ such that  $\wedge^2 \mathcal{E}'_{_{x_{_2}}}\simeq \theta_{_2}$ where $\mathcal{E}'_{_{x_{_2}}}:=\mathcal{E}'_{_{|x_{_2}\times M_{_2}}}$.

Again, for the choice of weights $0<\frac{a_{_1}}{2},\frac{a_{_2}}{2}<1$, we have $P_2\simeq \bp(\mathcal{E}'_{_{x_{2}}})$ and we have a  Hecke correspondence as in the previous case:
\begin{equation}\label{eq:hecke}
  \xymatrix{\bp(\mathcal{E}'_{_{x_{_2}}}) \ar[r]^{\pi_2'} \ar[d]^{\pi_2} & M'_{_2} \\
  M_{_2}}
 \end{equation}  
 So, we have the following:
  \begin{equation}
  \xymatrix{\bp(\mathcal{E}_{_{x_{_1}}})\times \bp(\mathcal{E}'_{_{x_{_2}}}) \ar[r]^{p_{_1}}  & \bp(\mathcal{E}_{_{x_{_1}}}) \ar[d]^{\pi_1} \ar[r]^{\pi_1'}   & M'_{_1} \\
               & M_{_1}      }
 \end{equation}
  and 
  \begin{equation}
  \xymatrix{\bp(\mathcal{E}_{_{x_{_1}}})\times \bp(\mathcal{E}'_{_{x_{_2}}}) \ar[r]^{p_{_2}}  & \bp(\mathcal{E}'_{_{x_{_2}}}) \ar[d]^{\pi_2} \ar[r]^{\pi_2'}   & M'_{_2} \\
               & M_{_2}      }
 \end{equation}

 \begin{remark}\label{rational}
  Note that $M_{_1}$ and $M_{_2}$ are smooth ,projecive, rational varieties. Now $P_{_1}$ (resp. $P_{_2}$) is isomorphic to the projective bundle $\bp(\mathcal{E}_{_{x_{_1}}})$ (resp. $\bp(\mathcal{E}'_{_{x_{_2}}})$). Therefore, $P_{_i}$'s are rational varieties, $i=1,2$.
 \end{remark}

For the rest of the section we will fix the following notations:
$$
\vartheta_{_1}:=(\pi_1 \circ p_{_1})^*\theta_{_1}, \hspace{0.5cm} \vartheta_{_2}:=(\pi_1' \circ p_{_1})^*\theta_{_1}', $$ $$\vartheta_{_3}:=(\pi_2 \circ p_{_2})^*\theta_{_2}, \hspace{0.5cm} \vartheta_{_4}:=(\pi_2' \circ p_{_2})^*\theta_{_2}'. $$
\begin{proposition}\label{nef}
 The numerically effective cone of $P_{_1}\times P_{_2}$ is generated by the line bundles $ \vartheta_{_i} $, $i=1,2,3,4$.
\end{proposition}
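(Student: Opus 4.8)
The plan is to compute the nef cone of $P_{_1}\times P_{_2}$ by reducing, via the K\"unneth/product structure of the N\'eron--Severi group, to the computation of the nef cones of the two factors $P_{_1}$ and $P_{_2}$ separately, and then to identify the extremal rays of each factor explicitly using the two $\bp^1$-bundle structures it carries. Since $\mathrm{Pic}(P_{_1}\times P_{_2})\simeq \mathrm{Pic}(P_{_1})\oplus \mathrm{Pic}(P_{_2})\simeq \Z^2\oplus \Z^2$ (by Lemma \ref{topology1} together with Remark \ref{rational}), and a line bundle on a product of projective varieties is nef if and only if its restriction to each factor is nef, it suffices to show that $\mathrm{Nef}(P_{_1})$ is the closed cone generated by $\pi_1^*\theta_{_1}$ and $\pi_1'^*\theta_{_1}'$, and similarly $\mathrm{Nef}(P_{_2})$ is generated by $\pi_2^*\theta_{_2}$ and $\pi_2'^*\theta_{_2}'$; pulling these back along the projections $p_{_1},p_{_2}$ then gives exactly the four generators $\vartheta_{_1},\dots,\vartheta_{_4}$.

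To pin down $\mathrm{Nef}(P_{_1})$: the variety $P_{_1}\simeq \bp(\mathcal{E}_{_{x_{_1}}})$ is a $\bp^1$-bundle over $M_{_1}$ via $\pi_1$, so $\mathrm{Pic}(P_{_1})\simeq \pi_1^*\mathrm{Pic}(M_{_1})\oplus \Z\cdot\mathcal{O}_{\bp(\mathcal{E}_{_{x_{_1}}})}(1)$, and since $\mathrm{Pic}(M_{_1})\simeq\Z\theta_{_1}$ this is a rank-two lattice. Now $P_{_1}$ carries a \emph{second} $\bp^1$-bundle structure, namely the Hecke correspondence $\pi_1':P_{_1}\to M_{_1}'$, because the fibres of $\pi_1'$ over a point of $M_{_1}'^s$ are projective lines (the choice of Hecke modification direction), and $M_{_1}\not\simeq M_{_1}'$ so these two fibrations are genuinely distinct. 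The curve classes of the two rulings — the class $f_{_1}$ of a $\pi_1$-fibre and the class $f_{_1}'$ of a $\pi_1'$-fibre — span the space of numerical curve classes on $P_{_1}$ modulo $\theta$-type classes; more precisely, on a Picard-rank-two variety the Mori cone is two-dimensional and I would argue that $f_{_1}$ and $f_{_1}'$ generate it (each is extremal, being the class of a fibre of a morphism with connected fibres contracting it). Dually, the nef cone is then generated by the two supporting divisors: $\pi_1^*\theta_{_1}$, which is nef (pullback of an ample) and has degree zero on $f_{_1}$, and $\pi_1'^*\theta_{_1}'$, which is nef and has degree zero on $f_{_1}'$. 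Since these two divisor classes are linearly independent (as established in the proof of Lemma \ref{ind}, via the fact that $M_{_1}\not\simeq M_{_1}'$ so the two contractions are different), they span a two-dimensional subcone of the two-dimensional nef cone, hence equal it. An identical argument on $X_{_2}$ gives $\mathrm{Nef}(P_{_2})=\langle \pi_2^*\theta_{_2},\pi_2'^*\theta_{_2}'\rangle$. Finally, pulling back under the projections and using $\mathrm{Nef}(P_{_1}\times P_{_2})=p_{_1}^*\mathrm{Nef}(P_{_1})+p_{_2}^*\mathrm{Nef}(P_{_2})$ yields the claim.

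The main obstacle I anticipate is justifying rigorously that the Mori cone $\overline{NE}(P_{_1})$ really is generated by the two fibre classes $f_{_1}$ and $f_{_1}'$ — i.e.\ that there is no third extremal ray hiding in the two-dimensional cone. This is where the Picard rank being exactly $2$ is doing essential work: a closed convex two-dimensional cone has exactly two extremal rays, and one needs only exhibit two independent extremal classes and check the cone they span is all of $\overline{NE}$, which follows once one knows $f_{_1}$ and $f_{_1}'$ are not proportional (they aren't, since $\pi_1^*\theta_{_1}\cdot f_{_1}=0\neq \pi_1^*\theta_{_1}\cdot f_{_1}'$, the latter being strictly positive because $\pi_1$ does not contract a $\pi_1'$-fibre as the two fibrations differ). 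I would also need the auxiliary fact — provable exactly as in Lemma \ref{ind} using normality of $M_{_1},M_{_1}'$ and the fact that the linear system of a large power of $\pi_1^*\theta_{_1}$ (resp.\ $\pi_1'^*\theta_{_1}'$) recovers $M_{_1}$ (resp.\ $M_{_1}'$) — that $\pi_1^*\theta_{_1}$ and $\pi_1'^*\theta_{_1}'$ are not proportional, so that they genuinely span the nef cone rather than a single ray. Everything else (nef $=$ pullback of ample under a fibration is standard; the product decomposition of the nef cone; the identification $P_{_i}\simeq\bp(\mathcal{E}_{_{x_{_i}}})$ and the Hecke correspondences) has already been set up in the preceding pages.
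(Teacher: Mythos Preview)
Your argument is correct and is organised differently from the paper's. You factor through the product: using $\mathrm{Pic}(P_{_1}\times P_{_2})\simeq \mathrm{Pic}(P_{_1})\oplus\mathrm{Pic}(P_{_2})$ (valid since the $P_{_i}$ are rational) you reduce to computing $\mathrm{Nef}(P_{_i})$ separately, and there you exploit that $\mathrm{rk}\,N^1(P_{_i})=2$ together with the two distinct contractions $\pi_{_i},\pi_{_i}'$ to pin down the two extremal rays. The paper, by contrast, works directly on the fourfold product: it first checks that the $\vartheta_{_i}$ are nef and form a $\Q$-basis of $N^1(P_{_1}\times P_{_2})$, then shows that $\bigotimes_i\vartheta_{_i}^{a_{_i}}$ is ample precisely when all $a_{_i}>0$ (the ``if'' via ampleness of $\pi_{_i}^*\theta_{_i}\otimes\pi_{_i}'^*\theta_{_i}'$ on each $P_{_i}$, the ``only if'' by exhibiting each boundary class as a pullback under a non-finite morphism), and concludes by identifying the nef cone with the closure of the ample cone. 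Your route is more modular and avoids the explicit ampleness check in the interior; the paper's route is more hands-on and yields, as a by-product, the precise description of which boundary classes contract to which targets, information that is used immediately afterwards in Remark~\ref{ima} and Lemma~\ref{nefc}. One small point worth tightening in your write-up: rather than arguing extremality of the fibre classes $f_{_1},f_{_1}'$ in $\overline{NE}(P_{_1})$ directly, it is cleaner (and avoids the issue you flag) to argue on the dual side exactly as you also indicate---$\pi_{_1}^*\theta_{_1}$ and $\pi_{_1}'^*\theta_{_1}'$ are nef, non-ample, and non-proportional, hence lie on the two distinct boundary rays of the two-dimensional salient cone $\mathrm{Nef}(P_{_1})$ and therefore generate it.
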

 \begin{proof}
  Clearly, $ \vartheta_{_i} $, $i=1,\cdots, 4$ are numerically effective (nef) line bundles as they are the pull backs of the ample line bundles.  First we show that $ \vartheta_{_i} $, $i=1,\cdots, 4$, are linearly independent. Note that $\pi_{_1}^*\theta_{_1}$ and $\pi_{_1}'^*\theta_{_1}'$ are linearly independent over $\Z$ (see the proof of \cite[page 4, Theorem 2.1]{bbd} for an argument). Therefore, $\vartheta_{_1}=p_{_1}^*\pi_{_1}^*\theta_{_1}$ and $\vartheta_{_2}:=p_{_1}^{*}\pi'^*\theta'_{_1}$ are linearly independent. By similar reason $\vartheta_{_3}$ and $\vartheta_{_4}$ are linearly independnt. Now we show that the relation $\vartheta_{_1} ^{a_{_1}}\otimes  \vartheta_{_2}^{a_{_2}}=\vartheta_{_3} ^{a_{_3}}\otimes  \vartheta_{_4}^{a_{_4}}$ for some $a_{_i}\neq 0$,$i=1,\cdots 4$ will not occur. Suppose, $\vartheta_{_1} ^{a_{_1}}\otimes  \vartheta_{_2}^{a_{_2}}=\vartheta_{_3} ^{a_{_3}}\otimes  \vartheta_{_4}^{a_{_4}}$. Then this would imply $p_{_1}^*({\pi_{_1}^*\theta_{_1}}^{a_{_1}}\otimes {\pi_{_1}'^*\theta'_{_1}}^{a_{_2}})=p_{_2}^*({\pi_{_2}^*\theta_{_2}}^{a_{_3}}\otimes {\pi_{_2}'^*\theta'_{_2}}^{a_{_4}})$. But this is impossible for the following reason: The line bundle $p_{_1}^*(\pi_{_1}^*\theta_{_1}\otimes \pi_{_1}'^*\theta'_{_1})$ is trivial on the fibres of $p_{_1}$. But as the fibres of $p_{_1}$ are $P_{_2}$ and $\pi_{_2}^*\theta_{_2}\otimes \pi_{_2}'^*\theta'_{_2}$ is a non trivial line bundle on $P_{_2}$ we get $p_{_2}^*(\pi_{_2}^*\theta_{_2}\otimes \pi_{_2}'^*\theta'_{_2})$ is non trivial on the fibres of $p_{_1}$.
  From the above observation, it follows that $\vartheta_{_i}$, $i=1,\cdots 4$ are linearly independent. Since $P_{_1}$ and $P_{_2}$ are both rational varieties we get $Pic(P_{_1}\times P_{_2})\simeq Pic(P_{_1})\times Pic(P_{_2})\simeq \Z^4$. Therefore, any nef line bundle on $P_{_1}\times P_{_2}$ is a non negative linear combination of 
  $ \vartheta_{_1} $, $ \vartheta_{_2}$, $ \vartheta_{_3}$, $\vartheta_{_4}$. 
   
Next we show that $\bigotimes\limits_{i=1}^4 \vartheta_{_i}^{a_{_i}}$ is ample if $a_{_i}>0$ for all $i=1,\ldots,4$. It is enough to show that $\bigotimes\limits_{i=1}^4 \vartheta_{_i}$ is ample. 
    We observe that $\pi_1^{*}\theta_{_1} \otimes \pi_1'^{*}\theta'_{_1} $ (resp. $\pi_2^{*}\theta_{_2}\otimes \pi_2'^{*}\theta'_{_2}$) is ample on $P_{_1}$(resp. on $P_2$)(see the proof of \cite[Theorem 2.1, page 4, 3rd paragraph]{bbd}). Therefore , $\bigotimes\limits_{i=1}^4 \vartheta_{_i}$ is ample on $P_{_1}\times P_{_2}$.
    
  Finally, we have to show  $\bigotimes\limits_{i=1}^4 \vartheta_{_i}^{a_{_i}}$ is not ample if $a_{_i}=0$ for some $i$. Now fix $j\in \{1,\ldots,4\}$ such that $a_{_j}=0$. Then $\bigotimes\limits_{\substack{i=1 \\ i\neq j}}^4 \vartheta_{i}$ is not ample as it is the pull back of an ample line bundle from $P_{_k}\times M_{l}$ or $P_{_k}\times M_{_k}'$ for $l,k\in \{1,2\}$, $k\neq l$. Next we observe that if $i\in \{1,2\}$ and $j\in \{3,4\}$, then $\vartheta_{_i}\otimes \vartheta_{_j}$ is not ample. Since in this case it is pull back of an ample line bundle from $M_{_k}\times M_{l}$ or $M_{k}'\times M_{_l}'$ for  $k,l\in \{1,2\}$, $k\neq l$. We have already observed that $\vartheta_{_1}\otimes \vartheta_{_2}$ and $\vartheta_{_3}\otimes \vartheta_{_4}$ is not ample. 
  
  So, from the above observations, we conclude the proposition.  
 \end{proof} 
 \begin{lemma}\label{good}
  Let $f:X\to Y$ be a projective morphism with Y, a normal variety. Suppose, each fibre of $f$ is a rational variety. Let $L$ be a line bundle on $Y$ then $H^0(X,f^*L)\simeq H^0(Y,L)$.
  \begin{proof}
   Since the fibres of $f$ are connected and $Y$ is normal we have $\mathcal{O}_{_Y}\simeq f_{_*}\mathcal{O}_{_{X}}$. Thus $L\simeq f_{_*}f^{*}L$. Since all the fibres of $f$ are rational we get $H^i(X_{_y},L_{_y})=H^i(X_{_y},\mathcal{O}_{_{X_{_y}}})=0$ for all $i>0$. Hence $H^0(X,f^*L)\simeq H^0(Y,f_{_*}f^{*}L)=H^0(Y,L)$
  \end{proof}

 \end{lemma}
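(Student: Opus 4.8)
The plan is to reduce the assertion to the single fact that $f_{_*}\mathcal{O}_{_X}\simeq\mathcal{O}_{_Y}$; granting this, the projection formula together with the tautological identity $H^{0}(X,\mathcal{F})=H^{0}(Y,f_{_*}\mathcal{F})$ (valid for any sheaf $\mathcal{F}$ on $X$) finishes the proof at once.

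First I would prove $f_{_*}\mathcal{O}_{_X}\simeq\mathcal{O}_{_Y}$. Since $f$ is projective, hence proper, $f_{_*}\mathcal{O}_{_X}$ is a coherent sheaf of $\mathcal{O}_{_Y}$-algebras, and the Stein factorization of $f$ is $X\to Y'\to Y$ with $Y'=\mathrm{Spec}_{Y}(f_{_*}\mathcal{O}_{_X})$, the first morphism having connected fibres and $h:Y'\to Y$ finite. A rational variety over $\C$ is integral, so every fibre of $f$ is connected; consequently $h$ is bijective on points. Moreover the generic fibre of $f$ is a proper, geometrically integral variety (being rational), so its ring of global functions is the residue field of the generic point of $Y$, which means $h$ is birational. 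A finite birational morphism onto the normal variety $Y$ is an isomorphism by Zariski's Main Theorem, and therefore $f_{_*}\mathcal{O}_{_X}\simeq h_{_*}\mathcal{O}_{_{Y'}}\simeq\mathcal{O}_{_Y}$.

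Given this, the projection formula gives $f_{_*}f^{*}L\simeq L\otimes_{\mathcal{O}_{_Y}}f_{_*}\mathcal{O}_{_X}\simeq L$, and taking global sections yields $H^{0}(X,f^{*}L)=H^{0}(Y,f_{_*}f^{*}L)\simeq H^{0}(Y,L)$, as wanted.

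The main obstacle is precisely the identity $f_{_*}\mathcal{O}_{_X}\simeq\mathcal{O}_{_Y}$, and in particular making sure the hypothesis on the fibres is used correctly: it supplies connectedness of \emph{every} fibre (so the Stein factorization collapses to a point-bijection) and integrality of the \emph{generic} fibre (so the resulting finite map is birational). A variant that avoids Stein factorization, and is perhaps what the author has in mind, uses cohomology and base change: rationality of each fibre $X_{_y}$ gives $H^{0}(X_{_y},\mathcal{O}_{_{X_{_y}}})=\C$ and $H^{i}(X_{_y},\mathcal{O}_{_{X_{_y}}})=0$ for $i>0$, so $f_{_*}\mathcal{O}_{_X}$ is an invertible sheaf of $\mathcal{O}_{_Y}$-algebras, hence equal to $\mathcal{O}_{_Y}$; the same input forces $R^{i}f_{_*}f^{*}L=0$ for $i>0$, so the Leray spectral sequence degenerates and gives $H^{k}(X,f^{*}L)\simeq H^{k}(Y,L)$ for all $k$, in particular for $k=0$.
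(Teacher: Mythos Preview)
Your proof is correct and follows the same route as the paper: establish $f_{_*}\mathcal{O}_{_X}\simeq\mathcal{O}_{_Y}$, apply the projection formula to get $f_{_*}f^{*}L\simeq L$, and take global sections. The paper is terser (asserting the first isomorphism directly from ``connected fibres and $Y$ normal''), and its mention of the vanishing $H^{i}(X_{_y},\mathcal{O}_{_{X_{_y}}})=0$ for $i>0$ is in fact superfluous for the $H^{0}$ statement---a point your variant discussion identifies correctly.
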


\begin{remark}\label{ima}
 Note that $\pi_1^{*}\theta_1\otimes \pi_1'^{*}\theta_1'$ (resp. $\pi_2^{*}\theta_2\otimes \pi_2'^{*}\theta_2'$) is ample on $P_{_1}$ (resp. on $P_{_2}$) (see the proof of \ref{nef}). Now $\vartheta_1\otimes  \vartheta_2=p_{_1}^*(\pi_1^{*}\theta_1\otimes \pi_1'^{*}\theta_1')$ and $\vartheta_3\otimes  \vartheta_4=p_{_1}^*(\pi_1^{*}\theta_3\otimes \pi_1'^{*}\theta_4')$. Since $P_{_1}$ and $P_{_2}$ are both rational varaities, by Lemma \ref{good}, the image of the morphism $|(\vartheta_1 \otimes \vartheta_2)^n|: P_1\times P_2\to \bp^N$  is isomorphic to $P_{_1}$ for some $n\gg 0$. Similarly, the image of the morphism $(\vartheta_3\otimes  \vartheta_4)^m:P_1\times P_2\to \bp^M$  
  is isomorphic to $P_2$ for some $m\gg 0$.
 \end{remark}

\begin{lemma}\label{nefc}
 Let $\theta$ be a nef but not ample line bundle on $P_1\times P_2$ 
 (i.e, $\theta$ lies in the boundary of the nef cone of $P_{_1}\times P_{_2}$) and $\theta \neq \vartheta_1^a\otimes  \vartheta_2^b ~ \mbox{or} ~  \vartheta_3^c\otimes \vartheta_4^d$, where $a, b, c$ and $d$ are some positive integers. 
 Let $Z$ be the image of the morphism $P_1\times P_2\to \bp^{N'}$ induced by the linear system $|\theta^n|$ for some large $n$. 
 Then we have $dim(Z)\neq dim(P_{_i})$ for $i=1,2$.
\end{lemma}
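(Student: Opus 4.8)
The plan is to decompose $\theta$ in the $\Z$-basis $\vartheta_{_1},\vartheta_{_2},\vartheta_{_3},\vartheta_{_4}$ of $Pic(P_{_1}\times P_{_2})$, to split the semiample morphism attached to $\theta$ as a Segre product of two morphisms, one on $P_{_1}$ and one on $P_{_2}$, and then to read off $\dim Z$ from the (semi)ampleness statements already in Proposition \ref{nef}, Lemma \ref{good} and Remark \ref{ima}. First I would invoke Proposition \ref{nef}: since $Pic(P_{_1}\times P_{_2})\simeq\Z^4$ is freely generated by the $\vartheta_{_i}$, with nef cone equal to their non-negative span and ampleness equivalent to all four exponents being strictly positive, every nef $\theta$ can be written $\theta=\vartheta_{_1}^{a_{_1}}\otimes\vartheta_{_2}^{a_{_2}}\otimes\vartheta_{_3}^{a_{_3}}\otimes\vartheta_{_4}^{a_{_4}}$ with $a_{_i}\in\Z_{\geq 0}$, and the hypothesis that $\theta$ is not ample forces some $a_{_i}=0$. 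Since $\vartheta_{_1}=p_{_1}^*(\pi_{_1}^*\theta_{_1})$ and $\vartheta_{_2}=p_{_1}^*(\pi_{_1}'^*\theta_{_1}')$ are pulled back along the projection $p_{_1}\colon P_{_1}\times P_{_2}\to P_{_1}$, while $\vartheta_{_3},\vartheta_{_4}$ are pulled back along $p_{_2}$, we obtain $\theta=p_{_1}^*L_{_1}\otimes p_{_2}^*L_{_2}$ with $L_{_1}=(\pi_{_1}^*\theta_{_1})^{a_{_1}}\otimes(\pi_{_1}'^*\theta_{_1}')^{a_{_2}}$ on $P_{_1}$ and $L_{_2}=(\pi_{_2}^*\theta_{_2})^{a_{_3}}\otimes(\pi_{_2}'^*\theta_{_2}')^{a_{_4}}$ on $P_{_2}$. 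Each $L_{_i}$ is a non-negative combination of pullbacks of ample classes, hence semiample, so $|\theta^n|$ is base-point free for $n\gg 0$.

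By the K\"unneth formula $H^0(P_{_1}\times P_{_2},\theta^n)=H^0(P_{_1},L_{_1}^n)\otimes H^0(P_{_2},L_{_2}^n)$, so the morphism defined by $|\theta^n|$ is the Segre composite of the morphisms $\psi_{_i}$ defined by $|L_{_i}^n|$ on $P_{_i}$; hence $Z\cong Z_{_1}\times Z_{_2}$ with $Z_{_i}=\psi_{_i}(P_{_i})$, and $\dim Z=\dim Z_{_1}+\dim Z_{_2}$. Next I would compute $\dim Z_{_i}$. If $a_{_1},a_{_2}>0$ then $L_{_1}$ equals $(\pi_{_1}^*\theta_{_1}\otimes\pi_{_1}'^*\theta_{_1}')^{\min(a_{_1},a_{_2})}$ times a nef line bundle, hence is ample on $P_{_1}$ by the ampleness of $\pi_{_1}^*\theta_{_1}\otimes\pi_{_1}'^*\theta_{_1}'$ established in Proposition \ref{nef}, so $Z_{_1}\cong P_{_1}$ and $\dim Z_{_1}=\dim P_{_1}=3g_{_1}-2$. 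If exactly one of $a_{_1},a_{_2}$ is positive, then $L_{_1}$ is a positive power of $\pi_{_1}^*\theta_{_1}$ or of $\pi_{_1}'^*\theta_{_1}'$, i.e. the pullback of an ample class along the surjection $\pi_{_1}\colon P_{_1}\to M_{_1}$ (resp. $\pi_{_1}'\colon P_{_1}\to M_{_1}'$); then $\psi_{_1}$ contracts the fibres of $\pi_{_1}$ (resp. $\pi_{_1}'$) and, by Lemma \ref{good} together with the argument of Remark \ref{ima}, has image of dimension $\dim M_{_1}=\dim M_{_1}'=3g_{_1}-3$. If $a_{_1}=a_{_2}=0$ then $L_{_1}=\mathcal{O}_{P_{_1}}$ and $\dim Z_{_1}=0$. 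The same discussion with the index $2$ gives $\dim Z_{_2}\in\{0,\,3g_{_2}-3,\,3g_{_2}-2\}$, the top value occurring exactly when $a_{_3},a_{_4}>0$.

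It then remains to enumerate the configurations of the $a_{_i}$. That $\theta$ is not ample excludes $a_{_1},a_{_2},a_{_3},a_{_4}$ all positive; the hypothesis $\theta\neq\vartheta_{_1}^{a}\otimes\vartheta_{_2}^{b}$ with $a,b>0$ excludes $a_{_1},a_{_2}>0$ and $a_{_3}=a_{_4}=0$; and $\theta\neq\vartheta_{_3}^{c}\otimes\vartheta_{_4}^{d}$ with $c,d>0$ excludes $a_{_1}=a_{_2}=0$ and $a_{_3},a_{_4}>0$. In every remaining configuration $\dim Z=\dim Z_{_1}+\dim Z_{_2}$ belongs to $\{\,0,\ 3g_{_1}-3,\ 3g_{_2}-3,\ 3g_{_1}+3g_{_2}-6,\ 3g_{_1}+3g_{_2}-5\,\}$, and using $g_{_1},g_{_2}\geq 2$ one checks immediately that none of these equals $3g_{_1}-2$ or $3g_{_2}-2$: for instance $3g_{_1}+3g_{_2}-5=3g_{_i}-2$ would force the other genus to be $1$, $3g_{_1}+3g_{_2}-6=3g_{_i}-2$ would force $3g_{_j}=4$, and the remaining equalities would force $3(g_{_1}-g_{_2})=\pm 1$ or $3g_{_i}=2$, all impossible. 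Hence $\dim Z\neq\dim P_{_i}$ for $i=1,2$.

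The step I expect to be the main obstacle is the splitting used above, namely that the semiample morphism attached to $\theta$ genuinely is the Segre product of the two one-factor morphisms and that each of these has the asserted image dimension ($\dim P_{_i}$, or $3g_{_i}-3$, or $0$). This relies on the K\"unneth decomposition of $H^0$ on the product, on Lemma \ref{good} to identify $H^0$ of the pullback of an ample class along the $\bp^1$-fibrations $\pi_{_1},\pi_{_1}',\pi_{_2},\pi_{_2}'$ with $H^0$ on the base, and on the ampleness of $\pi_{_i}^*\theta_{_i}\otimes\pi_{_i}'^*\theta_{_i}'$ from Proposition \ref{nef}; granted these, the concluding enumeration is elementary arithmetic.
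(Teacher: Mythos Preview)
Your argument is correct and follows the same underlying idea as the paper: write $\theta$ in the basis $\vartheta_{_1},\dots,\vartheta_{_4}$, use the product decomposition $\theta=p_{_1}^*L_{_1}\otimes p_{_2}^*L_{_2}$ to factor the semiample map as a Segre product, and then read off the image from the Hecke morphisms $\pi_{_i},\pi_{_i}'$. The paper's own proof proceeds in the same spirit but is terser and less complete: it only treats the representative exponent-one cases $\theta=\bigotimes_{i\neq j}\vartheta_{_i}$ and $\theta=\vartheta_{_i}\otimes\vartheta_{_j}$ (with $i\in\{1,2\}$, $j\in\{3,4\}$), identifies the image $Z$ directly as $P_{_k}\times M_{_l}$, $P_{_k}\times M_{_l}'$, $M_{_k}\times M_{_l}$ or $M_{_k}\times M_{_l}'$, and then asserts $\dim Z\neq\dim P_{_i}$ without writing out the arithmetic. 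Your version is more thorough in that it handles arbitrary non-negative exponents, includes the single-generator and trivial cases, and makes the final numerical check explicit using $g_{_1},g_{_2}\geq 2$. The trade-off is that the paper's identification of $Z$ as an explicit product variety is a bit more informative than a bare dimension count, but for the purposes of the Torelli argument your dimension computation is exactly what is needed.
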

\begin{proof}
 Assume that $\theta \neq  \vartheta_1\otimes  \vartheta_2 ~ or ~  \vartheta_3\otimes  \vartheta_4$. Then $\theta$ is either of the form $\bigotimes\limits_{i\neq j} \vartheta_{i}$ for  $i,j\in \{1,\ldots,4\}$ or $\vartheta_{_i}\otimes \vartheta_{_j}$ for $i\in \{1,2\}$ and $j\in \{3,4\}$ (see the proof of \ref{nef}). 
 
 Fix $j\in \{1,2,3,4\}$. If $\theta$ is of the form $\bigotimes\limits_{\substack{i=1 \\ i\neq j}}^4 \vartheta_{i}$,  then the image $Z$ of the morphism $|\theta^n|$, for sufficiently large $n$, is either isomorphic to $P_{_k}\times M_{_l}$ or $P_{_k}\times M_{_l}'$ for $k,l\in \{1,2\}$, $k\neq l$.
 
 If $\theta$ is of the form $\vartheta_{_i}\otimes \vartheta_{_j}$ then the image $Z$ of the morphism $|\theta^n|$ is either isomorphic to $M_{_k}\times M_{_l}$ or $M_{_k}\times M_{_l}'$,  for $k,l\in \{1,2\}$, $k\neq l$. In both the cases we see  $dim(Z)\neq dim(P_{_i})$ and hence we are done.
 \end{proof}
 Now we prove the main theorem of this section:

Let  $X_{_0}$ (resp. $Y_{_0}$) be a reducible curve with two components $X_{_1}$, $X_{_2}$ (resp. $Y_{_1}$, $Y_{_2}$) meeting transversally at a point $p$ (resp. $q$). Let $\pi_{_1}:\tilde X_{_0}\to X_{_0}$ (resp. $\pi_{_2}:\tilde Y_{_0}\to Y_{_0}$) be the normalisation map and $\pi_{_1}^{-1}(p)=\{x_{_1},x_{_2}\}$, $\pi_{_2}^{-1}(q)=\{y_{_1},y_{_2}\}$. We will make the following assumption on the components of $X_{_0}$ and $Y_{_0}$.

\begin{itemize}
\item $g(X_{_i})=g(Y_{_i})\geq 2$ for $i=1,2$.  
\item $X_{_1}\ncong X_{_2}$ (resp. $Y_{_1}\ncong Y_{_2}$). 
\end{itemize}
Fix $\xi_{_{X_0}}\in J^{\chi}(X_{_0})$ (resp. $\xi_{_{Y_0}}\in J^{\chi}(Y_{_0})$).
Let  $M_{_{0,\xi_{_{X_0}}}}$ (resp. $M_{_{0,\xi_{_{Y_0}}}}$) be the moduli space of rank $2$, $a=(a_{_1},a_{_2})$-stable torsion free sheaves with $detE\simeq \xi_{_{X_0}}$ (resp.$detE\simeq \xi_{_{Y_0}}$) on $X_{_0}$ (resp. on $Y_0$). Let $D\subset M_{_{0,\xi_{_{X_0}}}}$ (resp. $D'\subset M_{_{0,\xi_{_{Y_0}}}}$) be the singular locus of $M_{_{0,\xi_{_{X_0}}}}$ (resp. $M_{_{0,\xi_{_{Y_0}}}}$) and $P_{_i}$ (resp. $P_{_i}'$) be the parabolic moduli spaces, described before, with parabolic structure over $x_{_i}$ (resp. $y_{_i}$). Then $D\simeq P_{_1}\times P_{_2}$ and $D'\simeq P'_{_1}\times P'_{_2}$. Now we have the following Torelli type theorem. 
    \begin{theorem}
   If $M_{_{0,\xi_{_{X_0}}}}\simeq M_{_{0,\xi_{_{Y_0}}}}$ then we have $X_{_0}\simeq Y_{_0}$.
  \end{theorem}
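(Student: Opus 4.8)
The plan, following \cite{bbd}, is to pass from the isomorphism of moduli spaces to an isomorphism of their singular loci and then to read off, from the birational geometry of that locus, the two pointed components of $X_{_0}$ together with the gluing that produces the node. First I would use that the singular locus of $M_{_{0,\xi_{_{X_0}}}}$ is an intrinsic closed subvariety, so any isomorphism $\Phi\colon M_{_{0,\xi_{_{X_0}}}}\to M_{_{0,\xi_{_{Y_0}}}}$ restricts to an isomorphism $D\to D'$, i.e. to $P_{_1}\times P_{_2}\cong P_{_1}'\times P_{_2}'$. Since the Picard group and the nef cone are isomorphism invariants, and since by Proposition~\ref{nef} the nef cone of $P_{_1}\times P_{_2}$ is the simplicial cone on the four classes $\vartheta_{_1},\vartheta_{_2},\vartheta_{_3},\vartheta_{_4}$ (and likewise for the $Y$-side), the induced isomorphism on Picard groups must carry $\{\vartheta_{_1}',\vartheta_{_2}',\vartheta_{_3}',\vartheta_{_4}'\}$ bijectively onto $\{\vartheta_{_1},\vartheta_{_2},\vartheta_{_3},\vartheta_{_4}\}$.

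The next step is to pin down this permutation. By Remark~\ref{ima} and Lemma~\ref{nefc}, among the faces of the nef cone exactly the two faces spanned by $\{\vartheta_{_1},\vartheta_{_2}\}$ and by $\{\vartheta_{_3},\vartheta_{_4}\}$ have the property that the morphism attached to a large power of an interior class of the face contracts $P_{_1}\times P_{_2}$ onto a variety of dimension $\dim P_{_i}=3g_{_i}-2$, the image being $P_{_1}$, respectively $P_{_2}$; every other face of dimension at most $3$ contracts onto a product of (parabolic) moduli spaces of strictly smaller --- and, because $g_{_i}\ge 2$, numerically different --- dimension. Hence the unordered partition $\{\{\vartheta_{_1},\vartheta_{_2}\},\{\vartheta_{_3},\vartheta_{_4}\}\}$ is intrinsic and is matched by $\Phi^{*}$ with the analogous partition for $Y_{_0}$; separating the two blocks by the invariant $b_{_3}(P_{_i})=2g_{_i}$ (or simply relabelling the components of $Y_{_0}$ when $g_{_1}=g_{_2}$), and invoking $X_{_1}\ncong X_{_2}$ exactly as in the linear-independence arguments of \cite{bbd} and Lemma~\ref{ind}, I obtain isomorphisms $P_{_1}\cong P_{_1}'$ and $P_{_2}\cong P_{_2}'$ that are moreover compatible with the two extremal contractions $P_{_i}\to M_{_i}$ and $P_{_i}\to M_{_i}'$ of each factor --- the contraction onto the odd-degree space $M_{_i}$ being singled out because its image is smooth (for $g_{_i}\ge 3$, $M_{_i}'$ being singular) and satisfies $b_{_3}=2g_{_i}\ne 0$ (which already distinguishes it when $g_{_i}=2$).

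Finally I would recover the pointed curves and glue. The contraction $P_{_i}\to M_{_i}$ realises the moduli space of rank $2$ odd-degree fixed-determinant bundles on $X_{_i}$, so the classical Torelli theorem gives $X_{_i}\cong Y_{_i}$; comparing this identification with the other contraction $P_{_i}\to M_{_i}'$ exhibits $P_{_i}$ as the Hecke $\bp^1$-bundle $\bp(\mathcal{E}_{_{x_{_i}}})\to M_{_i}$ attached to the marked point $x_{_i}$, and the rigidity of this construction in the marked point forces the isomorphism $X_{_i}\cong Y_{_i}$ to send $x_{_i}$ to $y_{_i}$. Gluing the resulting isomorphisms $(X_{_i},x_{_i})\cong(Y_{_i},y_{_i})$ along the marked points then yields $X_{_0}\cong Y_{_0}$, since $X_{_0}$ (resp. $Y_{_0}$) is obtained from $\tilde X_{_0}$ (resp. $\tilde Y_{_0}$) precisely by identifying $x_{_1}$ with $x_{_2}$ (resp. $y_{_1}$ with $y_{_2}$). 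I expect the main obstacle to be this last step: showing that the parabolic moduli space $P_{_i}$, equivalently the projective bundle $\bp(\mathcal{E}_{_{x_{_i}}})\to M_{_i}$, determines not merely the curve $X_{_i}$ but the point $x_{_i}$ on it, so that the recovered pointed components glue back to $X_{_0}$ rather than to some other nodal curve with the same normalization; the secondary difficulty is the combinatorial bookkeeping in the middle step, where the hypotheses $g_{_i}\ge 2$ and $X_{_1}\ncong X_{_2}$ are used to kill all the exotic permutations of the four extremal rays.
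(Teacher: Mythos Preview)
Your proposal is correct and follows the paper's strategy: pass to the singular locus $D\cong P_{_1}\times P_{_2}$, use the nef-cone analysis (Proposition~\ref{nef}, Lemma~\ref{nefc}, Remark~\ref{ima}) to split the product as $P_{_i}\cong P'_{_{\sigma(i)}}$, and then recover the pointed components and glue. The paper differs only in packaging --- it argues with the boundary classes $N=\varsigma_{_1}\otimes\varsigma_{_2}$ and $N'=\varsigma_{_3}\otimes\varsigma_{_4}$ and the dimensions of their Iitaka images rather than with a permutation of the four extremal rays --- and, more to the point, it disposes of your anticipated ``main obstacle'' in one line by invoking \cite[Theorem~2.1]{bbd}, which is precisely the parabolic Torelli statement $P_{_i}\cong P'_{_i}\Rightarrow (X_{_i},x_{_i})\cong(Y_{_i},y_{_i})$ that you set out to re-derive.
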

\begin{proof}
 Let $\Psi:M_{_{0,\xi_{_{X_0}}}}\simeq M_{_{0,\xi_{_{Y_0}}}}$ be an isomorphism. Then $\Psi(D)=D'$ as $D$ is the singular locus of $M_{_{0,\xi}}$. Therefore, $\Psi$ induces an isomorphism $\Psi:P_{_1}\times P_{_2}\simeq P_{_1}'\times P_{_2}'$. Now if we can show that the above statement will imply $P_{_i}\simeq P'_{_\sigma(i)}$ for $i\in\{1,2\}$ and $\sigma$ is a permutation on $\{1,2\}$. Then by \cite[Theorem 2.1]{bbd}, we get an isomorphism $f_{_i}:X_{_i}\to Y_{_{\sigma(i)}}$ such that $f_{_i}(x_{_i})=y_{_\sigma(i)}$. Hence, we get $X_{_0}\simeq Y_{_0}$. We will show that if $\Psi:P_{_1}\times P_{_2}\simeq P_{_1}'\times P_{_2}'$, then $P_{_i}\simeq P'_{_{\sigma(i)}}$. 
 %Therefore $\Psi$ induces an isomorphism $\Psi:P_1\times P_2\to P_1'\times P_2'$.
 %Let  $L_1=\tilde \theta_{_1} \otimes \tilde \theta_{_0} $, $L_2=\tilde \theta_{_1} '\otimes \tilde \theta_{_0}'$. We have already observed that  the image of the morphism defined by the linear system $|L_1^{n}|:P_1\times P_2\to \bp^N$, for some large $n$, is isomorphic to $P_{_1}$ and the image of the morphism defined by the linear system $|L_2^{m}|:P_1\times P_2\to \bp^M$, for some large $m$, is isomorphic to $P_2$.
 
 Let $\varsigma_1$, $\varsigma_2$, $\varsigma_3$, $\varsigma_4$ be the generators of the nef cone of $P_1'\times P_2'$ as in Proposition \ref{nef}. Let $N:=\varsigma_1\otimes \varsigma_2$ and $N':=\varsigma_{_3}\otimes \varsigma_{_4}$. Then $\Psi^{*}N$, $\Psi^*N'$ lie in the boundary of the nef cone of $P_1\times P_2$. Note that, for sufficiently large $n,m$, the image of the morphism $|\Psi^{*}N^n|$ is isomorphic to $P_{_1}'$ and the image of the morphism $|\Psi^{*}N'^m|$ is isomorphic to $P_{_2}'$. 
 
 Now we claim that $\Psi^{*}(N)=\vartheta_{_1} ^a\otimes \vartheta_{_2}^b$ or $\vartheta_{_3}^c\otimes \vartheta_{_4}^d$  for some $a,b,c,d>0$. Otherwise, by Lemma \ref{nefc}, the dimension of the image of $|\Psi^{*}(N)^n|$ will be different from the dimension of $P'_{_1}$. Suppose that $\Psi^{*}(N)=\vartheta_{_1} ^a\otimes \vartheta_{_2}^b$ for some $a,b>0$. Then, by our assumption $Y_{_1}\ncong Y_{_2}$, we have $\Psi^*(N')=\vartheta_{_3}^c\otimes \vartheta_{_4}^d$ for some $c,d>0$. Therefore, by Remark \ref{ima}, for sufficiently large $n,m\gg 0$, the images of the morphisms defined by the linear systems $|\Psi^*N^{n}|$ and $|\Psi^*N'^m|$ will be isomorphic to $P_{_1}$ and $P_{_2}$. %This would imply $P_2\simeq P_1'$ then we would have $M_{_{X_{_2}}}(2,L_{-1})\simeq M_{Y_1}(2,L_1)$ (see the proof of \cite[Theorem 2,page 4,(2.3)]{bbd}  for an argument). Hence $X_2\
%simeq Y_1$. We already know $Y_1\simeq X_1$ therefore $X_1\simeq X_2$, a contradiction to our assumption. Therefore, $\Psi^*(N_{_{_1}})=\theta_{_1} ^a\otimes \theta_{_0}^b$. 
Hence, we have  isomorphisms $\Phi_{_1}:P_1\to P_1'$ and $\Phi_{_2}:P_{_2}\to P_{_2}'$ such 
that the following diagrams commute:     
                  \begin{equation}
                     \xymatrix{P_1\times P_2 \ar[r]^{\Psi} \ar[d]^{|\Psi^{*}(N)^n|} & P_1'\times P_2' \ar[d]^{|N^{n}|} \\
                     P_1 \ar[r]^{\Phi_{_1}} & P_1'}
                    \end{equation}
              \begin{equation}\xymatrix{P_1\times P_2 \ar[r]^{\Psi} \ar[d]^{|\Psi^{*}(N')^m|} & P_1'\times P_2' \ar[d]^{|N'^{m}|} \\
                     P_2 \ar[r]^{\Phi_{_2}} & P_2'}
                    \end{equation}

  Therefore, by \cite[Theorem 2.1]{bbd}, there is an isomorphism $f_{_1}:X_{_1}\to Y_1$ such that $f_{_1}(x_{_1})=y_{_1}$ and an isomorphism $f_{_2}:X_2\to Y_2$ such that $f_{_2}(x_{_2})=y_{_2}$.  
  
  Suppose that $\Psi^{*}(N)=\vartheta_{_2} ^c\otimes \vartheta_{_4}^d$, $c,d>0$. Then, by similar arguments as above, we can show that $P_{_2}\simeq P_{_1}'$ and $P_{_1}\simeq P_{_2}'$. Therefore, there is an isomorphism $f'_{_1}:X_{_2}\to Y_{_1}$ such that $f'_{_1}(x_{_2})=y_{_1}$ and an isomorphism $f'_{_2}(x_{_1})=y_{_2}$
  Hence, we conclude $X_0\simeq Y_0$. This completes the proof.
\end{proof}
\section{Appendix}
In this section we will continue with the notations of the preleminary section.
Fix an ample line bundle $\mathcal{O}_{_{X_{_0}}}(1)$ on $X_{_0}$. Let $c_{_i}=deg(\mathcal{O}_{_{X_{_0}}}(1)|_{_{X_{_i}}})$, and $a_{_i}=\frac{c_{_i}}{c_{_1}+c_{_2}}$, $i=1,2$. Let $S(r,\chi)$ be the set of all rank $r$, $a=(a_{_1},a_{_2})$ semistable torsion free sheaves on $X_{_0}$ with Euler charachteristic $\chi$. Note that the Hilbert polynomial $P(E,n)=(c_{_1}+c_{_2})n+\chi(E)$ for all $E\in S(r,\chi)$ (this can be easily computed from equation $(2.1.3)$).
\begin{lemma}(\cite[Septieme Partie]{ns})\label{bounded}
  There exists an integer $m_{_0}$ such that-
 \begin{enumerate}
 \item $H^1(F(m))=0$ for all $F\in S(r,\chi)$ and $m\geq m_{_0}$
 \item $F(m)$ is globally generated by its sections for all $F\in S(r,\chi)$.
 \end{enumerate}
\end{lemma}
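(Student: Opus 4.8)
The plan is to derive both statements from the single fact that $S(r,\chi)$ is a \emph{bounded} family of sheaves on $X_{_0}$. Granting boundedness, choose a finite type $\C$-scheme $T$ and a coherent sheaf $\mathcal{F}$ on $X_{_0}\times T$, flat over $T$, such that every member of $S(r,\chi)$ is isomorphic to the restriction $\mathcal{F}_{_t}$ for some closed point $t\in T$. Covering the quasi-compact scheme $T$ by finitely many affine opens and applying Serre's theorem on each open, we obtain an integer $m_{_0}$ such that for all $m\geq m_{_0}$ one has $R^1 p_{T*}(\mathcal{F}(m))=0$ and the evaluation map $p_{T}^{*}p_{T*}(\mathcal{F}(m))\to \mathcal{F}(m)$ is surjective, where $p_{T}$ is the projection to $T$. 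Since $X_{_0}$ is a curve, $R^1 p_{T*}$ is the top direct image, so its vanishing forces cohomology and base change to hold in all degrees: for $m\geq m_{_0}$ the sheaf $p_{T*}(\mathcal{F}(m))$ is locally free, its formation commutes with base change, and hence $H^1(X_{_0},\mathcal{F}_{_t}(m))=0$ while $\mathcal{F}_{_t}(m)$ is globally generated, for every $t\in T$. As each $F\in S(r,\chi)$ occurs as some $\mathcal{F}_{_t}$, assertions (1) and (2) follow.

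It remains to prove that $S(r,\chi)$ is bounded. There are only finitely many rank types $(r_{_1},r_{_2})$ for a rank $r$ torsion free sheaf, and for each of them an $(a_{_1},a_{_2})$-semistable sheaf $F$ is encoded by a triple $(F_{_1},F_{_2},A)$ with $F_{_i}$ a vector bundle of rank $r_{_i}$ on $X_{_i}$ and $A$ a linear map between the fibres at $p$. Testing $(a_{_1},a_{_2})$-semistability of $F$ against the sub-triple $(F_{_1},0,0)$ and against the quotient triple supported on $X_{_2}$, and symmetrically, bounds $\chi(F_{_1})$ from above and below, so the pair $(\chi(F_{_1}),\chi(F_{_2}))$ lies in a fixed finite set; for $r=2$ this is exactly Lemma \ref{bounds2}, and for general $r$ it is carried out in \cite[Septieme Partie]{ns}. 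Testing semistability further against sub-triples $(G,0,0)$ with $G\subseteq F_{_1}$ a subsheaf yields a uniform upper bound on the maximal slope of $F_{_1}$ (and likewise for $F_{_2}$); alternatively, in the relevant range one may invoke Fact \ref{semistable} and Remark \ref{other}, which give that $F_{_i}$ is itself semistable on $X_{_i}$. In either case, for each rank type and each admissible pair of Euler characteristics, the bundles $F_{_1}$ (resp.\ $F_{_2}$) arising this way form a bounded family on the smooth projective curve $X_{_1}$ (resp.\ $X_{_2}$) by Kleiman's boundedness criterion; fixing finite type parameter schemes $T_{_1}$ and $T_{_2}$ for these families, the datum $A$ then ranges over the total space of the vector bundle on $T_{_1}\times T_{_2}$ whose fibre over $(F_{_1},F_{_2})$ is $\mathrm{Hom}(F_{_1}(p),F_{_2}(p))$, which is again of finite type. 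Running over the finitely many rank types and finitely many Euler characteristic splittings exhibits $S(r,\chi)$ as parametrised by a finite type scheme, hence bounded.

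The only substantive step is this boundedness, and inside it the uniform control of the components $F_{_i}$. The feature peculiar to the reducible curve $X_{_0}$ is that the slope $\mu(F)=\chi(F)/(a_{_1}r_{_1}+a_{_2}r_{_2})$ mixes the two components, so one cannot quote boundedness of semistable sheaves on a smooth variety directly; the passage to triples, together with the Euler-characteristic pigeonhole of Lemma \ref{bounds2} and the (semi)stability transfer of Fact \ref{semistable}, is precisely what reduces the problem to the classical boundedness of semistable vector bundles on the smooth curves $X_{_i}$. That reduction is the content of \cite[Septieme Partie]{ns}; once it is in place, the remaining argument is the routine uniform Serre vanishing sketched above.
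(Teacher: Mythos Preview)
The paper does not give a proof of this lemma at all; it simply records the statement with the citation \cite[Septieme Partie]{ns} and moves on. So there is nothing to compare your proof against on the paper's side.

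Your argument is correct and is essentially the standard one. The only point worth a brief remark is the claim that there are finitely many rank types $(r_{_1},r_{_2})$: this is fine because $r=a_{_1}r_{_1}+a_{_2}r_{_2}$ with $a_{_1},a_{_2}>0$ and $r_{_1},r_{_2}\geq 0$ integers, so the solutions lie on a bounded segment. Everything else---the Euler-characteristic bounds via testing against component-supported subtriples, the slope bound on $F_{_i}$ via subtriples of the form $(G,0,0)$, the Kleiman-type boundedness on the smooth $X_{_i}$, and the finite-type parametrisation of the linear data $A$---is exactly the intended reduction, and once boundedness is in hand the uniform Serre vanishing and global generation follow as you describe. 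Note only that Fact~\ref{semistable} as stated in the paper is specific to rank~$2$ and to particular Euler-characteristic windows, so for general $r$ you should rely on the direct slope-bound argument rather than on semistability of the $F_{_i}$; you already indicate this alternative.
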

%In the next subsection we will see the moduli space of rank $1$,semistable torsion free sheaves with respect to certain choice polarisation is isomorphic to the product of the Jacobians.
\subsection{Moduli space of rank \texorpdfstring{$1$}{} torsion free sheaves over a reducible nodal curve}\label{rank1}
In this subsection we prove that the moduli space of rank $1$,semistable torsion free sheaves with respect to certain choice polarisation is isomorphic to the product of the Jacobians.

\subsubsection{Euler Characteristic bounds for rank $1$ semistable sheaves}
Fix three integers $\chi\neq 0$, $\chi_{_1}$ and $\chi_{_2}\neq 1$ with $\chi > \chi_{_i}$ such that $\chi=\chi_{_1}+\chi_{_2}-1$. Let $\mathcal{O}_{_{X_0}}(1)$ be an ample line bundle such that $deg(\mathcal{O}_{_{X_{_0}}}(1)|_{_{X_1}})=\chi_{_1}-1$ and $deg(\mathcal{O}_{_{X_{_0}}}(1)|_{_{X_2}})=\chi_{_2}$. Since $\chi=\chi_{_1}+\chi_{_2}-1$, the Hilbert polynomial $P(L,n)=(n+1)\chi$ for all $L\in S(1,\chi)$.

Let $b_{_1}=\frac{\chi_{_1}-1}{\chi}$ and $b_{_2}=\frac{\chi_{_2}}{\chi}$. In this subsection whenever we say a semistable rank $1$ torsion free sheaf we assume the semistability with respect to the polarisation $b=(b_{_1},b_{_2})$.
\begin{lemma}\label{line}
 Let $L\in S(1,\chi)$ and $(L_{_1},L_{_2},\lambda)\in \stackrel{\to}{C}$ be the unique triple representing $L$. Then $\chi(L_{_i})$, the Euler characteristic of $L_{_i}$, satisfy the following:
 \[\chi_{_1}\leq \chi(L_{_1})\leq \chi_{_1}+1,~ \chi_{_2}-1\leq \chi(L_{_2})\leq \chi_{_2}.\] Moreover if $L$ is semistable and non locally free then we have $\chi(L_{_1})=\chi_{_1}$ and $\chi(L_{_2})=\chi_{_2}$.
 
 Conversely, suppose $L$ be a rank $1$ torsion free sheaf with $\chi(L_{_i})$ satisfy the above conditions then $L\in S(1,\chi)$.
 \begin{proof}
 By Lemma \cite[Lemma 5.2]{ns} we can easily derive the following: if $L$ is a rank $1$,locally free and $(L_{_1},L_{_2},\lambda) \in \stackrel{\to}{C}$ be the unique triple representing $L$ then we only have to check the semistability condition for the subtriples  $(L_{_1}(-p),0,0)$ and $(0,L_{_2},0)$. If $L$ is a rank $1$, non locally free sheaf and $(L_{_1},L_{_2},0)$ be the triple representing $L$ then we only have to check the semistability for the subtriples $(L_{_1},0,0)$ and $(0,L_{_2},0)$. Now by using the definition of semistability (see \ref{semi}) we immediately get the above Lemma.
\end{proof}
\end{lemma}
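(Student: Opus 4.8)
The plan is to reduce $b$-semistability to the slope inequality on a short, explicit list of subtriples via \cite[Lemma 5.2]{ns}, and then to extract the numerical bounds by a one-line slope computation. First I would record the basic identity linking the two components: if $L$ is rank $1$ torsion free with $\chi(L)=\chi$ and $(L_{_1},L_{_2},\lambda)\in\stackrel{\to}{C}$ represents it, then by the definition of the Euler characteristic of a triple (Definition \ref{semi}) one has $\chi=\chi(L)=\chi(L_{_1})+\chi(L_{_2})-1$, so $\chi(L_{_1})+\chi(L_{_2})=\chi+1=\chi_{_1}+\chi_{_2}$. Thus the two displayed pairs of inequalities are equivalent to one another, and it suffices to bound $\chi(L_{_1})$ and $\chi(L_{_2})$ each from one side. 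Moreover, since these fibres are one-dimensional, $L$ is locally free exactly when $\lambda$ is an isomorphism, and non-locally-free exactly when $\lambda=0$.

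Next I would invoke \cite[Lemma 5.2]{ns}, which (as the cited proof uses it) reduces the semistability test for a rank $1$ triple to the slope inequality $\mu(G_{_1},G_{_2},B)\le\mu(L)$ for the two ``extreme'' one-component subtriples. When $\lambda$ is an isomorphism these are $(L_{_1}(-p),0,0)$ and $(0,L_{_2},0)$ — note that $(L_{_1},0,0)$ is not a subtriple in this case, because the composite $L_{_1}(p)\stackrel{\lambda}{\longrightarrow}L_{_2}(p)$ is nonzero — and when $\lambda=0$ they are $(L_{_1},0,0)$ and $(0,L_{_2},0)$. Using $r=b_{_1}+b_{_2}=1$ one has $\mu(L)=\chi$; a rank-type $(1,0)$ subtriple has rank $b_{_1}$ and Euler characteristic $\chi(L_{_1})$ or $\chi(L_{_1})-1$, while the rank-type $(0,1)$ subtriple $(0,L_{_2},0)$ has rank $b_{_2}$ and Euler characteristic $\chi(L_{_2})-1$. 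Writing $b_{_i}\chi$ for the degree of $\mathcal{O}_{_{X_{_0}}}(1)$ restricted to $X_{_i}$ and clearing denominators, the inequalities $\mu(\,\cdot\,)\le\chi$ become exactly the asserted bounds on $\chi(L_{_1})$ and $\chi(L_{_2})$; in the non-locally-free case the two one-sided bounds, together with the identity $\chi(L_{_1})+\chi(L_{_2})=\chi_{_1}+\chi_{_2}$, force $\chi(L_{_1})=\chi_{_1}$ and $\chi(L_{_2})=\chi_{_2}$. For the converse I would run the same comparison in reverse: if the integers attached to $L$ satisfy the stated constraints, then every subtriple in the list provided by \cite[Lemma 5.2]{ns} has slope $\le\chi=\mu(L)$, whence $L\in S(1,\chi)$.

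I expect the only genuinely delicate point to be this first move — checking that \cite[Lemma 5.2]{ns} really does cut the semistability test down to the two one-component subtriples above, and correctly deciding which of $(L_{_1},0,0)$ and $(L_{_1}(-p),0,0)$ is admissible according to whether $\lambda$ is zero or an isomorphism. Once that is settled, the rest is the elementary arithmetic of comparing $\frac{\chi(L_{_i})-\varepsilon}{b_{_i}}$ with $\chi$ for $\varepsilon\in\{0,1\}$, which I would not write out in detail.
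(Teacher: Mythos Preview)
Your proposal is correct and follows essentially the same route as the paper's proof: invoke \cite[Lemma 5.2]{ns} to reduce the $(b_{_1},b_{_2})$-semistability test to the two one-component subtriples $(L_{_1}(-p),0,0)$, $(0,L_{_2},0)$ in the locally free case and $(L_{_1},0,0)$, $(0,L_{_2},0)$ in the non-locally-free case, then read off the bounds from the slope inequality. Your added explanation of why $(L_{_1},0,0)$ is admissible only when $\lambda=0$, together with the identity $\chi(L_{_1})+\chi(L_{_2})=\chi_{_1}+\chi_{_2}$, makes explicit exactly what the paper leaves to the reader.
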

Fix an integer $m\geq m_{_0}$ such that Lemma \ref{bounded} holds for all $F\in S(1,\chi)$ and let $P(n)=(n+1)\chi$. Let $Q(1,\chi)$ be the Quot scheme parametrising all coherent quotients \[\mathcal{O}_{_{X_{_0}}}^{\oplus p(m)}\to L\to 0\] with Hilbert polynomial $P(n)$ and $\mathcal{U}^1$ be the universal quotient sheaf of $\mathcal{O}_{_{X_{_0}\times Q(1,\chi)}}^{\oplus p(m)}$ on $X_{_0}\times Q(1,\chi)$. Let $R(1,\chi)^{ss}$ be the open subset of $Q(1,\chi)$ such that if $q\in R(1,\chi)^{ss}$ then $\mathcal{U}^1_{_q}:=\mathcal{U}^1|_{_{X_{_0}\times q}}$ is a rank $1$ semistable torsion free quotient and the natural map \[H^0(\mathcal{O}_{_{X_{_0}\times q}})\to H^0(\mathcal{U}^1_{_q})\] is an isomorphism. Note that if $L\in S(1,\chi)$ then ,by Lemma \ref{bounded}, $L(m)$ is globally generated. Thus $L(m)$ is a quotient of a  trivial sheaf of rank $p(m):=h^0(L(m))$ and the natural map $H^0(\mathcal{O}_{_{X_{_0}}}^{\oplus p(m)})\to H^0(L(m))$ is an isomorphism. Therefore $L(m)\simeq \mathcal{U}^{1}_{_q}$ for some $q\in R(1,\chi)^{ss}$. The group $GL(p(m))$ acts on $Q(1,\chi)$ and $R(1,\chi)^{ss}$ is invariant under the action of $GL(p(m))$. Moreover, the action of $GL(p(m))$ goes down to an action of $PGL(p(m))$. By a general result in (see \cite[Septieme partie,III, Theorem 15]{se1}) the good quotient $R(1,\chi)^{ss} \parallelslant PGL(p(m))$ exists as a reduced, projective scheme. Let $R_{_0}$ be the open subset of $R(1,\chi)^{ss}$ consisting of only rank $1$, locally free sheaves. Then $R_{_0}=R_{_1}\sqcup R_{_1}'$ where $R_{_1}$ consists of those rank $1$ locally free sheaves $L$ such that $\chi(L_{_1})=\chi_{_1}$, $\chi(L_{_2})=\chi_{_2}$ and $R_{_1}'$ consists of those rank $1$ locally free sheaves $L$ such that $\chi(L_{_1})=\chi_{_1}+1$, $\chi(L_{_2})=\chi_{_2}-1$. Let $J^{\chi_{_i}}(X_{_i})$ be the Jacobian of isomorphism classes of line bundles over $X_{_i}$ with Euler characteristic $\chi_{_i}$, $i=1,2$. With these notations the main theorem of this subsection is:
\begin{theorem}\label{moduli}
 The good quotient $R(1,\chi)^{ss}\parallelslant PGL(p(m))$ is isomorphic to $J^{\chi_{_1}}(X_{_1})\times J^{\chi_{_2}}(X_{_2})$.
 \begin{proof}
 Let $q:R(1,\chi)^{ss}\to R(1,\chi)^{ss}\parallelslant PGL(p(m))$ be the quotient map. Note that, since $Hom(L,L)=\C$ for all $L\in R_{_1}$, $PGL(p(m))$ acts freely on $R_{_1}$. Moreover, $R_{_1}$ is smooth and irreducible. Therefore, the quotient $R_{_1}/PGL(p(m))$ is smooth and irreducible (see \cite[Corollary 4.2.13]{huy}). 
 \noindent  
{\em Claim 1:} The quotient $q(R_{_1})=R_{_1}/PGL(p(m))$ is isomorphic to $J^{\chi_{_1}}(X_{_1})\times J^{\chi_{_2}}(X_{_2})$. 

To see this consider $\mathcal{U}$ over $X_{_0}\times R_{_1}$. Then $\mathcal{U}$ is locally free and hence $\mathcal{U}_{_i}=\mathcal{U}|_{_{X_{_i}}\times R_{_1}}$ is locally free. Moreover, $\chi(\mathcal{U}_{_i}|_{_{X_{_i}\times q}})=\chi_{_i}$, $i=1,2$. Thus by the universal property of $J^{\chi_{_i}}(X_{_i})$ we get a morphism $f_{_i}:R_{_1}\to J^{\chi_{_i}}$, $i=1,2$. Therefore, we get a morphism $f=(f_{_1},f_{_2}):R_{_1}\to J_{_0}=J^{\chi_{_1}}(X_{_1})\times J^{\chi_{_2}}(X_{_2})$. Clearly, this morphism is $PGL(p(m))$-invariant and the fibres of this morphism are isomorphic to the orbits of the $PGL(p(m))$ action. Therefore, we get a bijective morphism $R_{_1}/PGL(p(m))\to J_{_0}$. Since $R_{_1}/PGL(p(m))$, $J_{_0}$ are integral and $J_{_0}$ is smooth, we have $R_{_1}/PGL(p(m))$ is isomorphic to the variety $J_{_0}=J^{\chi_{_1}}(X_{_1})\times J^{\chi_{_2}}(X_{_2})$. 
\noindent
{\em Claim 2:} We have an equality:
\[R_{_1} / PGL(p(m))=R(1,\chi)^{ss} \parallelslant PGL(p(m).\]
Note that Claims 1 and 2 together prove the theorem.

Let $[L_{_0}]\in R(1,\chi)^{ss}\parallelslant PGL(p(m)$ where $[]$ is orbit closure equivalence class. Then we want to show there is a $L\in R_{_1}$ such that $q(L)=[L_{_0}]$. In other words the orbit closure $\overline{O(L)}$ intersects the orbit closure $\overline{O(L_{_0})}$. Suppose, $L_{_0}$ is locally free with $\chi(L_{_0}|_{_{X_{_i}}})=\chi_{_i}$, $i=1,2$ then there is nothing to prove. So we assume that $L_{_0}$ is a rank $1$ torsion free but non-locally free sheaf. Then as $L_{_0}$ is semistable, by Lemma \ref{line}, we get $\chi(L_{_i})=\chi_{_i}$, $i=1,2$ where $(L_{_1},L_{_2},0)\in \stackrel{\to}{C}$ is the unique triple representing $L_{_0}$. Let $L$ be the  rank $1$ locally free sheaf corresponding to the triple $(L_{_1},L_{_2},\lambda)\in \stackrel{\to}{C}$ where $\lambda:L_{_1}(p)\to L_{_2}(p)$ is an isomorphism. We will show now the orbit closure $\overline{O(L)}$ intersects the orbit $O(L_{_0})$. For this, let $p_{_i}:X_{_i}\times \mathbb{A}^1\to X_{_i}$, $i=1,2$, be the two projections. We again denote the pullback $p_{_i}^*L_{_i}$ by $L_{_i}$. Since $L_{_i}$, $i=1,2$, are  free $\mathcal{O}_{_{\mathbb{A}^1}}$-module, we can choose a $\mathcal{O}_{_{\mathbb{A}^1}}$-module homomorphism $\lambda:L_{_1}|_{_{p\times \mathbb{A}^1}}\to L_{_2}|_{_{p\times \mathbb{A}^1}}$ such that $\lambda(t):L_{_1}(p,t)\to L_{_2}(p,t)$ is an isomorphism for all $t\neq 0$ and $\lambda(0)=0$. Let $G$ be the graph of the morphism $\lambda$ in $L_{_1}|_{_{p\times \mathbb{A}^1}}\oplus L_{_1}|_{_{p\times \mathbb{A}^1}}$ and $G':=\frac{L_{_1}|_{_{p\times \mathbb{A}^1}}\oplus L_{_2}|_{_{p\times \mathbb{A}^1}}}{G}$. Let $\mathcal{L}:=Ker(L{_1}\oplus L_{_2}\to G')$ over $X_{_0}\times \mathbb{A}^1$. Now $L_{_1}\oplus L_{_2}$ and $G'$, being free $\mathcal{O}_{_{\mathbb{A}}}^1$-module, are flat over $\mathbb{A}^1$. Therefore, $\mathcal{L}$ is flat over $\mathbb{A}^1$. We also see that $\mathcal{L}_{_t}$ is the torsion free sheaf corresponding to the triple $(L_{_1},L_{_2},\lambda(t))$. Therefore, $\mathcal{L}_{_t}\simeq L$ for all $t\neq 0$ and $\mathcal{L}_{_0}\simeq L_{_0}$. Note that, as $\mathcal{L}_{_t}\in R(1,\chi)^{ss}$ for all $t\in \mathbb{A}^1$, $H^1(\mathcal{L}_{_t})=0$ and $\mathcal{L}_{_t}$ is globally generated for all $t\in \mathbb{A}^1$. By semicontinuity theorem, we get ${p_{_2}}_*\mathcal{L}$ is locally free sheaf of rank $p(m)$ on $\mathbb{A}^1$. Since any locally free sheaf on $\mathbb{A}^1$ is free, ${p_{_2}}_*\mathcal{L}\simeq {\mathcal{O}_{_{\mathbb{A}^1}}}^{\oplus p(m)}$. Thus we get a quotient \[\mathcal{O}_{_{X_{_0}\times \mathbb{A}^1}}^{\oplus p(m)}\simeq p_{_2}^*{p_{_2}}_*\mathcal{L} \to \mathcal{L}\to 0.\] such that $H^0(\mathcal{O}_{_{X_0\times t}}^{p(m)})\to H^0(\mathcal{L}_{
_t})$ is an isomorphism for all $t\in \mathbb{A}^1$. Hence we get a morphism $\phi:\mathbb{A}^1\to R(1,\chi)^{ss}$ such that $\phi^*\mathcal{U}^1\simeq \mathcal{L}$. Since $\mathcal{L}_{_t}\simeq L$ for all $t\in \mathbb{A}^1-0$, $\phi(\mathbb{A}^1-0)$ lies in the $PGL(p(m))$ orbit of $L$ and $\phi(0)=L_{_0}$. Therefore, $L_{_0}$ is in the orbit closure of $L$. Clearly, $\chi(L_{_i})=\chi(L_{_i}|_{_{X_{_i}}})=\chi_{_i}$, $i=1,2$. Thus $L\in R_{_1}$ and we are done. 
 
 Finally suppose, $L_{_0}$ is rank $1$, locally free sheaf such that $\chi(L_{_1})=\chi_{_1}+1$ and $\chi(L_{_2})=\chi_{_1}-1$ where $(L_{_1},L_{_2},\lambda)$, $\lambda:L_{_2}(p)\to L_{_1}(p)$ an isomorphism, is the unique triple representing $L_{_0}$. Let $L$ is the rank $1$ locally free sheaf represented by $(L_{_1}(-p),L_{_2}(p),\lambda)\in \stackrel{\to}{C}$. Then the orbit closure $\overline{O(L_{_0})}$ intersects the orbit closure $\overline{O(L)}$ in $R(1,\chi)^{ss}$. This easily follows from the observation: The torsion free sheaf $L'$ represented by the triple $(L_{_1},L_{_2},0)\in \stackrel{\leftarrow}{C}$ is in the orbit closure of $L_{_0}$. Note, by Remark \ref{direction}, $L'$ is isomorphic to the torsion free sheaf represented by $(L_{_1}(-p),L_{_2}(p),0)\in \stackrel{\to}{C}$. Thus , by Lemma \ref{line}, $L'$ is semistable and is also in the orbit closure of $L$. Thus given any $[L_{_0}]\in R(1,\chi)^{ss}\parallelslant PGL(p(m)$ we have seen that there is a $L\in R_{_1}$ such that $q(L)=[L_{_0}]$.
  
   %Since each orbit closu
\end{proof}
\end{theorem}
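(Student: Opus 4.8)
The plan is to reduce everything to the dense open subset $R_1\subset R(1,\chi)^{ss}$ consisting of rank $1$ locally free sheaves $L$ with $\chi(L|_{X_1})=\chi_1$ and $\chi(L|_{X_2})=\chi_2$, and to prove two assertions: first, that the $PGL(p(m))$-quotient of $R_1$ is isomorphic to $J_0:=J^{\chi_1}(X_1)\times J^{\chi_2}(X_2)$; and second, that this quotient already coincides with the full good quotient $R(1,\chi)^{ss}/\!\!/PGL(p(m))$. Together these give the theorem.

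For the first assertion I would argue as follows. Every $L\in R_1$ is a rank $1$ locally free sheaf on the reduced curve $X_0$, so $\mathrm{Hom}(L,L)=\C$ and $PGL(p(m))$ acts freely on $R_1$; since $R_1$ is moreover smooth and irreducible, the geometric quotient $R_1/PGL(p(m))$ exists and is smooth and irreducible. Restricting the universal quotient sheaf $\mathcal{U}^1$ to $X_0\times R_1$ and pulling back to the two components yields locally free sheaves $\mathcal{U}_i$ on $X_i\times R_1$ whose fibrewise Euler characteristic is $\chi_i$, so the modular property of $J^{\chi_i}(X_i)$ produces a morphism $f=(f_1,f_2)\colon R_1\to J_0$, which is surjective because by Lemma \ref{line} any pair of line bundles with Euler characteristics $\chi_1,\chi_2$ glues across the node to a semistable sheaf. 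This $f$ is $PGL(p(m))$-invariant, and its fibres are precisely the $PGL(p(m))$-orbits: two quotients in $R_1$ with isomorphic restrictions to both $X_1$ and $X_2$ correspond to isomorphic torsion free sheaves on $X_0$, since for fixed restrictions the gluing isomorphism at the node is unique up to a scalar. Hence $f$ descends to a bijective morphism $R_1/PGL(p(m))\to J_0$, and as both source and target are integral with $J_0$ smooth, Zariski's main theorem makes it an isomorphism.

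The heart of the argument, and the step I expect to be the main obstacle, is the second assertion: every point of $R(1,\chi)^{ss}/\!\!/PGL(p(m))$, i.e.\ every orbit-closure ($S$-equivalence) class, meets $R_1$. Concretely, one must show that both the non-locally-free semistable sheaves and the locally free sheaves with $\chi(L|_{X_1})=\chi_1+1$, $\chi(L|_{X_2})=\chi_2-1$ lie in the orbit closure of some point of $R_1$. For a non-locally-free semistable $L_0$, Lemma \ref{line} forces the triple $(L_1,L_2,0)\in\stackrel{\to}{C}$ representing it to satisfy $\chi(L_i)=\chi_i$. I would then degenerate the gluing data: pulling $L_i$ back to $X_i\times\mathbb{A}^1$ and choosing an $\mathcal{O}_{\mathbb{A}^1}$-homomorphism $\lambda\colon L_1|_{p\times\mathbb{A}^1}\to L_2|_{p\times\mathbb{A}^1}$ which is an isomorphism for $t\neq 0$ and vanishes at $t=0$, the kernel $\mathcal{L}$ of the surjection from $L_1\oplus L_2$ onto the cokernel of the graph of $\lambda$ is flat over $\mathbb{A}^1$ (both $L_1\oplus L_2$ and that cokernel are flat), with $\mathcal{L}_t$ the torsion free sheaf of triple $(L_1,L_2,\lambda(t))$; thus $\mathcal{L}_t\cong L\in R_1$ for $t\neq 0$ and $\mathcal{L}_0\cong L_0$. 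Semistability is preserved along the family, so $H^1(\mathcal{L}_t)=0$ and $\mathcal{L}_t(m)$ stays globally generated; by cohomology-and-base-change ${p_2}_*\mathcal{L}$ is then locally free of rank $p(m)$ on $\mathbb{A}^1$, hence free, giving a surjection $\mathcal{O}_{X_0\times\mathbb{A}^1}^{\oplus p(m)}\to\mathcal{L}$ and a morphism $\phi\colon\mathbb{A}^1\to R(1,\chi)^{ss}$ with $\phi(\mathbb{A}^1\setminus 0)$ in the $PGL(p(m))$-orbit of $L$ and $\phi(0)=L_0$, so that $L_0\in\overline{O(L)}$.

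Finally, for a locally free $L_0$ with $\chi(L|_{X_1})=\chi_1+1$, I would first degenerate (again by letting the gluing map go to $0$, now for the triple in $\stackrel{\leftarrow}{C}$) to the non-locally-free sheaf $L'$ with triple $(L_1,L_2,0)\in\stackrel{\leftarrow}{C}$; by Remark \ref{direction}, $L'$ is the same sheaf as the one with triple $(L_1(-p),L_2(p),0)\in\stackrel{\to}{C}$, which by the previous paragraph lies in the orbit closure of $L:=(L_1(-p),L_2(p),\lambda)\in R_1$. Hence every orbit-closure class has a representative in $R_1$; combined with the first assertion and the projectivity and seminormality of the good quotient, this yields $R(1,\chi)^{ss}/\!\!/PGL(p(m))\cong J^{\chi_1}(X_1)\times J^{\chi_2}(X_2)$. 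The only genuinely delicate points are the flatness of the degenerating families over $\mathbb{A}^1$ and the constancy of $h^1$ and of global generation along them, so that the families really map into $R(1,\chi)^{ss}$; once those are checked, the orbit-closure bookkeeping is routine.
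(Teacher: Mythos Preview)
Your proposal is correct and follows essentially the same route as the paper: the same two-claim structure (identify $R_1/PGL(p(m))$ with $J_0$, then show every $S$-equivalence class meets $R_1$), the same use of the universal sheaf and Zariski's main theorem for the first claim, and the same $\mathbb{A}^1$-degeneration of the gluing map together with Remark~\ref{direction} for the second. If anything, you supply a bit more detail than the paper on why the fibres of $f$ are exactly the $PGL(p(m))$-orbits and on surjectivity of $f$.
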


\subsection{Determinant morphism}
Fix an odd integer $\chi$ and a polarisation $(a_{_1},a_{_2})$ on $X_{_0}$ such that $a_{_1}\chi$ is not an integer. We also fix an integer $m'$ such that Lemma \ref{bounded} holds for all $E\in S(2,\chi)$ . Let $Q(2,\chi)$ be the Quot scheme parametrising all coherent quotients \[\mathcal{O}_{_{X_{_0}}}^{\oplus p(m')}\to E\to 0\] and $\mathcal{U}^2$ be the universal quotients sheaf of $\mathcal{O}_{_{X_{_0}\times Q(2,\chi)}}^{\oplus p(m')}$ on $X_{_0}\times Q(2,\chi)$. Let $R(2,\chi)^{ss}$ be the open subset of $Q(2,\chi)$ such that if $q\in R(2,\chi)^{ss}$ then $\mathcal{U}^2_{_q}:=\mathcal{U}^2|_{_{X_{_0}\times q}}$ is a rank $2$ semistable torsion free quotient and the natural map \[H^0(\mathcal{O}_{_{X_{_0}\times q}})\to H^0(\mathcal{U}^1_{_q})\] is an isomorphism. The moduli space $M(2,a,\chi)$ is isomorphic to the quotient $R(2,\chi)^{ss}/PGL(p(m'))$. Let $M_{_{12}}$ and $M_{_{21}}$ be the two smooth components of $M(2,a,\chi)$. Let $M_{_{12}}^{0}\subset M_{_{12}}$ (resp. $M_{_{21}}^{0}\subset M_{_{21}}$) be the open subvariety of $M_{_{12}}$ (resp. $M_{_{21}}$) consisting of isomorphism classes of rank $2$ semistable locally free sheaves.

%Consider the open subvariety $R(\chi')^{ss}$ of the Quote scheme $Q(\chi')$ (see section \ref{rank1}) consisting of rank $1$, torsion free sheaves which are semistable with respect to the polarisation $b=(b_{_1},b_{_2})$ where $b_{_1}=\frac{\chi_{_1}'}{\chi'}$, $b_{_2}=\frac{\chi_{_2}'-1}{\chi'}$ and $\chi':=\chi-(1-g)$.
%Then by Theorem \ref{moduli},  $R(\chi')^{ss}/PGL(E)$ is isomorphic to $J^{\chi_{_1}'}(X_{_1})\times J^{\chi_{_2}'}(X_{_2})$ where $\chi_{_i}':=\chi_{_i}-(1-g_{_i})$, $i=1,2$.
\begin{proposition}\label{determin}
There exists a determinant morphism $det:M(2,a,\chi)\to J^{\chi_{_1}'}(X_{_1})\times J^{\chi_{_2}'}(X_{_2})$ where $\chi_{_i}'=\chi_{_i}-(1-g_{_i})$, $i=1,2$.
 \begin{proof}
Let $R_{_2}$ be the open subset of $R(2,\chi)^{ss}$ such that for all $q\in R_{_2}$, $\mathcal{U}_{_q}$ is rank $2$ semistable locally free and $\chi(\mathcal{U}_{_q}|_{_{X_{_i}}})=\chi_{_i}$, $i=1,2$. Then $M_{_{12}}^{0}\simeq R_{_2}/PGL(p(m')$. Let $R_{_1}$ be the open subset of $R(1,\chi')^{ss}$, where $\chi':=\chi-(1-g)$, such that for all $q\in R_{_1}$, $\mathcal{U}^1_{_q}$ is rank $1$, semistable locally free and $\chi(\mathcal{U}^1_{_q}|_{_{X_{_i}}})=\chi_{_i}-(1-g_{_i})$, $i=1,2$. Let us restrict the universal quotient sheaf $\mathcal{U}^2$ on $X_{_0}\times R_{_2}$. Then $\mathcal{U}^2$ is a rank $2$ locally free shef on $X_{_0}\times R_{_2}$ such that $\mathcal{U}^2_{_q}$ is semistable and $\chi(\mathcal{U}^2_{_q}|_{_{X_i}})=\chi_{_i}$ for all $q\in R_{_2}$, $i=1,2$. Thus $\wedge^2\mathcal{U}^2$ is a flat family of rank $1$ locally free sheaves on $X_{_0}\times R_{_2}$ such that $\chi(\wedge^2\mathcal{U}^2_{_q}|_{_{X_i}})=\chi_{_i}-(1-g_{_i})$ for all $q\in R_{_2}$, $i=1,2$. By Lemma \ref{line} $\wedge^2\mathcal{U}^2_{_q}$ is semistable for all $q\in R_{_2}$. By Lemma \ref{bounded} there exists an integer $m$ such that $H^1(\wedge^2\mathcal{U}^2_{_q}(m))=0$ and $\wedge^2\mathcal{U}^2_{_q}(m)$ is globally generated for all $q\in R_{_2}$. Therefore, there is an open covering $\{U_{_i}\}$ of $R_{_2}$ and morphisms $det_{_i}:U_{_i}\to R_{_1}$ such that, for any non-empty open set $U_{_{ij}}:=U_{_i}\cap U_{_j}$ if we denote by $det_{{ij}}=det_{_i}|_{_{U_{_{ij}}}}$, then there exists $g\in PGL(n)(U_{_{ij}})$ with the property $det_{_{ij}}=gdet_{_{ji}}$, where $n=h^0(\wedge^2\mathcal{U}^2_{_q}(m))$. Therefore, we get a well-defined morphism $det:R_{_2}\to R_{_1}/PGL(n)=J^{\chi_{_1}'}(X_{_1})\times J^{\chi_{_2}'}(X_{_2})$.
Since $M_{_{12}}$ is a smooth projective variety and $J_{_0}$ is an abelian variety the  morphism $det_{_1}^0$ extends to a morphism $det_{_1}:M_{_{12}}\to J_{_0}:=J^{\chi_{_1}'}(X_{_1})\times J^{\chi_{_2}'}(X_{_2})$. By similar arguments we get a morphism $det_{_2}:M_{_{21}}\to J_{_0}':=J^{\chi_{_1}'+1}(X_{_1})\times J^{\chi_{_2}'-1}(X_{_2})$. Let $F\in M_{_{12}}\cap M_{_{21}}$. Then $F$ is represented by a unique triple $(F_{_1},F_{_2},A)\in \stackrel{\to}{C}$ where $rk(A)=1$. $F$ is also represented by another triple $(F_{_1}',F_{_2}',B)\in \stackrel{\leftarrow}{C}$ such that $F_{_i}$ and $F_{_i}'$ are related by the diagram in Remark \ref{direction}. Note that $\wedge^2F_{_1}'\simeq \wedge^2F_{_1}(p)$ and $\wedge^2F_{_2}'\simeq \wedge^2F_{_2}(-p)$. Claim $det_{_1}(F)=(\wedge^2F_{_1},\wedge^2F_{_2})$ and $det_{_2}(F)=(\wedge^2F_{_1}',\wedge^2F_{_2}')$. First we construct a flat family $\mathcal{F}$ over $X\times \mathbb{A}^1$ such that $\mathcal{F}_{_t}$ is rank $2$, semistable locally free sheaf for all $t\neq 0$ and $\mathcal{F}_{_0}\simeq F$. Moreover, $\mathcal{F}_{_t}|_{_{X_{_1}}}\simeq F_{_1}$ and $\mathcal{F}_{_t}|_{_{X_{_2}}}\simeq F_{_2}$ for all $t\neq 0$ (this can be done using the similar construction given in the proof of Theorem \ref{moduli}). Thus we get a morphism $\phi:\mathbb{A}^1\to M_{_{12}}$ such that $\phi(\mathbb{A}^1-0)\subset M_{_{12}}^0$ and $\phi(0)=F$. Denote the restriction of $\mathcal{F}$ over $X_{_0}\times(\mathbb{A}^1-0)$ by $\mathcal{F}'$. Then $\wedge^2\mathcal{F}'$ induces  a morphism $\wedge^2\phi:\mathbb{A}^1-0\to J_{_0}$. Since $\mathcal{F}_{_t}|_{_{X_{_i}}}\simeq F_{_i}$, $i=1,2$, we get $\wedge^2\mathcal{F}'_{_t}\simeq L$ for all $t\neq 0$ where $L$ is the line bundle represented by the triple $(\wedge^2F_{_1},\wedge^2F_{_2},\lambda)$. Thus $\wedge^2\phi$ is constant for all $t\neq 0$. Thus $\wedge^2\phi$  extends as a constant morphism over whole $\mathbb{A}^1$ and $\wedge^2\phi(t)=(\wedge^2F_{_1},\wedge^2F_{_2})$ for all $t\in \mathbb{A}^1$ . Clearly, $det_{_1}(\phi(t))=\wedge^2\phi(t)$ for all $t\in \mathbb{A}^1$.  By similar arguments we can show that $det_{_2}(F)=(\wedge^2F_{_1}',\wedge^2F_{_2}')$. Now we define a morphism $det:M(2,a,\chi)\to J_{_0}$ in the following way:
 \[det(F):=det_{_1}(F) ~ \mbox{if} ~ F\in M_{_{12}}\]
\[det(F):=f^{-1}(det_{_2}(F)) ~ \mbox{if} ~ F\in M_{_{21}}\]
where $f:J_{_0}\to J_{_0}'$ is an isomorphism defined by the association $(L_{_1},L_{_2})\to (L_{_1}(p),L_{_2}(-p))$. By the above discussion clearly, $det$ is a well defined morphism.
\end{proof}
\end{proposition}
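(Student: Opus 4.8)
The plan is to define the morphism first over the locus of locally free sheaves, where it is literally ``second exterior power of the two restrictions'', then to extend it over each of the two smooth components $M_{12}$ and $M_{21}$ using that the target is an abelian variety, and finally to glue the two halves along the singular divisor $D$.

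First I would restrict the universal quotient $\mathcal{U}^2$ to the open subscheme $R_2\subset R(2,\chi)^{ss}$ of locally free sheaves $E$ with $\chi(E|_{X_i})=\chi_i$, so that $M_{12}^{0}\simeq R_2/PGL(p(m'))$. Then $\wedge^2\mathcal{U}^2$ is a flat family over $R_2$ of line bundles on $X_0$ with $\chi\bigl((\wedge^2\mathcal{U}^2_q)|_{X_i}\bigr)=\chi_i'$, and by Lemma \ref{line} every member is semistable for the appropriate rank $1$ polarisation. After twisting by a sufficiently large $m$ (Lemma \ref{bounded}), so that $H^1$ vanishes and global generation holds uniformly, the direct image of $(\wedge^2\mathcal{U}^2)(m)$ on $R_2$ is locally free of some rank $n$; trivialising it over an open cover $\{U_i\}$ of $R_2$ gives surjections $\mathcal{O}^{\oplus n}\twoheadrightarrow (\wedge^2\mathcal{U}^2)(m)|_{X_0\times U_i}$, hence classifying morphisms $U_i\to R(1,\chi')^{ss}$ which on overlaps differ by a section of $PGL(n)$. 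Composing with the quotient $R(1,\chi')^{ss}\to R(1,\chi')^{ss}/PGL(n)\simeq J^{\chi_1'}(X_1)\times J^{\chi_2'}(X_2)$ of Theorem \ref{moduli}, and observing that the outcome depends only on the isomorphism class of $\mathcal{U}^2_q$, these maps glue to a $PGL(p(m'))$-invariant morphism $R_2\to J_0$, whence a morphism $det_1^{0}\colon M_{12}^{0}\to J_0:=J^{\chi_1'}(X_1)\times J^{\chi_2'}(X_2)$. Since $M_{12}$ is smooth and projective and $J_0$ is an abelian variety, a rational map from $M_{12}$ to $J_0$ is automatically a morphism, so $det_1^{0}$ extends to $det_1\colon M_{12}\to J_0$; the same construction on $M_{21}$ yields $det_2\colon M_{21}\to J_0':=J^{\chi_1'+1}(X_1)\times J^{\chi_2'-1}(X_2)$.

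It remains to reconcile $det_1$ and $det_2$ along $D=M_{12}\cap M_{21}$, which needs care because every point of $D$ represents a \emph{non} locally free sheaf, so ``$\wedge^2$ of the restrictions'' is not directly available there. I would fix the values by a degeneration: given $F\in D$ with triple $(F_1,F_2,A)\in \stackrel{\to}{C}$, $rk(A)=1$, deform the gluing datum exactly as in the proof of Theorem \ref{moduli} to a flat family $\mathcal{F}$ over $X_0\times\mathbb{A}^1$ with $\mathcal{F}_0\simeq F$ and, for $t\ne 0$, $\mathcal{F}_t$ locally free and semistable with $\mathcal{F}_t|_{X_i}\simeq F_i$ (Lemma \ref{semistable1}); this produces $\phi\colon\mathbb{A}^1\to M_{12}$ with $\phi(0)=F$, along which $det_1\circ\phi$ has the constant value $(\wedge^2F_1,\wedge^2F_2)$ off $0$, hence equals it at $0$ as well. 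Likewise $det_2(F)=(\wedge^2F_1',\wedge^2F_2')$ with $\wedge^2F_1'\simeq(\wedge^2F_1)(p)$ and $\wedge^2F_2'\simeq(\wedge^2F_2)(-p)$ by Remark \ref{direction}. Therefore, for the isomorphism $f\colon J_0\to J_0'$ given by $(L_1,L_2)\mapsto\bigl(L_1(p),L_2(-p)\bigr)$ we have $f\circ det_1=det_2$ on $D$; and since $M(2,a,\chi)$ is the pushout of $M_{12}\hookleftarrow D\hookrightarrow M_{21}$, setting $det=det_1$ on $M_{12}$ and $det=f^{-1}\circ det_2$ on $M_{21}$ defines a morphism on all of $M(2,a,\chi)$.

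The step I expect to be the real obstacle is the extension from $M_{12}^{0}$ to all of $M_{12}$. Unlike the classical odd degree fixed determinant setting, the non locally free locus here is the \emph{divisor} $D_1$ rather than a subset of codimension $\ge 2$, so extension by normality is unavailable; the remedy is that $J_0$ is an abelian variety, but this forces the additional bookkeeping of the third paragraph, namely verifying (by the degeneration above) that the extended $det_1$ still agrees with $\wedge^2$ of the restrictions along $D_1$ --- which is exactly what makes the gluing with $det_2$ coherent. A secondary point to write out is the $PGL(n)$ cocycle argument of the second paragraph, i.e.\ that the locally defined classifying maps to $R(1,\chi')^{ss}$ genuinely descend, after Theorem \ref{moduli}, to a single morphism into $J^{\chi_1'}(X_1)\times J^{\chi_2'}(X_2)$.
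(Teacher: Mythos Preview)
Your proposal is correct and follows essentially the same route as the paper: define $\det$ on the locally free locus via $\wedge^2$ of the restrictions using the $PGL(n)$-cocycle descent through Theorem~\ref{moduli}, extend over each smooth component by the ``rational map to an abelian variety is a morphism'' principle, and then compute the extended values on $D$ by the $\mathbb{A}^1$-degeneration to glue $det_1$ and $f^{-1}\circ det_2$. Your meta-remark about the extension step being the crux (codimension~$1$ complement, so abelian target is essential) and about needing the degeneration to pin down the values on $D$ is exactly the content of the paper's argument.
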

\begin{proposition}\label{tran}
The fibres of the morphism $det:M(2,a,\chi)\to J^{\chi_{_1}'}(X_{_1})\times J^{\chi_{_2}'}(X_{_2})$ are the union of two smooth, irreducible projective varieties meeting transversally along a smooth divisor.
\begin{proof}
Let $J^0(X_{_0})$ be the variety parametrising all isomorphism classes of line bundles $L$ such that $deg(L|_{X_{_i}})=0$, $i=1,2$. Then we can show that $J^0(X_{_0}):=J^{0}(X_{_1})\times J^{0}(X_{_2})$ where $J^{0}(X_{_i})$ are the Jacobians of $X_{_i}$, $i=1,2$. Now $J^0(X_{_i})$ acts on $M(2,a,\chi)$ by $F\to F\otimes L$. Clearly, both the components of $M(2,a,\chi)$ and the divisor $D$ are fixed by this action. Also we can easily check that the morphism $det$ is compatible with action of $J^0(X_{_0})$ where action of $J^0(X_{_0})$ on $J_{_0}$ is given by $(\eta_{_1},\eta_{_2})\mapsto (\eta_{_1}\otimes L_{_1},\eta_{_2}\otimes L_{_2})$. Now $det|_{_{M_{_{12}}}}=det_{_1}$ and $det|_{_{M_{_{21}}}}=f^{-1}odet_{_2}$. Clearly, the morphism $det_{_1}:M_{_{12}}\to J_{_0}$ is compatible with the action of $J^0(X_{_0})$. Thus it is a smooth morphism. Since both $M_{_{12}}$ and $J_{_0}$ are smooth we conclude that the fibres of $det_{_1}$ are smooth. Also, by the same reasoning, the fibres of $det_{_1}|_{_D}$ , the restriction to the divisor $D$, are also smooth. Thus the fibres of $det_{_1}$ intersects $D$ transversally. By the same arguments we can show that the fibres of $det_{_2}$ intersects $D$ transversally. Hence,  we conclude that the intersection of $det_{_1}^{-1}(\xi)$ with $det_{_2}^{-1}(f(\xi))$, $\xi\in J_{0}$ is smooth. Therefore, the fibres of $det$ are the union of two smooth, projective varieties intersecting transversally.
 
 %Let $M_{_1}$ and $M_{_2}$ be the components of $M(2,a,\chi)$ and $det_{_i}$ be the restrictions $det|_{_{M_{_i}}}$, $i=1,2$. Then all the fibres of $det_{_i}$ are isomorphic. We only show this for $det_{_1}$. For $det_{_2}$ the proof is same. Note that if $L$ be an invertible sheaf over $X_{_0}$ such that $deg(L|_{_{X_{_i}}}=0$, $i=1,2$. Then $F\to F\otimes L$ gives an automorphism of $M(2,a,\chi)$. Thus $J^0(X_{_1})\times J^{0}(X_{_2})$ acts on $M(2,a,\chi)$. Let $\xi,\xi'\in J^{\chi_{_1}'}(X_{_1})\times J^{\chi_{_2}'}(X_{_2})$ and $M_{_{1,\xi}}:=det^{-1}(\xi)$, $M_{_{1,\xi'}}:=det^{-1}(\xi')$.  Tensoring by a suitable deg $0$ invertible sheaf $L$ will induce an isomorphism between $M_{_{1,\xi}}^0$ and $M_{_{1,\xi'}}^0$. Clearly, this isomorphism extends to whole of $M_{_{1,\xi}}$. Thus, any two fibres of $det_{_1}$ are isomorphic. Similarly, any two fibres of $det_{_2}$ are isomorphic. As both $M_{_1}$ and $J^{\chi_{_1}'}(X_{_1})\times J^{\chi_{_2}'}(X_{_2})$ are smooth the generic fibres of $det_{_1}$ are smooth. Hence, all the fibres of $det_{_1}$ are smooth. Similarly all the fibres of $det_{_2}$ are smooth.
\end{proof}
\end{proposition}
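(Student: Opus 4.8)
The plan is to run the same argument used in the proof of Lemma \ref{det3}, now on the full moduli space $M(2,a,\chi)$ and its singular locus $D = M_{12} \cap M_{21}$, and then to assemble the fibre from its two pieces. The engine is the translation action of $J^{0}(X_{0}) = J^{0}(X_{1}) \times J^{0}(X_{2})$: a pair $(L_{1},L_{2})$ acts on a torsion free sheaf $F$ by $F \mapsto F \otimes L$, where $L$ is the line bundle on $X_{0}$ restricting to $L_{i}$ on $X_{i}$ (equivalently, on a triple $(F_{1},F_{2},A)$ by $(F_{1}\otimes L_{1}, F_{2}\otimes L_{2}, A)$). This is an automorphism of $M(2,a,\chi)$: it preserves semistability, local freeness, and the rank of the gluing map, hence it preserves each smooth component $M_{12}$, $M_{21}$ and the divisor $D$. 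On the target it acts on $J_{0} = J^{\chi_{1}'}(X_{1}) \times J^{\chi_{2}'}(X_{2})$ by $(\eta_{1},\eta_{2}) \mapsto (\eta_{1}\otimes L_{1}, \eta_{2}\otimes L_{2})$, and this action is transitive since each $J^{\chi_{i}'}(X_{i})$ is a torsor under $J^{0}(X_{i})$. From the construction of $det$ in Proposition \ref{determin} (where $det = det_{1}$ on $M_{12}$ and $det = f^{-1}\circ det_{2}$ on $M_{21}$) the morphism $det$ is equivariant for these actions, and so are the restrictions $det_{1}$, $det_{2}$ and $det_{1}|_{D}$, $det_{2}|_{D}$.

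The first step is to deduce that $det_{1}$ is a smooth morphism. By generic smoothness in characteristic zero there is a dense open $U \subset J_{0}$ over which $det_{1}$ is smooth; equivariance together with transitivity of the $J^{0}(X_{0})$-action on $J_{0}$ then forces $det_{1}$ to be smooth over every translate of $U$, hence everywhere. Since $M_{12}$ and $J_{0}$ are smooth, every fibre $M_{12}^{\xi} := det_{1}^{-1}(\xi)$ is smooth; it is moreover irreducible, because its dense open subset where $rk(A)=2$ is a $\bp^{3}$-bundle over the irreducible variety $M_{1}\times M_{2}$, exactly as in the proof of Proposition \ref{morphism}. The identical argument, applied to the equivariant morphism $det_{1}|_{D}: D \to J_{0}$, shows it is smooth, so its fibres are smooth; but the fibre of $det_{1}|_{D}$ over $\xi$ is precisely $M_{12}^{\xi}\cap D = D_{1}^{\xi}$. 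Thus inside the smooth variety $M_{12}$ both $M_{12}^{\xi}$ and $M_{12}^{\xi}\cap D$ are smooth, which forces $M_{12}^{\xi}$ to meet the smooth divisor $D$ transversally along the smooth divisor $D_{1}^{\xi}$ of $M_{12}^{\xi}$. The same conclusions hold verbatim for $det_{2}$ and $M_{21}^{\xi'}$, $D_{2}^{\xi'}$ inside $M_{21}$, where $\xi' := f(\xi)$.

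The final step is to glue. Fixing $\xi \in J_{0}$ and putting $\xi' = f(\xi)$, the description $det = det_{1}$ on $M_{12}$ and $det = f^{-1}\circ det_{2}$ on $M_{21}$ gives $det^{-1}(\xi) = M_{12}^{\xi} \cup M_{21}^{\xi'}$, the two pieces being glued along $M_{12}^{\xi}\cap D = D_{1}^{\xi}$ and $M_{21}^{\xi'}\cap D = D_{2}^{\xi'}$ via the isomorphism $\Psi$ of the preliminary section (which carries $D_{1}$ to $D_{2}$ and restricts to $D_{1}^{\xi}\simeq D_{2}^{\xi'}$ because $det_{1}$, $det_{2}$ are compatible with $\Psi$ and $f$). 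By the previous paragraph each piece is smooth, irreducible and projective, each $D_{i}$ restricts to a smooth divisor in the corresponding piece, and the two pieces meet transversally along this common divisor, the transversality being inherited from the transversal intersection $M_{12}\cap M_{21}=D$ in $M(2,a,\chi)$ via the transversality of $M_{12}^{\xi}$, $M_{21}^{\xi'}$ with $D$. Hence $det^{-1}(\xi) = M_{0,\xi}$ is the union of two smooth, irreducible, projective varieties meeting transversally along a smooth divisor.

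The point requiring the most care is the promotion of generic smoothness to smoothness everywhere: one has to check that the equivariance furnished by Proposition \ref{determin} is a genuine identity of morphisms over $J_{0}$, compatible with the $J^{0}(X_{0})$-action simultaneously on $M_{12}$, on $M_{21}$ via $f$, and on the singular locus $D$. Granting that, transitivity of the action on the abelian variety $J_{0}$ makes the smoothness statements automatic, and the transversality of the fibres with $D$ together with the gluing are then formal.
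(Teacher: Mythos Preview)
Your argument is correct and follows essentially the same route as the paper's: use the equivariance of $det_{1}$, $det_{2}$ and their restrictions to $D$ under the transitive $J^{0}(X_{1})\times J^{0}(X_{2})$-action to deduce smoothness of these morphisms, hence smoothness of the fibres and of their intersections with $D$, and then glue. You spell out a few steps the paper leaves implicit (generic smoothness plus transitivity for the smoothness claim, and the explicit gluing at the end), and you add an irreducibility argument; note that in the latter the $\bp^{3}$-bundle is only over the open set $M_{1}\times M_{2}^{s}$, not all of $M_{1}\times M_{2}$, but since this open set is dense in the irreducible product, the conclusion stands.
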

\subsection{Relative moduli space and relative determinant morphism}
 Let $C=SpecR$ where $R$ is a complete discrete valuation ring and $\mathcal{X}\to B$ be a flat family of proper, connected curves. We assume the generic fibre $\mathcal{X}_{_{\eta}}$ is smooth and the closed fibre $\mathcal{X}_{_0}$ is the curve $X_{_0}$. We further assume that $\mathcal{X}$ is regular over $\C$. For any $C$ scheme $S$ we denote $\mathcal{X}\times_{_C} S$ by $\mathcal{X}_{_S}$. Fix an integer $\chi$.
 
 \subsubsection{Relative moduli of rank $1$, torsion free sheaves}\label{degen}:
 Fix a relatively ample line bundle $\mathcal{O}_{_{\mathcal{X}}}(1)$ over $\mathcal{X}$ such that $\mathcal{O}_{_{\mathcal{X}}}(1)|_{_{X_0}}$ gives the polarisation of type $(b_{_1},b_{_2})$. Let $\mathcal{Q}_{_1}\to C$ be the relative Quot scheme parametrising all rank $1$ coherent quotients \[\mathcal{O}_{_{\mathcal{X}}}^{p(N)}\to \mathcal{L}\to 0.\] which has the fixed Hilbert polynomial $p(n):=(n+1)\chi'$, $\chi'=\chi-(1-g)$, along the fibre of $\mathcal{X}$ and flat over $C$. Let $\mathcal{U}$ be the universal quotient sheaf of $\mathcal{O}_{_{\mathcal{X}_{_{\mathcal{Q}_1}}}}^{\oplus p(N)}$ on $\mathcal{X}_{_{\mathcal{Q}_1}}$.  Let $\mathcal{G}=Aut(\mathcal{O}_{_{\mathcal{X}}}^{p(N)})$ be the reductive group scheme over $C$. Then $\mathcal{G}$ acts on $\mathcal{Q}_{_1}$. Let $\mathcal{R}_{_1}^{ss}$ be the open subvariety of $\mathcal{Q}_{_1}$ consisting of those quotients $\mathcal{L}$ which are semistable along the fibre of $\mathcal{X}$ and the natural map $H^0(\mathcal{O}_{_{\mathcal{X}}}^{p(N)})\to H^0(\mathcal{L})$ is an isomorphism. We can construct a good quotient $\mathcal{J}:=\mathcal{R}_{_1}^{ss}\parallelslant\mathcal{G}$, projective over $C$ using GIT over arbitrary base. Also note that $({\mathcal{R}_{_1}}^{ss}\parallelslant\mathcal{G})_{_t}={\mathcal{R}_{_1}}^{ss}_{_t}\parallelslant\mathcal{G}_{_t}$ for all $t\in C$ (\cite[Theorem 4]{se2}). Thus the general fibre $\mathcal{J}_{_{\eta}}$ is the Jacobian $J^{\chi'}(\mathcal{X}_{_{\eta}})$ and by Theorem \ref{moduli} the closed fibre $\mathcal{J}_{_0}$ is isomorphic to $J^{\chi'_{_1}}(X_{_1})\times J^{\chi'_{_2}}(X_{_2})$. %Let $\mathcal{R}_{_0}=\mathcal{R}^{ss}\setminus \overline{R_{_2}}$ where $\overline{R_{_2}}$ is the closure  of $R_{_2}$ in $R^{ss}$ (see subsection \ref{rank1} for the notation). Then $\mathcal{R}_{_0}$ is smooth over $\C$. Since $P\mathcal{G}$ acts freely on $\mathcal{R}_{_0}$ we have $\mathcal{R}_{_0}/P\mathcal{G}$ a locally trivial quotient and hence is regular over $\C$.  By same argument as in the proof of \ref{moduli} we can show that $\mathcal{J}\simeq \mathcal{R}_{_0}/P\mathcal{G}$. Thus $\mathcal{J}$ is regular over $\C$. Hence $\mathcal{J}$ is flat over $C$ as a torsion free module over a d.v.r is flat. By the above discusion we conclude that there exists a proper flat family  $\mathcal{J}\to C$ such that $\mathcal{J}_{_{\eta}}=J^{\chi'}(X_{_{\eta}})$ and $\mathcal{J}_{_0}=J^{\chi'_{_1}}(X_{_1})\times J^{\chi'_{_2}}(X_{_2})$. Moreover, $\mathcal{J}$ is regular over $\C$.

 \subsubsection{Relative moduli of rank $2$, torsion free sheaves}:
 Fix a relatively ample line bundle $\mathcal{O}_{_{\mathcal{X}}}(1)'$ over $\mathcal{X}$ such that $\mathcal{O}_{_{\mathcal{X}}}(1)'|_{_{X_0}}=\mathcal{O}_{_{X_{_0}}}(1)$ gives the polarisation of type $(a_{_1},a_{_2})$ such that $a_{_1}\chi$ is not an integer. Let $\mathcal{Q}_{_2}\to C$ be the relative Quot scheme parametrising all rank $2$ coherent quotients \[\mathcal{O}_{_{\mathcal{X}}}^{p(N)}\to \mathcal{E}\to 0.\] which has the fixed Hilbert polynomial $p(n):=(c_{_1}+c_{_2})n+\chi$, where $c_{_i}=deg(\mathcal{O}_{_{X_{_0}}}(1)|_{_{X_{_i}}}$, along the fibre of $\mathcal{X}$ and flat over $C$.Let $\mathcal{U}''$ be the universal quotient sheaf of $\mathcal{O}_{_{\mathcal{X}_{_{\mathcal{Q}_1}}}}^{\oplus p(m')}$ on $\mathcal{X}_{_{\mathcal{Q}_2}}$. Let $\mathcal{G}'=Aut(\mathcal{O}_{_{\mathcal{X}}}^{p(N)})$ be the reductive group scheme over $C$. Then $\mathcal{G}'$ acts on $\mathcal{Q}_{_2}$. Let $\mathcal{R}_{_2}^{ss}$ be the open subvariety of $\mathcal{Q}_{_2}$ consisting of those quotients $\mathcal{E}$ which are semistable along the fibre of $\mathcal{X}$ and the natural map $H^0(\mathcal{O}_{_{\mathcal{X}}}^{p(N)})\to H^0(\mathcal{E})$ is an isomorphism. It is shown in \cite[Theorem 4.2]{ns} a relative moduli space $\mathcal{M}:=\mathcal{R}_{_2}^{ss}\parallelslant\mathcal{G}'$ exists and it is projective over $C$ using GIT over arbitrary base.  Thus the general fibre $\mathcal{M}_{_{\eta}}$ is the moduli space $M_{_{\mathcal{X}_{_\eta}}}(2,\chi)$ of rank $2$, semistable sheaves with Euler characteristic $\chi$ and $\mathcal{M}_{_0}$ is the moduli space $M(2,a,\chi)$.  Note that, if $\mathcal{X}$ is a regular surface, by \cite[Remark 4.2]{ns}, $\mathcal{R}_{_2}^{ss}$ is smooth over $\C$. If we assume $\chi$ to be odd then $\mathcal{R}_{_2}^{ss}=\mathcal{R}_{_2}^{s}$. Therefore, $P\mathcal{G}'$ acts on $\mathcal{R}_{_2}^{s}$ freely. Since $\mathcal{R}_{_2}^{ss}$ is smooth we conclude that $\mathcal{M}=\mathcal{R}_{_2}^{s}/P\mathcal{G}'$ is regular over $\C$ (see \cite[Corollary 4.2.23]{huy}). %By the above discussion we
 
\begin{proposition}\label{reldet}
  There exists a morphism $Det:\mathcal{M}\to \mathcal{J}$ such that the following diagram commutes-
  \begin{equation}\label{det2}
  \xymatrix{
\mathcal{M} \ar[rr]^{Det} \ar[rd]_{\pi'} && \mathcal{J}\ar[ld]^{\pi''} \\
&C
}
\end{equation}
Moreover, we have $Det|_{_{\mathcal{M}_{_0}}}=det$.
\begin{proof}
Let $\mathcal{R}_{_2}^0$ be the open subscheme of $\mathcal{R}_{_2}^{ss}$ such that if $q\in \mathcal{R}_{_2}^0$ then $\mathcal{U}''_{_q}$ is rank $2$,locally free. Then by similar arguments as in the proof of Proposition \ref{determin} we get a morphism $Det'^{0}:\mathcal{R}_{_2}^0\to \mathcal{J}$ and this descends to a morphism $Det^0:\mathcal{M}^0=\mathcal{R}_{_2}^0/P\mathcal{G}'\to \mathcal{J}$. Let $Z=\mathcal{M}\setminus \mathcal{M}^0$. Then $Z$ is supported on the fibre $\mathcal{M}_{_0}$ and $\mathcal{M}_{_0}\setminus Z$ is a dense open set. Clearly, $Det^0|_{_{\mathcal{M}_{_0}\setminus Z}}=det$. Thus if we can show that $Det^0$ extends as a morphism $Det:\mathcal{M}\to \mathcal{J}$ then we have $Det|_{_{\mathcal{M}_{_0}}}=det$.
 Let $\Gamma$ be the graph of the morphism $Det^0$ in $\mathcal{M}\times_{_C} \mathcal{J}$ and $\overline {\Gamma}$ be the Zariski closure of $\Gamma$ in $\mathcal{M}\times_{_C} \mathcal{J}$. Let $p_{_1}$, $p_{_2}$ be the restriction of the two projections to $\overline{\Gamma}$. Then the morphism $p_{_1}:\overline {\Gamma}\to \mathcal{M}$ is clearly birational. We will show that it is bijective. Since $\mathcal{M}$ is smooth over $\C$ it will follow that $p_{_1}$ is an isomorphism. Thus we define $Det:=p_{_2}p_{_1}^{-1}$ which clearly extends the morphism $Det^0$. Let $(F,G_1)$ and $(F,G_2)$ be the closed points of $\overline {\Gamma}\setminus \Gamma$. Then we claim that $G_{_1}\simeq G_{_2}$. The claim follows from the following observation:
 
 Let $\tilde R$ be a complete discrete valuation ring such that $\tilde C\to C$ is dominant, where $\tilde C:= Spec \tilde R$. Let $t$ be the generic point of $\tilde C$ and $0$ be the closed point of $\tilde C$,. Let $\mathcal{F}$ be a  coherent sheaf on $\mathcal{X}\times_{_C} \tilde C$, flat over $\tilde C$ such that $\mathcal{F}_{_t}$ is a rank $2$ semistable bundle over $\mathcal{X}_{_t}$ and $\mathcal{F}_{_0}\simeq F$ and $\mathcal{G}$  be a locally free sheaf on $\mathcal{X}_{_{\tilde C}}:=\mathcal{X}\times_{_{ C}} \tilde C$ such that $\mathcal{G}_{_t}\simeq \wedge^2\mathcal{F}_{_t}$ and $\mathcal{G}_{_0}\simeq G_{_1}$. Let $\pi: \tilde {\mathcal{X}}_{_{\tilde C}}\to \mathcal{X}_{_{\tilde C}}$ be the disingularization of $\mathcal{X}_{_{\tilde C}}$ at $p$. The exceptional divisor $\pi^{-1}(p)=\Sigma_{_i}D_{_i}$ is a chain of $(-2)$ curves, and the special fibre $\tilde {\mathcal{X}}_{_0}$ of $\tilde {\mathcal{X}}_{_{\tilde C}}\to \tilde C$ at $0$ is $X_{_1}+X_{_2}+\Sigma D_{_i}$ and $X_{_1}\cap (X_{_2}+\Sigma D_{_i})=p_{_1}$, $X_{_2}\cap(X_{_1}+\Sigma D_{_i})=p_{_2}$. Let $\mathcal{F}''$ be the restriction of $\mathcal{F}$ on $(\mathcal{X}-p)\times_{_C} \tilde C$. Identifying $\tilde {\mathcal{X}}_{_{\tilde C}}\setminus \pi^{-1}(p)\simeq \mathcal{X}_{_{\tilde C}}\setminus {p}=(\mathcal{X}-p)_{_{\tilde C}}$, we can extend the line bundle $\wedge^2\mathcal{F}''$ into a line bundle $\overline {\wedge^2\mathcal{F}''}$ on $\tilde {\mathcal{X}}_{_{\tilde C}}$. This is possible since $\tilde{\mathcal{X}}_{_{\tilde C}}$ is non singular. Therefore, we can show $Pic(\tilde{\mathcal{X}}_{_{\tilde C}})=Pic(\mathcal{X}_{_{ \tilde C}}\setminus p)\oplus_{_i} \Z D_{_i}$. Clearly, we have $\overline {\wedge^2\mathcal{F}''}_{_t}\simeq \pi^*\mathcal{G}_{_t}$. Thus one has \[\overline {\wedge^2\mathcal{F}''}\simeq \pi^*\mathcal{G}\otimes \mathcal{O}_{_{\tilde {\mathcal{X}}_{_{\tilde C}}}}(V),\] where $V\subsetneqq \tilde{\mathcal{X}}_{_0}$ is a vertical divisor i.e of the form $\Sigma n_{_i}D_{_i}$. %Since $\mathcal{O}_{_{\tilde {\mathcal{X}}_{_{\tilde C}}}}(V)|_{_{X_{_i}\setminus p_{_i}}}$ is trivial we get $\overline {\wedge^2\mathcal{F}''}|_{_{X_{_i}\setminus p_{_i}}}\simeq \mathcal{G}_{_0}|_{_{X_{_i}\setminus p_{_i}}}$, $i=1,2$. 
 Therefore, $\overline {\wedge^2\mathcal{F}''}|_{_{X_{_i}}}\simeq \pi^*\mathcal{G}_{_0}|_{_{X_{_i}}}(n_{_i}p_{_i})$ for some integers $n_{_i}$, $i=1,2$ since $\mathcal{O}_{_{\tilde {\mathcal{X}}_{_{\tilde C}}}}(V)|_{_{X_{_i}}}=\mathcal{O}_{_{X_{_i}}}(n_{_i}p_{_i})$. Let $e_{_i}=deg(\overline {\wedge^2\mathcal{F}''}|_{_{X_{_i}}})$, $i=1,2$. Then $n_{_i}=e_{_i}-deg(\mathcal{G}_{_0}|_{_{X_{_i}}})$. Therefore the integers $n_{_i}$ only depend on $deg(\mathcal{G}_{_0}|_{_{X_{_i}}})$. Let $L_{_i}=\overline {\wedge^2\mathcal{F}''}|_{_{X_{_i}}}(-n_{_i}p_{_i})$. Then $G_{_1}=\mathcal{G}_{_0}$ is isomorphic to the line bundle $L$ which is uniquely represented by the triple $(L_{_1},L_{_2},\lambda)$. Suppose $\mathcal{G}'$ be another locally free sheaf on $\mathcal{X}_{_{\tilde C}}$ such that $\mathcal{G}'_{_t}=\wedge^2\mathcal{F}_{_t}$ and $\mathcal{G}'_{_0}\simeq G_{_2}$. Since $deg(G_{_1}|_{_{X_{_i}}})=deg(G_{_2}|_{_{X_{_i}}})$ , by the above argument, we can show that $G_{_2}\simeq L$. Thus $G_{_1}\simeq G_{_2}$.
 
% Observe that $\mathcal{F}$ is locally free over $(\mathcal{X}-p)\times_{_C} \tilde C$ where $p$ is the singular point of $\mathcal{X}_{_0}$. Let $\mathcal{F}''$ be the restriction of $\mathcal{F}$ on $(\mathcal{X}-p)\times_{_C} \tilde C$. %Since  $(\mathcal{X})\times_{_C} \tilde C$ is regular over $\C$, the line bundle $\wedge^2\mathcal{F}''$ extends over $\mathcal{X}\times_{_C} \tilde C$. Let us denote this line bundle by $\overline {\wedge^2\mathcal{F}''}$. Note that $\chi(\overline {\wedge^2\mathcal{F}''}_{_x})=\chi-(1-g)$ for all $x\in \tilde C$. The line bundles $\overline {\wedge^2\mathcal{F}''}$ and $\mathcal{G}$ are isomorphic outside the divisor $\mathcal{X}_{_0}\subset \mathcal{X}_{_{\tilde C}}$. Therefore, $\overline {\wedge^2\mathcal{F}''}\simeq \mathcal{G}\otimes \mathcal{O}_{_{\mathcal{X}_{_{\tilde C}}}}(n\mathcal{X}_{_0})$ for some integer $n$. Therefore $\overline {\wedge^2\mathcal{F}''}|_{_{\mathcal{X}_{_0}-p}}\simeq G_{_1}|_{_{\mathcal{X}_{_0}-p}}$. Let $L=\overline {\wedge^2\mathcal{F}''}|_{_{\mathcal{X}_{_0}}}$. Since $deg(L)=deg(G_{_1})$ we have $L|_{_{X_{_1}}}=G_{_1}|_{_{X_{_1}}}(n_{_1}p)$ and $L|_{_{X_{_2}}}=G_{_1}|_{_{X_{_2}}}(n_{_2}p)$ for some integers $n_{_1}$, $n_{_2}$ with $n_{_1}+n_{_2}=0$.   %Then there exists completef \ref Propositio  dvr $\tilde{R}$ dominant over $R$ and morphisms $q_{_i}:\tilde{C}:=Spec \tilde R\to \overline{\Gamma}$ such that $q_{_i}(\tilde{C}-0)\subset \Gamma$ and $q_{_i}(0)=G_{_i}$ for $i=1,2$. Let 
\end{proof}
\end{proposition}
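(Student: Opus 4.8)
The plan is to define $Det$ first over the locus where the sheaves are locally free --- there it is literally the second exterior power --- and then to extend it across the (proper closed) complement by taking the closure of its graph. For the first step I would restrict to the open subscheme $\mathcal{R}_2^0\subset\mathcal{R}_2^{ss}$ where the universal quotient $\mathcal{U}''$ is rank $2$ locally free and set $\mathcal{M}^0:=\mathcal{R}_2^0/P\mathcal{G}'$. On $\mathcal{X}_{\mathcal{R}_2^0}$ the sheaf $\wedge^2\mathcal{U}''$ is a $C$-flat family of rank $1$ locally free sheaves, and by Lemma \ref{line} each of its restrictions to a geometric fibre of $\mathcal{X}$ is semistable for the induced rank $1$ polarisation. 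By Lemma \ref{bounded} one may fix $m$ so that $H^1(\wedge^2\mathcal{U}''_q(m))=0$ and $\wedge^2\mathcal{U}''_q(m)$ is globally generated for every $q$; trivialising the (then locally free) pushforward over an open cover of $\mathcal{R}_2^0$ yields local morphisms into the relative rank $1$ semistable locus $\mathcal{R}_1^{ss}$ that agree up to the structure group on overlaps, hence descend to a morphism $Det^0\colon\mathcal{M}^0\to\mathcal{J}$ over $C$, exactly as in Proposition \ref{determin}.

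Because $\mathcal{X}_\eta$ is smooth, every semistable torsion free sheaf on a fibre over the generic point is locally free, so $Z:=\mathcal{M}\setminus\mathcal{M}^0$ is supported on the closed fibre $\mathcal{M}_0=M(2,a,\chi)$; and on the dense open $\mathcal{M}_0\setminus Z$ the maps $Det^0$ and the determinant morphism $det$ of Proposition \ref{determin} agree, both being ``$\wedge^2$''. Hence it suffices to extend $Det^0$ to a morphism $Det$ on all of $\mathcal{M}$: then $Det|_{\mathcal{M}_0}$ and $det$ are two morphisms from the reduced $\mathcal{M}_0$ to the separated $\mathcal{J}_0$ agreeing on a dense open, hence equal, which gives the last assertion. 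To extend, let $\overline{\Gamma}\subset\mathcal{M}\times_{_C}\mathcal{J}$ be the closure of the graph of $Det^0$, with projections $p_1,p_2$. Then $p_1\colon\overline{\Gamma}\to\mathcal{M}$ is proper (as $\mathcal{J}\to C$ is projective) and birational, and $\mathcal{M}$ is regular over $\C$ --- here one uses that $\chi$ is odd, so $\mathcal{R}_2^{ss}=\mathcal{R}_2^{s}$, the group $P\mathcal{G}'$ acts freely on the smooth $\mathcal{R}_2^{ss}$, and $\mathcal{M}=\mathcal{R}_2^{s}/P\mathcal{G}'$ --- so by Zariski's main theorem it is enough to show $p_1$ is bijective, after which $Det:=p_2\circ p_1^{-1}$ works and the triangle over $C$ commutes by construction. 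The fibres of $p_1$ over $\mathcal{M}^0$ are singletons and over $Z$ are non-empty by properness, so the crux is to prove: if $(F,G_1),(F,G_2)\in\overline{\Gamma}$ then $G_1\simeq G_2$.

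For this I would use the valuative criterion. Each $G_a$ ($a=1,2$) arises from a dominant map $\mathrm{Spec}(\tilde R)=\tilde C\to C$ of a DVR, a $\tilde C$-flat family $\mathcal{F}$ on $\mathcal{X}\times_{_C}\tilde C$ with $\mathcal{F}_t$ a rank $2$ semistable bundle on the smooth curve $\mathcal{X}_t$, $\mathcal{F}_0\simeq F$, and a line bundle $\mathcal{G}^{(a)}$ on $\mathcal{X}_{\tilde C}$ with $\mathcal{G}^{(a)}_t\simeq\wedge^2\mathcal{F}_t$ and $\mathcal{G}^{(a)}_0\simeq G_a$. The surface $\mathcal{X}_{\tilde C}$ may acquire an $A_n$-singularity at the node; let $\pi\colon\tilde{\mathcal{X}}_{\tilde C}\to\mathcal{X}_{\tilde C}$ be its minimal resolution, with exceptional chain $\sum_i D_i$ of $(-2)$-curves and new special fibre $X_1+X_2+\sum_i D_i$, where $X_i$ meets the rest in a single point $p_i$. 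Away from $p$ the determinant $\wedge^2\mathcal{F}''$ is a line bundle which, $\tilde{\mathcal{X}}_{\tilde C}$ being regular, extends uniquely to $\overline{\wedge^2\mathcal{F}''}$ on $\tilde{\mathcal{X}}_{\tilde C}$; from $Pic(\tilde{\mathcal{X}}_{\tilde C})=Pic((\mathcal{X}-p)_{\tilde C})\oplus\bigoplus_i\Z D_i$ one gets $\overline{\wedge^2\mathcal{F}''}\simeq\pi^*\mathcal{G}^{(a)}\otimes\mathcal{O}(V_a)$ with $V_a=\sum_i n_i^{(a)}D_i$ vertical, whence $\overline{\wedge^2\mathcal{F}''}|_{X_i}\simeq G_a|_{X_i}(n_i^{(a)}p_i)$ and $n_i^{(a)}=\deg(\overline{\wedge^2\mathcal{F}''}|_{X_i})-\deg(G_a|_{X_i})$. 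The first term depends only on $\mathcal{F}$, and the second only on the fixed bidegree of points of $\mathcal{J}_0\simeq J^{\chi_1'}(X_1)\times J^{\chi_2'}(X_2)$ (Theorem \ref{moduli}); so $n_i^{(a)}$ is independent of $a$, the line bundles $L_i:=\overline{\wedge^2\mathcal{F}''}|_{X_i}(-n_ip_i)$ are intrinsic to $\mathcal{F}$, and $G_a$ --- a line bundle on $X_0$ --- is forced to be the one with restrictions $L_1,L_2$. Therefore $G_1\simeq G_2$, and the extension exists.

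The step I expect to be the main obstacle is this last one: making the resolution-and-vertical-divisor bookkeeping precise, and within it the key observation that the bidegree of the limiting determinant on the two components of $X_0$ is rigid because it is pinned down by the target Jacobian $\mathcal{J}_0$, so that the limit line bundle cannot depend on the chosen extension $\mathcal{G}^{(a)}$ of $\wedge^2\mathcal{F}_t$.
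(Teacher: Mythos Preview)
Your proposal is correct and follows essentially the same route as the paper: define $Det^0$ on the locally free locus via $\wedge^2$, take the closure of its graph in $\mathcal{M}\times_{C}\mathcal{J}$, and show the first projection is a bijection (hence an isomorphism by regularity of $\mathcal{M}$) via a DVR/resolution argument that pins down the limiting line bundle through the vertical-divisor bookkeeping on $\tilde{\mathcal{X}}_{\tilde C}$. Your write-up is in places a bit more explicit than the paper's (invoking properness for surjectivity of $p_1$, and citing Theorem~\ref{moduli} to fix the bidegree of $G_a|_{X_i}$), but the strategy and the key uniqueness step are identical.
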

\begin{remark}\label{reldet1}
By similar arguments as in the proof of Proposition \ref{tran} we can show that $Det$ is a smooth morphism. Fix a section $\sigma:C\to \mathcal{J}$ such that $\sigma(0)=\xi$. This corresponds to a line bundle $\mathcal{L}$ over $\mathcal{X}$ such that $\mathcal{L}|_{_{\mathcal{X}_{_0}}}=\xi$. Let us denote $Det^{-1}(\sigma(C))$ by $\mathcal{M}_{_{\mathcal{L}}}$. Since both the varieties $\mathcal{M}$ and $\mathcal{J}$ are smooth we conclude that $\mathcal{M}_{_{\mathcal{L}}}$ is smooth over $\C$.
\end{remark}
%\addbibresource{refer1.bib}
\bibliographystyle{amsplain}
\bibliography{refer1}

\providecommand{\bysame}{\leavevmode\hbox to3em{\hrulefill}\thinspace}
\providecommand{\MR}{\relax\ifhmode\unskip\space\fi MR }
% \MRhref is called by the amsart/book/proc definition of \MR.
\providecommand{\MRhref}[2]{%
  \href{http://www.ams.org/mathscinet-getitem?mr=#1}{#2}
}
\providecommand{\href}[2]{#2}
\begin{thebibliography}{10}

\bibitem{Ba1}
V.~Balaji, \emph{Cohomology of certain moduli spaces of vector bundles},
  Proceedings of the Indian Academy of Sciences-Mathematical Sciences, vol.~98,
  Springer, 1988, pp.~1--24.

\bibitem{Ba2}
\bysame, \emph{Intermediate {J}acobian of some moduli spaces of vector bundles
  on curves}, American Journal of Mathematics (1990), 611--629.

\bibitem{bbd}
V.~Balaji, I.~Biswas, and S.~del Bano~Rollin, \emph{A {T}orelli type theorem
  for the moduli space of parabolic vector bundles over curves}, Mathematical
  Proceedings of the Cambridge Philosophical Society, vol. 130, Cambridge Univ
  Press, 2001, pp.~269--280.

\bibitem{bbgn}
V.~Balaji, I.~Biswas, O.~Gabber, and D.~S. Nagaraj, \emph{Brauer obstruction
  for a universal vector bundle}, Comptes Rendus Mathematique \textbf{345}
  (2007), no.~5, 265--268.

\bibitem{bs}
V.~Balaji and C.~S. Seshadri, \emph{Cohomology of a moduli space of vector
  bundles}, The Grothendieck Festschrift, Springer, 2007, pp.~87--120.

\bibitem{barik}
P.~Barik, \emph{Hitchin pairs on a singular curve}, Ph.D. thesis, Chennai
  Mathematical Institute, May 2014.

\bibitem{de}
P~Deligne, \emph{{\'E}quations diff{\'e}rentielles {\`a} points singuliers
  r{\'e}guliers}, Lecture Notes in Math. \textbf{163} (1970).

\bibitem{RH}
R.~Hain, \emph{Periods of limit mixed {H}odge structures}, Current developments
  in mathematics (2002), 113--133.

\bibitem{huy}
Daniel Huybrechts and Manfred Lehn, \emph{The geometry of moduli spaces of
  sheaves}, Cambridge University Press, 2010.

\bibitem{mummay}
A.~Mayer and D.~Mumford, \emph{Further comments on boundary points},
  Unpublished lecture notes distributed at the Amer. Math. Soc. Summer
  Institute, Woods Hole (1964).

\bibitem{M}
D.~R. Morrison, \emph{The {C}lemens-{S}chmid exact sequence and applications},
  Topics in transcendental algebraic geometry (Ph. Griffiths, ed.) (1984),
  101--119.

\bibitem{mum-new}
D.~Mumford and P.~E. Newstead, \emph{Periods of a moduli space of bundles on
  curves}, American Journal of Mathematics (1968), 1200--1208.

\bibitem{ns}
D.~S. Nagaraj and C.~S. Seshadri, \emph{Degenerations of the moduli spaces of
  vector bundles on curves i}, Proceedings of the Indian Academy of
  Sciences-Mathematical Sciences, vol. 107, Springer, 1997, pp.~101--137.

\bibitem{NAR}
M.~S. Narasimhan and S.~Ramanan, \emph{Moduli of vector bundles on a compact
  {R}iemann surface}, Annals of Mathematics (1969), 14--51.

\bibitem{ne}
P.~E. Newstead, \emph{Topological properties of some spaces of stable bundles},
  Topology \textbf{6} (1967), no.~2, 241--262.

\bibitem{ne1}
P.E. Newstead, \emph{Rationality of moduli spaces of stable bundles},
  Mathematische Annalen \textbf{215} (1975), no.~3, 251--268.

\bibitem{N}
N.~Nitsure, \emph{Cohomology of desingularization of moduli space of vector
  bundles}, Compositio Mathematica \textbf{69} (1989), no.~3, 309--339.

\bibitem{R}
S.~Ramanan, \emph{The moduli spaces of vector bundles over an algebraic curve},
  Mathematische Annalen \textbf{200} (1973), no.~1, 69--84.

\bibitem{S}
W.~Schmid, \emph{Variation of {H}odge structure: the singularities of the
  period mapping}, Inventiones mathematicae \textbf{22} (1973), no.~3,
  211--319.

\bibitem{Se}
J.~P. Serre, \emph{On the fundamental group of a unirational variety}, Journal
  of the London Mathematical Society \textbf{1} (1959), no.~4, 481.

\bibitem{se1}
C.S Seshadri, \emph{Fibres vectoriels sur les courbes algebriques,
  ast{\'e}risque96 (1982)}, MR 85b \textbf{14023}.

\bibitem{se2}
\bysame, \emph{Geometric reductivity over arbitrary base}, Advances in
  Mathematics \textbf{26} (1977), no.~3, 225--274.

\bibitem{sun}
XT~Sun, \emph{Degeneration of sl (n)-bundles on a reducible curve}, Proceedings
  of the Symposium on Algebraic Geometry in East Asia (Kyoto, World Scientific,
  2001, pp.~229--243.

\bibitem{Vit}
M.A Vitulli, \emph{Homeomorphism versus isomorphism for varieties}, Journal of
  Pure and Applied Algebra \textbf{56} (1989), no.~3, 313--318.

\end{thebibliography}

\end{document}